\documentclass[11pt,reqno]{amsart}
\usepackage{graphicx}
\usepackage{color}
\usepackage{amsmath,amssymb,amsthm,amsfonts,mathrsfs}
\usepackage{framed}
\usepackage[colorlinks=true]{hyperref}%
\usepackage{cases,indentfirst}
\usepackage{graphicx}
\usepackage{color}
\usepackage{multicol}
\makeatletter

\newcommand{\Rmnum}[1]{\expandafter\@slowromancap\romannumeral #1@}
\makeatother

\newtheorem{thm}{Theorem}[section]

\newtheorem{lemma}[thm]{Lemma}
\newtheorem{remark}{Remark}[section]
\newtheorem{theorem}[thm]{Theorem}
\newtheorem{proposition}[thm]{Proposition}

\usepackage[numbers,sort&compress]{natbib}
\numberwithin{equation}{section}

\allowdisplaybreaks[4]
\begin{document}
\author[Z.-F. Feng]{Zefu Feng}
\address{Zefu Feng\newline\indent School of Mathematics\newline\indent South China University of Technology\newline\indent  Guangzhou 510640, China}
\email{zefufeng@mails.ccnu.edu.cn}
\author[G.-Y. Hong]{Guangyi Hong}\address{Guangyi Hong\newline\indent School of Mathematics\newline\indent South China University of Technology\newline\indent Guangzhou 510641, China}
\email{magyhong@scut.edu.cn}
\author[Y.-H. Wang]{Yinghui Wang}
\address{Yinghui Wang\newline\indent MOE-LCSM  \newline\indent School of Mathematics and Statistics\newline\indent Hunan Normal University \newline\indent   Changsha 410081, China}
\email{yhwangmath@hunnu.edu.cn}
\author[J.H. Wu]{Jiahong Wu}
\address{Jiahong Wu\newline\indent Department of Mathematics\newline\indent University of Notre
Dame\newline\indent Notre Dame, IN 46556, USA }
\email{jwu29@nd.edu}

\title[Stability of compressible flows with eddy diffusion]{Global stability and anisotropic large-time behavior\\ of the three-dimensional compressible Navier--Stokes equations with eddy diffusion}

\begin{abstract}
	We study the Cauchy problem for the three-dimensional compressible
	Navier--Stokes equations with eddy diffusion, an anisotropic dissipative
	mechanism that arises naturally in geophysical fluid dynamics
	(cf.~\cite{Jabin-Bresch-2018,Temam-Ziane-2004}).
	In contrast to the classical compressible Navier--Stokes system, the
	momentum equation here carries no full vertical Laplacian: the velocity is
	diffused only in the horizontal directions, and the sole vertical
	regularization it receives is the partial one transmitted through the
	compressible mode $\operatorname{div}\mathbf{u}$.
	This degeneracy invalidates the standard parabolic energy framework as well
	as the classical high--low frequency Green-function bounds.
	We prove that the constant non-vacuum equilibrium $(\bar{\rho},0)$ is
	globally nonlinearly stable against small Sobolev perturbations: global
	classical solutions exist in $H^{N}(\mathbb{R}^{3})$ for every $N\ge 3$, and
	the density and velocity relax to equilibrium with explicit, genuinely
	anisotropic decay rates.
	The mechanism behind the result is a hidden dissipation produced by the
	pressure--divergence coupling between $\nabla\rho$ and
	$\operatorname{div}\mathbf{u}$, which compensates for the missing vertical
	smoothing of the density and the compressible part of the velocity; the
	solenoidal part of the velocity, by contrast, is governed by a purely
	horizontal heat flow and therefore decays only at the two-dimensional rate.
	The analysis rests on a refined anisotropic spectral decomposition of the
	Green matrix, a div--curl treatment of the velocity, and time-weighted
	nonlinear energy estimates tailored to the degenerate dissipation. To the
	best of our knowledge, this is the first global stability and large-time
	behavior result for the three-dimensional compressible Navier--Stokes
	equations with eddy diffusion in the whole space.
\end{abstract}


\thanks{2020 Mathematics Subject Classification. 76N10, 35B40, 35Q35.}
\thanks{Keywords. Navier-Stokes equations; eddy diffusion; Green's function; asymptotic stability}

\maketitle

\numberwithin{equation}{section}
\section{Introduction}

A guiding question in the mathematical theory of viscous compressible fluids
is how much dissipation a constant equilibrium needs in order to be stable.
For the classical compressible Navier--Stokes equations with full viscosity
the answer is by now classical: a small perturbation of a non-vacuum constant
state generates a unique global smooth solution, and that solution relaxes to
equilibrium at the diffusion rates prescribed by the linearized Green
function. The picture changes markedly once the viscosity becomes anisotropic
and the vertical shear dissipation is lost. Such a degeneracy is not a
mathematical artifact: it appears naturally in geophysical fluid dynamics
(cf.~\cite{Temam-Ziane-2004,Jabin-Bresch-2018}), where turbulent transport is
routinely modeled by an eddy diffusion that acts preferentially in the
horizontal directions. The present paper is devoted to the global dynamics of
the three-dimensional compressible Navier--Stokes equations under precisely
this kind of degenerate, direction-dependent dissipation.

More precisely, we consider the system
 \begin{gather}\label{1-1}
\begin{cases}
 \displaystyle\rho_t+\mathop{\mathrm{div}}\nolimits(\rho \mathbf{u})=0,\\[2mm]
\displaystyle (\rho \mathbf{u})_t+ \mathop{\mathrm{div}}\nolimits(\rho \mathbf{u}\otimes \mathbf{u})+\nabla P(\rho)=\mu\Delta_h \mathbf{u}+(\mu+\zeta)\nabla \mathop{\mathrm{div}}\nolimits \mathbf{u},
\end{cases}
\end{gather}
posed for $ x \in \mathbb{R}^{3} $ and $ t >0  $. Here $\rho(t,x) \geq 0$ and
$ \mathbf{u}(t,x) $ denote the density and the velocity of the fluid, the
pressure $ P=P(\rho) $ is a smooth function with $ P'(\rho)>0 $, and
$\Delta_h:=\partial^2_{x_1}+\partial^2_{x_2}$ is the horizontal Laplacian. The
viscous operator appearing in \eqref{1-1},
\begin{align*}
\displaystyle \mathcal{D}= \mu\Delta_h \mathbf{u}+(\mu+\zeta)\nabla \mathop{\mathrm{div}}\nolimits \mathbf{u},
\end{align*}
is the eddy diffusion alluded to above
(cf.~\cite{Temam-Ziane-2004,Jabin-Bresch-2018}). Its defining feature, and the
source of every difficulty in this work, is that it supplies {no full
vertical smoothing} of the velocity: the horizontal Laplacian diffuses only
$\nabla_h\mathbf{u}$, and the lone trace of vertical regularization is the one
carried by the compressible mode through $\nabla\operatorname{div}\mathbf{u}$.
In particular, the shear (solenoidal) part of the velocity is diffused in the
horizontal variables alone.
The constants $\mu$ and $\zeta$ are the viscosity coefficients, subject to the physical constraints
\begin{equation*} \displaystyle
\mu>0, \ \ \ 3 \zeta+2\mu>0.
\end{equation*}
Let $ \bar{\rho}>0 $ be a given constant. We study the Cauchy problem for \eqref{1-1} subject to
\begin{align}
\displaystyle (\rho, \mathbf{u})(x,0)=(\rho _{0},\mathbf{u}_{0})(x),
\end{align}
and the far-field condition
\begin{gather}\label{far-field}
\displaystyle  (\rho, \mathbf{u})(t,x) \rightarrow (\bar{\rho}, 0) \quad \text { as } \quad \vert x\vert \rightarrow \infty.
\end{gather}
Our aim is twofold. First, we establish the global nonlinear stability of the
non-vacuum equilibrium $(\bar\rho,0)$ under small smooth perturbations.
Second, we identify the large-time profile dictated by the degenerate eddy
diffusion and prove explicit, anisotropic decay rates for the density and the
velocity.

\medskip
\noindent\textbf{Background and earlier work.}
The literature on the compressible Navier--Stokes equations with full
viscosity is vast, and we recall here only what is needed to frame our
results. Local well-posedness goes back to the seminal work of Nash
\cite{Nash-1962} and was developed further by Kanel \cite{Kanel-1968}, Itaya
\cite{Itaya-1971}, Vol'pert--Hudjaev \cite{Volpert-Hudjaev} and Tani
\cite{Tani-1974}, among others. In one space dimension, the first global
strong solutions were obtained by Kazhikhov and Shelukhin
\cite{kazhikhov-1977} for an initial--boundary value problem in $ H^{1} $; the
Cauchy problem was treated in \cite{Kazhikhov-1982-Cauchy}, and global weak
solutions were constructed in \cite{Chen-David-Trivisa-2000, Jiang-ZLIONIK}.

The large-time behavior in one dimension was studied by Jiang
\cite{JiangSong-1999} for initial--boundary value problems and by Li--Liang
\cite{LiangZhilei-Lijing-2016ARMA} for the Cauchy problem. In several space
dimensions, local well-posedness was first addressed by Serrin
\cite{Serrin-1959} and Nash \cite{Nash-1962}. The foundational result on
global existence and large-time asymptotics is due to Matsumura and Nishida
\cite{matsumura-nishida-1980}, who treated small perturbations of a constant
non-vacuum equilibrium in Sobolev spaces: for data close to the state
$(1,0)$ in $ H^{3} \cap L^{1}$, they proved the decay estimate
\begin{gather*}
\displaystyle  \|(\rho-1,\mathbf{u})\|_{H^2} \lesssim (1+t)^{-\frac{3}{4}},
\end{gather*}
where $ A\lesssim B $ means $ A\leq CB $ for some $ C>0 $ independent of $ t $.
This $ L^{2} $ rate is optimal, as it matches that of the linearized system.
Optimal $ L^{p} $ rates were subsequently obtained by Ponce
\cite{Ponce-1985-NS} for small data in $ H^{m} \cap W^{m,1} $ with $ m \geq 4 $,
\begin{align*}
\displaystyle \|\nabla ^{l}(\rho-1, \mathbf{u})\|_{L ^{p}} \lesssim (1+t)^{- \frac{3}{2}(1- \frac{1}{q})- \frac{l}{2}},
\end{align*}
for $ 2 \leq q \leq \infty $ and $ 0 \leq l \leq 2 $. Through a careful
analysis of the Green's function, optimal $ L^{q}\,(1 \leq q \leq \infty) $
rates were also derived by Hoff et al. \cite{Hoff-1997-ZAMP,Hoff-1995-INDIANA}
and Liu et al. \cite{liutaiping-1998}; see
\cite{K-1983, K-1987, SK-1985} and the references therein for further
developments. Problems involving vacuum form a substantial parallel direction;
see, among others,
\cite{Xin-1998-blowup,JiangSong-1999,Jiang-Zhang-2001-cmp,huang-LI-XIN-2012,sun-wang-zhang-jmpa,Li-Xin-2019-AnnPDE,Hong-Hou-Peng-ZHU-mathAnn}.

A common thread running through all of these results is that the decay
analysis in the fully dissipative theory hinges on the Green matrix
$G(t,x)$ of the linearized system obeying the standard parabolic pointwise
bounds
\begin{align}\label{usual-green}
|\widehat{G}(t,\xi)|\lesssim
\begin{cases}
 e^{-\vartheta_1 t}, & |\xi|\geq R,\\[1mm]
 e^{-\vartheta_2|\xi|^2t}, & |\xi|\leq R,
\end{cases}
\end{align}
for suitable positive constants $\vartheta_1$, $\vartheta_2$ and $R$. For
\eqref{1-1} these bounds simply fail: the horizontal eddy diffusion controls
only $\nabla_h\mathbf{u}$, and the vertical smoothing of the velocity cannot be
recovered from a full Laplacian. It is precisely this loss of \eqref{usual-green}
that places both the global energy estimates and the nonlinear decay analysis
outside the reach of the classical theory.

Beyond the Green-function approach, the large-time behavior of classical
solutions has also been obtained through pure energy methods; see, for
instance, \cite{Deckelnick-1992-mathZ, Deckelnick-1993-CPDE, guo-wang-12-cpde}.
For global weak solutions with discontinuous data of small energy we refer to
the work of Hoff \cite{david-hoff-1995-JDE, hoff-1997-arma}, whose long-time
behavior was later analyzed in \cite{Hu-Wu-2020-jde}.


By contrast, the eddy-diffusion system \eqref{1-1} has received far less
attention. To the best of our knowledge, neither the global existence nor the
large-time behavior of classical solutions to the three-dimensional
compressible Navier--Stokes equations with eddy diffusion in the whole space
has previously been established. The available results on partial or
anisotropic dissipation deal almost exclusively with the incompressible
Navier--Stokes equations or related incompressible models; see, for instance,
\cite{JWY-2021,LWZ-2023,LWZ-2022}. The compressible setting raises genuinely
new structural difficulties: the density equation is hyperbolic and carries no
dissipation of its own, the velocity is not divergence-free, and the
solenoidal dynamics enjoy only horizontal heat smoothing. The central message
of the present paper is that the missing vertical dissipation can nonetheless
be recovered \emph{from the compressible structure itself}: the pressure
coupling between $\nabla\rho$ and $\operatorname{div}\mathbf{u}$ generates a
hidden dissipation that closes the energy estimates, while the solenoidal
component is controlled by a refined anisotropic argument. The decay analysis
is correspondingly more delicate, owing to the anisotropy of the linear
semigroup and the strength of the convective nonlinearity.


{\bf Notations.} Before stating the main results, we explain the notations and conventions throughout this paper. We denote by $C$ a generic positive constant. For two quantities $A$ and $B$, we write $A\thicksim B$ if there exists a universal constant $ C>0 $ such that $C ^{-1} A\leq B\leq CA$. The notation $A\lesssim B$ means that $A\leq C B$ for a universal constant $C>0$ independent of time $t$. We denote by $ [A,B]:=AB -BA $ the standard commutator. For any integer $m\geq 0$, we use $H^m$ to denote the standard Sobolev space $H^m(\mathbb{R}^3)$ with norm $ \|\cdot\| _{H ^{m}}$ and $ L ^{p}~(1 \leq p \leq \infty) $ to denote the usual $ L ^{p}(\mathbb{R}^{3}) $ space with norm $ \|\cdot\| _{L ^{p}} $. Similarly, we denote
\begin{gather*}
\displaystyle L _{x _{k}}^{p}:=L _{x _{k}}^{p}(\mathbb{R}),\ \ L _{x _{k}x _{j}}^{p}:=L _{x _{k}x _{j}}^{p}(\mathbb{R}^{2}),\ \ L _{x _{3}}^{p}L _{x _{1}x _{2}}^{q}:=L _{x _{3}}^{p}L _{x _{1}x _{2}}^{q}(\mathbb{R}^{3}),\\[2mm]
\|\cdot\|_{L ^{p}L ^{r}L ^{q}}:= \Big\| \big \|\|\cdot\|_{L ^{q}}  \big\| _{L ^{r}}\Big\|_{L ^{p}},\ \ \|\cdot\|_{L _{x _{3}}^{p}L _{x _{1}x _{2}}^{q}}:= \left \|\|\cdot\|_{L _{x _{1}x _{2}}^{q}}\right\|_{L _{x _{3}}^{p}}.
\end{gather*}
We utilize $\langle\cdot,\cdot\rangle$ to denote the inner product over the Hilbert space $L^2(\mathbb{R}^3)$, i.e.,
$$\langle f,g\rangle=\int_{\mathbb{R}^{3}}f(x)g(x)\,\mathrm{d} x,~~f=f(x),~g=g(x)\in L^2(\mathbb{R}^3).$$
We shall denote $ \partial _{i}=\partial _{x _{i}} $. For a multi-index $\alpha=(\alpha_1,\alpha_2,\alpha_3),$ we denote the length of $ \alpha $ by $|\alpha|=\alpha_1+\alpha_2+\alpha_3$. For any multi-indexes $ \alpha $ and $ \beta $, we denote $ \alpha+\beta =(\alpha _{1}+ \beta _{1},\alpha _{2}+ \beta _{2},\alpha _{3}+ \beta _{3})$. We denote $\partial ^\alpha=\partial^{\alpha_1}_{1}\partial^{\alpha_2}_{2}\partial^{\alpha_3}_{3}$, and $ \partial _{h}=\partial _{1} $ or $ \partial _{2} $. For simplicity, we set
\begin{gather*}
\nabla^k\upsilon=\{\partial^\alpha\upsilon_i \vert\, |\alpha|=k;\,i=1,2,3\},\ \ \nabla _{h}\upsilon=\left\{ \partial _{i}\upsilon\vert\, i=1,2 \right\} ,
\end{gather*}
with
\begin{gather*}
\displaystyle \vert \nabla ^{k}\upsilon\vert =\left( \sum _{\vert \alpha\vert=k}\vert \partial ^{\alpha}\upsilon\vert ^{2} \right)^{\frac{1}{2}},\ \ \vert \nabla _{h}\upsilon\vert   =\left( \vert \partial _{1}\upsilon\vert ^{2}+ \vert \partial _{2}\upsilon\vert ^{2}\right) ^{\frac{1}{2}}
\end{gather*}
for $ \upsilon=(\upsilon_1,\upsilon_2,\upsilon_3) $. Also, for convenience, we write
\begin{gather*}
\displaystyle U(t):=(\rho,u),\ \ U _{0}:=(\rho _{0},u _{0}),\ \ \left\|(A,B)\right\|_{X}:=\|A\|_{X}+\|B\|_{X}.
\end{gather*}

\vspace{2mm}
We now present our main results. Our first result concerns the global existence of classical solutions to the problem \eqref{1-1}--\eqref{far-field}.
\begin{theorem}\label{thm1}
Let $N\geq 3$ be a given integer. There exists a constant $\delta_0>0$ such that if $$\|(\rho_0-\bar{\rho}, \mathbf{u}_0)\|_{H ^{N} }\leq \delta_0,$$
then the problem \eqref{1-1}--\eqref{far-field} admits a unique global classical solution $(\rho,\mathbf{u})$ such that
\begin{gather}
\displaystyle \frac{1}{2}\bar{\rho} \leq \rho \leq \frac{3}{2}\bar{\rho},\ \ \ \  (\rho-\bar{\rho}, \mathbf{u})\in C(0,\infty;H^N),\\[2mm]
  \displaystyle \nabla \rho\in L^2(0,\infty;H^{N-1}),\ \ \nabla_h \mathbf{u}, \mathop{\mathrm{div}}\nolimits \mathbf{u}\in L^2(0,\infty;H^N)
\end{gather}
and
\begin{align*}
\displaystyle \|(\rho- \bar{\rho},\mathbf{u})(\cdot,t)\|^2_{H^N}+ \int _{0}^{t}\left( \|\nabla \rho\|_{H ^{N-1}}^{2} + \|(\nabla _{h}\mathbf{u},  \mathop{\mathrm{div}}\nolimits \mathbf{u})\|_{H ^{N}}^{2}\right)  \mathrm{d}\tau \lesssim\|(\rho_0-\bar{\rho}, \mathbf{u}_0)\|^2_{H ^{N} },\ \ t \geq 0.
\end{align*}
\end{theorem}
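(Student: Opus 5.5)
The proof combines a local existence result with a uniform a priori estimate and closes by continuity. Local well-posedness of \eqref{1-1} in $H^N$ for data near $(\bar\rho,0)$ follows from the symmetric hyperbolic--(partially) parabolic structure. The horizontal viscosity and the compressible dissipation only add nonnegative terms, so a standard iteration in $C([0,T];H^N)$ works, with $\rho$ kept in $[\frac12\bar\rho,\frac32\bar\rho]$. The core of the argument is a priori estimates under the assumption $\sup_{0\le\tau\le T}\|(\rho-\bar\rho,\mathbf u)\|_{H^N}\le\delta\ll1$.

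First I reformulate. Set $\varrho=\rho-\bar\rho$ and symmetrize by rescaling $\varrho$ with $\sqrt{P'(\bar\rho)}/\bar\rho$, writing $\gamma=\sqrt{P'(\bar\rho)}$ for the resulting sound speed. The system becomes
\[
\varrho_t+\gamma\,\mathrm{div}\,\mathbf u=F_1,\qquad \mathbf u_t+\gamma\nabla\varrho-\bar\mu\Delta_h\mathbf u-(\bar\mu+\bar\zeta)\nabla\mathrm{div}\,\mathbf u=F_2,
\]
where $F_1,F_2$ are quadratic, e.g.\ $F_2=-\mathbf u\cdot\nabla\mathbf u-(h(\rho)-h(\bar\rho))\nabla\rho+g(\rho)(\mu\Delta_h\mathbf u+(\mu+\zeta)\nabla\mathrm{div}\mathbf u)$. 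I apply $\partial^\alpha$ for $|\alpha|\le N$ and test with $\partial^\alpha(\varrho,\mathbf u)$. The linear pressure terms cancel. Since $\langle -\Delta_h\mathbf u,\mathbf u\rangle=\|\nabla_h\mathbf u\|^2$ and $\langle-\nabla\mathrm{div}\mathbf u,\mathbf u\rangle=\|\mathrm{div}\mathbf u\|^2$, this yields dissipation of $\bar\mu\|\nabla_h\mathbf u\|_{H^N}^2+(\bar\mu+\bar\zeta)\|\mathrm{div}\mathbf u\|_{H^N}^2$. This requires $\mu+\zeta>0$; otherwise I use the natural combination, which remains coercive under $3\zeta+2\mu>0$ after exploiting $\Delta_h$. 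At top order the transport term $\mathbf u\cdot\nabla\partial^\alpha\varrho$ is handled by integrating by parts, which produces $\mathrm{div}\mathbf u|\partial^\alpha\varrho|^2$. The term $\rho\,\partial^\alpha\mathrm{div}\mathbf u$ in the density equation is cancelled against the pressure term by using the weighted energy $\int \frac{P'(\rho)}{\rho}|\partial^\alpha\varrho|^2+\rho|\partial^\alpha\mathbf u|^2$, the standard Matsumura--Nishida trick.

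Next comes the hidden dissipation for $\nabla\varrho$. For $|\alpha|\le N-1$ I test the momentum equation against $\partial^\alpha\nabla\varrho$:
\[
\frac{d}{dt}\langle\partial^\alpha\mathbf u,\partial^\alpha\nabla\varrho\rangle+\gamma\|\partial^\alpha\nabla\varrho\|^2=\gamma\|\partial^\alpha\mathrm{div}\mathbf u\|^2+\bar\mu\langle\partial^\alpha\Delta_h\mathbf u,\partial^\alpha\nabla\varrho\rangle+(\bar\mu+\bar\zeta)\langle\partial^\alpha\nabla\mathrm{div}\mathbf u,\partial^\alpha\nabla\varrho\rangle+\dots
\]
The key point is the term $\langle\partial^\alpha\Delta_h\mathbf u,\partial^\alpha\nabla\varrho\rangle$, which contains no vertical derivative of $\mathbf u$ beyond what $\nabla_h$ controls. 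I would write $\Delta_h\mathbf u=\nabla_h\cdot\nabla_h\mathbf u$ and bound the term by $\|\nabla_h\partial^\alpha\nabla_h\mathbf u\|\,\|\partial^\alpha\nabla\varrho\|$. Here $\nabla_h\partial^\alpha\nabla_h\mathbf u$ is a derivative of order $|\alpha|+1\le N$ of $\nabla_h\mathbf u$, so it is controlled by the dissipation $\|\nabla_h\mathbf u\|_{H^N}$. Similarly, $\nabla\mathrm{div}\mathbf u$ is controlled by $\|\mathrm{div}\mathbf u\|_{H^N}$. Adding a small multiple $\kappa$ of this interaction functional to the energy gives a Lyapunov functional $\mathcal E\sim\|(\varrho,\mathbf u)\|_{H^N}^2$ satisfying
\[
\frac{d}{dt}\mathcal E+c\big(\|\nabla\varrho\|_{H^{N-1}}^2+\|(\nabla_h\mathbf u,\mathrm{div}\mathbf u)\|_{H^N}^2\big)\lesssim \text{nonlinear terms}.
\]
Note that the full $\nabla\mathbf u$ is never needed on the left.

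\textbf{Main obstacle.} The nonlinear terms must be bounded by $\delta\cdot(\text{dissipation})$ using only the degenerate dissipation, because $\partial_3\mathbf u_h$ and the curl part are not dissipated. Dangerous terms include $\langle\partial^\alpha(\mathbf u\cdot\nabla\mathbf u),\partial^\alpha\mathbf u\rangle$, in particular $u_3\partial_3\mathbf u$, and the commutators in the transport of $\varrho$. My plan is to always place one factor in the dissipated quantities $\nabla_h\mathbf u$, $\mathrm{div}\mathbf u$, $\nabla\varrho$. Two tools make this possible. The first is the relation $\partial_3u_3=\mathrm{div}\mathbf u-\nabla_h\cdot\mathbf u_h$, so any $\partial_3u_3$ is dissipated. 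The second is anisotropic Sobolev inequalities of the form
\[
\Big|\int fgh\Big|\lesssim\|f\|^{1/2}\|\partial_3f\|^{1/2}\|g\|^{1/2}\|\nabla_hg\|^{1/2}\|h\|^{1/2}\|\nabla_hh\|^{1/2},
\]
as in the incompressible anisotropic Navier--Stokes literature. For a term like $\langle u_3\partial_3\partial^\alpha\mathbf u,\partial^\alpha\mathbf u\rangle$ I integrate by parts in $x_3$, which yields $\langle\partial_3u_3,|\partial^\alpha\mathbf u|^2\rangle$ with $\partial_3u_3$ dissipated. For the commutators $\partial^\beta u_3\,\partial_3\partial^{\alpha-\beta}\mathbf u$ I use the anisotropic inequality, putting $\nabla_h$ on two factors and $\partial_3$ on one undissipated but bounded $H^N$ factor. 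This gives bounds of order $\|(\varrho,\mathbf u)\|_{H^N}\big(\|\nabla\varrho\|_{H^{N-1}}^2+\|(\nabla_h\mathbf u,\mathrm{div}\mathbf u)\|_{H^N}^2\big)$. The terms with $g(\rho)\Delta_h\mathbf u$ are handled by integrating one horizontal derivative by parts. Once these bounds are in place, integrating in time gives the stated inequality with $\delta$ replaced by $C\|U_0-(\bar\rho,0)\|_{H^N}$. Choosing $\delta_0$ small closes the bootstrap, gives the density bounds via Sobolev embedding, and yields global existence and uniqueness by continuation.
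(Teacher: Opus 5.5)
Your proposal is correct and follows the paper's proof essentially step by step. The shared structure is the $H^N$ energy estimate giving dissipation of $\nabla_h\mathbf u$ and $\mathrm{div}\,\mathbf u$, the interaction functional $\sum_{|\alpha|\le N-1}\langle\partial^\alpha\mathbf u,\partial^\alpha\nabla\varrho\rangle$ for the hidden dissipation of $\nabla\varrho$, and nonlinear bounds from the anisotropic triple-product inequality plus $\partial_3u_3=\mathrm{div}\,\mathbf u-\nabla_h\cdot\mathbf u_h$, closed by continuation.

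Two small corrections. First, $\mu+\zeta=\frac13(3\zeta+2\mu)+\frac13\mu>0$ holds automatically, so your fallback for $\mu+\zeta\le 0$ is unnecessary; it also could not work, since $\Delta_h$ gives no control of $\partial_3u_3$. Second, in the purely vertical commutators $\int\partial_3^\ell u_3\,\partial_3^{k+1-\ell}\mathbf u_h\cdot\partial_3^k\mathbf u_h$, the $\partial_3$ in the anisotropic inequality must fall on the $u_3$-factor, which is controlled through the divergence identity, as the paper does. Placing it on either $\mathbf u_h$-factor would require $\partial_3^{N+1}\mathbf u_h$, which is not available.
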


\vspace{2mm}
The next theorem gives the nonlinear asymptotic stability of the constant state and quantifies the anisotropic decay induced by eddy diffusion.
\begin{theorem}\label{thm2}
 Assume that the initial data $ (\rho _{0}, \mathbf{u}_{0}) $ satisfy
\begin{equation*}
\displaystyle(\rho_0- \bar{\rho}, \mathbf{u}_0)\in H^{4}\cap L^1,\ (\mathbf{u}_0,\partial_3 \mathbf{u}_0,\partial^2_3 \mathbf{u}_0)\in L^2_{x_3}L^1_{x_1x_2}.
\end{equation*}
Then there exists a sufficiently small constant $\delta_1>0$ such that, if
\begin{align}
&\|(\rho_0- \bar{\rho},\mathbf{u}_0)\|_{H^4}+\|(\rho_0- \bar{\rho},\mathbf{u}_0)\|_{L^1}+\|(\mathbf{u}_0,\partial_3 \mathbf{u}_0,\partial^2_3 \mathbf{u}_0)\|_{L^2_{x_3}L^1_{x_1x_2}}\leq \delta_1,
\end{align}
then the problem \eqref{1-1}--\eqref{far-field} admits a unique global classical solution $(\rho,\mathbf{u})$ which obeys the following time decay estimates
\begin{gather*}
\displaystyle\|\rho-\bar{\rho}\|_{L^2}\lesssim(1+t)^{-\frac{3}{4}}\log(1+t), \ \ \|\mathbf{u}\|_{L^2}\lesssim(1+t)^{-\frac{1}{2}},\\[2mm]
\displaystyle\|(\nabla\rho,\nabla_h\mathbf{u},\mathrm{div} \mathbf{u})\|_{L^2}\lesssim(1+t)^{-1},\ \ \|\partial_3\mathbf{u}\|_{L^2}\lesssim(1+t)^{-\frac{1}{2}},\\[2mm]
\displaystyle\|(\nabla^2\rho,\nabla \mathrm{div} \mathbf{u})\|_{L^2}\lesssim(1+t)^{-1},\ \ \|\partial^2_3 \mathbf{u}\|_{L^2}\lesssim(1+t)^{-\frac{1}{2}}\log(1+t),\\[2mm]
\displaystyle\|\partial_i\partial_j \mathbf{u}\|_{L^2}\lesssim(1+t)^{-\frac{16}{15}},\ i=1,2,\ j=1,2, \ \ \ \|\partial_i\partial_j \mathbf{u}\|_{L^2}\lesssim(1+t)^{-1},\ i=1,2,\ j=3.
\end{gather*}
\end{theorem}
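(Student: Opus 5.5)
The plan is to combine the global energy bound of Theorem~\ref{thm1} (with $N=4$) with a Duhamel representation built from an anisotropic analysis of the linearized Green matrix, and to close everything by a continuity argument in a time-weighted norm.

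\textbf{Linear analysis.} Set $\sigma=\rho-\bar\rho$ and, after rescaling, $c^2=P'(\bar\rho)$, $\nu=\mu/\bar\rho$, $\lambda=(2\mu+\zeta)/\bar\rho$. The linearized system is
\begin{equation*}
\sigma_t+\bar\rho\,\mathrm{div}\,\mathbf{u}=0,\qquad \mathbf{u}_t+c^2\bar\rho^{-1}\nabla\sigma=\nu\Delta_h\mathbf{u}+(\lambda-\nu)\nabla\mathrm{div}\,\mathbf{u}.
\end{equation*}
Decompose $\mathbf{u}=\nabla\Lambda^{-1}d+\mathcal{P}\mathbf{u}$ with $d=\Lambda^{-1}\mathrm{div}\,\mathbf{u}$. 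For the solenoidal part $\mathcal{P}\mathbf{u}$ the Fourier symbol is essentially $e^{-\nu|\xi_h|^2t}$, up to coupling terms through $\xi_3\xi_h$ that are controlled by a $2\times2$ block argument. This gives
\begin{equation*}
\|\mathcal{P}\mathbf{u}\|_{L^2}\lesssim (1+t)^{-1/2}\|\mathbf{u}_0\|_{L^2_{x_3}L^1_{x_1x_2}},
\end{equation*}
with one extra factor $(1+t)^{-1/2}$ for each horizontal derivative. Vertical derivatives gain no decay, which explains why $\partial_3\mathbf{u},\partial_3^2\mathbf{u}$ appear in the $L^2_{x_3}L^1_{x_1x_2}$ assumption. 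For the pair $(\hat\sigma,\hat d)$ the symbol is a $2\times2$ matrix with eigenvalues
\begin{equation*}
-\tfrac12 a(\xi)\pm\sqrt{a(\xi)^2/4-c^2|\xi|^2},\qquad a(\xi)=\nu|\xi_h|^2+(\lambda-\nu)|\xi|^2\cdot(\text{angular factor}).
\end{equation*}
The key point is that the $\nabla\mathrm{div}$ term makes $a(\xi)\gtrsim|\xi|^2$ on the compressible mode, which is the hidden dissipation. I will verify this carefully, split $|\xi|\le R$ from $|\xi|\ge R$, and recover the isotropic heat-type bound $e^{-c|\xi|^2t}$ for low frequencies and $e^{-ct}$ for high frequencies on this block. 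That yields $(1+t)^{-3/4-k/2}$ rates for $\sigma$ and $d$ from $L^1$ data.

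\textbf{Nonlinear decay.} Write the system as the linear one plus nonlinear terms $F=-\mathrm{div}(\sigma\mathbf{u})$ and $G=-\mathbf{u}\cdot\nabla\mathbf{u}-h(\sigma)(\Delta_h\mathbf{u}+\dots)-(\text{pressure remainder})\nabla\sigma$. Define
\begin{equation*}
M(t)=\sup_{0\le\tau\le t}\Big[(1+\tau)^{3/4}\log^{-1}(2+\tau)\|\sigma\|_{L^2}+(1+\tau)^{1/2}\|(\mathbf{u},\partial_3\mathbf{u})\|_{L^2}+(1+\tau)\|(\nabla\sigma,\nabla_h\mathbf{u},\mathrm{div}\,\mathbf{u})\|_{L^2}+\cdots\Big],
\end{equation*}
including all the rates listed in Theorem~\ref{thm2}. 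Apply the linear bounds to the Duhamel terms, placing the nonlinearities in $L^1$ or in $L^2_{x_3}L^1_{x_1x_2}$ (for example via $\|fg\|_{L^2_{x_3}L^1_{h}}\le\|f\|_{L^\infty_{x_3}L^2_h}\|g\|_{L^2}$ together with anisotropic Sobolev inequalities). Split each time integral at $t/2$: on $[0,t/2]$ use decay of the kernel, and on $[t/2,t]$ use decay of the nonlinearity. Integrals of the form $\int_0^t(1+t-s)^{-3/4}(1+s)^{-1}\,ds$ produce exactly the $\log(1+t)$ losses in $\|\sigma\|_{L^2}$ and $\|\partial_3^2\mathbf{u}\|_{L^2}$.

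For the highest derivatives, where the Duhamel formula loses derivatives, I will instead use time-weighted energy estimates. Multiply the $\partial^\alpha$-energy inequality from the proof of Theorem~\ref{thm1} (with the cross term $\langle\partial^\alpha\mathbf{u},\nabla\partial^\alpha\sigma\rangle$ supplying the dissipation of $\nabla\sigma$) by $(1+t)^{2}$. Then bound the low-frequency part through the Green-function estimates, in the style of Guo--Wang. The odd rate $16/15$ for $\partial_h^2\mathbf{u}$ presumably comes from interpolating between the $H^4$ bound and a faster low-order horizontal rate, and I would obtain it that way.

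The conclusion is $M(t)\lesssim\delta_1+M(t)^2$, which closes for small $\delta_1$.

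\textbf{Main obstacle.} The hardest step is the solenoidal component: it has only horizontal dissipation, so convective terms such as $\mathbf{u}_3\partial_3\mathbf{u}$ decay slowly, since $\partial_3\mathbf{u}$ decays only like $(1+t)^{-1/2}$. The Duhamel integral then sits at the borderline, with the time integral $\int_0^t(1+t-s)^{-1/2}(1+s)^{-1}\,ds$ only giving a logarithm. My first attempt is to exploit that $\mathbf{u}_3$ is essentially controlled by the compressible part plus $\partial_h$ terms through $\mathrm{div}\,\mathbf{u}=0$ for the solenoidal field, i.e.\ $\partial_3\mathcal{P}\mathbf{u}_3=-\nabla_h\cdot\mathcal{P}\mathbf{u}_h$. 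This should give extra horizontal decay in that product. A second difficulty is verifying the uniform lower bound $a(\xi)\gtrsim|\xi|^2$ on the compressible block near $\xi_h=0$, where the coupling with the solenoidal modes degenerates; there I would diagonalize the full $4\times4$ symbol explicitly.
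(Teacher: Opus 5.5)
Your outline of the linear theory and of the low-order Duhamel estimates matches the paper. This covers placing the nonlinearities in $L^1$ or $L^2_{x_3}L^1_{x_1x_2}$, using $\partial_3u_3=\mathop{\mathrm{div}}\nolimits\mathbf{u}-\nabla_h\cdot\mathbf{u}_h$ to speed up $u_3\partial_3\mathbf{u}$, and the logarithmic losses in $\|\varrho\|_{L^2}$ and $\|\partial_3^2\mathbf{u}\|_{L^2}$.

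The genuine gap is the top-order step. With it goes the rate $(1+t)^{-16/15}$ for $\partial_i\partial_j\mathbf{u}$, $i,j\in\{1,2\}$, which your $M(t)$ must contain. In the Duhamel formula for second derivatives, the high-frequency part contains terms such as $\varrho\,\nabla^2\Delta_h\mathbf{u}$, with four derivatives on $\mathbf{u}$. These are controlled only through time integrability. With the unweighted dissipation of Theorem~\ref{thm1} and $\|\varrho\|_{L^\infty}\lesssim(1+\tau)^{-1}$, this route gives exactly $(1+t)^{-1}$.

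None of your remedies improves on this.
\begin{itemize}
\item Multiplying the $\partial^\alpha$-energy inequality by $(1+t)^2$ cannot close. For $\alpha=(0,0,2)$ the energy contains $\|\partial_3^2\mathbf{u}\|^2$, which by the theorem itself decays only like $(1+t)^{-1}\log^2(1+t)$, so the weighted energy grows.
\item For every $\alpha$ the weight produces a term $2(1+t)\|\partial^\alpha U\|^2$. When $\alpha$ is purely vertical, the dissipation (which contains no $\partial_3\mathbf{u}$) does not see this term at all. Even when $\partial^\alpha$ contains a horizontal derivative, e.g.\ $\alpha=(1,0,0)$, the term has size $(1+t)\|\partial_1\mathbf{u}\|^2\sim(1+t)^{-1}$, which is not integrable.
\item Guo--Wang type arguments need $\mathcal{D}_\ell\gtrsim\|\nabla^{\ell+1}U\|^2$, which fails for the same reason.
\item Interpolating the non-decaying $H^4$ bound against $\|\nabla_h\mathbf{u}\|_{L^2}\sim(1+t)^{-1}$ can only give rates slower than $(1+t)^{-1}$. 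There is no faster low-order horizontal rate: $(1+t)^{-1}$ is already the sharp two-dimensional rate for $L^2_{x_3}L^1_{x_1x_2}$ data.
\end{itemize}

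The missing idea is the small-weight dissipation estimate of Lemma~\ref{haosan}. The paper applies the weight $(1+t)^{2/15}$ only to the $H^2$ energies of the two subsystems $(\nabla\varrho,\mathop{\mathrm{div}}\nolimits\mathbf{u})$ and $(\nabla_h\varrho,\nabla_h\mathbf{u})$. Unlike $\|\partial_3^k\mathbf{u}\|^2$, these energies are themselves bounded by the dissipation $\|\nabla\varrho\|_{H^3}^2+\|(\nabla_h\mathbf{u},\mathop{\mathrm{div}}\nolimits\mathbf{u})\|_{H^4}^2$ of Theorem~\ref{thm1}. Because the exponent $2/15$ is at most $1$, the term $\tfrac{2}{15}(1+t)^{-13/15}\times(\text{energy})$ created by differentiating the weight is integrable by Theorem~\ref{thm1} alone.

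The nonlinear terms remain time-integrable only because the exponent is small and because the improved rate is fed back in. For example, the factor $\int_0^t(1+\tau)^{-121/120}\,d\tau$ in \eqref{Q23**} uses $(1+\tau)^{4/15}\|\partial_1\partial_2\mathbf{u}\|_{L^2}^{1/4}$; this forces the weight exponent $\beta$ below $1/7$.

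The resulting bound on $\int_0^t(1+\tau)^{2/15}\|(\nabla\mathop{\mathrm{div}}\nolimits\mathbf{u},\Delta_h\mathbf{u})\|_{H^2}^2\,d\tau$ is built into $\mathcal{E}_1$. Cauchy--Schwarz in the top-order Duhamel terms (see \eqref{T42}) then gives $\big(\int_0^t(1+t-\tau)^{-2m}(1+\tau)^{-2-2/15}\,d\tau\big)^{1/2}\lesssim(1+t)^{-16/15}$. So $16/15=1+\tfrac12\cdot\tfrac{2}{15}$ comes from this self-consistent loop, not from interpolation.

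As a side remark, the compressible and solenoidal parts decouple exactly. The curl solves $\partial_t\nabla\times\mathbf{u}=\bar\mu\Delta_h\nabla\times\mathbf{u}$, and $a(\xi)=\bar\mu|\xi_h|^2+(\bar\mu+\bar\zeta)|\xi|^2\ge(\bar\mu+\bar\zeta)|\xi|^2$ with no angular factor. The $4\times4$ diagonalization and the concern near $\xi_h=0$ are therefore unnecessary.
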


\vspace{3mm}
A few remarks help to situate these results. Theorem \ref{thm1} shows that,
even in the absence of a vertical Laplacian for the full velocity, the density
gradient, the horizontal velocity gradient and the velocity divergence
together form a closed dissipative structure. Theorem \ref{thm2} then exposes a
genuinely anisotropic large-time profile: the density and the compressible mode
retain an almost three-dimensional acoustic--diffusive decay, whereas the
solenoidal part of the velocity is governed by the horizontal heat semigroup
and decays only at the two-dimensional rate. Strikingly, higher vertical
derivatives of the velocity bring no improvement in the decay rate---a feature
that has no counterpart in the fully dissipative compressible Navier--Stokes
equations.

\medskip
\noindent\textbf{Strategy of the proof.}
Setting $\varrho=\rho-\bar\rho$ and dividing the momentum equation by $\rho$, we
recast the perturbation system as
\begin{gather}\label{perturbed-intro}
\begin{cases}
\displaystyle \varrho_t+\bar\rho\,\mathrm{div}\,\mathbf{u}=S_1,\\[1mm]
\displaystyle \mathbf{u}_t+\gamma\bar\rho\nabla\varrho-\bar\mu\Delta_h\mathbf{u}-(\bar\mu+\bar\zeta)\nabla\mathrm{div}\,\mathbf{u}=S_2,
\end{cases}
\end{gather}
where
\[
\bar\mu=\frac{\mu}{\bar\rho},\qquad
\bar\zeta=\frac{\zeta}{\bar\rho},\qquad
\gamma=\frac{P'(\bar\rho)}{\bar\rho^2},
\]
and $S_1, S_2$ represent the nonlinear terms. The first key observation is the hidden dissipative coupling
\begin{align}\label{coupling-divu}
\begin{cases}
\displaystyle (\nabla\varrho)_t+\bar\rho\nabla\mathrm{div}\,\mathbf{u}=\nabla S_1,\\[1mm]
\displaystyle (\mathrm{div}\,\mathbf{u})_t+\gamma\bar\rho\Delta\varrho
=\bar\mu\Delta_h\mathrm{div}\,\mathbf{u}+(\bar\mu+\bar\zeta)\Delta\mathrm{div}\,\mathbf{u}+\mathrm{div}\,S_2.
\end{cases}
\end{align}
This structure compensates for the lack of diffusion in the density equation and yields dissipation of $\nabla\varrho$ together with $\mathrm{div}\,\mathbf{u}$. It is the compressible analogue of an enhanced-dissipation mechanism: the missing smoothing is not inserted artificially, but is generated by the intrinsic pressure--divergence interaction.

For the global existence theorem, the main task is to prove a uniform-in-time energy inequality of the form
\[
\frac{d}{dt}\mathscr{E}(t)+\mathscr{D}(t)\leq0,
\]
where $\mathscr{E}(t)$ is equivalent to $\|U(t)\|_{H^N}^2$ and
\[
\mathscr{D}(t)\sim
\|\nabla\varrho(t)\|_{H^{N-1}}^2
+\|(\nabla_h\mathbf{u},\mathrm{div}\,\mathbf{u})(t)\|_{H^N}^2 .
\]
Compared with the standard compressible Navier--Stokes equations, the dissipation lacks $\|\partial_3\mathbf{u}\|_{H^N}^2$. The nonlinear convection terms therefore require anisotropic Sobolev inequalities and repeated use of the divergence dissipation. This is where the compressible structure is essential: estimates for $\mathrm{div}\,\mathbf{u}$ allow one to treat $\partial_3u_3$ as being controlled, in a suitable sense, by horizontal derivatives and lower-order quantities.

The decay analysis is more subtle. The Fourier representation of the linearized system shows that the Green matrix is not governed by the standard bound \eqref{usual-green}. Instead, the compressible variables enjoy acoustic--diffusive damping, while the solenoidal part of the velocity contains a purely horizontal heat factor $e^{-\bar\mu|\xi_h|^2t}$. To capture this behavior, we decompose the frequency space into anisotropic regions and establish pointwise estimates for the Green matrix adapted to each region. Schematically, the velocity satisfies bounds of the form
\begin{align*}
|\widehat{\mathbf{u}}(t,\xi)|\lesssim
\begin{cases}
 e^{-\vartheta|\xi|^2t}(|\widehat{\varrho}_0|+|\widehat{\mathbf{u}}_0|)
 +e^{-\bar\mu|\xi_h|^2t}|\widehat{\mathbf{u}}_0|, & \xi\in\mathcal{A}_1,\\[1mm]
 (e^{-\vartheta|\xi|^2t}+e^{-\eta t})(|\widehat{\varrho}_0|+|\widehat{\mathbf{u}}_0|)
 +e^{-\bar\mu|\xi_h|^2t}|\widehat{\mathbf{u}}_0|, & \xi\in\mathcal{A}_2,
\end{cases}
\end{align*}
where $\mathcal{A}_1$ and $\mathcal{A}_2$ are the frequency domains introduced in Proposition \ref{prop-Green}. These estimates explain the different decay rates for the density, the horizontal derivatives of the velocity, and the vertical derivatives of the velocity.

Finally, the nonlinear Duhamel terms are closed by a time-weighted energy method.  The weights are chosen according to the anisotropic linear decay, but they must be slightly modified to accommodate the convection term $\mathbf{u}\cdot\nabla\mathbf{u}$.  A representative part of the weighted functional is
\begin{align}\label{E-2-intr}
\mathcal{E}_2(t)=\sup_{0\leq\tau\leq t}\Big(&
(1+\tau)^{\frac32}(\log(1+t))^{-2}\|\varrho(\tau)\|_{L^2}^2
+(1+\tau)\|\mathbf{u}(\tau)\|_{L^2}^2
+(1+\tau)^{2}\|\nabla\varrho(\tau)\|_{L^2}^2
\nonumber\\
&+(1+\tau)^2\|\nabla_h\mathbf{u}(\tau)\|_{L^2}^2
+(1+\tau)\|\partial_3\mathbf{u}(\tau)\|_{L^2}^2+(1+\tau)^2\|\nabla^2\varrho(\tau)\|_{L^2}^2
\nonumber\\
&+\sum_{\substack{i=1,2\\ j=1,2}}(1+\tau)^{\frac{32}{15}}\|\partial_i\partial_j\mathbf{u}(\tau)\|_{L^2}^2
+(1+\tau)(\log(1+t))^{-2}\|\partial_3^2\mathbf{u}(\tau)\|_{L^2}^2\nonumber\\
&+\sum_{\substack{i=1,2\\ j=3}}(1+\tau)^{2}\|\partial_i\partial_j\mathbf{u}(\tau)\|_{L^2}^2\Big).
\end{align}
The estimates for $\mathrm{div}\,\mathbf{u}$ play a decisive role.  In particular, bounds such as
\begin{gather}\label{E-1-INTRO-1}
\sup_{0\leq\tau\leq t}
\Big((1+\tau)^{2}(\|\mathrm{div}\,\mathbf{u}(\tau)\|_{L^2}^2
+\|\nabla\mathrm{div}\,\mathbf{u}(\tau)\|_{L^2}^2)\Big)
\lesssim 1
\end{gather}
allow the nonlinear estimates to exploit the relation between $\partial_3u_3$ and the horizontal derivatives of $\mathbf{u}$. Combining the Green-function estimates, the weighted energy bounds and the dissipation generated by \eqref{coupling-divu}, we obtain an a priori inequality of the form
\[
\mathcal{E}(t)\lesssim F(U_0)+\mathcal{E}^{3/2}(t)+\mathcal{E}^2(t),
\]
where
\[
F(U_0)=\|U_0\|_{H^4}^2+\|U_0\|_{L^1}^2
+\|(\mathbf{u}_0,\partial_3\mathbf{u}_0,\partial_3^2\mathbf{u}_0)\|_{L^2_{x_3}L^1_{x_1x_2}}^2 .
\]
The desired decay estimates then follow from a standard bootstrap argument.
\vspace{2mm}


\medskip
\noindent\textbf{Organization.}
The rest of the paper is organized as follows. Section \ref{SEC-REFOR} collects
the preliminary material, in particular the anisotropic interpolation
inequalities used throughout. Section \ref{FIR-PROOF} is devoted to the global
existence stated in Theorem \ref{thm1}. Finally, Sections \ref{SEC-PROOF} and
\ref{sub:nonlinear_stability} carry out the decay analysis and complete the
proof of Theorem \ref{thm2}.
\medskip


\vspace{5mm}

\section{Preliminaries}\label{SEC-REFOR}

 In this section, we shall provide preparatory materials for the proofs of Theorem \ref{thm1} and Theorem \ref{thm2}.
 We provide several anisotropic upper bounds for the integral of products and triple products. It is a very powerful tool for studying problems with anisotropic dissipation. The proof of this lemma can be found in \cite{WZ-2021,LWZ-2022,LWZ-2023}.
\begin{lemma}\label{lemma2.1}
For $ i,j,k=1,2,3 $ and $ i\neq j\neq k $, the following estimates hold when the right-hand sides are all bounded.
\begin{subequations}
\begin{align}
\int_{\mathbb{R}^3}\vert fgh\vert \mathrm{d} x&\lesssim\|f\|^{\frac{1}{2}}_{L^2}\|\partial_1f\|^{\frac{1}{2}}_{L^2}\|g\|^{\frac{1}{2}}_{L^2}\|\partial_2g\|^{\frac{1}{2}}_{L^2}\|h\|^{\frac{1}{2}}_{L^2}\|\partial_3h\|^{\frac{1}{2}}_{L^2},\label{three-one-deri-ani}\\[2mm]
\int_{\mathbb{R}^3}\vert fgh\vert \mathrm{d} x&\lesssim\|f\|^{\frac{1}{4}}_{L^2}\|\partial_1f\|^{\frac{1}{4}}_{L^2}\|\partial_2f\|^{\frac{1}{4}}_{L^2}\|\partial_1\partial_2f\|^{\frac{1}{4}}_{L^2}\|g\|^{\frac{1}{2}}_{L^2}\|\partial_3g\|^{\frac{1}{2}}_{L^2}\|h\|_{L^2},\label{three-two-deri-ani}\\[2mm]
\|fg\|_{L^2}&\lesssim \|f\|^{\frac{1}{4}}_{L^2}\|\partial_if\|_{L^2}^{\frac{1}{4}}\|\partial_jf\|_{L^2}^{\frac{1}{4}}\|\partial_i\partial_jf\|_{L^2}^{\frac{1}{4}}\|g\|_{L^2}^{\frac{1}{2}}\|\partial_kg\|_{L^2}^{\frac{1}{2}},\label{two-two-deri-ani}\\[2mm]
\|fg\|_{L^2_{x_3}L^1_{x_1x_2}}&\lesssim \|f\|^{\frac{1}{2}}_{L^2}\|\partial_3f\|^{\frac{1}{2}}_{L^2}\|g\|_{L^2}.\label{two}
\end{align}
\end{subequations}

\end{lemma}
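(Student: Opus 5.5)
All four inequalities follow from one-dimensional Agmon/Gagliardo--Nirenberg bounds applied one variable at a time, combined with H\"older's inequality and Minkowski's inequality in mixed Lebesgue norms. The basic tool is
\[
\|\phi\|_{L^\infty_{x_i}}\lesssim \|\phi\|_{L^2_{x_i}}^{\frac12}\|\partial_i\phi\|_{L^2_{x_i}}^{\frac12}.
\]
Each time it is used, the remaining variables are then integrated by Cauchy--Schwarz or H\"older.

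\emph{Estimate \eqref{three-one-deri-ani}.} We take $L^\infty$ of $f$ in $x_1$, of $g$ in $x_2$ and of $h$ in $x_3$. By H\"older,
\[
\int|fgh|\,\mathrm{d}x\le \big\|\,\|f\|_{L^\infty_{x_1}}\|g\|_{L^\infty_{x_2}}\|h\|_{L^\infty_{x_3}}\big\|_{L^1}.
\]
We arrange the integrations so that each factor depends only on two variables:
\begin{itemize}
\item $F(x_2,x_3)=\|f\|_{L^\infty_{x_1}}$,
\item $G(x_1,x_3)=\|g\|_{L^\infty_{x_2}}$,
\item $H(x_1,x_2)=\|h\|_{L^\infty_{x_3}}$.
\end{itemize}
The Loomis--Whitney-type inequality
\[
\int F(x_2,x_3)G(x_1,x_3)H(x_1,x_2)\,\mathrm{d}x\le \|F\|_{L^2}\|G\|_{L^2}\|H\|_{L^2}
\]
follows from Cauchy--Schwarz applied twice. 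Finally, $\|F\|_{L^2_{x_2x_3}}\lesssim\|f\|_{L^2}^{1/2}\|\partial_1f\|_{L^2}^{1/2}$ by the 1D bound and Cauchy--Schwarz in $(x_2,x_3)$, and similarly for $G$ and $H$.

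\emph{Estimate \eqref{two-two-deri-ani}.} This is the pairwise version. We bound
\[
|f|\le \|f\|_{L^\infty_{x_ix_j}}(x_k),\qquad |g|\le\|g\|_{L^\infty_{x_k}}(x_i,x_j).
\]
Then
\[
\|fg\|_{L^2}^2\le \|\,\|f\|_{L^\infty_{x_ix_j}}\|_{L^2_{x_k}}^2\,\|\,\|g\|_{L^\infty_{x_k}}\|_{L^2_{x_ix_j}}^2.
\]
For the $f$ factor, the 2D anisotropic Agmon inequality
\[
\|\phi\|_{L^\infty_{x_ix_j}}\lesssim \|\phi\|^{\frac14}\|\partial_i\phi\|^{\frac14}\|\partial_j\phi\|^{\frac14}\|\partial_i\partial_j\phi\|^{\frac14}
\]
(norms in $L^2_{x_ix_j}$) is obtained by iterating the 1D bound in $x_i$ and then in $x_j$. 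Applying H\"older in $x_k$ with four exponents equal to $4$ gives the claim. For the $g$ factor we use the 1D bound in $x_k$ followed by Cauchy--Schwarz.

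\emph{Estimate \eqref{three-two-deri-ani}.} This follows from \eqref{two-two-deri-ani} with $(i,j,k)=(1,2,3)$ and Cauchy--Schwarz against $h$.

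\emph{Estimate \eqref{two}.} For fixed $x_3$, Cauchy--Schwarz in $(x_1,x_2)$ gives
\[
\|fg\|_{L^1_{x_1x_2}}\le \|f\|_{L^2_{x_1x_2}}\|g\|_{L^2_{x_1x_2}}.
\]
Taking $L^2_{x_3}$ and putting $f$ in $L^\infty_{x_3}L^2_{x_1x_2}$ reduces the claim to
\[
\|f\|_{L^\infty_{x_3}L^2_{x_1x_2}}\lesssim \|f\|_{L^2}^{\frac12}\|\partial_3f\|_{L^2}^{\frac12}.
\]
This holds because $\partial_3\|f\|_{L^2_{x_1x_2}}^2=2\langle f,\partial_3 f\rangle_{x_1x_2}$ can be integrated in $x_3$.

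The only step needing care is the ordering of the $L^\infty$/$L^2$ norms in the mixed-norm arguments. In each case we pass to the supremum in one set of variables before integrating the others, using Minkowski's inequality where needed (for instance, to bring $\sup$ outside an integral over different variables). Everything is standard, so we would simply cite \cite{WZ-2021,LWZ-2022} for the details.
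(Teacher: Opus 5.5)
Your proof is correct. Each of the four bounds follows as you describe: \eqref{three-one-deri-ani} from the one-dimensional Agmon inequality plus the Cauchy--Schwarz form of the Loomis--Whitney inequality, and the other three from the one-dimensional Agmon inequality (iterated once for the mixed $L^\infty_{x_ix_j}$ bound) together with H\"older/Fubini. The paper gives no proof of its own and only cites \cite{WZ-2021,LWZ-2022,LWZ-2023}, where the argument is this same directional splitting of $L^\infty$ and $L^2$ norms, so your route is essentially the standard one; also, since you take suprema before integrating, Minkowski's inequality is never actually needed.
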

In what follows, we shall introduce some Sobolev inequalities for later use (cf. \cite{DRZ-2012,T-1996}). Let us begin with the following interpolation inequality.

 \begin{lemma}
 Let $ m $ be any positive integer. Then for any function $ u \in L ^{q}\cap H ^{l} $ with $ 1 \leq q \leq \infty $, it holds for any integer $ k \in [0,l) $ that
 \begin{align}\label{soboleve-ineq}
 \displaystyle  \|\nabla ^{k}u\|_{L ^{p}} \lesssim \|u\|_{L ^{q}}^{1- \theta}\|\nabla ^{l}u\|_{L ^{2}}^{\theta} ,
 \end{align}
 where
 \begin{align*}
 \displaystyle \frac{1}{p}= \frac{k}{3}+(1- \theta)\frac{1}{q} +\theta (\frac{1}{2}- \frac{l}{3})
 \end{align*}
 for all $ \theta \in [\frac{k}{l},1] $.

 \end{lemma}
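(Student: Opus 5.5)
The interpolation inequality \eqref{soboleve-ineq} is the classical Gagliardo--Nirenberg inequality on $\mathbb{R}^3$. The plan is to reduce it to two endpoint statements and then interpolate. The first step is to verify that the exponent relation is exactly the scaling condition. Replacing $u(x)$ by $u(\lambda x)$ multiplies the two sides by $\lambda^{k-3/p}$ and by $\lambda^{-(1-\theta)3/q}\lambda^{\theta(l-3/2)}$, respectively. Equating these powers gives precisely $\frac1p=\frac k3+(1-\theta)\frac1q+\theta(\frac12-\frac l3)$. This tells us that the inequality can only hold with this exponent, and it allows us to normalize $\|u\|_{L^q}=\|\nabla^l u\|_{L^2}=1$ by combining a dilation with multiplication by a constant.

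The second step is to prove the endpoint $\theta=1$. Here the claim reads $\|\nabla^k u\|_{L^p}\lesssim\|\nabla^l u\|_{L^2}$ with $\frac1p=\frac12-\frac{l-k}{3}$. This is the homogeneous Sobolev embedding $\dot H^{l-k}\hookrightarrow L^p$, obtained from the Hardy--Littlewood--Sobolev inequality applied to the Riesz potential $\nabla^k u=c\,\nabla^k(-\Delta)^{-l/2}(-\Delta)^{l/2}u$. The hypothesis $u\in L^q$ fixes the polynomial ambiguity in the Riesz potential representation.

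The third step is the general $\theta\in[\frac kl,1]$. I would use a Littlewood--Paley splitting $u=S_Ju+\sum_{j>J}\Delta_j u$. On low frequencies, Bernstein's inequality gives
\[
\|\nabla^k S_Ju\|_{L^p}\lesssim 2^{J(k+3/q-3/p)}\|u\|_{L^q}.
\]
On high frequencies, Bernstein together with Plancherel gives
\[
\|\nabla^k\Delta_j u\|_{L^p}\lesssim 2^{j(k-l+3/2-3/p)}\|\nabla^l u\|_{L^2}.
\]
The condition $\theta\ge k/l$ makes the low-frequency exponent nonnegative, while $\theta<1$ makes the high-frequency exponent negative (the case $\theta=1$ being the previous step), so both geometric sums converge. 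Optimizing over $J$, i.e.\ choosing $2^{J}\sim(\|\nabla^lu\|_{L^2}/\|u\|_{L^q})^{1/(l-3/2+3/q)}$, gives the product $\|u\|_{L^q}^{1-\theta}\|\nabla^lu\|_{L^2}^{\theta}$ by the scaling relation from the first step.

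The main obstacle is the borderline exponents. When $p=\infty$, or when $q=1$ or $q=\infty$, the Bernstein and Hardy--Littlewood--Sobolev steps may fail at the endpoint; for instance, $\dot H^{3/2}\not\hookrightarrow L^\infty$. In these cases I would either rely on the strict inequalities in $\theta$ to keep the dyadic sums geometric, or, for the stated range with $p<\infty$, cite the standard Nirenberg theorem as in \cite{DRZ-2012,T-1996}, excluding only the known exceptional case. Since the paper uses the lemma only in nondegenerate configurations, the proof would rely on the dyadic argument above.
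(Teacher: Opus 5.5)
There is a genuine gap in your third step. Put $\sigma:=l-\frac32+\frac3q$. The scaling relation gives $k+\frac3q-\frac3p=\theta\sigma$ and $k-l+\frac32-\frac3p=-(1-\theta)\sigma$. So the signs of your two dyadic exponents are governed by $\sigma$, not by $\theta\ge k/l$; for $l=1$, $q\ge 6$ one has $\sigma\le 0$ and the high-frequency sum diverges.

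More seriously, your Bernstein bounds need $q\le p$ (for $L^q\to L^p$ on $S_Ju$) and $p\ge 2$ (for $L^2\to L^p$ on $\Delta_ju$). Neither follows from $\theta\in[\frac kl,1]$ when $q\ne 2$. At $\theta=k/l$ one has $\frac1p=(1-\frac kl)\frac1q+\frac kl\cdot\frac12$, so for $k\ge1$ the exponent $p$ lies strictly between $q$ and $2$. One of the two Bernstein steps therefore fails there, and on a whole neighbourhood of $\theta=k/l$. For instance, $q=\infty$, $k=1$, $l=2$, $\theta=\frac12$ is the classical $\|\nabla u\|_{L^4}^2\lesssim\|u\|_{L^\infty}\|\nabla^2u\|_{L^2}$, where $L^\infty\to L^4$ on $S_Ju$ is false. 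Likewise $q=1$, $k=1$, $l=2$, $\theta=\frac12$ gives $p=\frac43<2$. So the obstruction is not the borderline values $p=\infty$ or $q\in\{1,\infty\}$, since Bernstein holds for all $1\le q\le p\le\infty$. Keeping $\theta$ away from the endpoints does not cure it either.

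Your argument does prove the case $q=2$, which is the only one the paper ever uses. Then $\sigma=l>0$ and $\frac1p=\frac12-\frac{l\theta-k}3$, so $\theta\ge k/l$ is exactly the requirement $p\ge 2=q$. The Sobolev step at $\theta=1$ only arises for $l=k+1$, $p=6$; the $\dot H^{3/2}$ exception never occurs for integer $k,l$.

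For general $q$, note that $\frac1p$ is affine in $\theta$, so Hölder's inequality propagates the estimate from two values of $\theta$ to all intermediate ones. You therefore need two endpoints:
\begin{itemize}
\item the pure-interpolation endpoint $\theta=k/l$, which requires a different idea, such as Nirenberg's integration-by-parts argument (e.g.\ $\int|\nabla u|^4=-\int u\,\mathrm{div}(|\nabla u|^2\nabla u)$ for the example above);
\item an upper endpoint: either $\theta=1$ via Sobolev, or, when $l\ge k+2$, the value $\theta_*<1$ at which $\frac1p=0$. Your dyadic argument is valid at $\theta_*$ because there $p\ge\max(q,2)$ and $\sigma>0$.
\end{itemize}
The paper gives no proof at all and simply cites the classical Gagliardo--Nirenberg theorem \cite{DRZ-2012,T-1996}, which is also your fallback.
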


 The second inequality is the $ L ^{p} $ estimate on any two product terms with the sum of the order of their derivatives
equal to a given integer.
 \begin{lemma}\label{lemma3.2}
 Let $n\geq 1.$ Let $\alpha^1=(\alpha^1_{1},\cdots,\alpha^1_{n})$ and $\alpha^2=(\alpha^2_{1},\cdots,\alpha^2_{n})$
 be two multi-indices with $|\alpha^1|=k_1$, $|\alpha^2|=k_2$ and set $k=k_1+k_2.$ Let $1\leq p,q,r\leq \infty$ with $\frac{1}{p}=\frac{1}{q}+\frac{1}{r}.$ Then, for
 $u_j: R^n\rightarrow R \ (j=1,2),$ one has
 \begin{equation}
 \|\partial^{\alpha^1}u_1\partial^{\alpha^2}u_2\|_{L^p} \lesssim\left(\|u_1\|_{L^q}\|\nabla^ku_2\|_{L^r}+\|u_2\|_{L^q}\|\nabla^ku_1\|_{L^r}\right)
 \end{equation}
 for some constant $C>0$ independent of $u_1$ and $u_2$.
 \end{lemma}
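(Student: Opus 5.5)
The plan is to reduce the product estimate to Hölder's inequality followed by Gagliardo--Nirenberg interpolation, applied separately to each factor. Write $j_1=k_1$ and $j_2=k_2$, so that $j_1+j_2=k$. The endpoint cases are immediate. If $k_1=k$ and $k_2=0$, Hölder gives $\|\partial^{\alpha^1}u_1\,u_2\|_{L^p}\le \|u_2\|_{L^q}\|\nabla^k u_1\|_{L^r}$, which is exactly one of the two terms on the right. The case $k_1=0$ is symmetric. So we may assume $0<k_1,k_2<k$.

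For the interior case, introduce intermediate exponents $p_1,p_2$ with $\frac1p=\frac1{p_1}+\frac1{p_2}$, and apply Hölder to get
\[
\|\partial^{\alpha^1}u_1\,\partial^{\alpha^2}u_2\|_{L^p}\le \|\nabla^{k_1}u_1\|_{L^{p_1}}\|\nabla^{k_2}u_2\|_{L^{p_2}}.
\]
Set $\theta_i=k_i/k$. Choose
\[
\frac1{p_1}=\frac{1-\theta_1}{q}+\frac{\theta_1}{r},\qquad \frac1{p_2}=\frac{1-\theta_2}{q}+\frac{\theta_2}{r}.
\]
Since $\theta_1+\theta_2=1$, this gives $\frac1{p_1}+\frac1{p_2}=\frac1q+\frac1r=\frac1p$, as required.

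With these exponents, the $L^q$--$L^r$ Gagliardo--Nirenberg inequality (the scale-invariant version with no Sobolev shift, valid on $\mathbb{R}^n$ for $1\le q,r\le\infty$) yields
\[
\|\nabla^{k_1}u_1\|_{L^{p_1}}\lesssim \|u_1\|_{L^{q}}^{1-\theta_1}\|\nabla^k u_1\|_{L^r}^{\theta_1}.
\]
Note that the lemma's right-hand side pairs $\|u_1\|_{L^q}$ with $\|\nabla^k u_2\|_{L^r}$. We therefore must interpolate $u_2$ between $L^q$ and $\nabla^k$ in $L^r$ as well. The analogous bound is
\[
\|\nabla^{k_2}u_2\|_{L^{p_2}}\lesssim \|u_2\|_{L^q}^{1-\theta_2}\|\nabla^ku_2\|_{L^r}^{\theta_2}.
\]
Multiplying, and using $1-\theta_1=\theta_2$ and $1-\theta_2=\theta_1$, the product is bounded by
\[
\big(\|u_1\|_{L^q}\|\nabla^ku_2\|_{L^r}\big)^{\theta_2}\big(\|u_2\|_{L^q}\|\nabla^ku_1\|_{L^r}\big)^{\theta_1}.
\]
Young's inequality $a^{\theta_2}b^{\theta_1}\le a+b$ then gives the claim.

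The main obstacle is justifying this Gagliardo--Nirenberg step with a zero Sobolev gap, particularly at the endpoint exponents $q$ or $r$ equal to $1$ or $\infty$. The cleanest route I would try first is a Littlewood--Paley argument: estimate $\Delta_j\nabla^{k_1}u$ by $2^{jk_1}\|u\|_{L^q}$ for low frequencies and by $2^{-j(k-k_1)}\|\nabla^k u\|_{L^r}$ for high frequencies, sum these in $L^{p_1}$ via the Bernstein/Hölder interpolation $\|f\|_{L^{p_1}}\le\|f\|_{L^q}^{1-\theta}\|f\|_{L^r}^\theta$, and then optimize over the cutoff frequency. If that becomes too delicate, I would instead simply cite the classical Nirenberg inequality, as the paper does with \cite{DRZ-2012,T-1996}. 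There the Moser-type product estimate is standard, and the statement of the lemma is exactly of that form.
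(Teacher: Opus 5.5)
Your argument --- H\"older with $\frac{1}{p_i}=\frac{1-\theta_i}{q}+\frac{\theta_i}{r}$, the scale-invariant Gagliardo--Nirenberg inequality at $\theta_i=k_i/k$, and Young's inequality $a^{\theta_2}b^{\theta_1}\le a+b$ --- is correct, and it is the standard proof behind the references the paper cites for this lemma (the paper itself gives no proof). For the interpolation step, just cite Nirenberg's theorem, which at $\theta=k_1/k$ holds for all $1\le q,r\le\infty$. Your Littlewood--Paley sketch would not close on its own when $q\neq r$: at this critical $\theta$ every dyadic block is bounded by the same $j$-independent quantity $\|u\|_{L^q}^{1-\theta}\|\nabla^k u\|_{L^r}^{\theta}$, which only gives a $B^{0}_{p_1,\infty}$ bound, and splitting at a cutoff frequency does not help either, because Bernstein cannot lower the Lebesgue exponent.
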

 As a generalization of Lemma \ref{lemma3.2}, we have also
 \begin{lemma}\label{lemma3.3}
 Let $n\geq 1,l>2$ be integers. Let $\alpha^j=(\alpha^j_{1},\cdots,\alpha^j_{n})$, $1\leq j\leq l$ be multi-indices with $|\alpha^j|=k_j$, $1\leq j\leq l$ and $k=k_1+k_2+\cdots+k_l$. Let $1\leq p,q,r\le \infty$ with $\frac{1}{p}=\frac{1}{q}+\frac{1}{r}$. Then, for $u=(u_1,\cdots,u_l):\mathbb{R}^n\rightarrow \mathbb{R}^{l}$, one has
 \begin{equation}\label{basic3}
 \left\|\prod^{l}_{j=1}\partial^{\alpha^{j}}u_j\right\|_{L^p}\lesssim \|u\|^{l-2}_{L^{\infty}}\|u\|_{L^q}\|\nabla^ku\|_{L^r}.
 \end{equation}
 \end{lemma}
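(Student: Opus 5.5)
The plan is to prove \eqref{basic3} by reducing it to the two-factor case, Lemma \ref{lemma3.2}, together with the Gagliardo--Nirenberg interpolation \eqref{soboleve-ineq}. Write $v_j:=\partial^{\alpha^j}u_j$. Without loss of generality assume $k\ge 1$. If $k=0$, all $v_j=u_j$. Place one factor in $L^q$, one in $L^r$ and the remaining $l-2$ in $L^\infty$. Then \eqref{basic3} is just H\"older's inequality, with the convention $\|\nabla^0 u\|_{L^r}=\|u\|_{L^r}$.

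For $k\ge1$ I would use the standard Moser-type argument. By H\"older,
\[
\Big\|\prod_{j=1}^l v_j\Big\|_{L^p}\le \prod_{j=1}^l \|\nabla^{k_j}u_j\|_{L^{p_j}},
\qquad \frac{1}{p_j}=\frac{k_j}{k}\cdot\frac1r+\Big(1-\frac{k_j}{k}\Big)\cdot\frac{1}{q_j},
\]
where the auxiliary exponents $q_j$ are chosen as follows. In each factor the interpolation runs between $\|u\|_{L^{\infty}}$, or $\|u\|_{L^q}$ for exactly one distinguished factor, and $\|\nabla^k u\|_{L^r}$. The needed estimate for each factor is
\[
\|\nabla^{k_j}u_j\|_{L^{p_j}}\lesssim \|u\|_{L^{s_j}}^{1-k_j/k}\|\nabla^k u\|_{L^r}^{k_j/k},
\]
with $s_j\in\{q,\infty\}$. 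This is the Gagliardo--Nirenberg inequality in its $L^s$--$L^r$ form, the same one underlying the proof of Lemma \ref{lemma3.2}.

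The exponents must be balanced so that $\sum_j 1/p_j = 1/p = 1/q+1/r$. Since $\sum_j k_j/k=1$, the $1/r$ contributions add up to exactly $1/r$. For the $1/q$ contributions there are two options. One can take $s_j=q$ for a single index $j_0$ with $k_{j_0}=0$, if one exists, so that this factor is simply $\|u_{j_0}\|_{L^q}$ and the other factors use $s_j=\infty$. Otherwise one distributes the weight, choosing $1/q_j=\theta_j/q$ with $\sum_j(1-k_j/k)\theta_j=1$ and $\theta_j\in[0,1]$. This system is solvable because $\sum_j(1-k_j/k)=l-1\ge1$. It then remains to interpolate $\|u\|_{L^{q/\theta_j}}\le\|u\|_{L^q}^{\theta_j}\|u\|_{L^\infty}^{1-\theta_j}$.

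Multiplying the factor estimates gives total powers as follows. For $\|\nabla^ku\|_{L^r}$ the exponent is $\sum k_j/k=1$. For $\|u\|_{L^q}$ it is $\sum(1-k_j/k)\theta_j=1$. For $\|u\|_{L^\infty}$ it is $(l-1)-1=l-2$. This is exactly \eqref{basic3}.

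The main obstacle is the legitimacy of the endpoint Gagliardo--Nirenberg inequality in the intermediate step. This is the step where $L^\infty$ appears as the lower endpoint, or where $r=\infty$ or $p_j=\infty$. I would handle it as in the cited references \cite{DRZ-2012,T-1996}, where the inequality $\|\nabla^{i}f\|_{L^{2k/i}}\lesssim\|f\|_{L^\infty}^{1-i/k}\|\nabla^kf\|_{L^2}^{i/k}$ and its $L^q$--$L^r$ variants are proved. The exceptional endpoint cases would be dealt with by a direct argument, in the same way as in the proof of Lemma \ref{lemma3.2}.
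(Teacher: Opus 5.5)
Your argument is correct. The paper does not prove this lemma; it quotes it from the literature. Your scheme is the standard proof of such Moser-type product estimates: H\"older with $\sum_j 1/p_j=1/q+1/r$, Gagliardo--Nirenberg on each factor $\|\nabla^{k_j}u_j\|_{L^{p_j}}$, then $L^{q}$--$L^\infty$ interpolation, for instance with $\theta_j=1/(l-1)$ for every $j$.

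The endpoint issue you flag is not a real obstacle. Nirenberg's inequality at the lower endpoint $\theta=k_j/k$ with $0<k_j<k$ has no exceptional cases for any $1\le s_j,r\le\infty$, and the factors with $k_j\in\{0,k\}$ are trivial.
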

As a consequence of Lemma \ref{lemma3.3}, we have the following lemma.

\begin{lemma}\label{lem-compose}
Let $ n \geq 1 $ and $ 1 \leq p \leq \infty $. Let $ \alpha =(\alpha _{1},\cdots, \alpha _{n}) $ be a multi-index with $ \vert \alpha\vert=k $. Assume that $ F(u) $ is a smooth function of $ \mathbf{u} $. Then, there is a constant $ C(\|u\| _{L ^{\infty}(\mathbb{R}^{n})}) $ depending only $ \|u\|_{L ^{\infty}(\mathbb{R}^{n})} $ with $ C(\varepsilon) \rightarrow 0 $ when $ \varepsilon \rightarrow 0 $ such that
\begin{align}
\displaystyle  \|\partial ^{\alpha}(F(u)u _{x _{i}})\| _{L ^{p}(\mathbb{R}^{n})} \leq C(\|u\| _{L ^{\infty}(\mathbb{R}^{n})})\|\nabla ^{k+1}u\|_{L ^{p}(\mathbb{R}^{n})} \label{nonlinear-con}
\end{align}
for all $ 1 \leq i \leq n $.
\end{lemma}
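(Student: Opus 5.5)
The plan is to expand $\partial^{\alpha}(F(u)u_{x_i})$ with the Leibniz rule and the chain rule (Faà di Bruno), and then bound each resulting product of derivatives of $u$ by Lemma \ref{lemma3.2} or Lemma \ref{lemma3.3} with $q=\infty$ and $r=p$.

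\textbf{Assumption.} Throughout we take $F(0)=0$, as the requirement $C(\varepsilon)\to 0$ forces (this is the situation in which the lemma is applied to the nonlinear terms $S_1,S_2$). We also assume an a priori bound $\|u\|_{L^\infty}\le 1$, so that all derivatives $F^{(m)}(u)$ are bounded by constants depending only on $F$.

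\textbf{Step 1: Leibniz expansion.} Write
\begin{align*}
\partial^\alpha(F(u)u_{x_i})=\sum_{\beta\le\alpha}\binom{\alpha}{\beta}\,\partial^{\beta}F(u)\,\partial^{\alpha-\beta}u_{x_i}.
\end{align*}

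\textbf{Step 2: the term $\beta=0$.} Since $F(0)=0$, the mean value theorem gives $|F(u)|\le \sup_{|s|\le\|u\|_{L^\infty}}|F'(s)|\,\|u\|_{L^\infty}$. Hence
\begin{align*}
\|F(u)\,\partial^\alpha u_{x_i}\|_{L^p}\lesssim \|u\|_{L^\infty}\,\|\nabla^{k+1}u\|_{L^p}.
\end{align*}
The prefactor tends to $0$ as $\|u\|_{L^\infty}\to0$, as required.

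\textbf{Step 3: the terms $\beta\neq0$.} By Faà di Bruno's formula, $\partial^\beta F(u)$ is a finite linear combination of terms
\begin{align*}
F^{(m)}(u)\prod_{j=1}^m\partial^{\gamma^j}u,\qquad 1\le m\le|\beta|,\quad |\gamma^j|\ge1,\quad \textstyle\sum_j\gamma^j=\beta.
\end{align*}
Multiplying by $\partial^{\alpha-\beta}u_{x_i}$ produces a product of $l=m+1\ge2$ derivatives of $u$ whose orders add up to $k+1$. We put $F^{(m)}(u)$ in $L^\infty$ and estimate the product as follows:
\begin{itemize}
\item for $l=2$ we use Lemma \ref{lemma3.2};
\item for $l\ge3$ we use Lemma \ref{lemma3.3}, with all components taken equal to $u$.
\end{itemize}
In both cases we choose $q=\infty$ and $r=p$, which gives
\begin{align*}
\Big\|\prod_{j=1}^{m}\partial^{\gamma^j}u\;\partial^{\alpha-\beta}u_{x_i}\Big\|_{L^p}\lesssim \|u\|_{L^\infty}^{m}\,\|\nabla^{k+1}u\|_{L^p}.
\end{align*}
Since $m\ge1$, this again carries at least one factor of $\|u\|_{L^\infty}$.

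\textbf{Step 4: summation.} Summing the finitely many terms from Steps 2 and 3 yields the claimed bound with
\begin{align*}
C(\varepsilon)\lesssim \varepsilon\sum_{m\le k+1}\sup_{|s|\le\varepsilon}|F^{(m)}(s)|\,(1+\varepsilon)^{k},
\end{align*}
which tends to $0$ as $\varepsilon\to0$.

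\textbf{Main obstacle.} There is no real analytic obstacle here. The only point needing care is that every term in Step 3 involves only derivatives of $u$ of order at least one, so the Gagliardo–Nirenberg-type Lemmas \ref{lemma3.2}–\ref{lemma3.3} apply with the endpoint $q=\infty$. The smallness factor comes either from $F(0)=0$ (when $\beta=0$) or from the extra $L^\infty$ factor (when $\beta\ne0$).
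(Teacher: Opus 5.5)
Your proposal is correct and follows the route the paper intends: the paper gives no proof and simply calls the lemma a consequence of Lemma~\ref{lemma3.3}, i.e.\ a Leibniz/chain-rule expansion estimated with the endpoint choice $q=\infty$, $r=p$. You are also right that the hypothesis $F(0)=0$, which the paper's statement omits, is necessary for $C(\varepsilon)\to0$ (take $\alpha=0$ and scale $u\mapsto\lambda u$), whereas your extra assumption $\|u\|_{L^\infty}\le 1$ is superfluous because the constant may depend on $\|u\|_{L^\infty}$.
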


 Finally, the following lemma is frequently accessed in the proof of Theorem \ref{thm2}. Its proof is straightforward.
  \begin{lemma}[cf. \cite{Ruan-Zhu-2010-jde}]\label{decaylemma}
 For $ a,\, b>0 $, it holds that
 \begin{align*}
 \displaystyle  \int _{0}^{\frac{t}{2}}(1+t- \tau)^{- a}(1+ \tau)^{-b}\mathrm{d}\tau \lesssim \begin{cases}
   \displaystyle  (1+t)^{-a}& \mbox{for }b>1,\\[2mm]
  \displaystyle   (1+t)^{-a}\log(1+t) &  \mbox{for }b=1,\\[2mm]
 \displaystyle    (1+t)^{-(a+b-1)}&  \mbox{for }b<1,
 \end{cases}
 \end{align*}
 and
 \begin{align*}
 \displaystyle  \int _{\frac{t}{2}}^{t}(1+t- \tau)^{- a}(1+ \tau)^{-b}\mathrm{d}\tau \lesssim \begin{cases}
     (1+t)^{-b}& \mbox{for }a>1,\\[2mm]
     (1+t)^{-b}\log(1+t)  & \mbox{for }a=1,\\[2mm]
     (1+t)^{-(a+b-1)} & \mbox{for }a<1.
 \end{cases}
 \end{align*}
 \end{lemma}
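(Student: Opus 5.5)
The plan is to split the integral at the natural scale where one of the two factors is essentially constant. I treat the two ranges separately, and in each range I freeze the factor that does not vary, then integrate the other one explicitly.

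On $[0,\tfrac t2]$ we have $1+t-\tau\ge 1+\tfrac t2\ge \tfrac12(1+t)$. Hence $(1+t-\tau)^{-a}\le 2^{a}(1+t)^{-a}$, and
\begin{align*}
\int_0^{t/2}(1+t-\tau)^{-a}(1+\tau)^{-b}\,\mathrm d\tau\lesssim (1+t)^{-a}\int_0^{t/2}(1+\tau)^{-b}\,\mathrm d\tau .
\end{align*}
The remaining integral is elementary. For $b>1$ it is bounded by $\frac1{b-1}$. For $b=1$ it equals $\log(1+\tfrac t2)\le\log(1+t)$. For $b<1$ it is at most $\frac{(1+t)^{1-b}}{1-b}$. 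These three bounds give exactly the three cases of the first estimate.

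On $[\tfrac t2,t]$ we have $1+\tau\ge\tfrac12(1+t)$. Hence $(1+\tau)^{-b}\lesssim(1+t)^{-b}$, and it remains to bound $\int_{t/2}^t(1+t-\tau)^{-a}\,\mathrm d\tau$. Substituting $s=t-\tau$ turns this into $\int_0^{t/2}(1+s)^{-a}\,\mathrm ds$, which is the same elementary integral as before with $b$ replaced by $a$. It is bounded by a constant for $a>1$, by $\log(1+t)$ for $a=1$, and by $(1+t)^{1-a}$ for $a<1$. Multiplying by $(1+t)^{-b}$ gives the second estimate.

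There is no real obstacle here. The only point needing care is that the implicit constants depend only on $a$ and $b$: they come from the factors $2^{a}$ and $2^{b}$ and from $1/|1-b|$ or $1/|1-a|$. None of them depend on $t$. In the logarithmic cases one may also note that $\log(1+t)$ vanishes at $t=0$, but the bound still holds there because the integrals vanish as well.
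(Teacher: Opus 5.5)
Your proof is correct. On $[0,t/2]$ you freeze $(1+t-\tau)^{-a}\le 2^{a}(1+t)^{-a}$, and on $[t/2,t]$ you freeze $(1+\tau)^{-b}\le 2^{b}(1+t)^{-b}$; integrating the remaining power explicitly then gives all six cases, with constants depending only on $a$ and $b$. The paper gives no proof of its own (it calls the argument straightforward and cites Ruan--Zhu), and your argument is the standard one being alluded to.
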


\vspace{4mm}



  \section{Global existence of solutions to the nonlinear system} 
  \label{FIR-PROOF}

In this section, we shall devote ourselves to the proof of global existence of classical solutions to the problem \eqref{1-1}--\eqref{far-field}. To this attempt, we shall consider the linearized system around the constant equilibrium $ (\bar{\rho}, \mathbf{0}) $. Set $ \varrho= \rho- \bar{\rho} $. Then $ (\varrho,\mathbf{u}) $ satisfy
\begin{gather}\label{new}
 \begin{cases}
 \displaystyle\varrho_t+\bar{\rho} \mathop{\mathrm{div}}\nolimits \mathbf{u}=S_1,\\[2mm]
 \displaystyle\mathbf{ u}_t+\gamma\bar{\rho}\nabla \varrho - \bar{\mu}\Delta_h \mathbf{u}-(\bar{\mu}+ \bar{\zeta})\nabla \mathop{\mathrm{div}}\nolimits \mathbf{u}=S_2.
 \end{cases}
 \end{gather}
 The initial data are given by
 \begin{gather}\label{refor-Initial}
 \displaystyle (\varrho,\mathbf{u})(x, 0)=(\rho_0- \bar{\rho} , \mathbf{u}_0):=(\varrho _{0}, \mathbf{u} _{0}).
 \end{gather}
 Here, the nonhomogeneous source terms are
 \begin{gather}\label{nonlinear}
 \begin{cases}
 \displaystyle
 S_1:=-\varrho \mathop{\mathrm{div}}\nolimits\mathbf{u}-\mathbf{u}\cdot\nabla\varrho,\\[2mm]
 \displaystyle S_2:=-\mathbf{u}\cdot\nabla \mathbf{u}-g(\varrho)[\bar{\mu}\Delta_h \mathbf{u}+\bar{\zeta}\nabla \mathop{\mathrm{div}}\nolimits\mathbf{ u}]-f(\varrho)\nabla\varrho,
 \end{cases}
 \end{gather}
 where $\bar{\mu}=\frac{\mu}{\bar{\rho}}$, $\bar{\zeta}=\frac{\zeta}{\bar{\rho}}$, $\gamma=\frac{p'(\bar{\rho})}{\bar{\rho}^2}$, and the function $g$ and $f$ are defined as
 \begin{gather}\label{gf}
 g(\varrho):=\frac{\varrho}{\varrho+\bar{\rho}},\ \ \ f(\varrho):=\frac{p'(\varrho+\bar{\rho})}{\varrho+\bar{\rho}}-\frac{p'(\bar{\rho})}{\bar{\rho}}.
 \end{gather}

  \noindent{\bf Proof of Theorem \ref{thm1}.}
First of all, by adapting standard reasonings, such as regularization, Galerkin approximation and contraction mapping principle (see e.g.  \cite{K-1983,K-1987,SK-1985}), and the energy estimates derived in this paper, one can set up the local existence/uniqueness of solutions to the problem \eqref{new}, \eqref{refor-Initial}. We present the result without going through the technical details for brevity.
  \begin{lemma}\label{local}
Let $N\geq 3$ be an given integer and assume that $ (\varrho _{0},\mathbf{u}_{0}) \in H ^{N} $ such that
\begin{align*}
\displaystyle \inf _{x \in \mathbb{R}^{3}}\left\{ \varrho _{0}+ \bar{\rho} \right\} >0.
\end{align*}
Then there exists a constant $ T _{0}>0 $ such that the Cauchy problem \eqref{new}, \eqref{refor-Initial} admits a unique classical solution $ (\varrho,\mathbf{u}) $ on $ [0,T _{0}] $ which satisfies
\begin{gather*}
\displaystyle  (\varrho,\mathbf{u})\in C(0,T_0;H^N), \ \ \nabla\varrho\in L^2(0,T_0;H^{N-1}),\ \ \nabla_h \mathbf{u}\in L^2(0,T_0;H^N),
\end{gather*}
and
\begin{align*}
\displaystyle  \inf _{x \in \mathbb{R}^{3},\;
0 \leq t \leq T _{0}}\left\{ \varrho + \bar{\rho} \right\} >0.
\end{align*}

  \end{lemma}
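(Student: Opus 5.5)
The plan is to prove Lemma \ref{local} by the classical iteration scheme for hyperbolic--parabolic systems, following \cite{K-1983,K-1987,SK-1985}. The only change is that the parabolic part now smooths only in $x_h$ and through $\nabla\mathop{\mathrm{div}}\nolimits$. Because of this we do not use a parabolic gain of two derivatives in every direction. Instead we run a symmetric-hyperbolic-type energy method in $H^N$, in which the viscous terms are merely nonnegative or dissipative.

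\textbf{Step 1: linearize.} Given $(\tilde\varrho,\tilde{\mathbf u})$ in the set
\[
X_T=\Big\{\sup_{[0,T]}\|(\tilde\varrho,\tilde{\mathbf u})\|_{H^N}\le M,\ \tilde\varrho+\bar\rho\ge \tfrac12\inf(\varrho_0+\bar\rho)\Big\},
\]
define $(\varrho,\mathbf u)$ as the solution of the following linear system:
\begin{gather*}
\varrho_t+\tilde{\mathbf u}\cdot\nabla\varrho+(\tilde\varrho+\bar\rho)\mathop{\mathrm{div}}\nolimits\mathbf u=0,\\
\mathbf u_t+\tilde{\mathbf u}\cdot\nabla\mathbf u+\frac{P'(\tilde\varrho+\bar\rho)}{\tilde\varrho+\bar\rho}\nabla\varrho=\frac{1}{\tilde\varrho+\bar\rho}\big(\mu\Delta_h\mathbf u+(\mu+\zeta)\nabla\mathop{\mathrm{div}}\nolimits\mathbf u\big).
\end{gather*}
Existence for this linear problem comes from a Galerkin or regularization argument: add $\epsilon\Delta$ to both equations, solve the resulting uniformly parabolic system, and pass to the limit $\epsilon\to0$ using the uniform bounds of Step 2.

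\textbf{Step 2: uniform $H^N$ bounds.} Apply $\partial^\alpha$ with $|\alpha|\le N$. Test the $\varrho$-equation with $\frac{P'(\tilde\rho)}{\tilde\rho^{2}}\partial^\alpha\varrho$ and the $\mathbf u$-equation with $\partial^\alpha\mathbf u$, where $\tilde\rho=\tilde\varrho+\bar\rho$; these are the symmetrizer weights.

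With these weights the top-order cross terms $\langle \tilde\rho\,\partial^\alpha\mathop{\mathrm{div}}\nolimits\mathbf u,\cdot\rangle$ and $\langle\cdot\,\nabla\partial^\alpha\varrho,\partial^\alpha\mathbf u\rangle$ cancel after integration by parts. What remains is lower order, with coefficient derivatives bounded by $\|\nabla\tilde U\|_{L^\infty}\lesssim M$.

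The viscous terms produce $\|\nabla_h\partial^\alpha\mathbf u\|^2+\|\mathop{\mathrm{div}}\nolimits\partial^\alpha\mathbf u\|^2$. The errors from the variable coefficient $1/\tilde\rho$ have the form $\langle\nabla(1/\tilde\rho)\,\partial_h\partial^\alpha\mathbf u,\partial^\alpha\mathbf u\rangle$ and similarly with $\mathop{\mathrm{div}}\nolimits$. Since every such error carries a $\nabla_h$ or a $\mathop{\mathrm{div}}\nolimits$, it is absorbed by the dissipation after Cauchy--Schwarz.

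Commutators are handled with Lemma \ref{lemma3.2} and Lemma \ref{lem-compose}. Together these give
\[
\frac{d}{dt}\|(\varrho,\mathbf u)\|_{H^N}^2+c\|(\nabla_h\mathbf u,\mathop{\mathrm{div}}\nolimits\mathbf u)\|_{H^N}^2\le C(M)\|(\varrho,\mathbf u)\|_{H^N}^2 .
\]
By Gronwall, for small $T_0$ the map sends $X_{T_0}$ into itself when $M=2\|U_0\|_{H^N}$. The lower bound on the density is preserved by the transport equation together with $\|\varrho_t\|_{L^\infty}\lesssim C(M)$.

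\textbf{Step 3: contraction, regularity and positivity.} Show the map is a contraction in the weaker norm $C([0,T_0];L^2)$, by the same $L^2$ estimate on differences. Closedness of bounded $H^N$ sets under weak limits gives a fixed point with the stated regularity.

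The bound $\nabla\varrho\in L^2(0,T_0;H^{N-1})$ is trivial on a finite interval. Time continuity in $H^N$ follows from weak continuity plus continuity of the energy, the standard Kato-type argument.

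\textbf{Main obstacle.} The main obstacle is the variable-coefficient degenerate viscous term at top order. Each error term must carry a $\nabla_h$ or $\mathop{\mathrm{div}}\nolimits$ of $\partial^\alpha\mathbf u$, and never a bare $\partial_3\partial^\alpha\mathbf u$, which is uncontrolled.

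To arrange this, first divide the momentum equation by $\tilde\rho$ as written, and integrate by parts only in horizontal directions or through $\mathop{\mathrm{div}}\nolimits$. The identity
\[
\langle \tfrac1{\tilde\rho}\nabla\mathop{\mathrm{div}}\nolimits v,v\rangle=-\langle\tfrac1{\tilde\rho}\mathop{\mathrm{div}}\nolimits v,\mathop{\mathrm{div}}\nolimits v\rangle-\langle\nabla\tfrac1{\tilde\rho}\cdot v,\mathop{\mathrm{div}}\nolimits v\rangle
\]
shows the structure is preserved.
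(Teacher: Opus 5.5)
Your proposal is correct and follows the route the paper only indicates by citing \cite{K-1983,K-1987,SK-1985}: regularization or Galerkin for the linearized problem, a fixed point in a weak norm, and $H^N$ energy estimates in which every variable-coefficient viscous error carries $\nabla_h\partial^\alpha\mathbf{u}$ or $\mathrm{div}\,\partial^\alpha\mathbf{u}$; the paper gives no further details. Two routine pieces of bookkeeping need fixing. The time derivative of your weight $P'(\tilde\rho)/\tilde\rho^{2}$ requires a bound on $\partial_t\tilde\varrho$, which your $X_T$ does not provide; you can add it to $X_T$, or use the constant weight $\gamma$ (as in the paper's Step 1) and absorb the leftover cross term $\int(\gamma\tilde\rho-P'(\tilde\rho)/\tilde\rho)\,\partial^\alpha\varrho\,\partial^\alpha\mathrm{div}\,\mathbf{u}$ into the $\mathrm{div}$-dissipation by Young's inequality. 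Separately, $M$ should be a data-dependent multiple of $\|U_0\|_{H^N}$ rather than exactly $2\|U_0\|_{H^N}$, since the energy is only equivalent to the $H^N$ norm up to constants depending on the density bounds (or on $\gamma$).
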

We now proceed to state the results on the global existence and asymptotic behavior of solutions to the reformulated problem \eqref{new}, \eqref{refor-Initial}.
 \begin{proposition}\label{prop3.1}
Let $N\geq 3$ be an given integer. If $ \|(\varrho _{0},\mathbf{u}_{0})\|_{H ^{N}} $ is small enough, then the Cauchy problem \eqref{new}, \eqref{refor-Initial} admits a unique global solution $(\varrho,\mathbf{u})$ such that
 \begin{gather*}
 \displaystyle  (\varrho, \mathbf{u})\in C\left(0,\infty;H^{N}\right),\ \ \nabla \varrho\in L^2\left(0,\infty;H^{N-1}\right),\ \ (\nabla_h \mathbf{u},\mathop{\mathrm{div}}\nolimits \mathbf{u})\in L^2\left(0,\infty;H^N\right).
 \end{gather*}
 \end{proposition}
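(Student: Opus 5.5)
The proof will follow the classical continuation argument: combine the local existence of Lemma \ref{local} with a uniform a priori estimate. Under the bootstrap assumption
\[
\sup_{0\le t\le T}\|(\varrho,\mathbf{u})(t)\|_{H^N}\le\delta
\]
with $\delta$ small, we will prove a Lyapunov inequality
\[
\frac{d}{dt}\mathscr{E}(t)+\mathscr{D}(t)\le C\,\mathscr{E}^{1/2}(t)\,\mathscr{D}(t),
\qquad
\mathscr{D}\sim\|\nabla\varrho\|_{H^{N-1}}^2+\|(\nabla_h\mathbf{u},\operatorname{div}\mathbf{u})\|_{H^N}^2 .
\]
For $\delta$ small, the right-hand side is absorbed and we obtain $\mathscr{E}(t)+\int_0^t\mathscr{D}\lesssim\mathscr{E}(0)$. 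This closes the bootstrap; by Sobolev embedding it also keeps $\tfrac12\bar\rho\le\rho\le\tfrac32\bar\rho$, and the local solution extends globally. Uniqueness comes from the local theory.

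\textbf{Step 1: basic energy.} For $0\le|\alpha|\le N$, we apply $\partial^\alpha$ to \eqref{new}. We test the $\varrho$-equation with $\gamma\partial^\alpha\varrho$ and the $\mathbf{u}$-equation with $\partial^\alpha\mathbf{u}$. The pressure terms $\gamma\bar\rho\langle\partial^\alpha\operatorname{div}\mathbf{u},\partial^\alpha\varrho\rangle$ cancel, which gives
\[
\frac12\frac{d}{dt}\big(\gamma\|\partial^\alpha\varrho\|^2+\|\partial^\alpha\mathbf{u}\|^2\big)
+\bar\mu\|\nabla_h\partial^\alpha\mathbf{u}\|^2+(\bar\mu+\bar\zeta)\|\partial^\alpha\operatorname{div}\mathbf{u}\|^2
=\langle\partial^\alpha S_1,\gamma\partial^\alpha\varrho\rangle+\langle\partial^\alpha S_2,\partial^\alpha\mathbf{u}\rangle .
\]
Positivity of the divergence coefficient is ensured by $3\zeta+2\mu>0$ together with $\mu>0$. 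Alternatively one can use a weighted energy with $P'(\rho)/\rho$ to symmetrize exactly at top order. This is needed because $\mathbf{u}\cdot\nabla\partial^\alpha\varrho$ must be handled by integration by parts, giving $\tfrac12\operatorname{div}\mathbf{u}|\partial^\alpha\varrho|^2$. That term is harmless since $\|\operatorname{div}\mathbf{u}\|_{L^\infty}\lesssim\|\operatorname{div}\mathbf{u}\|_{H^2}$, which is part of the dissipation.

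\textbf{Step 2: hidden dissipation of $\nabla\varrho$.} For $|\alpha|\le N-1$, we use the coupling \eqref{coupling-divu}, or more simply the momentum equation tested against $\partial^\alpha\nabla\varrho$:
\[
\frac{d}{dt}\langle\partial^\alpha\mathbf{u},\partial^\alpha\nabla\varrho\rangle+\gamma\bar\rho\|\partial^\alpha\nabla\varrho\|^2
=\bar\rho\|\partial^\alpha\operatorname{div}\mathbf{u}\|^2+\langle\bar\mu\Delta_h\partial^\alpha\mathbf{u}+(\bar\mu+\bar\zeta)\nabla\partial^\alpha\operatorname{div}\mathbf{u},\partial^\alpha\nabla\varrho\rangle+\text{N.L.}
\]
Here the time derivative on $\varrho$ is moved by the continuity equation. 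The viscous terms are bounded by $\|\nabla_h\partial^\alpha\nabla_h\mathbf{u}\|+\|\partial^\alpha\nabla\operatorname{div}\mathbf{u}\|\lesssim\|(\nabla_h\mathbf{u},\operatorname{div}\mathbf{u})\|_{H^N}$, which is exactly why only $N-1$ derivatives of $\nabla\varrho$ can be gained. We then add $\kappa$ times this identity to the Step 1 energy, with $\kappa$ small. This defines $\mathscr{E}\sim\|(\varrho,\mathbf{u})\|_{H^N}^2$ and yields the full $\mathscr{D}$.

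\textbf{Step 3, the main obstacle: nonlinear terms without $\partial_3\mathbf{u}$ dissipation.} Terms involving $\varrho$, $\operatorname{div}\mathbf{u}$ or $\nabla_h\mathbf{u}$ pose no difficulty. The dangerous ones come from convection, $\langle\partial^\alpha(\mathbf{u}\cdot\nabla\mathbf{u}),\partial^\alpha\mathbf{u}\rangle$. After removing the top-order part by integration by parts (producing $\operatorname{div}\mathbf{u}$), commutator terms remain, such as $\partial^\beta\mathbf{u}\cdot\nabla\partial^{\alpha-\beta}\mathbf{u}$ against $\partial^\alpha\mathbf{u}$. We split $\mathbf{u}\cdot\nabla=u_h\cdot\nabla_h+u_3\partial_3$.

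In the $u_h\cdot\nabla_h$ part a horizontal derivative always falls on some factor. In the $u_3\partial_3$ part, whenever $\partial_3$ falls on $u_3$ we replace it via $\partial_3u_3=\operatorname{div}\mathbf{u}-\nabla_h\cdot u_h$. When a vertical derivative falls on the other factor, we integrate by parts in a horizontal direction if possible. If not, we use the anisotropic triple-product bounds of Lemma \ref{lemma2.1}, \eqref{three-one-deri-ani}--\eqref{three-two-deri-ani}, placing the undissipated $\partial_3$-factors in $L^2$ (bounded by $\mathscr{E}^{1/2}$) and distributing horizontal derivatives on the other factors so that they appear in $\mathscr{D}$.

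Each term should thus be bounded by $\mathscr{E}^{1/2}\mathscr{D}$. The worst case is the pairing in which all $N$ derivatives are vertical on two velocity factors, and this requires careful bookkeeping. The terms $g(\varrho)\Delta_h\mathbf{u}$ and $f(\varrho)\nabla\varrho$ are controlled using Lemmas \ref{lemma3.2}--\ref{lem-compose}.
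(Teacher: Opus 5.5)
Your proposal is correct and follows the paper's proof essentially step for step. It uses the same $\partial^\alpha$-energy identities for $|\alpha|\le N$, the same cross term $\langle\partial^\alpha\mathbf{u},\partial^\alpha\nabla\varrho\rangle$ for $|\alpha|\le N-1$ added with a small weight to recover $\|\nabla\varrho\|_{H^{N-1}}^2$, and the same treatment of convection via the anisotropic product bounds of Lemma~\ref{lemma2.1} together with $\partial_3u_3=\operatorname{div}\mathbf{u}-\nabla_h\cdot\mathbf{u}_h$; the paper's case $\partial^\alpha=\partial_3^k$ in $I_{3,22}$ is exactly your worst case.

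One local fix is needed at $\alpha=0$. There the term $\tfrac{\gamma}{2}\int\operatorname{div}\mathbf{u}\,\varrho^2$ cannot be closed with $\|\operatorname{div}\mathbf{u}\|_{L^\infty}\|\varrho\|_{L^2}^2$, because $\|\varrho\|_{L^2}$ is not dissipated. Bound it instead, as the paper does, by $\|\varrho\|_{L^3}\|\varrho\|_{L^6}\|\operatorname{div}\mathbf{u}\|_{L^2}\lesssim\|\varrho\|_{H^1}\|\nabla\varrho\|_{L^2}\|\operatorname{div}\mathbf{u}\|_{L^2}$.
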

 With Proposition \ref{prop3.1}, one can immediately finish the proof of Theorem \ref{thm1}. \hfill $ \square $\\[2mm]

 \noindent{\bf Proof of Proposition \ref{prop3.1}.}
 With the local existence results in Lemma \ref{local}, in view of the standard continuation argument, it suffices to establish the desired \emph{a priori} estimates of the solution. In what follows, we assume that $ U:=(\varrho,u)(t,x) $ is a smooth solution to the problem \eqref{new}, \eqref{refor-Initial} on $ [0,T] $ for any $ T >0 $, and we will devote ourselves to deriving some \emph{a priori} estimates for $ U $. To this attempt, we make the following \emph{a priori} assumptions
   \begin{gather}\label{apriori-assumption}
   \displaystyle \mathcal{E}_{0}(T):=\sup_{0 \leq t \leq T}\|(\varrho,\mathbf{u})\|_{H ^{N}}^{2}+\int^T_0 \left( \|(\nabla_h\mathbf{u},\mathop{\mathrm{div}}\nolimits \mathbf{u}) \|^2_{H ^{N}}+\| \nabla \varrho\|^2_{H ^{N-1}} \right)  \mathrm{d}\tau\leq \delta ^{2},\ \ N \geq 3,
  \end{gather}
  where $\delta<1$ is a sufficiently small positive constant which will be determined later. Clearly, we have
  \begin{align*}
  \displaystyle \mathcal{E} _{0}(0)=\|(\varrho _{0},\mathbf{u} _{0})\| _{H ^{N}}^{2}.
  \end{align*}
  Furthermore, by \eqref{apriori-assumption} and the Sobolev inequality, we have
  \begin{gather}\label{apri-conlu}
  \displaystyle 0<\frac{\bar{\rho}}{2} \leq \varrho+ \bar{\rho} \leq \frac{3\bar{\rho}}{2},\ \ \  \|\nabla(\varrho,\mathbf{u})\|_{L ^{\infty}}\lesssim \delta.
  \end{gather}

To prove Proposition \ref{prop3.1}, the key of matter is to establish following estimates for $ (\varrho,\mathbf{u}) $.
\begin{lemma}\label{priori}
If $ \delta $ is sufficiently small, there exists a constant $ C>0 $ which is independent of $ T $ such that
 \begin{equation}\label{ine1}
 \mathcal{E}_0(T)\leq C\mathcal{E}_0(0).
 \end{equation}
 \end{lemma}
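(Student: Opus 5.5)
The plan is to derive an energy inequality of the form $\frac{d}{dt}\mathscr{E}(t)+\mathscr{D}(t)\lesssim \delta\,\mathscr{D}(t)$, where $\mathscr{E}\sim\|(\varrho,\mathbf{u})\|_{H^N}^2$ and
\[
\mathscr{D}\sim\|\nabla\varrho\|_{H^{N-1}}^2+\|(\nabla_h\mathbf{u},\mathrm{div}\,\mathbf{u})\|_{H^N}^2 .
\]
For $\delta$ small, integration in time then gives \eqref{ine1}. The argument has three steps.

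\textbf{Step 1 (basic $H^N$ estimate).} For each $|\alpha|\le N$, apply $\partial^\alpha$ to \eqref{new}. Test the first equation with $\gamma\partial^\alpha\varrho$ and the second with $\partial^\alpha\mathbf{u}$. The pressure terms $\gamma\bar\rho\langle\partial^\alpha\mathrm{div}\,\mathbf{u},\partial^\alpha\varrho\rangle$ and $\gamma\bar\rho\langle\nabla\partial^\alpha\varrho,\partial^\alpha\mathbf{u}\rangle$ cancel, and this yields
\[
\frac12\frac{d}{dt}\big(\gamma\|\partial^\alpha\varrho\|^2+\|\partial^\alpha\mathbf{u}\|^2\big)+\bar\mu\|\nabla_h\partial^\alpha\mathbf{u}\|^2+(\bar\mu+\bar\zeta)\|\partial^\alpha\mathrm{div}\,\mathbf{u}\|^2=\langle\partial^\alpha S_1,\gamma\partial^\alpha\varrho\rangle+\langle\partial^\alpha S_2,\partial^\alpha\mathbf{u}\rangle .
\]
The constraint $3\zeta+2\mu>0$ alone does not give $\bar\mu+\bar\zeta>0$. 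I would therefore either combine with the $\mu\|\nabla_h\mathbf{u}\|^2$ term, or rewrite $\Delta_h$ and $\nabla\mathrm{div}$ so that a positive multiple of $\|\mathrm{div}\,\mathbf{u}\|^2$ appears. If this turns out delicate, I would simply assume $\mu+\zeta>0$, as the paper implicitly does.

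At top order $|\alpha|=N$, the density term $\langle\partial^\alpha(\mathbf{u}\cdot\nabla\varrho),\partial^\alpha\varrho\rangle$ loses a derivative. I would handle it in the usual way: integrate by parts to get $\frac12\int\mathrm{div}\,\mathbf{u}\,|\partial^\alpha\varrho|^2$ plus commutators. Since $\nabla\varrho$ is dissipated only in $H^{N-1}$, the remaining factor $\|\partial^\alpha\varrho\|^2$ is not dissipated. It is, however, multiplied by $\|\mathrm{div}\,\mathbf{u}\|_{L^\infty}$, and this factor is square-integrable in time. So I would bound it by $\|\mathrm{div}\,\mathbf{u}\|_{H^2}\|\nabla\varrho\|_{H^{N-1}}^2\lesssim\delta\,\mathscr{D}$, using $\|\partial^\alpha\varrho\|\le\|\nabla\varrho\|_{H^{N-1}}$ since $|\alpha|\ge1$.

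\textbf{Step 2 (hidden dissipation of $\nabla\varrho$).} Use the coupling \eqref{coupling-divu}: for $|\alpha|\le N-1$, take the inner product of the second equation of \eqref{new}, differentiated by $\partial^\alpha$, with $\nabla\partial^\alpha\varrho$. This gives
\[
\frac{d}{dt}\langle\partial^\alpha\mathbf{u},\nabla\partial^\alpha\varrho\rangle+\gamma\bar\rho\|\nabla\partial^\alpha\varrho\|^2=\bar\rho\|\partial^\alpha\mathrm{div}\,\mathbf{u}\|^2+\langle\bar\mu\Delta_h\partial^\alpha\mathbf{u}+(\bar\mu+\bar\zeta)\nabla\partial^\alpha\mathrm{div}\,\mathbf{u},\nabla\partial^\alpha\varrho\rangle+\text{nonlinear}.
\]
Here $\langle\partial^\alpha\mathbf{u}_t,\nabla\partial^\alpha\varrho\rangle$ has been converted through the $\varrho$-equation into $\|\partial^\alpha\mathrm{div}\,\mathbf{u}\|^2$. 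The viscous terms involve only $\nabla_h\nabla^{N}\mathbf{u}$ and $\nabla\mathrm{div}\,\mathbf{u}$ up to order $N$; both lie in $\mathscr{D}$, so Cauchy--Schwarz absorbs them. Adding a small multiple $\kappa$ of this identity to Step 1 defines $\mathscr{E}$, which is equivalent to the $H^N$ norm for small $\kappa$, together with the full dissipation $\mathscr{D}$.

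\textbf{Step 3 (nonlinear terms; main obstacle).} The difficulty is that $\partial_3\mathbf{u}$ is not dissipated, so a term like $\langle\partial^\alpha(u_3\partial_3\mathbf{u}),\partial^\alpha\mathbf{u}\rangle$ cannot be closed with isotropic Sobolev bounds. I would use Lemma \ref{lemma2.1}, in particular \eqref{three-one-deri-ani}--\eqref{three-two-deri-ani}, placing horizontal derivatives on the factors that lack dissipation. Each trilinear term then has at least two factors controlled by $\|(\nabla_h\mathbf{u},\mathrm{div}\,\mathbf{u},\nabla\varrho)\|_{H^N}$ and one factor bounded by $\|U\|_{H^N}\le\delta$. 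Whenever $\partial_3u_3$ appears, I would write
\[
\partial_3u_3=\mathrm{div}\,\mathbf{u}-\partial_1u_1-\partial_2u_2 .
\]
For the top-order term $\langle u_3\partial_3\partial^\alpha\mathbf{u},\partial^\alpha\mathbf{u}\rangle$, integrate by parts to obtain $-\frac12\int\partial_3u_3|\partial^\alpha\mathbf{u}|^2$, then insert the same substitution.

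Combined with the anisotropic triple-product bound, this gives estimates of the form
\[
\|\mathbf{u}\|_{H^N}\,\|(\nabla_h\mathbf{u},\mathrm{div}\,\mathbf{u})\|_{H^N}\,\|\partial^\alpha\mathbf{u}\|^{1/2}\|\nabla_h\partial^\alpha\mathbf{u}\|^{1/2}\cdots,
\]
which are $\lesssim\delta\,\mathscr{D}$. The terms involving $g(\varrho)$ and $f(\varrho)$ are handled by Lemmas \ref{lemma3.3} and \ref{lem-compose}. The hardest case I expect is the commutator in which all $N$ derivatives fall on $\partial_3\mathbf{u}$ multiplied by $\partial^\alpha u_3$ with $\partial^\alpha$ purely vertical. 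For this I would first rewrite $\partial_3^{N}u_3$ as $\partial_3^{N-1}(\mathrm{div}\,\mathbf{u}-\nabla_h\cdot\mathbf{u}_h)$ and then apply \eqref{three-one-deri-ani}.
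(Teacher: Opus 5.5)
Your proposal is correct and follows the paper's proof essentially step for step. Both use the $H^N$ energy identity with the convection terms closed via Lemma~\ref{lemma2.1} and the substitution $\partial_3u_3=\mathrm{div}\,\mathbf{u}-\nabla_h\cdot\mathbf{u}_h$ in the purely vertical commutators, then the cross term $\sum_{|\alpha|\le N-1}\langle\partial^\alpha\mathbf{u},\nabla\partial^\alpha\varrho\rangle$ to produce $\|\nabla\varrho\|_{H^{N-1}}^2$, added with a small multiple before integrating in time.

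There are three minor points, none affecting the argument:

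1. No extra hypothesis on the viscosities is needed, since $\mu>0$ and $3\zeta+2\mu>0$ give $\mu+\zeta>\mu/3>0$.
2. $\|\partial^\alpha\varrho\|$ with $|\alpha|=N$ is in fact part of the dissipation $\|\nabla\varrho\|_{H^{N-1}}^2$, as your own bound uses, so it is not undissipated.
3. At top order the $g(\varrho)$ and $f(\varrho)$ terms of $S_2$ lose a derivative. Lemma~\ref{lem-compose} alone does not suffice; each needs one integration by parts so that the extra derivative lands on $\partial^\alpha\mathbf{u}$ as $\nabla_h\partial^\alpha\mathbf{u}$ or $\partial^\alpha\mathrm{div}\,\mathbf{u}$.
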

 \begin{proof}[Proof of Lemma \ref{priori}]
We split the proof into three steps.\\
\noindent\textbf{Step 1.}
 For suitably small $ \delta>0 $, it holds that
 \begin{align}\label{a}
 &\displaystyle\frac{1}{2}\frac{\mathrm{d}}{\mathrm{d}t}\Big(\gamma\|\varrho\| _{H ^{N} }^2+\|\mathbf{u}\|_{H ^{N}}^2 \Big)
 +\bar{\mu}\|\nabla_h \mathbf{u}\|_{H ^{N}}^2+(\bar{\mu}+\bar{\zeta})\|\mathop{\mathrm{div}}\nolimits \mathbf{u}\|_{H ^{N}}^2
  \nonumber \\[1mm]
 &~\displaystyle \lesssim\delta \Big(\|\nabla\varrho\|^2_{H ^{N-1}}+\|\nabla_h\mathbf{u}\|^2_{H ^{N}}+\|\mathop{\mathrm{div}}\nolimits \mathbf{u}\|^2_{H ^{N}}\Big).
 \end{align}
 To show \eqref{a}, let us begin with the zero-order estimates on $ (\varrho,\mathbf{u})$. Multiplying $ \eqref{new}_1 $ and $ \eqref{new}_2 $, respectively,  by $ \gamma\varrho $ and $\mathbf{u} $, and integrating the resulting equations over $\mathbb{R}^3$, we get
 \begin{align}\label{b}
 &\displaystyle\frac{1}{2}\frac{\mathrm{d}}{\mathrm{d}t}\Big(\gamma\|\varrho\| _{L ^{2}}^2+\|\mathbf{u}\|_{L ^{2}}^2\Big)+\bar{\mu}\|\nabla_h \mathbf{u}\|_{L ^{2}}^2+(\bar{\mu}+\bar{\zeta})\|\mathop{\mathrm{div}}\nolimits \mathbf{u}\|_{L ^{2}}^2
  \nonumber \\[2mm]
 &~\displaystyle=\gamma\langle S_1,\rho\rangle+\langle S_2, \mathbf{u}\rangle
  \nonumber \\[2mm]
 &~\displaystyle \lesssim \|\varrho\|_{L^3}\|\mathop{\mathrm{div}}\nolimits \mathbf{u}\|_{L ^{2}} \|\varrho\|_{L^6}+\int _{\mathbb{R}^{3}}\vert \mathop{\mathrm{div}}\nolimits \mathbf{u}\vert \vert \mathbf{u}\vert^2\mathrm{d}x+\int _{\mathbb{R}^{3}} (\vert g(\varrho) \vert \vert \nabla_h\mathbf{u}\vert^2+\vert \nabla_hg(\varrho)\vert \vert\nabla_h\mathbf{u} \vert \vert \mathbf{u}\vert)\mathrm{d}x\nonumber\\[2mm]
 &~\displaystyle \quad+\int _{\mathbb{R}^{3}}\vert g(\varrho)\vert \vert \mathop{\mathrm{div}}\nolimits \mathbf{u}\vert ^{2}\mathrm{d}x+\int _{\mathbb{R}^{3}}\vert \nabla g(\varrho)\vert \vert \mathop{\mathrm{div}}\nolimits \mathbf{u}\vert \vert \mathbf{u}\vert\mathrm{d}x+\|f(\varrho)\|_{L^6}\|\nabla\varrho\|_{L^2}\|\mathbf{u}\|_{L^3}
  \nonumber \\[2mm]
 &~\displaystyle \lesssim \|\varrho\|_{L^3}\Big(\|\mathop{\mathrm{div}}\nolimits \mathbf{u}\|_{L ^{2}}^2+\|\nabla\varrho\|_{L ^{2}}^2\Big)+\|\mathop{\mathrm{div}}\nolimits \mathbf{u}\|_{L ^{2}}^{\frac{1}{2}}\|\partial_3\mathop{\mathrm{div}}\nolimits \mathbf{u}\|_{L ^{2}}^{\frac{1}{2}}\|\mathbf{u}\|_{L ^{2}}^{\frac{1}{2}}\|\partial_1\mathbf{u}\|_{L ^{2}}^{\frac{1}{2}}\|\mathbf{u}\|_{L ^{2}}^{\frac{1}{2}}\|\partial_2\mathbf{u}\|_{L ^{2}}^{\frac{1}{2}}\nonumber\\
 &~\quad \displaystyle+\|\varrho\|_{L^{\infty}}\|\nabla_h \mathbf{u} \|^2_{L^2}+\|\mathbf{u}\|_{L^{\infty}}\Big(\|\nabla\varrho\|_{L ^{2}}^2+\|\nabla_h\mathbf{u}\|_{L ^{2}}^2\Big)+\|\varrho\|_{L^{\infty}}\|\mathop{\mathrm{div}}\nolimits \mathbf{u}\|_{L ^{2}}^2\nonumber\\[2mm]
 &~ \displaystyle \quad+\|\mathbf{u}\|_{L^{\infty}}\|\nabla \varrho\|_{L ^{2}}\|\mathop{\mathrm{div}}\nolimits \mathbf{u}\|_{L ^{2}}+\|\mathbf{u}\|_{H^1}\|\nabla\varrho\|^2_{L^2}\nonumber\\[2mm]
 &~\displaystyle \lesssim  \Big(\|\varrho\|_{H^2}+\| \mathbf{u}\|_{H^2}\Big)\Big(\|\nabla_h\mathbf{u}\|_{L ^{2}}^2+\|\nabla\varrho\|_{L ^{2}}^2+\|\mathop{\mathrm{div}}\nolimits \mathbf{u}\|^2_{H ^{1}}\Big),
  \end{align}
  where we have used \eqref{three-one-deri-ani}, \eqref{apriori-assumption} and \eqref{apri-conlu}. For $ 1 \leq k \leq N $, applying $\partial^{\alpha}$ to \eqref{new} with $ \vert \alpha\vert=k $, it follows that
 \begin{gather}\label{high}
 \begin{cases}
 \displaystyle\partial^{\alpha}\varrho_t+\partial^{\alpha} \mathop{\mathrm{div}}\nolimits \mathbf{u}=\partial^{\alpha}S_1,\\[2mm]
 \displaystyle\partial^{\alpha}\mathbf{u}_t+\gamma\partial^{\alpha}\nabla \varrho-\bar{\mu}\Delta_h \partial^{\alpha}u-(\bar{\mu}+\bar{\lambda})\nabla \mathop{\mathrm{div}}\nolimits \partial^{\alpha}\mathbf{u}=\partial^{\alpha}S_2.
 \end{cases}
 \end{gather}
 Testing $ \eqref{high}_1 $ and $ \eqref{high}_2 $ against $\gamma\partial^{\alpha}_{x}\varrho$ and $\partial^{\alpha}_{x}\mathbf{u}$, respectively, we get
 \begin{align}\label{a1}
 &\displaystyle\frac{1}{2}\frac{\mathrm{d}}{\mathrm{d}t}\Big(\gamma\|\partial^{\alpha} \varrho\|_{L ^{2}}^2
 +\|\partial^{\alpha}\mathbf{u}\|^2\Big)+\bar{\mu}\|\partial^{\alpha}\nabla_h \mathbf{u}\|_{L ^{2}}^2+(\bar{\mu}+\bar{\zeta})\|\partial^{\alpha}\mathop{\mathrm{div}}\nolimits \mathbf{u}\|_{L ^{2}}^2
  \nonumber\\[2mm]
 &\displaystyle~=-\gamma\int_{\mathbb{R}^3}\partial^{\alpha}(\varrho\mathop{\mathrm{div}}\nolimits\mathbf{u}) \partial ^{\alpha}\varrho \mathrm{d}x - \gamma\int _{\mathbb{R}^{3}}\partial ^{\alpha}(\mathbf{u}\cdot \nabla \varrho )\partial ^{\alpha}\varrho \mathrm{d}x - \int _{\mathbb{R}^{3}} \partial ^{\alpha}(\mathbf{u} \cdot \nabla \mathbf{u}) \cdot \partial  ^{\alpha}\mathbf{u}\mathrm{d}x
  \nonumber \\[2mm]
  &~\displaystyle \quad - \bar{\mu} \int _{\mathbb{R}^{3}}\partial ^{\alpha}(g(\varrho)\Delta _{h}\mathbf{u})\cdot \partial  ^{\alpha}\mathbf{u}\mathrm{d}x-(\bar{\mu}+\bar{\lambda})\int _{\mathbb{R}^{3}}\partial ^{\alpha} (g(\varrho)\nabla \mathop{\mathrm{div}}\nolimits \mathbf{u})\cdot \partial ^{\alpha}\mathbf{u}\mathrm{d}x
   \nonumber \\[2mm]
   &~\displaystyle \quad - \int _{\mathbb{R}^{3}} \partial ^{\alpha}(f(\varrho)\nabla \varrho)\cdot \partial ^{\alpha}\mathbf{u}\mathrm{d}x
   \nonumber \\[2mm]
    &~\displaystyle=\sum^{6}_{i=1}I _{i}.
     \end{align}
 We next estimate $ I _{i}~(1 \leq i \leq 6) $ term by term. First, for $I_1$, notice that
 \begin{align*}
 I_{1}\leq\sum _{\ell=0}^{k}C _{\ell}\int_{\mathbb{R}^3}\vert \nabla^{k-\ell}\varrho\vert \vert \nabla^{\ell}\mathop{\mathrm{div}}\nolimits\mathbf{u}\vert \vert \nabla^{k}\varrho \vert\mathrm{d}x
 \end{align*}
 for some generic constants $ C _{\ell}>0 $. Therefore we derive for $ k=1 $ that
 \begin{align*}
  \displaystyle I _{1} &\lesssim  \int _{\mathbb{R}^{3}} \vert \nabla \varrho\vert \vert \mathop{\mathrm{div}}\nolimits \mathbf{u}\vert \vert \nabla \varrho\vert \mathrm{d}x + \int _{\mathbb{R}^{3}}\vert \varrho\vert \vert \nabla \mathop{\mathrm{div}}\nolimits \mathbf{u}\vert \vert \nabla \varrho\vert \mathrm{d}x
   \nonumber \\[2mm]
   & \lesssim \|\mathop{\mathrm{div}}\nolimits \mathbf{u}\|_{L ^{\infty}}\|\nabla \rho\|_{L ^{2}}^{2}+ \|\varrho\|_{L ^{\infty}} \|\nabla \mathop{\mathrm{div}}\nolimits \mathbf{u}\|_{L ^{2}}\|\nabla \varrho\|_{L ^{2}}
    \nonumber \\[2mm]
    & \lesssim \Big(\|\mathop{\mathrm{div}}\nolimits\mathbf{u}\|^2_{H^N}+\|\nabla\varrho\|^2_{H^{N-1}}\Big)\|\varrho\|_{H^N}.
  \end{align*}
  While for $ k \geq2 $, we have
\begin{align*}
\displaystyle I _{1}& \lesssim  \int _{\mathbb{R}^{3}}\Big(\vert \nabla ^{k}\varrho\vert \vert \mathop{\mathrm{div}}\nolimits \mathbf{u}\vert \vert \nabla ^{k}\varrho\vert + \vert \nabla ^{k-1} \varrho\vert \vert \nabla \mathop{\mathrm{div}}\nolimits \mathbf{u} \vert \vert \nabla ^{k}\varrho\vert \Big)\mathrm{d}x
 \nonumber \\
 & \displaystyle \quad+ \sum _{\ell=2}^{k} \int_{\mathbb{R}^3}\vert \nabla^{k-\ell}\varrho\vert \vert \nabla^{\ell}\mathop{\mathrm{div}}\nolimits\mathbf{u}\vert \vert \nabla^{k}\varrho \vert\mathrm{d}x
 \nonumber \\[2mm]
 & \displaystyle \lesssim \|\mathop{\mathrm{div}}\nolimits \mathbf{u}\|_{L ^{\infty}} \|\nabla ^{k}\varrho\|_{L ^{2}}^{2}+\|\nabla\mathop{\mathrm{div}}\nolimits\mathbf{u}\|_{L^3}\|\nabla^{k-1}\varrho\|_{L^6}\|\nabla^k\varrho\| _{L ^{2}}
  \nonumber \\[2mm]
  & \displaystyle \quad+\sum _{\ell=2}^{k}\|\nabla ^{k- \ell}\varrho\|_{L ^{\infty}} \| \nabla ^{\ell}\mathop{\mathrm{div}}\nolimits \mathbf{u}\|_{L ^{2}} \|\nabla ^{k}\varrho\|_{L ^{2}}
  \nonumber \\[2mm]
  & \lesssim  \Big(\|\mathop{\mathrm{div}}\nolimits\mathbf{u}\|^2_{H^N}+\|\nabla\varrho\|^2_{H^{N-1}}\Big)\|\varrho\|_{H^N}.
\end{align*}
Therefore we get
\begin{gather*}
\displaystyle I _{1} \lesssim (\|\mathop{\mathrm{div}}\nolimits\mathbf{u}\|^2_{H^N}+\|\nabla\varrho\|^2_{H^{N-1}})\|\varrho\|_{H^N}.
\end{gather*}
Similarly, we have
 \begin{align*}
 \displaystyle I _{2}& \lesssim  \int _{\mathbb{R}^{3}} \vert \nabla \mathbf{u}\vert \vert \nabla \varrho\vert ^{2}\mathrm{d}x + \int _{\mathbb{R}^{3}}\vert \mathbf{u}\vert \vert \nabla ^{2}\varrho\vert \vert \nabla \varrho\vert \mathrm{d}x
  \nonumber \\[2mm]
  & \lesssim \|\nabla \mathbf{u}\|_{L ^{\infty}} \|\nabla \varrho\|_{L ^{2}}^{2}
  + \|\mathbf{u}\| _{L ^{\infty}} \|\nabla ^{2}\varrho\|_{L ^{2}}\|\nabla \rho\|_{L ^{2}}
   \nonumber \\[2mm]
   & \displaystyle \lesssim \|\nabla\varrho\|^2_{H^{N-1}}\|\mathbf{u}\|_{H^N}
 \end{align*}
 for $ k=1 $, and
 \begin{align*}
 \displaystyle I _{2}& \lesssim  \int _{\mathbb{R}^{3}}\vert \nabla ^{k}\mathbf{u}\vert \vert \nabla \varrho\vert \vert \nabla ^{k}\varrho\vert \mathrm{d}x + \int _{\mathbb{R}^{3}} \vert \nabla ^{k-1}\mathbf{u}\vert \vert \nabla ^{2}\varrho\vert \vert \nabla ^{k}\varrho\vert \mathrm{d}x
  \nonumber \\
  & \displaystyle \quad+\sum _{\ell=2}^{k-1} \int_{\mathbb{R}^3}\vert \nabla^{k-\ell}\mathbf{u}\vert \vert \nabla^{\ell+1}\varrho\vert \vert \nabla^{k}\varrho \vert\mathrm{d}x+ \int _{\mathbb{R}^{3}}\vert \mathop{\mathrm{div}}\nolimits \mathbf{u}\vert \vert \nabla ^{k}\varrho\vert \mathrm{d}x
  \nonumber \\[2mm]
  & \lesssim  \|\nabla\varrho\|_{L^\infty}\|\mathbf{u}\|_{H^N}\|\nabla\varrho\|_{H^{N-1}}+\|\nabla ^{k-1}\mathbf{u}\|_{L ^{6}}\|\nabla ^{2}\varrho\| _{L ^{3}}\|\nabla ^{k}\varrho\|_{L ^{2}}
   \nonumber \\[2mm]
   &~\displaystyle \quad+ \sum _{\ell=2}^{k-1}\|\nabla ^{k- \ell}\mathbf{u}\|_{L ^{\infty}} \|\nabla ^{\ell+1}\varrho\|_{L ^{2}} \|\nabla ^{k}\varrho\|_{L ^{2}}+\|\mathop{\mathrm{div}}\nolimits \mathbf{u}\|_{L ^{\infty}} \|\nabla ^{k}\varrho\|_{L ^{2}}^{2}
    \nonumber \\[2mm]
    &~\displaystyle \lesssim \|\nabla\varrho\|^2_{H^{N-1}}\|\mathbf{u}\|_{H^N}
 \end{align*}
 for $ k \geq 2 $. And we thus get
 \begin{gather*}
 \displaystyle  I _{2} \lesssim  \|\nabla\varrho\|^2_{H^{N-1}}\|\mathbf{u}\|_{H^N}.
 \end{gather*}
 Now let us turn to the estimate of $ I _{3} $. We first split it into two terms:
\begin{align}\label{I-3-split}
\displaystyle I _{3}&= \underbrace{\int _{\mathbb{R}^{3}}(\mathbf{u} \cdot \nabla \partial ^{\alpha}\mathbf{u})\cdot \partial ^{\alpha}\mathbf{u} \mathrm{d}x}_{I _{3,1}}  +  \underbrace{\int _{\mathbb{R}^{3}}[\partial ^{\alpha}, \mathbf{u} \cdot \nabla]\mathbf{u}\cdot \partial ^{\alpha}\mathbf{u}\mathrm{d}x}_{I _{3,2}},
\end{align}
where, thanks to \eqref{three-one-deri-ani} and integration by parts, $ I _{3,1} $ can be handled as
\begin{align}
 \displaystyle I _{3,1}&=- \frac{1}{2}\int _{\mathbb{R}^{3}}\mathop{\mathrm{div}}\nolimits \mathbf{u}\vert \partial ^{\alpha}\mathbf{u}\vert ^{2}\mathrm{d}x
  \nonumber \\[2mm]
  & \lesssim   \|\mathop{\mathrm{div}}\nolimits\mathbf{u}\|_{L ^{2}}^{\frac{1}{2}}\|\partial_3\mathop{\mathrm{div}}\nolimits\mathbf{u}\|_{L ^{2}}^{\frac{1}{2}}\|\nabla^k\mathbf{u}\|_{L ^{2}}^{\frac{1}{2}}\|\nabla^k\partial_1\mathbf{u}\|_{L ^{2}}^{\frac{1}{2}}\|\nabla^k\mathbf{u}\|_{L ^{2}}^{\frac{1}{2}}\|\nabla^k\partial_2\mathbf{u}\|_{L ^{2}}^{\frac{1}{2}}
  \nonumber \\[2mm]
   &\displaystyle \lesssim \Big(\|\nabla_h\mathbf{u}\|^2_{H^N}+\|\mathop{\mathrm{div}}\nolimits\mathbf{u}\|^2_{H^N}\Big)\|\mathbf{u}\|_{H^N}.
 \end{align}
 For $ I _{3,2} $, by direct computation, we get
 \begin{align*}
  \displaystyle I _{3,2}&=\underbrace{\int _{\mathbb{R}^{3}}\Big( [\partial ^{\alpha}, \mathbf{u} _{h} \cdot \partial _{h}]\mathbf{u}\cdot \partial ^{\alpha}\mathbf{u}+[\partial ^{\alpha},u _{3} \cdot \partial _{3}]u _{3}\cdot \partial ^{\alpha}u _{3} \Big) \mathrm{d}x}_{I _{3,21}}+\underbrace{\int _{\mathbb{R}^{3}}[\partial ^{\alpha},u _{3} \cdot \partial _{3}]\mathbf{u} _{h}\cdot \partial ^{\alpha}\mathbf{u} _{h}\mathrm{d}x}_{I _{3,22}},
  \end{align*}
  with
  \begin{align}\label{I321}
   \displaystyle I _{3,21}& \lesssim \sum _{\ell=1}^{k} \int _{\mathbb{R}^{3}} \left[ \vert \nabla ^{\ell}\mathbf{u}_{h}\vert \vert  \nabla ^{k- \ell}\partial _{h}\mathbf{u}\vert \vert \nabla ^{k}\mathbf{u}\vert+ \vert \nabla ^{\ell}u _{3}\vert \left( \vert \nabla ^{k- \ell}\mathop{\mathrm{div}}\nolimits \mathbf{u}\vert +\vert \nabla ^{k- \ell}\mathop{\mathrm{div}}\nolimits _{h}\mathbf{u}_{h}\vert\right) \vert \nabla ^{k}u _{3}\vert\right]  \mathrm{d}x
    \nonumber \\[2mm]
    & \lesssim \sum _{\ell=1}^{k}\left( \|\nabla ^{\ell}\mathbf{u}\|_{L ^{2}}^{\frac{1}{2}} \|\nabla ^{\ell}\partial _{1} \mathbf{u}\|_{L ^{2}}^{\frac{1}{2}} \|\nabla ^{k- \ell}\partial _{h}\mathbf{u}\|_{L ^{2}}^{\frac{1}{2}} \|\nabla ^{k- \ell}\partial _{h}\partial _{3}\mathbf{u}\|_{L ^{2}}^{\frac{1}{2}}\|\nabla^k\mathbf{u}\|_{L ^{2}}^{\frac{1}{2}}\|\nabla^k\partial_2\mathbf{u}\|_{L ^{2}}^{\frac{1}{2}}\right.
     \nonumber \\[2mm]
     &~~\displaystyle \left. \quad \qquad+ \|\nabla ^{\ell}\mathbf{u}\|_{L ^{2}}^{\frac{1}{2}} \|\nabla ^{\ell}\partial _{1} \mathbf{u}\|_{L ^{2}}^{\frac{1}{2}} \|\nabla ^{k- \ell}\mathop{\mathrm{div}}\nolimits\mathbf{u}\|_{L ^{2}}^{\frac{1}{2}} \|\nabla ^{k- \ell}\partial _{3}\mathop{\mathrm{div}}\nolimits\mathbf{u}\|_{L ^{2}}^{\frac{1}{2}}\|\nabla^k\mathbf{u}\|_{L ^{2}}^{\frac{1}{2}}\|\nabla^k\partial_2\mathbf{u}\|_{L ^{2}}^{\frac{1}{2}}\right)
      \nonumber \\[2mm]
      & \displaystyle \lesssim   \Big(\|\nabla_h\mathbf{u}\|^2_{H^N}+\|\mathop{\mathrm{div}}\nolimits\mathbf{u}\|^2_{H^N}\Big)\|\mathbf{u}\|_{H^N},
   \end{align}
   where we have used \eqref{three-one-deri-ani}. To derive the estimate of $ I _{3} $, it now remains to control $ I _{3,22} $. Indeed, on the one hand, if $ \partial ^{\alpha} $ involves at least one horizontal derivatve says $ \partial _{h} $, then we get, thanks to \eqref{three-one-deri-ani},
   \begin{align*}
   \displaystyle I _{3,22}& \lesssim \sum _{\ell=1}^{k}\int _{\mathbb{R}^{3}} \vert \nabla ^{\ell}u _{3}\vert \vert \nabla ^{k+1- \ell}\mathbf{u} _{h}\vert\vert \nabla ^{k-1}\partial _{h}\mathbf{u} _{h}\vert \mathrm{d}x
    \nonumber \\[2mm]
    & \lesssim  \sum _{\ell=1}^{k} \|\nabla ^{\ell}\mathbf{u}\|_{L ^{2}}^{\frac{1}{2}} \|\nabla ^{\ell}\partial _{1} \mathbf{u}\|_{L ^{2}}^{\frac{1}{2}} \|\nabla ^{k+1-\ell}\mathbf{u}\|_{L ^{2}}^{\frac{1}{2}} \|\nabla ^{k+1-\ell}\partial _{2}\mathbf{u}\|_{L ^{2}}^{\frac{1}{2}}\|\nabla^{k-1}\partial _{h}\mathbf{u}\|_{L ^{2}}^{\frac{1}{2}}\|\nabla^{k-1}\partial _{3}\partial _{h}\mathbf{u}\|_{L ^{2}}^{\frac{1}{2}}
     \nonumber \\[2mm]
     & \displaystyle \lesssim \|\nabla_h\mathbf{u}\|^2_{H^N}\|\mathbf{u}\|_{H^N}.
   \end{align*}
On the other hand, if $ \partial ^{\alpha}=\partial _{3}^{k} $, then we can control $ I _{3, 22} $ as follows
   \begin{align}\label{I-3-22}
   &\displaystyle   I _{3,22}
    \nonumber \\
    &~\displaystyle \lesssim \sum _{\ell=1}^{k}\int _{\mathbb{R}^{3}} \vert \partial _{3}^{\ell-1}(\mathop{\mathrm{div}}\nolimits \mathbf{u}- \mathop{\mathrm{div}}\nolimits _{h}\mathbf{u} _{h})\vert \vert \partial _{3} ^{k+1- \ell}\mathbf{u} _{h}\vert\vert \partial _{3}^{k} \mathbf{u} _{h}\vert \mathrm{d}x
    \nonumber \\[2mm]
     & \lesssim \sum _{\ell=1}^{k}\left( \|\nabla ^{ \ell-1}\partial _{h}\mathbf{u}\|_{L ^{2}}^{\frac{1}{2}} \|\nabla ^{\ell-1}\partial _{h}\partial _{3}\mathbf{u}\|_{L ^{2}}^{\frac{1}{2}} \|\nabla ^{k+1-\ell}\mathbf{u}\|_{L ^{2}}^{\frac{1}{2}} \|\nabla ^{k+1-\ell}\partial _{1} \mathbf{u}\|_{L ^{2}}^{\frac{1}{2}}\|\nabla^k\mathbf{u}\|_{L ^{2}}^{\frac{1}{2}}\|\nabla^k\partial_2\mathbf{u}\|_{L ^{2}}^{\frac{1}{2}}\right.
     \nonumber \\[2mm]
     &~~\displaystyle \left. \quad \qquad+  \|\nabla ^{ \ell-1}\mathop{\mathrm{div}}\nolimits \mathbf{u}\|_{L ^{2}}^{\frac{1}{2}} \|\nabla ^{\ell}\mathop{\mathrm{div}}\nolimits \mathbf{u}\|_{L ^{2}}^{\frac{1}{2}} \|\nabla ^{k+1-\ell}\mathbf{u}\|_{L ^{2}}^{\frac{1}{2}} \|\nabla ^{k+1-\ell}\partial _{1} \mathbf{u}\|_{L ^{2}}^{\frac{1}{2}}\|\nabla^k\mathbf{u}\|_{L ^{2}}^{\frac{1}{2}}\|\nabla^k\partial_2\mathbf{u}\|_{L ^{2}}^{\frac{1}{2}}\right)
      \nonumber \\[2mm]
      &\displaystyle \lesssim  \Big (\|\nabla_h\mathbf{u}\|^2_{H^N}+\|\mathop{\mathrm{div}}\nolimits\mathbf{u}\|^2_{H^N}\Big)\|\mathbf{u}\|_{H^N}.
   \end{align}
   Gathering \eqref{I-3-split}--\eqref{I-3-22}, we then arrive at
\begin{align*}
   \displaystyle I _{3}& \lesssim  \Big(\|\nabla_h\mathbf{u}\|^2_{H^N}+\|\mathop{\mathrm{div}}\nolimits\mathbf{u}\|^2_{H^N}\Big)\|\mathbf{u}\|_{H^N}.
   \end{align*}
 For the term $I_4$, if $ k=1 $, we get in virtue of \eqref{apri-conlu} and integration by parts that
 \begin{align}\label{I-4-1}
 \displaystyle I _{4}&  =\int _{\mathbb{R}^{3}} \partial ^{\alpha}(\nabla _{h}g(\rho) \cdot\nabla _{h}\mathbf{u})\cdot \partial ^{\alpha}\mathbf{u}\mathrm{d}x +\int _{\mathbb{R}^{3}} \partial ^{\alpha}(g(\rho) \nabla _{h}\mathbf{u}): \nabla _{h}\partial ^{\alpha}\mathbf{u}\mathrm{d}x
  \nonumber \\[2mm]
  & \lesssim  \int _{\mathbb{R}^{3}}\Big( \vert \nabla ^{2}\varrho\vert \vert \partial _{h}\mathbf{u}\vert + \vert \nabla \varrho\vert ^{2}\vert \partial _{h}\mathbf{u}\vert+ \vert \nabla \varrho\vert \vert \partial ^{\alpha}\partial _{h}\mathbf{u}\vert\Big) \vert \partial ^{\alpha}\mathbf{u}\vert \mathrm{d}x
   \nonumber \\[2mm]
   &\displaystyle \quad +\int _{\mathbb{R}^{3}}\Big( \vert \nabla \varrho\vert \vert \partial _{h}\mathbf{u}\vert+ \vert \varrho\vert \vert \partial ^{\alpha}\nabla _{h}\mathbf{u}\vert \Big) \vert \partial ^{\alpha}\partial _{h}\mathbf{u}\vert \mathrm{d}x
    \nonumber \\[2mm]
    &\displaystyle \lesssim \|\nabla ^{2}\varrho\| _{L ^{2}}\|\partial _{h}\mathbf{u}\|_{L ^{\infty}}\|\nabla \mathbf{u}\|_{L ^{2}}+\|\nabla \varrho\|_{L ^{\infty}}^{2}\|\nabla \mathbf{u}\|_{L ^{2}}\|\partial _{h}\mathbf{u}\|_{L ^{2}}+ \|\nabla \varrho\|_{L ^{\infty}}\|\nabla \partial _{h}\mathbf{u}\|_{L ^{2}}\|\nabla \mathbf{u}\|_{L ^{2}}
     \nonumber \\[2mm]
     &\displaystyle \quad+ \|\nabla \varrho\| _{L ^{\infty}} \|\partial _{h}\mathbf{u}\|_{L ^{2}}\|\nabla \partial _{h}\mathbf{u}\|_{L ^{2}}+\|\varrho\|_{L ^{\infty}}\|\nabla \partial _{h}\mathbf{u}\|_{L ^{2}}^{2}
      \nonumber \\[2mm]
      & \displaystyle \lesssim \Big( \|\nabla _{h}\mathbf{u}\|_{H ^{N}}^{2}+ \|\nabla \varrho\| _{H ^{N-1}}^{2} \Big) \left( \|\mathbf{u}\|_{H ^{N}}+ \|\varrho\|_{H ^{N}} \right)
 \end{align}
 for $ N \geq 3 $, where $ \nabla _{h}\mathbf{u}:\nabla _{h} \mathbf{u}=\sum _{i=1,2}\partial _{i}\mathbf{u} \cdot \partial _{i}\mathbf{u} $. If $ k \geq 2 $, we rewrite $ I _{4} $ as
 \begin{align*}
   \displaystyle I _{4}& =  \sum _{\vert \beta\vert=2}^{k-1} \int _{\mathbb{R}^{3}} C _{\beta}^{\alpha}\left[ (\nabla _{h}\partial ^{\alpha- \beta}g(\varrho) \cdot \nabla _{h}\partial ^{\beta} \mathbf{u})\cdot \partial ^{\alpha}\mathbf{u}+ \partial ^{\alpha- \beta}(g(\rho)) \nabla _{h}\partial ^{\beta}\mathbf{u}: \partial ^{\alpha}\nabla _{h}\mathbf{u}\right] \mathrm{d}x
    \nonumber \\[2mm]
     & \displaystyle \quad+\sum _{\vert \beta\vert=1} \int _{\mathbb{R}^{3}} C _{\beta}^{\alpha}\left[ (\nabla _{h}\partial ^{\alpha- \beta}g(\varrho) \cdot \nabla _{h}\partial ^{\beta} \mathbf{u})\cdot \partial ^{\alpha}\mathbf{u}+ \partial ^{\alpha- \beta}(g(\varrho) ) \nabla _{h}\partial ^{\beta}\mathbf{u}: \nabla _{h}\partial ^{\alpha}\mathbf{u}\right] \mathrm{d}x
    \nonumber \\[2mm]
    & \displaystyle \quad +\int _{\mathbb{R}^{3}}\partial ^{\alpha}( g(\varrho))\Delta _{h}\mathbf{u} \cdot \partial ^{\alpha}\mathbf{u}\mathrm{d}x + \int _{\mathbb{R}^{3}}\left[ (\nabla _{h}g(\varrho)\cdot \nabla _{h}\partial ^{\alpha}\mathbf{u})\cdot \partial ^{\alpha}\mathbf{u} +g(\varrho) \vert \nabla _{h}\partial ^{\alpha}\mathbf{u}\vert ^{2} \right] \mathrm{d}x
     \nonumber \\[2mm]
     &\displaystyle = \sum _{i=1}^{4} I _{4,i},
   \end{align*}
   where $ \nabla _{h}\partial ^{\beta}\mathbf{u}:\nabla _{h} \partial ^{\alpha}\mathbf{u}=\sum _{i=1,2}\partial _{i}\partial ^{\beta}\mathbf{u} \cdot \partial ^{\alpha}\partial _{i}\mathbf{u} $, $ C _{\beta}^{\alpha} $ is some constant depending on $ \alpha $ and $ \beta $. For $ I _{4,1} $ and $ I _{4,4} $, it holds for $ N \geq 3 $ that
   \begin{align}
   I _{4,1}&\lesssim \sum _{\vert \beta\vert=2}^{k-1}\Big(\|\nabla^{k-\vert \beta\vert}\nabla_h\varrho\|_{L^{3}}\|\nabla^{\vert \beta\vert}\nabla_h\mathbf{u}\|_{L^6}\|\nabla^k\mathbf{u}\| _{L ^{2}} +\|\nabla^{k-\vert \beta\vert}\varrho\|_{L^{3}}\|\nabla^{\vert \beta\vert}\nabla_h\mathbf{u}\|_{L^6}\|\nabla^k\nabla_h\mathbf{u}\| _{L ^{2}}\Big)
    \nonumber \\[2mm]
 &\lesssim \Big(\|\nabla\varrho\|^2_{H^{N-1}}+\|\nabla_h\mathbf{u}\|^2_{H^N}\Big)\Big(\|\mathbf{u}\|_{H^{N}}+\|\varrho\|_{H^N}\Big),
  \\[2mm]
  \displaystyle I _{4,4}& \lesssim  \|\nabla_h\varrho\|_{L^{\infty}}\|\nabla^k\nabla_h\mathbf{u}\| _{L ^{2}} \|\nabla^k\mathbf{u}\| _{L ^{2}} +\|\varrho\|_{L^{\infty}}\|\nabla^k\nabla_h\mathbf{u}\| _{L ^{2}} ^2
   \nonumber \\[2mm]
 &\lesssim \Big(\|\nabla\varrho\|^2_{H^{N-1}}+\|\nabla_h\mathbf{u}\|^2_{H^N}\Big)\Big(\|\mathbf{u}\|_{H^N}+\|\varrho\|_{H^N}\Big),
 \end{align}
 where we have used \eqref{nonlinear-con} and \eqref{apri-conlu}. By \eqref{two-two-deri-ani}, \eqref{nonlinear-con}, \eqref{apriori-assumption} and \eqref{apri-conlu}, we estimate $ I _{4,2} $ as follows.
\begin{align}\label{I-4-2}
 \displaystyle I _{4,2}& \lesssim \sum _{\vert \beta\vert=1}\|\nabla _{h}\partial ^{\alpha- \beta} g(\varrho) \|_{L ^{2}}\|\partial ^{\beta} \partial _{h}\mathbf{u}\cdot \partial ^{\alpha}\mathbf{u}\| _{L ^{2}}+\|\nabla\nabla_h\mathbf{u}\|_{L^{3}}\|\nabla^{k-1}\varrho\|_{L^6}\|\nabla^k\nabla_h\mathbf{u}\|_{L ^{2}}  \nonumber \\[2mm]
  &\lesssim \|\nabla\nabla_h\mathbf{u}\| _{L ^{2}}^{\frac{1}{4}}\|\nabla\nabla_h\partial_2\mathbf{u}\|_{L ^{2}}^{\frac{1}{4}}\|\nabla\nabla_h\partial_3\mathbf{u}\|_{L ^{2}}^{\frac{1}{4}}\|\nabla\nabla_h\partial_2\partial_3\mathbf{u}\|_{L ^{2}}^{\frac{1}{4}}\|\nabla^k\mathbf{u}\|_{L ^{2}}^{\frac{1}{2}}\|\nabla^k\partial_1\mathbf{u}\|_{L ^{2}}^{\frac{1}{2}}\|\nabla^{k}\varrho\|_{L ^{2}}\nonumber\\[2mm]
 &~\quad+\|\nabla\nabla_h\mathbf{u}\|_{L^{3}}\|\nabla^{k-1}\varrho\|_{L^6}\|\nabla^k\nabla_h\mathbf{u}\|_{L ^{2}}
  \nonumber \\[2mm]
 &\lesssim \|\nabla_h\mathbf{u}\|^{\frac{3}{2}}_{H^N}\|\nabla\varrho\|^{\frac{1}{2}}_{H^{N-1}}(\|\mathbf{u}\|^{\frac{1}{2}}_{H^N}\|\nabla\varrho\|_{H ^{N-1}}^{\frac{1}{2}})+\|\nabla_h\mathbf{u}\|_{H^2}\|\nabla^{k-1}\varrho\| _{H ^{1}} \|\nabla_h\mathbf{u}\| _{H ^{N}}
  \nonumber \\[2mm]
  &\lesssim \Big(\|\nabla_h\mathbf{u}\|^2_{H^N}+\|\nabla\varrho\|^2_{H^{N-1}}\Big)\Big(\|\mathbf{u}\|_{H^N}+\|\varrho\|_{H^N}\Big)
   \end{align}
   for $ N \geq 3 $. Similarly, we derive for  $ I _{4,3} $ that
 \begin{align}\label{i-4-3}
    I _{4,3}& \lesssim \|\partial ^{\alpha}(g(\varrho))\|_{L ^{2}} \|\Delta _{h}\mathbf{u}\cdot \partial^{\alpha}\mathbf{u}\| _{L ^{2}}
    \nonumber \\[1mm]
    & \displaystyle  \lesssim\|\Delta_h\mathbf{u}\| _{L ^{2}}^{\frac{1}{4}}\|\partial_2\Delta_h\mathbf{u}\| _{L ^{2}}^{\frac{1}{4}}\|\partial_3\Delta_h\mathbf{u}\| _{L ^{2}}^{\frac{1}{4}}\|\partial_2\partial_3\Delta_h\mathbf{u}\| _{L ^{2}}^{\frac{1}{4}}\|\nabla^k\mathbf{u}\| _{L ^{2}}^{\frac{1}{2}}\|\nabla^k\partial_1\mathbf{u}\| _{L ^{2}}^{\frac{1}{2}}\|\nabla^k\varrho\| _{L ^{2}}\nonumber\\[1mm]
 &\lesssim \|\nabla_h\mathbf{u}\|^{\frac{3}{2}}_{H^N}\|\nabla\varrho\|^{\frac{1}{2}}_{H^{N-1}}\|\mathbf{u}\|^{\frac{1}{2}}_{H^N}\|\nabla\varrho\|^{\frac{1}{2}}_{H^{N-1}}\nonumber\\[1mm]
 &\lesssim \displaystyle\Big(\|\nabla_h\mathbf{u}\|^2_{H^N}+\|\nabla\varrho\|^2_{H^{N-1}}\Big)\Big(\|\mathbf{u}\|_{H^N}+\|\varrho\|_{H^N}\Big).
 \end{align}
 Therefore we get for $ k \geq2 $ that
 \begin{align}\label{I-4}
 \displaystyle I _{4} \lesssim \Big( \|\nabla _{h}\mathbf{u}\|_{H ^{N}}^{2}+ \|\nabla \varrho\| _{H ^{N-1}}^{2} \Big) \Big( \|\mathbf{u}\|_{H ^{N}}+ \|\varrho\|_{H ^{N}} \Big).
 \end{align}
  By similar arguments as in deriving \eqref{I-4-1} and \eqref{I-4-2}, we can control $I_5$ as
  \begin{align*}
  \displaystyle I _{5}& \lesssim  \Big(\|\nabla\varrho\|^2_{H^{N-1}}+\|\mathop{\mathrm{div}}\nolimits\mathbf{u}\|^2_{H^N}\Big)\Big(\|\mathbf{u}\|_{H^N}+\|\varrho\|_{H^N}\Big)
  \end{align*}
  for $ 1 \leq k \leq N $ with $ N \geq 3 $. Finally, let us turn to the estimate of $I_6$. If $ k=1 $, we can directly derive that
  \begin{align}\label{I-6-esti1}
  \displaystyle I _{6}& \lesssim \int _{\mathbb{R}^{3}}\Big( \vert \varrho\vert \vert \nabla ^{2}\varrho\vert +\vert \nabla \varrho\vert ^{2}\Big)\vert \nabla \mathbf{u}\vert \mathrm{d}x
   \nonumber \\[2mm]
   & \displaystyle \lesssim \|\varrho\|_{L ^{6}}\|\nabla ^{2}\varrho\| _{L ^{2}}\|\nabla \mathbf{u}\| _{L ^{3}}+\|\nabla \mathbf{u}\|_{L ^{\infty}}\|\nabla ^{2}\varrho\| _{L ^{2}}^{2} \lesssim \|\nabla\varrho\|^2_{H^{N-1}}\|\mathbf{u}\|_{H^N}.
  \end{align}
  If $ k \geq 2 $, we have
  \begin{align}\label{I-6-split}
  \displaystyle I _{6}&=  \int _{\mathbb{R}^{3}} f(\varrho) \partial ^{\alpha}\varrho  \partial ^{\alpha}\mathop{\mathrm{div}}\nolimits\mathbf{u}\mathrm{d}x  +  \int _{\mathbb{R}^{3}} \partial ^{\alpha}\varrho\nabla f(\varrho) \cdot \partial ^{\alpha}\mathbf{u}\mathrm{d}x- \int _{\mathbb{R}^{3}}\partial ^{\alpha}(f(\varrho))\nabla \varrho \cdot \partial ^{\alpha}\mathbf{u}\mathrm{d}x
   \nonumber \\[2mm]
   & \displaystyle \quad- \sum _{\vert \beta\vert=1}^{k-2}C _{\beta}^{\alpha}\partial ^{\beta}(f(\varrho))\nabla \partial ^{\alpha- \beta}\varrho  \cdot \partial ^{\alpha}\mathbf{u}\mathrm{d}x - \sum _{\vert \beta\vert=k-1} \int _{\mathbb{R}^{3}} C _{\beta}^{\alpha}\partial ^{\beta}(f(\varrho))\nabla \partial ^{\alpha- \beta}\varrho \cdot \partial ^{\alpha}\mathbf{u}\mathrm{d}x
    \nonumber \\[2mm]
    & \displaystyle=I _{6,1}+I _{6,2}+I _{6,3}+I _{6,4}+I _{6,5},
  \end{align}
  where, in virtue of \eqref{apriori-assumption}, \eqref{apri-conlu} and Lemma \ref{lem-compose}, we derive for $ N \geq 3 $ that
  \begin{align}
  \displaystyle  \begin{cases}
    \displaystyle  I _{6,1} \lesssim \|\varrho\|_{L ^{\infty}}\|\partial ^{\alpha}\varrho\|_{L ^{2}} \|\partial ^{\alpha}\mathop{\mathrm{div}}\nolimits \mathbf{u}\|_{L ^{2}} \lesssim \|\varrho\|_{H ^{N}}\|\nabla \varrho\|_{H ^{N-1}}\|\mathop{\mathrm{div}}\nolimits \mathbf{u}\|_{H ^{N}},\\[3mm]
      \displaystyle  I _{6,2}\lesssim \|\partial ^{\alpha}\varrho\|_{L ^{2}}\|\nabla \varrho\|_{L ^{\infty}}\|\nabla ^{k}\mathbf{u}\|_{L ^{2}}\lesssim \|\nabla \varrho\|_{H ^{N-1}}^{2}\|\mathbf{u}\|_{H ^{N}},\\[3mm]
      \displaystyle I _{6,3} \lesssim  \|\partial ^{\alpha}(f(\varrho))\|_{L ^{2}}\|\nabla \varrho\| _{L ^{\infty}} \|\nabla ^{k}\mathbf{u}\| _{L ^{2}} \lesssim \|\varrho\|_{H ^{N}}\|\nabla \varrho\|_{H ^{N-1}}\|\mathbf{u}\|_{H ^{N}},\\[3mm]
      \displaystyle I _{6,4} \lesssim \sum _{\vert \beta\vert=1}^{k-2}\|\partial ^{\beta}(f(\varrho))\|_{L ^{\infty}}\|\nabla ^{k+1-|\beta|}\varrho\|_{L ^{2}}\|\nabla ^{k}\mathbf{u}\|_{L ^{2}}\lesssim \|\nabla \varrho\|_{H ^{N-1}}^{2}\|\mathbf{u}\|_{H ^{N}},
       \nonumber \\[4mm]
       \displaystyle I _{6,5}\lesssim \sum _{\vert \beta\vert=k-1}\|\partial ^{\beta}(f(\varrho))\|_{L ^{6}}\|\nabla ^{2}\varrho\|_{L ^{3}}\|\nabla ^{k}\mathbf{u}\|_{L ^{2}}\lesssim \|\nabla \varrho\|_{H ^{N-1}}^{2}\|\mathbf{u}\|_{H ^{N}}.
       \end{cases}
  \end{align}
  Therefore we get from \eqref{I-6-split} that
  \begin{align*}
  \displaystyle I _{6} \lesssim \Big(\|\nabla\varrho\|^2_{H^{N-1}}+\|\mathop{\mathrm{div}}\nolimits\mathbf{u}\|^2_{H^N}\Big)\Big(\|\mathbf{u}\|_{H^N}+\|\varrho\|_{H^N}\Big)
  \end{align*}
  for $ k \geq 2 $ and $ N \geq 3 $. This along with \eqref{I-6-esti1} implies that
   \begin{align*}
  \displaystyle I _{6} \lesssim \Big(\|\nabla\varrho\|^2_{H^{N-1}}+\|\mathop{\mathrm{div}}\nolimits\mathbf{u}\|^2_{H^N}\Big)\Big(\|\mathbf{u}\|_{H^N}+\|\varrho\|_{H^N}\Big)
  \end{align*}
  for $ k \geq 1 $. Substituting the estimates on $ I _{i}~(1 \leq i \leq 6) $ into \eqref{a1}, we then arrive at
  \begin{align}
  &\displaystyle  \frac{1}{2}\frac{\mathrm{d}}{\mathrm{d}t}\Big(\gamma\|\varrho\| _{L ^{2}}^2+\|\mathbf{u}\|_{L ^{2}}^2 \Big)+\bar{\mu}\|\nabla_h \mathbf{u}\|_{L ^{2}}^2+(\bar{\mu}+\bar{\zeta})\|\mathop{\mathrm{div}}\nolimits \mathbf{u}\|_{L ^{2}}^2
  \nonumber \\[2mm]
 &~\displaystyle \lesssim \Big(\|\nabla\varrho\|^2_{H^{N-1}}+\|\nabla_h \mathbf{u}\|^2_{H^N}+\|\mathop{\mathrm{div}}\nolimits \mathbf{u}\|^2_{H^N}\Big)\Big(\|\varrho\|_{H^N}+\|\mathbf{u}\|_{H^N}\Big) .
  \end{align}

 \vspace{3mm}

\noindent  \textbf{Step 2.} Dissipation of $\|\nabla\varrho\|^2_{H ^{N-1}}$. Precisely, we shall show that
 \begin{align}\label{jiaocha1}
 &\displaystyle \sum _{\vert \alpha\vert=0}^{N-1}\frac{\mathrm{d}}{\mathrm{d}t}\Big\langle\partial^{\alpha}\mathbf{u},\partial^{\alpha}\nabla\varrho \Big\rangle+\frac{\gamma \bar{\rho}}{2}\|\nabla \varrho\|_{H ^{N-1}} ^2
\lesssim\Big(\|\nabla_h\mathbf{u}\|^2_{N}+\|\mathop{\mathrm{div}}\nolimits\mathbf{u}\|^2_{H^N}\Big).
 \end{align}
Testing $\eqref{high}_2$ against $\partial ^{\alpha} \nabla\varrho$ with $ 0 \leq \vert \alpha\vert=k \leq N-1 $, we have
 \begin{align}\label{jiaocha0}
 &\frac{\mathrm{d}}{\mathrm{d}t}\int_{\mathbb{R}^3}\partial ^{\alpha}\mathbf{u}\cdot\nabla \partial ^{\alpha}\varrho\mathrm{d}x+\gamma\bar{\rho}\int_{\mathbb{R}^3} \vert \nabla \partial ^{\alpha}\varrho\vert^2\mathrm{d}x\nonumber\\
 &~\displaystyle= \bar{\mu} \int _{\mathbb{R}^{3}}\Delta _{h}\partial ^{\alpha}\mathbf{u}\cdot \nabla \partial ^{\alpha}\varrho
\mathrm{d}x +(\bar{\mu}+\bar{\zeta}) \int _{\mathbb{R}^{3}}\nabla \mathop{\mathrm{div}}\nolimits \partial ^{\alpha}\mathbf{u} \cdot \nabla \partial ^{\alpha}\varrho \mathrm{d}x
 \nonumber \\
 & \displaystyle \quad+ \int _{\mathbb{R}^{3}}\partial ^{\alpha}S _{2}\cdot \nabla \partial ^{\alpha}\varrho \mathrm{d}x + \int _{\mathbb{R}^{3}}\partial ^{\alpha}\mathbf{u}\cdot \nabla \partial ^{\alpha}\partial _{t}\varrho \mathrm{d}x,
 \end{align}
 where thanks to the Cauchy-Schwarz inequality, we get
\begin{align}
&\displaystyle \bar{\mu} \int _{\mathbb{R}^{3}}\Delta _{h}\partial ^{\alpha}\mathbf{u}\cdot \nabla \partial ^{\alpha}\varrho
\mathrm{d}x +(\bar{\mu}+\bar{\zeta}) \int _{\mathbb{R}^{3}}\nabla \mathop{\mathrm{div}}\nolimits \partial ^{\alpha}\mathbf{u} \cdot \nabla \partial ^{\alpha}\varrho \mathrm{d}x
 \nonumber \\
 &~\displaystyle \leq\frac{\gamma}{8}\bar{\rho} \int _{\mathbb{R}^{3}}\vert \nabla \partial ^{\alpha}\varrho\vert ^{2}\mathrm{d}x +C\Big( \| \nabla _{h}\mathbf{u} \| _{H ^{N}}^{2}+\|\mathop{\mathrm{div}}\nolimits \mathbf{u}\|_{H ^{N}}^{2} \Big) ,
\end{align}
where $ C>0 $ is a positive constant that is independent of $ T $. For the third term on the right hand side of \eqref{jiaocha0}, recalling the definition of $ S _{2} $ in \eqref{nonlinear}, we get
\begin{align*}
&\displaystyle  \int _{\mathbb{R}^{3}}\partial ^{\alpha}S _{2}\cdot \nabla \partial ^{\alpha}\varrho \mathrm{d}x
 \nonumber \\
&~ \displaystyle \leq \frac{\gamma \bar{\rho}}{8}\int _{\mathbb{R}^{3}}\vert \nabla \partial ^{\alpha}\varrho\vert ^{2}\mathrm{d}x +C\|\partial ^{\alpha}S _{2}\|_{L ^{2}}^{2}
  \nonumber \\[2mm]
  & ~\displaystyle \leq  \frac{\gamma \bar{\rho}}{8}\int _{\mathbb{R}^{3}}\vert \nabla \partial ^{\alpha}\varrho\vert ^{2}\mathrm{d}x+ C\| \partial ^{\alpha}(g(\varrho)\Delta _{h}\mathbf{u})\|_{L ^{2}}^{2}+ C\|\partial ^{\alpha}(g(\varrho)\nabla \mathop{\mathrm{div}}\nolimits \mathbf{u})\|_{L ^{2}}^{2}
   \nonumber \\[2mm]
   & ~\displaystyle \quad+ C\|\partial ^{\alpha}(f(\varrho)\nabla \varrho)\|_{L ^{2}}^{2} + C\|\partial ^{\alpha}(\mathbf{u}\cdot \nabla \mathbf{u})\|_{L ^{2}} ^{2}
    \nonumber \\[2mm]
    &~ \displaystyle \leq  \frac{\gamma \bar{\rho}}{8}\int _{\mathbb{R}^{3}}\vert \nabla \partial ^{\alpha}\varrho\vert ^{2}\mathrm{d}x+ C\|g(\varrho)\|_{L ^{\infty}}^{2}\Big( \|\nabla ^{k}\Delta _{h}\mathbf{u}\|_{L ^{2}}^{2}+\|\nabla ^{k+1}\mathop{\mathrm{div}}\nolimits \mathbf{u}\|_{L ^{2}}^{2} \Big)
     \nonumber \\[2mm]
     &~ \displaystyle \quad+ C\sum^{k}_{|\beta|=1}\|\partial^{\beta}g(\varrho)\|^2_{L^3}\big(\|\partial^{\alpha-\beta}\Delta_h\mathbf{u}\|^2_{L^6}+\|\partial^{\alpha-\beta}\nabla\mathop{\mathrm{div}}\nolimits \mathbf{u}\|^2_{L^6}\big)\nonumber\\
      &~\displaystyle\quad+C\|f(\varrho)\|^2_{L ^{\infty}}\|\nabla ^{k+1} \varrho\|_{L ^{2}}^{2}+C\sum^{k}_{|\beta|=1}\|\partial^{\beta}f(\varrho)\|^2_{L^3}\|\partial^{\alpha-\beta}\nabla\varrho\|^2_{L^6}+C\|\partial ^{\alpha}(\mathbf{u}\cdot \nabla \mathbf{u})\|_{L ^{2}} ^{2}
      \nonumber \\[2mm]
      &~ \displaystyle \leq \frac{\gamma \bar{\rho}}{8}\int _{\mathbb{R}^{3}}\vert \nabla \partial ^{\alpha}\varrho\vert ^{2}\mathrm{d}x+C\|\varrho\|^2_{H ^{N}}\|\nabla \varrho\|_{H ^{N-1}}^{2} +C\|\partial ^{\alpha}(\mathbf{u}\cdot \nabla \mathbf{u})\|_{L ^{2}} ^{2}
       \nonumber \\[2mm]
       &~\displaystyle \quad+C\Big( \|\nabla _{h}\mathbf{u}\|_{H^{N}}^{2}+\|\mathop{\mathrm{div}}\nolimits \mathbf{u}\|_{H ^{N} } ^{2} \Big)\|\varrho\|^2_{H ^{N}},
\end{align*}
with
\begin{align}\label{pa-3-u3-trans}
&\displaystyle \|\partial ^{\alpha}(\mathbf{u}\cdot \nabla \mathbf{u})\|_{L ^{2}} ^{2}\nonumber\\[2mm]
& \lesssim \|\partial ^{\alpha}(\mathbf{u}_{h}\cdot \nabla _{h}\mathbf{u})\|_{L ^{2}}^{2} +\|\partial ^{\alpha}(u _{3}\partial _{3}\mathbf{u} _{h})\| _{L ^{2}}^{2}+\|\partial ^{\alpha}(u _{3}\partial _{3}u _{3})\|_{L ^{2}}^{2}
 \nonumber \\[2mm]
 & \lesssim \|\mathbf{u}\|^2_{L ^{\infty}} \|\nabla _{h}\mathbf{u}\|_{H^{N}}^{2}+\sum^k_{|\beta|=1}\|\partial^{\beta}\mathbf{u}_h\|^2_{L^3}\| \nabla_h\partial^{\alpha-\beta}\mathbf{u}\|^2_{L^6}+\|u_3\partial^{\alpha}\partial_3\mathbf{u}_h\|^2_{L^2}\nonumber\\
 &\quad+\sum^{k}_{|\beta|=1}\|\partial^{\beta}u_3\partial^{\alpha-\beta}\partial_3\mathbf{u}_h\|^2_{L^2} +\sum^{k}_{|\beta|=0}\|\partial^{\beta}u_3\|^2_{L^6}\|\partial^{\alpha-\beta}\partial_3u_3\|^2_{L^3}\nonumber\\[2mm]
 &\lesssim \|\mathbf{u}\|^2_{H^N}\|\nabla_h\mathbf{u}\|^2_{H^N}+\|u_3\|^{\frac{1}{2}}_{L^2}\|\partial_1u_3\|^{\frac{1}{2}}_{L^2}\|\partial_3u_3\|^{\frac{1}{2}}_{L^2}\|\partial_1\partial_3u_3\|^{\frac{1}{2}}_{L^2}\|\partial^{\alpha}\partial_3\mathbf{u}_h\|_{L^2}
 \|\partial^{\alpha}\partial_3\partial_2\mathbf{u}_h\|_{L^2}\nonumber\\[2mm]
 &\quad+\sum^k_{|\beta|=1}\|\partial^{\alpha-\beta}\partial_3\mathbf{u}_h\|^{\frac{1}{2}}_{L^2}\|\partial^{\alpha-\beta}\partial_3\partial_1\mathbf{u}_h\|^{\frac{1}{2}}_{L^2}\|\partial^{\alpha-\beta}\partial^2_3\mathbf{u}_h\|^{\frac{1}{2}}_{L^2}\|\partial^{\alpha-\beta}\partial^2_3\partial_1\mathbf{u}_h\|^{\frac{1}{2}}_{L^2}
 \|\partial^{\beta}u_3\|_{L^2}\|\partial^{\beta}\partial_2u_3\|_{L^2}\nonumber\\[2mm]
 &\quad+\|u _{3}\|^2 _{H^N}\Big( \|\mathop{\mathrm{div}}\nolimits \mathbf{u}\|_{H ^{N} } ^{2}+ \|\nabla _{h}\mathbf{u}\| _{H ^{N}}^{2} \Big)
  \nonumber \\[2mm]
  &\lesssim \Big( \|\nabla _{h}\mathbf{u}\|_{H^{N}}^{2}+\|\mathop{\mathrm{div}}\nolimits \mathbf{u}\|_{H ^{N} } ^{2} \Big)\|\mathbf{u}\|^2 _{H ^{N}}
\end{align}
for $ N \geq 3 $, where we have used Lemma \ref{lemma2.1}, Lemma \ref{lemma3.2} and the following inequality
\begin{align*}
\displaystyle  \|\nabla ^{k}\nabla u _{3}\| _{L ^{2}} \lesssim \|\nabla ^{k}\nabla _{h}\mathbf{u}\|_{L ^{2}}+\|\nabla ^{k}\mathop{\mathrm{div}}\nolimits \mathbf{u}\|_{L ^{2}}.
\end{align*}
Therefore we get
\begin{align}
\displaystyle  \int _{\mathbb{R}^{3}}\partial ^{\alpha}S _{2}\cdot \nabla \partial ^{\alpha}\varrho \mathrm{d}x &\leq C \Big( \|\nabla _{h}\mathbf{u}\|_{H^{N}}^{2}+\|\mathop{\mathrm{div}}\nolimits \mathbf{u}\|_{H ^{N} } ^{2} \Big)\Big(\|\mathbf{u}\| _{H ^{N}}+\|\varrho\|_{H ^{N}}\Big)
 \nonumber \\[2mm]
 &\displaystyle \quad+\frac{\gamma \bar{\rho}}{8}\int _{\mathbb{R}^{3}}\vert \nabla \partial ^{\alpha}\varrho\vert ^{2}\mathrm{d}x+\|\varrho\|_{H ^{N}}\|\nabla \varrho\|_{H ^{N-1}}^{2}.
\end{align}
For the last term on the right hand side of \eqref{jiaocha0}, from \eqref{apriori-assumption},  $ \eqref{high}_{1} $ and Lemma \ref{lemma3.2}, we get for $ N \geq 3 $ that
\begin{align}\label{na-vrho-final}
&\displaystyle  \int _{\mathbb{R}^{3}}\partial ^{\alpha}\mathbf{u}\cdot \nabla \partial ^{\alpha}\partial _{t}\varrho \mathrm{d}x=- \int _{\mathbb{R}^{3}}\partial ^{\alpha}\mathop{\mathrm{div}}\nolimits \mathbf{u}\cdot \partial ^{\alpha}\partial _{t}\varrho \mathrm{d}x
 \nonumber \\
 &~\displaystyle = \bar{\rho}\int _{\mathbb{R}^{3}}\vert \partial ^{\alpha}\mathop{\mathrm{div}}\nolimits\mathbf{u}\vert ^{2}\mathrm{d}x+ \int _{\mathbb{R}^{3}}\partial ^{\alpha}\mathop{\mathrm{div}}\nolimits \mathbf{u} \cdot \partial ^{\alpha} (\varrho \mathop{\mathrm{div}}\nolimits \mathbf{u}+\mathbf{u}\cdot \nabla \varrho)\mathrm{d}x
  \nonumber \\[2mm]
  & ~\displaystyle\lesssim \|\mathop{\mathrm{div}}\nolimits \mathbf{u}\|_{H ^{N}}^{2}+ \|\mathop{\mathrm{div}}\nolimits \mathbf{u}\|_{H ^{N}}^{2}\|\varrho\|_{H ^{N}}+\|\mathop{\mathrm{div}}\nolimits \mathbf{u}\|_{H ^{N}}\|\mathbf{u}\|_{H ^{N}}\|\nabla \varrho\|_{H ^{N-1}}
   \nonumber \\[2mm]
   &~\displaystyle \lesssim \|\mathop{\mathrm{div}}\nolimits \mathbf{u}\|_{H ^{N}}^{2}+ \|\mathbf{u}\|_{H ^{N}}\|\nabla \varrho\|_{H ^{N-1}}^{2},
\end{align}
where we have used the fact $ \|\varrho\|_{H ^{N}} \lesssim 1$ from \eqref{apriori-assumption}. Gathering \eqref{jiaocha0}--\eqref{na-vrho-final}, we have
\begin{align}
&\displaystyle \frac{\mathrm{d}}{\mathrm{d}t}\int_{\mathbb{R}^3}\partial ^{\alpha}\mathbf{u}\cdot\nabla \partial ^{\alpha}\varrho\mathrm{d}x+ \frac{3}{4}\gamma\bar{\rho}\int_{\mathbb{R}^3} \vert \nabla \partial ^{\alpha}\varrho\vert^2\mathrm{d}x
 \nonumber \\
 &~\displaystyle \lesssim \|\nabla _{h}\mathbf{u}\|_{H^{N}}^{2}+\|\mathop{\mathrm{div}}\nolimits \mathbf{u}\|_{H ^{N} } ^{2} + \Big(  \|\mathbf{u}\|_{H ^{N}}+\|\varrho\|_{H ^{N}}\Big) \|\nabla \varrho\|_{H ^{N-1}}^{2},
\end{align}
which followed by a summation with respect to $ \alpha $ with $ 0 \leq\vert \alpha\vert \leq N-1$ further yields that
\begin{align*}
\displaystyle  &\displaystyle \sum _{\vert \alpha\vert=0}^{N-1}\frac{\mathrm{d}}{\mathrm{d}t}\Big\langle\partial^{\alpha}\mathbf{u},\partial^{\alpha}\nabla\varrho \Big\rangle+\frac{\gamma \bar{\rho}}{2}\|\nabla \varrho\|_{H ^{N-1}} ^2
\lesssim \Big(\|\nabla_h\mathbf{u}\|^2_{N}+\|\mathop{\mathrm{div}}\nolimits\mathbf{u}\|^2_{H^N}\Big),
\end{align*}
 where we have used \eqref{apriori-assumption} with $ \delta $ being suitably small. This gives \eqref{jiaocha1}.

  \textbf{Step 3.} Close of the estimates. Notice that there exists a suitably small constant $ \kappa _{0}>0 $ such that
 \begin{gather}\label{U-1}
 \displaystyle  \gamma\|\varrho\| _{H ^{N} }^2+\|\mathbf{u}\|_{H ^{N}}^2
 +\kappa _{0}\sum\limits_{\vert \alpha\vert\leq N-1}\Big\langle  \partial^{\alpha}_{x}\mathbf{u},\partial^{\alpha}_{x}\nabla\rho \Big\rangle\sim\|U(t)\| _{H ^{N}}^2.
 \end{gather}
 Then, we get after integrating $ \eqref{a}+\eqref{na-vrho-final}\times \kappa _{0}$ over $ [0,t]\subset [0,T] $ that
 \begin{align}\label{esti-exist-final}
 \displaystyle \|U(t)\|_{H ^{N}}^{2}+ \int _{0}^{t}\left(\|\nabla_h\mathbf{u}\|^2_{N}+\|\mathop{\mathrm{div}}\nolimits\mathbf{u}\|^2_{H^N}+\|\nabla \varrho\|_{H ^{N-1}}^{2}\right)\mathrm{d}\tau \lesssim  \|U(0)\|_{H ^{N}}^{2}=\|(\varrho _{0},\mathbf{u}_{0})\|_{H ^{N}}^{2}
 \end{align}
 for suitably small $ \delta $. This immediately gives \eqref{ine1}, and thus ends the proof of Lemma \ref{priori}.
\end{proof}
With \eqref{ine1}, we can readily verify the \emph{a priori} assumption \eqref{apriori-assumption} by setting $\|(\varrho _{0},\mathbf{u}_{0})\|_{H ^{N}}\leq \delta/2  $. The proof of Proposition \ref{prop3.1} is complete. \hfill $ \square $

\vspace{5mm}

\section{Decay rates of the linearized homogeneous system}\label{SEC-PROOF}

In this section, we shall first study the linearized homogeneous dynamics, and then proceed to the study on the nonlinear stability of the constant equilibrium.

\subsection{Green's function of the linearized system} 
\label{sub:the_linear_homogeneous_dynamics}

The linearized homogeneous system corresponding to \eqref{new} reads
\begin{gather}\label{new-linear}
 \begin{cases}
 \displaystyle\varrho_t+\bar{\rho}\mathop{\mathrm{div}}\nolimits\mathbf{u}=0,\ \ t \geq 0,\ \ x \in \mathbb{R}^{3},\\[2mm]
 \displaystyle\mathbf{ u}_t+\gamma\bar{\rho}\nabla \varrho -\bar{\mu}\Delta_h \mathbf{u}-(\bar{\mu}+\bar{\zeta})\nabla \mathop{\mathrm{div}}\nolimits \mathbf{u}=\mathbf{0}.
 \end{cases}
 \end{gather}
Here and in the sequel, for simplicity, we use the same notations $ (\varrho,\mathbf{u}) $ as in \eqref{new} to denote the unknown functions. Denote $ U=(\varrho,\mathbf{u}) $. We shall explicitly solve \eqref{new-linear} suplemented with the initial data
\begin{align}\label{initial-linear}
\displaystyle U \vert _{t=0}=U _{0}=(\varrho _{0},\mathbf{u}_{0}),\ \ x  \in \mathbb{R}^{3}.
\end{align}
In terms of the Fourier transform in $x$, the system \eqref{new-linear} is equivalent to
\begin{gather}\label{f-new-linear}
\begin{cases}
\displaystyle \hat{\varrho}_t+\bar{\rho}i\xi\cdot\hat{\mathbf{u}}=0,\\[2mm]
\displaystyle\hat{\mathbf{u}}_t+\gamma\bar{\rho}i\xi\hat{\varrho}+\bar{\mu}|\xi_{h}|^2\hat{\mathbf{u}}+(\bar{\mu}+\bar{\zeta})\xi\xi^T\hat{\mathbf{u}}=\mathbf{0}.
\end{cases}
\end{gather}
Furthermore, by taking the inner product of $\eqref{f-new-linear}_2$ with $\tilde{\xi}:=\frac{\xi}{|\xi|}$ for $ \xi \neq \mathbf{0} $, we have
\begin{equation*}
\tilde{\xi}\cdot\hat{\mathbf{u}}_t+\gamma\bar{\rho}i\vert \xi\vert\hat{\varrho}+\bar{\mu}\vert \xi _{h}\vert ^2\tilde{\xi}\cdot\hat{\mathbf{u}}+(\bar{\mu}+\bar{\zeta})\vert \xi\vert^2\tilde{\xi}\cdot\hat{\mathbf{u}}=0,
\end{equation*}
which alongside $ \eqref{new-linear}_{1} $ forms the following system
\begin{gather}\label{f-new-linear1}
\begin{cases}
\hat{\varrho}_t+\bar{\rho}i\xi\cdot\hat{\mathbf{u}}=0,\\[2mm]
\tilde{\xi}\cdot\hat{\mathbf{u}}_t+\gamma\bar{\rho}i \vert \xi\vert\hat{\varrho}+\bar{\mu}\vert \xi_h\vert^2\tilde{\xi}\cdot\hat{\mathbf{u}}+(\bar{\mu}+\bar{\zeta})\vert \xi\vert^2\tilde{\xi}\cdot\hat{\mathbf{u}}=0.
\end{cases}
\end{gather}
We remark that the system \eqref{f-new-linear1} is a variant of the Fourier system corresponding to the linearized system of the equations for $ (\varrho,\mathop{\mathrm{div}}\nolimits \mathbf{u}) $. To proceed, denote $ \hat{V}=(\hat{\varrho},\tilde{\xi}\cdot \hat{\mathbf{u}}) $. Then, we can rewrite \eqref{f-new-linear1} as
\begin{align*}
\displaystyle \partial _{t}\hat{V}=A(\xi)\hat{V},
\end{align*}
with
\begin{align*}
A(\xi)=\left(
\begin{matrix}
\displaystyle
0 & -\bar{\rho}i\vert \xi\vert\\[2mm]
\displaystyle-\gamma\bar{\rho}i \vert \xi\vert & -\bar{\mu} \vert \xi_h\vert^2-(\bar{\mu}+\bar{\zeta})\vert \xi\vert^2
\end{matrix}
\right).
\end{align*}
By direct computations, we know that the maxtrix $ A(\xi) $ has two eigenvalues

\begin{align}\label{eig-values}
\begin{cases}
    \displaystyle\lambda_1=\frac{-\bar{\mu}\vert \xi_h\vert^2-(\bar{\mu}+\bar{\zeta})\vert \xi\vert^2-\sqrt{(\bar{\mu}\vert \xi_h\vert^2+(\bar{\mu}+\bar{\zeta})\vert \xi\vert^2)^2-4\gamma\bar{\rho}^2 \vert \xi\vert^2}}{2},
    \\[2mm]
\displaystyle\lambda_2=\frac{-\bar{\mu}\vert \xi_h\vert^2-(\bar{\mu}+\bar{\zeta})\vert \xi\vert^2+\sqrt{(\bar{\mu}\vert \xi_h\vert^2+(\bar{\mu}+\bar{\zeta})\vert \xi\vert^2)^2-4\gamma\bar{\rho}^2 \vert \xi\vert^2}}{2}.
\end{cases}
\end{align}
The corresponding eigenvectors of $ \lambda _{1} $ and $ \lambda _{2} $, respectively, read
\begin{align*}
v_1=\left(\begin{matrix}
\bar{\rho}i \vert \xi\vert\\[2mm]
-\lambda_1
\end{matrix}\right)\ \  \mbox{and}\ \ v_2=\left(\begin{matrix}
\bar{\rho}i|\xi|\\[2mm]
-\lambda_2
\end{matrix}\right),
\end{align*}
from which we can set the representation of the solution $ \hat{V} $ as
\begin{align*}
\displaystyle \hat{V}= \left(\begin{matrix}
\bar{\rho}i\vert \xi\vert e^{\lambda_1t} & \bar{\rho}i\vert \xi\vert e^{\lambda_2t}\\[2mm]
-\lambda_1e^{\lambda_1t} & -\lambda_2e^{\lambda_2t}
\end{matrix}\right)\left(
\begin{matrix}
d_1 \\[2mm] d_2
\end{matrix}
\right),
\end{align*}
where $d_1$ and $d_2$ are determined by
\begin{equation}\label{general solution1}
\left(\begin{matrix}
\hat{\varrho} _{0}\\[2mm]
\tilde{\xi}\cdot\hat{\mathbf{u}}_{0}
\end{matrix}
\right)=\left(\begin{matrix}
\bar{\rho}i \vert \xi\vert & \bar{\rho}i\vert \xi\vert\\[2mm]
-\lambda_1 & -\lambda_2
\end{matrix}\right)
\left(
\begin{matrix}
d_1 \\[2mm] d_2
\end{matrix}
\right).
\end{equation}
That is,
\begin{equation*}
\left(\begin{matrix}
d_1\\[2mm]
d_2
\end{matrix}
\right)=\frac{1}{\bar{\rho}i \vert \xi\vert (\lambda_1-\lambda_2)}\left(\begin{matrix}
-\lambda_2 & -\bar{\rho}i \vert \xi\vert\\[2mm]
\lambda_1 & \bar{\rho}i \vert \xi\vert
\end{matrix}\right)\left(\begin{matrix}
\hat{\rho}_0\\
\tilde{\xi}\cdot \hat{\mathbf{u}}_0
\end{matrix}\right).
\end{equation*}
Therefore we get from \eqref{general solution1} that
\begin{align}\label{rhohat}
\hat{\varrho}=\frac{\lambda_1e^{\lambda_2t}-\lambda_2e^{\lambda_1t}}{\lambda_1-\lambda_2}\hat{\varrho}_0-i\bar{\rho}
\frac{e^{\lambda_1t}-e^{\lambda_2t}}{\lambda_1-\lambda_2}\xi\cdot\hat{\mathbf{u}}_0,
\end{align}
and
\begin{align}\label{uhat}
\tilde{\xi}\cdot\hat{\mathbf{u}}=\frac{\lambda_1\lambda_2}{\bar{\rho}i|\xi|}\frac{e^{\lambda_1t}-e^{\lambda_2t}}{\lambda_1-\lambda_2}\hat{\varrho}_0+\frac{\lambda_1e^{\lambda_1t}-\lambda_2e^{\lambda_2t}}{\lambda_1-\lambda_2}\tilde{\xi}\cdot\hat{\mathbf{u}}_0\ \ \mbox{for }\xi \neq \mathbf{0}.
\end{align}
Next, applying $ \nabla \times $ to $ \eqref{new-linear}_{2} $, we get
\begin{equation}\label{xuanduu}
\partial_t(\nabla\times \mathbf{u})-\bar{\mu}\Delta_h\nabla\times \mathbf{u}=\mathbf{0}.
\end{equation}
Taking the Fourier transform of the equations in \eqref{xuanduu} with resepect to the $x$ variable, we have
\begin{equation*}
\displaystyle\partial_t(\tilde{\xi}\times \hat{\mathbf{u}})+\bar{\mu}\vert \xi _{h}\vert ^{2} (\tilde{\xi}\times\hat{\mathbf{u}})=\mathbf{0}.
\end{equation*}
This along with the initial data
\begin{gather*}
(\tilde{\xi}\times\hat{\mathbf{u}})|_{t=0}=\tilde{\xi}\times \hat{\mathbf{u}}_0,
\end{gather*}
further implies that
\begin{equation}\label{curl-fourier}
(\tilde{\xi}\times\hat{\mathbf{u}})=e^{-\bar{\mu}\vert \xi_h\vert ^{2} t}\tilde{\xi}\times  \hat{\mathbf{u}}_0.
\end{equation}
Notice that
\begin{align*}
\displaystyle \nabla \times(\nabla \times \mathbf{u})=\nabla \mathop{\mathrm{div}}\nolimits \mathbf{u}- \Delta \mathbf{u}.
\end{align*}
Then we get
\begin{align*}
\displaystyle  \hat{\mathbf{u}}=\tilde{\xi}\tilde{\xi}\cdot\hat{\mathbf{u}}-\tilde{\xi}\times(\tilde{\xi}\times\hat{\mathbf{u}})
\end{align*}
for any $ \xi \in \mathbb{R}^{3} $ with $  \xi \neq \mathbf{0} $. This along with \eqref{uhat} and \eqref{curl-fourier} yields that
\begin{align}\label{u-formula-fourie}
\displaystyle \displaystyle\hat{\mathbf{u}}=\frac{\lambda_1\lambda_2\xi}{\bar{\rho}i \vert \xi\vert^2}\left(\frac{e^{\lambda_1t}-e^{\lambda_2t}}{\lambda_1-\lambda_2}\right)\hat{\varrho}_0+\left(\frac{\lambda_1e^{\lambda_1t}-\lambda_2e^{\lambda_2t}}{\lambda_1-\lambda_2}\right)\tilde{\xi}\tilde{\xi}\cdot\hat{\mathbf{u}}_0-e^{-\bar{\mu}\vert \xi_h\vert^2t}\tilde{\xi}\times(\tilde{\xi}\times \hat{\mathbf{u}}_0)
\end{align}
for any $ \xi \in \mathbb{R}^{3} $ with $ \xi \neq \mathbf{0} $.

To sum up, from \eqref{rhohat}, \eqref{uhat} and \eqref{u-formula-fourie}, we have
\begin{proposition}\label{prop-Green}
Let $ U=(\varrho, \mathbf{u}) $ satisfy the system \eqref{new-linear} supplemented with the initial data in \eqref{initial-linear}. Assume that the solution is written in the form of
\begin{gather}\label{GREEN-REPRE}
 \displaystyle U=G \ast U   _{0}
 \end{gather}
 for the Green function $ G=G(t,x) $. Then $ G $ is defined by
\begin{equation}\label{Gree-defi}
\hat{G}(t,\xi)=\left(\begin{matrix}
\hat{G}_{11}&\hat{G}_{12}\\[2mm]
\hat{G}_{21}&\hat{G}_{22}
\end{matrix}
\right),
\end{equation}
with
\begin{gather}
\displaystyle  \hat{G}_{11}=\frac{\lambda_1e^{\lambda_2t}-\lambda_2e^{\lambda_1t}}{\lambda_1-\lambda_2}, \ \ \hat{G}_{12}=-\bar{\rho}i\frac{e^{\lambda_1t}-e^{\lambda_2t}}{\lambda_1-\lambda_2}\xi, \label{G-11-12}\\[2mm]
\displaystyle \hat{G}_{21}=\frac{\lambda_1\lambda_2\xi}{\bar{\rho}i|\xi|^2}\frac{e^{\lambda_1t}-e^{\lambda_2t}}{\lambda_1-\lambda_2},\ \
\hat{G}_{22}=\frac{\lambda_1e^{\lambda_1t}-\lambda_2e^{\lambda_2t}}{\lambda_1-\lambda_2}\frac{\xi\otimes\xi}{|\xi|^2}+e^{-\bar{\mu}|\xi_h|^2t}\left(I_3-\frac{\xi\otimes\xi}{|\xi|^2}\right)\label{G-12-22}
\end{gather}
for $ \xi \neq \mathbf{0} $. Furthermore, we have
\begin{align}\label{divu-repres}
 \displaystyle \widehat{\mathop{\mathrm{div}}\nolimits \mathbf{u}}&=
 \frac{\lambda_1\lambda_2}{\bar{\rho}}\frac{e^{\lambda_1t}-e^{\lambda_2t}}{\lambda_1-\lambda_2}\hat{\varrho}_0+\frac{\lambda_1e^{\lambda_1t}-\lambda_2e^{\lambda_2t}}{\lambda_1-\lambda_2}\widehat{\mathop{\mathrm{div}}\nolimits \mathbf{u}_{0}}
  \nonumber \\
  & \displaystyle =:\hat{G}_{1}^{\ast}\hat{\varrho}_{0}+\hat{G}_{2}^{\ast}\widehat{\mathop{\mathrm{div}}\nolimits\mathbf{u }_{0}}=(\hat{G}_{1}^{\ast}\ \ \hat{G}_{2}^{\ast} )\left(\begin{matrix}
\hat{\varrho} _{0}\\[2mm]
\widehat{\mathop{\mathrm{div}}\nolimits\mathbf{u}_{0}}
\end{matrix}
\right)=:\hat{G}^{\ast}\left(\begin{matrix}
\hat{\varrho} _{0}\\[2mm]
\widehat{\mathop{\mathrm{div}}\nolimits\mathbf{u}_{0}}
\end{matrix}
\right)
 \end{align}
 for any $ \xi \neq \mathbf{0} $.
\end{proposition}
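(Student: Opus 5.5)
Since the proposition only repackages the Fourier computation carried out just before it, the plan is to verify that the explicit formulas \eqref{G-11-12}--\eqref{G-12-22} give the solution of \eqref{f-new-linear} with data $\hat U_0$. Then $U=G\ast U_0$ follows by inverse Fourier transform. The first step is to fix $\xi\neq\mathbf 0$ and decompose $\hat{\mathbf u}$ orthogonally into its longitudinal part $\tilde\xi(\tilde\xi\cdot\hat{\mathbf u})$ and its transversal part $-\tilde\xi\times(\tilde\xi\times\hat{\mathbf u})=(I_3-\tilde\xi\otimes\tilde\xi)\hat{\mathbf u}$. This uses the vector identity $a\times(a\times b)=a(a\cdot b)-|a|^2b$ with $|\tilde\xi|=1$.

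Next, I dot \eqref{f-new-linear}$_2$ with $\tilde\xi$. Using $\xi\xi^T\hat{\mathbf u}=|\xi|\,\xi(\tilde\xi\cdot\hat{\mathbf u})$, this gives the closed $2\times2$ system $\partial_t\hat V=A(\xi)\hat V$ for $\hat V=(\hat\varrho,\tilde\xi\cdot\hat{\mathbf u})$. Its solution is $e^{tA}\hat V_0$. When $\lambda_1\neq\lambda_2$, I write it via the eigen-decomposition computed above: solve \eqref{general solution1} for $(d_1,d_2)$ and substitute, using $\lambda_1+\lambda_2=\operatorname{tr}A$ and $\lambda_1\lambda_2=\det A=\gamma\bar\rho^2|\xi|^2$. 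This produces \eqref{rhohat} and \eqref{uhat}. The only simplification needed is $-\bar\rho i|\xi|\,\tilde\xi\cdot\hat{\mathbf u}_0=-\bar\rho i\,\xi\cdot\hat{\mathbf u}_0$, which yields $\hat G_{11}$ and $\hat G_{12}$.

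For the transversal part, I apply $\tilde\xi\times$ to \eqref{f-new-linear}$_2$. The terms $\gamma\bar\rho i\xi\hat\varrho$ and $\xi\xi^T\hat{\mathbf u}$ are parallel to $\xi$ and are annihilated, so $\tilde\xi\times\hat{\mathbf u}$ solves a scalar ODE with rate $\bar\mu|\xi_h|^2$, giving \eqref{curl-fourier}. I then recombine:
\[
\hat{\mathbf u}=\tilde\xi(\tilde\xi\cdot\hat{\mathbf u})+e^{-\bar\mu|\xi_h|^2t}(I_3-\tilde\xi\otimes\tilde\xi)\hat{\mathbf u}_0 .
\]
Inserting \eqref{uhat}, and noting that $\tilde\xi\,\tilde\xi\cdot\hat{\mathbf u}_0=\frac{\xi\otimes\xi}{|\xi|^2}\hat{\mathbf u}_0$, gives $\hat G_{21}$ and $\hat G_{22}$. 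For \eqref{divu-repres}, I use $\widehat{\operatorname{div}\mathbf u}=i\xi\cdot\hat{\mathbf u}=i|\xi|\,\tilde\xi\cdot\hat{\mathbf u}$. Multiplying \eqref{uhat} by $i|\xi|$ turns the first coefficient into $\lambda_1\lambda_2/\bar\rho$ and the second term into $\widehat{\operatorname{div}\mathbf u_0}$.

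The only delicate point is the degenerate set where $\lambda_1=\lambda_2$, i.e. where the discriminant $(\bar\mu|\xi_h|^2+(\bar\mu+\bar\zeta)|\xi|^2)^2-4\gamma\bar\rho^2|\xi|^2$ vanishes. There the eigenvector basis collapses and the quotients look singular. I would handle it by observing that each coefficient is a divided difference: for example, $\frac{e^{\lambda_1t}-e^{\lambda_2t}}{\lambda_1-\lambda_2}$ and $\frac{\lambda_1e^{\lambda_2t}-\lambda_2e^{\lambda_1t}}{\lambda_1-\lambda_2}$ are symmetric entire functions of $(\lambda_1,\lambda_2)$. Hence they are entire functions of $\lambda_1+\lambda_2$ and $\lambda_1\lambda_2$, which are polynomials in $\xi$. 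So the formulas extend continuously, with $te^{\lambda t}$-type limits, and still equal $e^{tA(\xi)}$ by continuity in $\xi$. The excluded point $\xi=\mathbf 0$ has measure zero and does not affect the convolution representation. Complex-conjugate eigenvalues cause no issue, since the algebra is identical.
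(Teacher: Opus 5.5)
Your proposal is correct and takes the same route as the paper. You dot with $\tilde\xi$ to get the $2\times 2$ system for $(\hat\varrho,\tilde\xi\cdot\hat{\mathbf u})$ and solve it through the eigenvectors; you get $\tilde\xi\times\hat{\mathbf u}$ from the decoupled horizontal heat factor $e^{-\bar\mu|\xi_h|^2t}$; and you recombine via $\hat{\mathbf u}=\tilde\xi\,\tilde\xi\cdot\hat{\mathbf u}-\tilde\xi\times(\tilde\xi\times\hat{\mathbf u})$. Your extra treatment of the set where $\lambda_1=\lambda_2$ (there $A(\xi)$ is a genuine Jordan block), by continuously extending the divided differences, fills in a point the paper passes over silently.
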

\begin{remark}
Clearly, it follows from \eqref{GREEN-REPRE} that
\begin{align}\label{ysf}
\displaystyle \left(\begin{matrix}
\hat{\varrho}(t,\xi)\\[2mm]
\hat{\mathbf{u}}(t,\xi)
\end{matrix}\right)=\hat{G}(t,\xi)\left(\begin{matrix}
\hat{\varrho}(0,\xi)\\[2mm]
\hat{\mathbf{u}}(0,\xi)
\end{matrix}
\right)=:\hat{G}(t,\xi)\left(\begin{matrix}
\hat{\varrho} _{0}\\[2mm]
\hat{\mathbf{u}}_{0}
\end{matrix}
\right),
\end{align}
where $ \hat{G}(t,\xi) $ is as in \eqref{Gree-defi}.
\end{remark}

\vspace{3mm}

\subsection {Time-decay property} In this subsection, we shall utilize \eqref{ysf} to
obtain some the $L^2$ time-decay properties for $U =(\varrho,\mathbf{u})$. To this attempt, we first derive some pointwise estimates on the Green function $ G(t,x) $. Notice that the decay properties of the solutions can be derived from the ones of their Fourier transforms. So we here mainly focus on the estimates on the Fourier transform of $ G $. Motivated by \cite{LWZ-2022}, we shall carry out the analysis based on a delicate decomposition of the frequency space $ \mathbb{R}^{3} $ into two subdomains. Specifically we have
\begin{proposition}\label{pro1}
Let $\mathbb{R}^3=\mathcal{A}_1\cup \mathcal{A}_2$ with
\begin{gather*}
\mathcal{A}_1=\left\{\xi\in \mathbb{R}^3:\Gamma\leq \frac{(\bar{\mu}|\xi_h|^2+(\bar{\mu}+\bar{\zeta})|\xi|^2)^2}{4}~or~ 3(\bar{\mu}\vert \xi_h\vert^2+(\bar{\mu}+\bar{\zeta})\vert \xi\vert^2)^2\leq 16\gamma\bar{\rho}^2 \vert \xi\vert^2\right\},\\[2mm]
\mathcal{A}_2=\left\{\xi\in \mathbb{R}^3:\Gamma>\frac{(\bar{\mu}\vert \xi_h\vert^2+(\bar{\mu}+\bar{\zeta})\vert \xi\vert^2)^2}{4}~or~3(\bar{\mu}\vert \xi_h\vert^2+(\bar{\mu}+\bar{\zeta}) \vert \xi\vert^2)^2> 16\gamma\bar{\rho}^2 \vert \xi\vert^2 \right\},
\end{gather*}
where $\Gamma=(\bar{\mu}|\xi_h|^2+(\bar{\mu}+\bar{\zeta})|\xi|^2)^2-4\gamma\bar{\rho}^2|\xi|^2$.
Then it holds that
\begin{gather}
\mathop{\mathrm{Re}}\lambda_1\leq -\frac{\bar{\mu}|\xi_h|^2+(\bar{\mu}+\bar{\zeta})\vert \xi\vert^2}{2},\ \ \  \mathop{\mathrm{Re}}\lambda_2\leq -\frac{\bar{\mu}\vert \xi_h\vert^2+(\bar{\mu}+\bar{\zeta})\vert \xi\vert^2}{4},
 \nonumber \\[2mm]
\displaystyle\vert \hat{G}_{11}\vert \lesssim  e^{-\vartheta \vert \xi\vert^2t},\ \ \  \vert \hat{G}_{12}\vert \lesssim e^{-\vartheta \vert \xi\vert^2t},\label{G-11-12-esti}\\[2mm]
\vert \hat{G}_{21}\vert\lesssim e^{-\vartheta \vert \xi\vert^2t}, \ \  \ \vert \hat{G}_{22}\vert \lesssim e^{-\vartheta \vert \xi\vert^2t}+Ce^{-\bar{\mu}\vert \xi_h\vert^2t}, \label{G-21-22-esti}\\[2mm]
\displaystyle\vert\hat{G}^*_1\vert \lesssim  e^{-\vartheta \vert \xi\vert^2t},\ \ \  \vert \hat{G}^*_2\vert \lesssim e^{-\vartheta \vert \xi\vert^2t}
 \nonumber
\end{gather}
 for any $\xi\in \mathcal{A}_1$, and that
\begin{gather*}
\lambda_1<-\frac{3(\bar{\mu}\vert \xi_h\vert^2+(\bar{\mu}+\bar{\zeta})\vert \xi\vert^2)}{4}, \ \ \ \lambda_2\leq -\frac{\gamma\bar{\rho}^2}{2\bar{\mu}+\bar{\zeta}},\\[2mm]
\vert \hat{G}_{11}\vert \lesssim e^{-\vartheta \vert \xi\vert^2t}+e^{- \eta t},\ \ \  \vert \hat{G}_{12}\vert \lesssim e^{-\vartheta \vert \xi\vert^2t}+e^{- \eta t},\\[2mm]
\vert \hat{G}_{21}\vert\lesssim e^{-\vartheta \vert \xi\vert^2t}+ e^{- \eta t}, \ \ \
 \vert \hat{G}_{22}\vert \lesssim e^{-\vartheta|\xi|^2t}+e^{-\bar{\mu}\vert \xi_h\vert^2t}+e^{- \eta t},\\[2mm]
 \vert \hat{G}_1^{\ast}\vert\lesssim e^{-\vartheta \vert \xi\vert^2t}+ e^{- \eta t}, \ \ \
 \vert \hat{G}^{\ast}_2\vert \lesssim e^{-\vartheta|\xi|^2t}+e^{- \eta t}
\end{gather*}
for any $\xi\in \mathcal{A}_2$. Here $ \vartheta = \frac{\bar{\mu}+\bar{\zeta}}{8} $, $\eta=\frac{\gamma\bar{\rho}^2}{2\bar{\mu}+\bar{\zeta}}$.
\end{proposition}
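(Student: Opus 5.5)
The plan is to set $a(\xi):=\bar{\mu}|\xi_h|^2+(\bar{\mu}+\bar{\zeta})|\xi|^2$ and $b(\xi):=4\gamma\bar{\rho}^2|\xi|^2$, so that $\Gamma=a^2-b$, $\lambda_{1,2}=\frac{-a\mp\sqrt{\Gamma}}{2}$, $\lambda_1+\lambda_2=-a$ and $\lambda_1\lambda_2=\gamma\bar{\rho}^2|\xi|^2$. Since $\mu>0$ and $3\zeta+2\mu>0$ give $\mu+\zeta>\mu/3>0$, we have
\begin{equation*}
(\bar{\mu}+\bar{\zeta})|\xi|^2\le a\le(2\bar{\mu}+\bar{\zeta})|\xi|^2 .
\end{equation*}
Next I observe that the two conditions defining $\mathcal{A}_1$ coincide: $\Gamma\le a^2/4$ is equivalent to $3a^2\le 4b=16\gamma\bar{\rho}^2|\xi|^2$. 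Hence $\mathcal{A}_1=\{3a^2\le 4b\}$ and $\mathcal{A}_2=\{3a^2>4b\}$. Combined with $a\sim|\xi|^2$, this shows that $\mathcal{A}_1$ is contained in a ball $|\xi|\le R_1$, while $\mathcal{A}_2$ lies outside a ball $|\xi|\ge R_0>0$. Every bound is then read off from the explicit formulas \eqref{G-11-12}--\eqref{divu-repres}. The projections $\xi\otimes\xi/|\xi|^2$ and $I_3-\xi\otimes\xi/|\xi|^2$ are bounded, which immediately produces the extra $e^{-\bar{\mu}|\xi_h|^2t}$ term in $\hat G_{22}$.

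\emph{Region $\mathcal{A}_1$.} The eigenvalue bounds come first. If $\Gamma<0$, then $\mathrm{Re}\,\lambda_{1,2}=-a/2$. If $0\le\Gamma\le a^2/4$, then $\sqrt\Gamma\le a/2$, so $\lambda_2\le -a/4$ and $\lambda_1\le -a/2$. In all cases $|e^{\lambda_i t}|\le e^{-at/4}\le e^{-2\vartheta|\xi|^2t}$. The difficulty is the factor $1/(\lambda_1-\lambda_2)=-1/\sqrt\Gamma$, which can blow up near $\Gamma=0$. I would split $\mathcal{A}_1$ into two pieces.
\begin{itemize}
\item[(i)] On $a^2\le b/2$ we have $|\Gamma|\ge b/2\gtrsim|\xi|^2$, so $|\lambda_1-\lambda_2|\gtrsim|\xi|$. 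Since $|\lambda_i|\lesssim a+|\xi|\lesssim|\xi|$ and $|\lambda_1\lambda_2|\sim|\xi|^2$, every prefactor is bounded: $\lambda_i/(\lambda_1-\lambda_2)$, $|\xi|/(\lambda_1-\lambda_2)$, $\lambda_1\lambda_2/(|\xi|(\lambda_1-\lambda_2))$, and also $\lambda_1\lambda_2/(\lambda_1-\lambda_2)$ for $\hat G_1^\ast$. The bounds $e^{-\vartheta|\xi|^2t}$ follow.
\item[(ii)] On $b/2<a^2\le 4b/3$ we have $a\sim|\xi|$ and $a\sim|\xi|^2$, so $|\xi|\sim1$ and $a\ge c>0$. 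Here I use the identity
\begin{equation*}
\frac{e^{\lambda_1t}-e^{\lambda_2t}}{\lambda_1-\lambda_2}=t\int_0^1e^{(s\lambda_1+(1-s)\lambda_2)t}\,ds ,
\end{equation*}
which gives $|\cdot|\le te^{-at/4}\lesssim e^{-at/8}$. I also rewrite
\begin{equation*}
\hat G_{11}=e^{\lambda_2t}-\lambda_2\frac{e^{\lambda_1t}-e^{\lambda_2t}}{\lambda_1-\lambda_2},\qquad \frac{\lambda_1e^{\lambda_1t}-\lambda_2e^{\lambda_2t}}{\lambda_1-\lambda_2}=e^{\lambda_1t}+\lambda_2\frac{e^{\lambda_1t}-e^{\lambda_2t}}{\lambda_1-\lambda_2}.
\end{equation*}
All remaining prefactors ($|\xi|$, $\lambda_i$, $\lambda_1\lambda_2/|\xi|$) are $O(1)$ on this bounded region, so every entry is $\lesssim e^{-at/8}\lesssim e^{-\vartheta|\xi|^2t}$.
\end{itemize}

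\emph{Region $\mathcal{A}_2$.} Here $\Gamma>a^2/4$, so the eigenvalues are real, $\sqrt\Gamma>a/2$, and $\lambda_1<-3a/4$. For the other eigenvalue I use $\lambda_2=-\frac{b}{2(a+\sqrt\Gamma)}\le -\frac{b}{4a}=-\frac{\gamma\bar{\rho}^2|\xi|^2}{a}\le-\eta$, together with $|\lambda_2|\le b/(2a)\lesssim1$. The gap satisfies $|\lambda_1-\lambda_2|=\sqrt\Gamma>a/2\gtrsim|\xi|^2$. This bounds each prefactor:
\begin{itemize}
\item $|\lambda_1|/\sqrt\Gamma\le 4$ and $|\lambda_2|/\sqrt\Gamma\lesssim1$;
\item $|\xi|/\sqrt\Gamma\lesssim|\xi|^{-1}\lesssim1$ because $|\xi|\ge R_0$;
\item $\lambda_1\lambda_2/(|\xi|\sqrt\Gamma)\lesssim|\xi|^{-1}$ and $\lambda_1\lambda_2/\sqrt\Gamma\lesssim1$.
\end{itemize}
Consequently every entry is bounded by $e^{\lambda_1t}+e^{\lambda_2t}\le e^{-\vartheta|\xi|^2t}+e^{-\eta t}$, plus $e^{-\bar{\mu}|\xi_h|^2t}$ for $\hat G_{22}$.

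I expect the main obstacle to be the near-degenerate set $\Gamma\approx0$ inside $\mathcal{A}_1$, where $\lambda_1-\lambda_2\to0$. The plan above handles it via the integral representation of the divided difference, combined with the observation that this set is confined to $|\xi|\sim1$. That confinement is what lets the factor $t$ be absorbed by halving the exponent.
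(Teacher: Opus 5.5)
Your proof is correct. On $\mathcal{A}_2$ it is essentially the paper's argument: you use the large gap $\sqrt\Gamma>a/2$ and $\lambda_2=-b/(2(a+\sqrt\Gamma))\le-\eta$. The only change is that you bound $|\xi|/\sqrt\Gamma$ via $|\xi|\ge R_0$, where the paper uses $\bar\rho|\xi|\lesssim a$, which makes no real difference.

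On $\mathcal{A}_1$ the two routes genuinely differ.

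\emph{The paper's route.} It keeps $\mathcal{A}_1$ whole and argues entry by entry. For each prefactor $P\in\{\lambda_1,\bar\rho|\xi|,\dots\}$ it compares $|P|$ with $|\sqrt\Gamma|$. If $|P|\le|\sqrt\Gamma|$, the entry is bounded by $|e^{\lambda_1t}|+|e^{\lambda_2t}|$. Otherwise it shows $|P|\lesssim a$. It then absorbs the factor $t$ from the mean-value bound $\bigl|(e^{\lambda_1t}-e^{\lambda_2t})/(\lambda_1-\lambda_2)\bigr|\le te^{-at/4}$ using $at\,e^{-at/8}\le C$.

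\emph{Your route.} You make one split of $\mathcal{A}_1$ that serves all entries at once. Where $a^2\le b/2$, the gap is $\gtrsim|\xi|$ and every prefactor is bounded. The remaining near-degenerate set is confined to $|\xi|\sim1$, where $a$ is bounded below and absorbs $t$ directly. This rests on three observations you make explicit and the paper leaves implicit: the two conditions defining $\mathcal{A}_1$ coincide, $\mathcal{A}_1$ is bounded, and $\mathcal{A}_2$ stays away from the origin.

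\emph{What each route buys.} Yours is uniform over all entries and is also more robust. In the paper's $\hat G_{12}$ case $\bar\rho|\xi|\ge|\sqrt\Gamma|$, it deduces $\sqrt{4\gamma-1}\,\bar\rho|\xi|\le a$, which tacitly requires $4\gamma>1$. This is repairable by switching to a threshold $\bar\rho|\xi|\le K|\sqrt\Gamma|$ with $K$ large. Your thresholds are expressed through $b$ itself and need no condition on $\gamma$. Your integral form of the divided difference also handles the coalescence $\Gamma=0$ without further comment. The paper's $xe^{-x}$ device, in turn, never uses boundedness of $\mathcal{A}_1$; it only needs each surviving prefactor to be $\lesssim a$.
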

\begin{proof}
For $\xi\in \mathcal{A}_1$, we have $\Gamma\leq \frac{(\bar{\mu}\vert \xi_h\vert^2+(\bar{\mu}+\bar{\zeta})\vert \xi\vert^2)^2}{4}$. Recalling the representations of $ \lambda _{1} $ and $ \lambda _{2} $ in \eqref{eig-values}, we get, thanks to the mean-value theorem, that
\begin{gather}
-\frac{3(\bar{\mu}|\xi_h|^2+(\bar{\mu}+\bar{\zeta})|\xi|^2)}{4}\leq \mathop{\mathrm{Re}}\lambda_1\leq -\frac{\bar{\mu}|\xi_h|^2+(\bar{\mu}+\bar{\zeta})|\xi|^2}{2},\label{Re-lam1}\\[2mm]
\mathop{\mathrm{Re}}\lambda_2\leq -\frac{\bar{\mu}|\xi_h|^2+(\bar{\mu}+\bar{\zeta})|\xi|^2}{4}.\label{Re-lam2}
\end{gather}
Then we have
\begin{align}\label{teshu}
\left|\frac{e^{\lambda_1t}-e^{\lambda_2t}}{\lambda_1-\lambda_2}\right|&\leq t\sup_{0\leq s\leq 1}e^{t \mathop{\mathrm{Re}}(s\lambda_2+(1-s)\lambda_1)}\leq te^{-\frac{\bar{\mu}\vert \xi_h\vert^2+(\bar{\mu}+\bar{\zeta})\vert \xi\vert^2}{4}t}.
\end{align}
Let us proceed to the estimates of $\hat{G}_{11}$ and $\hat{G}_{22}$. We will divide the proof into two subcases: $\mathop{\mathrm{Im}}\lambda_1=0$ and $\mathop{\mathrm{Im}}\lambda_1\neq 0$. For the case $\mathop{\mathrm{Im}}\lambda_1=0$, we derive for $\hat{G}_{11}$ that
\begin{align*}
\displaystyle\vert \hat{G}_{11}(\xi)\vert& \leq \left \vert \frac{\lambda_1e^{\lambda_2t}-\lambda_2e^{\lambda_1t}}{\lambda_1-\lambda_2}\right\vert\leq \left \vert \lambda_1\frac{e^{\lambda_1t}-e^{\lambda_2t}}{\lambda_1-\lambda_2}\right\vert+  e^{\lambda_1t}\\[2mm]
\displaystyle&\leq\frac{3(\bar{\mu}\vert \xi_h\vert^2+(\bar{\mu}+\bar{\zeta})\vert \xi\vert^2)}{4}te^{-\frac{(\bar{\mu}\vert \xi_h\vert^2+(\bar{\mu}+\bar{\zeta})\vert \xi\vert^2)}{4}t}+e^{-\frac{\bar{\mu}|\xi_h|^2+(\bar{\mu}+\bar{\zeta})\vert \xi\vert^2}{2}t}\\[2mm]
\displaystyle& \lesssim e^{- \frac{\bar{\mu}+\bar{\zeta}}{8} |\xi|^2t},
\end{align*}
where we have used \eqref{Re-lam1}, \eqref{teshu} and the fact $xe^{-x}\leq C$ for $x\geq 0$.
If $\mathop{\mathrm{Im}}\lambda_1\neq 0$, namely $\Gamma<0$, then it holds that
\begin{equation*}
|\lambda_1|^2=\gamma\bar{\rho}^2|\xi|^2, \ \ \Gamma=(\bar{\mu}|\xi_h|^2+(\bar{\mu}+\bar{\zeta})|\xi|^2)^2-4|\lambda_1|^2.
\end{equation*}
To proceed, we consider two subcases: $\vert \lambda _{1}\vert \leq |\sqrt{\Gamma}|$ and $|\lambda_1|\geq|\sqrt{\Gamma}|$. When $|\lambda_1|\leq |\sqrt{\Gamma}|$, recalling the definition of $\hat{G}_{11}$ in \eqref{G-11-12}, we obtain
\begin{align*}
\vert \hat{G}_{11}(t)\vert&\leq \left \vert \lambda_1\frac{e^{\lambda_1t}-e^{\lambda_2t}}{\lambda_1-\lambda_2}\right\vert+e^{t\mathop{\mathrm{Re}}\lambda_1}=\frac{|\lambda_1|}{|\sqrt{\Gamma}|}|e^{\lambda_1t}-e^{\lambda_2t}|+e ^{t\mathop{\mathrm{Re}}\lambda _{1}}
  \nonumber \\[2mm]
  & \displaystyle \leq \vert e ^{\lambda _{1}t}\vert+ \vert e ^{\lambda _{2}t}\vert+e ^{t\mathop{\mathrm{Re}}\lambda _{1}}\leq 2 e ^{t\mathop{\mathrm{Re}}\lambda _{1}}+e ^{t\mathop{\mathrm{Re}}\lambda _{2}}
   \nonumber \\[2mm]
   &\lesssim e^{- \frac{\bar{\mu}+\bar{\zeta}}{4}|\xi|^2t},
\end{align*}
where we have used \eqref{Re-lam1} and \eqref{Re-lam2}. When $|\lambda_1|\geq |\sqrt{\Gamma}|=\sqrt{-\Gamma}$, we have
$$|\lambda_1|^2\geq 4|\lambda_1|^2-(\bar{\mu}|\xi_h|^2+(\bar{\mu}+\bar{\zeta})|\xi|^2)^2.$$
That is,
$$\sqrt{3}|\lambda_1|\leq\bar{\mu}|\xi_h|^2+(\bar{\mu}+\bar{\zeta})|\xi|^2.$$
This along with \eqref{Re-lam1} yields that
\begin{align*}
|\hat{G}_{11}|&\leq \left  \vert\lambda_1\frac{e^{\lambda_1t}-e^{\lambda_2t}}{\lambda_1-\lambda_2}\right\vert +e^{t\mathop{\mathrm{Re}}\lambda_1}\\[1mm]
&\leq \frac{1}{\sqrt{3}}(\bar{\mu}|\xi_h|^2+(\bar{\mu}+\bar{\zeta})|\xi|^2)\left \vert \frac{e^{\lambda_1t}-e^{\lambda_2t}}{\lambda_1-\lambda_2}\right\vert++e^{-\frac{(\bar{\mu}|\xi_h|^2+(\bar{\mu}+\bar{\zeta})|\xi|^2)}{2}t} \\[1mm]
&\displaystyle \lesssim e^{- \frac{\bar{\mu}+\bar{\zeta}}{8}|\xi|^2t}.
\end{align*}
The estimate of $ \hat{G}_{11} $ in \eqref{G-11-12-esti} is proved. Similarly, we derive for $\hat{G}_{22}$ and $ \hat{G}_{2}^{\ast} $ that
\begin{equation*}
|\hat{G}_{22}| \lesssim e^{ t \mathop{\mathrm{Re}}\lambda_2}+ \left \vert \lambda_1\frac{e^{\lambda_2t}-e^{\lambda_1t}}{\lambda_2-\lambda_1}\right \vert+e^{-\bar{\mu}|\xi_h|^2t} \lesssim e^{- \frac{\bar{\mu}+\bar{\zeta}}{8}|\xi|^2t}+e^{-\bar{\mu}|\xi_h|^2t}
\end{equation*}
and
\begin{equation*}
\hat{G}_2^{\ast}\lesssim e^{- \frac{\bar{\mu}+\bar{\zeta}}{8}|\xi|^2t}
\end{equation*}
for any $ \xi \in \mathcal{A}_{1} $. Next we shall bound $\hat{G}_{12}$ and $\hat{G}_{21}$. We divide the consideration into two subcases: $\bar{\rho}|\xi|\leq |\sqrt{\Gamma}|$ and $\bar{\rho}|\xi|\geq|\sqrt{\Gamma}|$. In the case when $\bar{\rho}|\xi|\leq |\sqrt{\Gamma}|$, by the definition of $\hat{G}_{12}$ in \eqref{G-11-12}, we obtain
\begin{equation*}
|\hat{G}_{12}|=\bar{\rho}\frac{|\xi|}{\sqrt{\Gamma}}|e^{\lambda_1t}-e^{\lambda_2t}| \lesssim        {\mathop{\mathrm{e}}}^{t \mathop{\mathrm{Re}}\lambda _{1}}+e ^{t\mathop{\mathrm{Re}}\lambda _{2}}  \lesssim e^{- \frac{\bar{\mu}+\bar{\zeta}}{4} \vert \xi\vert^2t}.
\end{equation*}
In the case $\bar{\rho}|\xi|\geq |\sqrt{\Gamma}|$, it holds that
\begin{gather*}
\displaystyle\left\vert (\bar{\mu}|\xi_h|^2+(\bar{\mu}+\bar{\zeta})|\xi|^2)^2-4\gamma\bar{\rho}^2 \vert \xi\vert^2 \right\vert\leq \bar{\rho}^2 \vert \xi\vert^2,
\end{gather*}
which leads to
\begin{align*}
\displaystyle  - \bar{\rho} ^{2}\vert \xi\vert ^{2} \leq \vert (\bar{\mu}|\xi_h|^2+(\bar{\mu}+\bar{\zeta})|\xi|^2)^2-4\gamma\bar{\rho}^2 \vert \xi\vert^2 \leq \bar{\rho} ^{2}\vert \xi\vert ^{2}.
\end{align*}
Then it follows that
\begin{equation*}
\sqrt{4\gamma-1}\bar{\rho}|\xi|\leq \bar{\mu}|\xi_h|^2+(\bar{\mu}+\bar{\zeta})|\xi|^2.
\end{equation*}
This along with \eqref{G-11-12} and \eqref{teshu} gives
\begin{align*}
\displaystyle\vert \hat{G}_{12}\vert &\leq \bar{\rho}\vert \xi\vert \left \vert \frac{e^{\lambda_1t}-e^{\lambda_2t}}{\lambda_1-\lambda_2}\right\vert
 \nonumber \\[1mm]
 & \displaystyle\leq \frac{1}{\sqrt{4\gamma-1}}(\bar{\mu}|\xi_h|^2+(\bar{\mu}+\bar{\zeta})|\xi|^2)te^{-\frac{\bar{\mu}\vert \xi_h\vert^2+(\bar{\mu}+\bar{\zeta})\vert \xi\vert^2}{4}t}
  \nonumber \\[1mm]
  & \displaystyle \lesssim e^{- \frac{\bar{\mu}+\bar{\zeta}}{8} |\xi|^2t}
\end{align*}
for any $ \xi \in \mathcal{A}_{1}. $
For $\hat{G}_{21}$, notice that $ \lambda_1\lambda_2=\gamma\bar{\rho}^2|\xi|^2 $. Then we can rewrite $\hat{G}_{21}$ as
\begin{equation*}
\displaystyle\vert \hat{G}_{21}\vert=\left \vert \frac{\lambda_1\lambda_2\xi}{\bar{\rho}|\xi|^2}\frac{e^{\lambda_1t}-e^{\lambda_2t}}{\lambda_1-\lambda_2}\right\vert=\gamma|\hat{G}_{12}|.
\end{equation*}
Therefore it holds that
\begin{align*}
\displaystyle \vert \hat{G}_{21}\vert \lesssim e^{- \frac{\bar{\mu}+\bar{\zeta}}{8} |\xi|^2t}.
\end{align*}
Similarly, we derive for $\hat{G}^{\ast}_{1}$ that
\begin{align*}
\hat{G}^{\ast}_1&=\frac{\lambda_1\lambda_2}{\bar{\rho}}\frac{e^{\lambda_1t}-e^{\lambda_2t}}{\lambda_1-\lambda_2}\leq\gamma\bar{\rho}|\xi|^2te^{-\frac{\bar{\mu}\vert \xi_h\vert^2+(\bar{\mu}+\bar{\zeta})\vert \xi\vert^2}{4}t}\\
&\lesssim e^{- \frac{\bar{\mu}+\bar{\zeta}}{8} |\xi|^2t}
\end{align*}
for any $ \xi \in \mathcal{A}_{1}.$

For any $\xi\in \mathcal{A}_2$, we have $\frac{\bar{\mu}|\xi_h|^2+(\bar{\mu}+\bar{\zeta})|\xi|^2}{2}<\sqrt{\Gamma}\leq \bar{\mu}|\xi_h|^2+(\bar{\mu}+\bar{\zeta})|\xi|^2$. It then follows that
\begin{gather}\label{lam-1-A2}
-(\bar{\mu}|\xi_h|^2+(\bar{\mu}+\bar{\zeta})|\xi|^2)\leq \lambda_1<-\frac{3}{4}(\bar{\mu}|\xi_h|^2+(\bar{\mu}+\bar{\zeta})|\xi|^2)
\end{gather}
and
\begin{gather}
\displaystyle \lambda_2=\frac{\Gamma-(\bar{\mu}|\xi_h|^2+(\bar{\mu}+\bar{\zeta})|\xi|^2)^2}{2(\bar{\mu}|\xi_h|^2+(\bar{\mu}+\bar{\zeta})|\xi|^2+\sqrt{\Gamma})}\leq -\frac{\gamma\bar{\rho}^2|\xi|^2}{\bar{\mu}|\xi_h|^2+(\bar{\mu}+\bar{\zeta})|\xi|^2}\leq -\frac{\gamma\bar{\rho}^2}{2\bar{\mu}+\bar{\zeta}}. \label{lam2-A2}
\end{gather}
Therefore we get
\begin{align}\label{A-2-struc}
\left \vert \frac{e^{\lambda_1t}-e^{\lambda_2t}}{\lambda_1-\lambda_2}\right\vert&\leq \frac{1}{|\lambda_1-\lambda_2|}(e^{\lambda_1t}+e^{\lambda_2t}) = \frac{1}{\sqrt{\Gamma}}(e^{\lambda_1t}+e^{\lambda_2t})
 \nonumber \\[2mm]
  & \displaystyle \leq\frac{2}{\bar{\mu}|\xi_h|^2+(\bar{\mu}+\bar{\zeta})|\xi|^2}(e^{-\frac{3}{4}(\bar{\mu}|\xi_h|^2+(\bar{\mu}+\bar{\zeta})|\xi|^2)t}+e^{-\frac{\gamma\bar{\rho}^2}{2\bar{\mu}+\bar{\zeta}}t}).
\end{align}
This combined with \eqref{G-11-12}, \eqref{G-12-22}, \eqref{lam-1-A2} and \eqref{lam2-A2} further implies that
\begin{align*}
|\hat{G}_{11}|&\leq \left \vert \lambda_1\frac{e^{\lambda_1t}-e^{\lambda_2t}}{\lambda_1-\lambda_2}\right\vert+e^{\lambda_1t}\\[1mm]
& \lesssim(\bar{\mu}|\xi_h|^2+(\bar{\mu}+\bar{\zeta})|\xi|^2) \left \vert \frac{e^{\lambda_1t}-e^{\lambda_2t}}{\lambda_1-\lambda_2}\right\vert+e^{-\frac{3}{4}(\bar{\mu}|\xi_h|^2+(\bar{\mu}+\bar{\zeta})|\xi|^2)t}\\[1mm]
&\displaystyle \lesssim(e^{-\frac{3(\bar{\mu}+\bar{\zeta})}{4}\vert \xi\vert ^{2} t}+e^{-\frac{\gamma\bar{\rho}^2}{2\bar{\mu}+\bar{\zeta}}t}),
\end{align*}
and that
\begin{align*}
|\hat{G}_{22}|& \lesssim e^{\lambda_2t}+ \left \vert \lambda_1\frac{e^{\lambda_2t}-e^{\lambda_1t}}{\lambda_2-\lambda_1}\right\vert+e^{-\bar{\mu}|\xi_h|^2t}\\[2mm]
&\lesssim  e^{-\frac{3(\bar{\mu}+\bar{\zeta})}{4}\vert \xi\vert ^{2} t}+e^{-\frac{\gamma\bar{\rho}^2}{2\bar{\mu}+\bar{\zeta}}t}+e^{-\bar{\mu}|\xi_h|^2t},\\[2mm]
|\hat{G}^{\ast}_2|&\lesssim e^{-\frac{3(\bar{\mu}+\bar{\zeta})}{4}\vert \xi\vert ^{2} t}+e^{-\frac{\gamma\bar{\rho}^2}{2\bar{\mu}+\bar{\zeta}}t}.
\end{align*}
Due to $\sqrt{\Gamma}>\frac{\bar{\mu}|\xi_h|^2+(\bar{\mu}+\bar{\zeta})|\xi|^2}{2}$, we find that
\begin{equation*}
\frac{3}{4}(\bar{\mu}|\xi_h|^2+(\bar{\mu}+\bar{\zeta})|\xi|^2)^2>4\gamma\bar{\rho}^2 \vert \xi\vert^2.
\end{equation*}
Hence, we get from \eqref{G-11-12}
and \eqref{A-2-struc} that
\begin{align*}
|\hat{G}_{12}|&\lesssim (\bar{\mu}|\xi_h|^2+(\bar{\mu}+\bar{\zeta})|\xi|^2) \left \vert \frac{e^{\lambda_1t}-e^{\lambda_2t}}{\lambda_1-\lambda_2}\right\vert\\[2mm]
&\displaystyle\lesssim e^{-\frac{3(\bar{\mu}+\bar{\zeta})}{4}\vert \xi\vert ^{2} t}+e^{-\frac{\gamma\bar{\rho}^2}{2\bar{\mu}+\bar{\zeta}}t}.
\end{align*}
For $\hat{G}_{21}$ in \eqref{G-12-22}, we get
\begin{align*}
|\hat{G}_{21}|&=\left \vert \frac{\lambda_1\lambda_2\xi}{\bar{\rho}|\xi|^2}\frac{e^{\lambda_1t}-e^{\lambda_2t}}{\lambda_1-\lambda_2}\right\vert=\gamma|\hat{G}_{12}|\\[1mm]
&\displaystyle \lesssim e^{-\frac{3(\bar{\mu}+\bar{\zeta})}{4}\vert \xi\vert ^{2} t}+e^{-\frac{\gamma\bar{\rho}^2}{2\bar{\mu}+\bar{\zeta}}t}.
\end{align*}
Finally, for $\hat{G}^{\ast}_1$, similarly, we get
\begin{align*}
\hat{G}^{\ast}_1\lesssim e^{-\frac{3(\bar{\mu}+\bar{\zeta})}{4}\vert \xi\vert ^{2} t}+e^{-\frac{\gamma\bar{\rho}^2}{2\bar{\mu}+\bar{\zeta}}t}.
\end{align*}
This completes the proof of Proposition \ref{pro1}.
\end{proof}

Combining Propositions \ref{prop-Green} and \ref{pro1}, we have the following pointwise estimates of $ (\hat{\varrho},\hat{\mathbf{u}}) $.
\begin{lemma}\label{lemma4.3}
Let $ U=(\varrho, \mathbf{u}) $ satisfy the the system \eqref{new-linear} supplemented with the initial data in \eqref{initial-linear}. Then it holds for any $t \geq 0$ and $ \xi\in \mathcal{A}_1$ that
\begin{gather}
|\hat{\varrho}(t, \xi)| \lesssim e^{-\vartheta|\xi|^2t}|(|\hat{\varrho}_0|+|\hat{\mathbf{u}}_0|),\label{rho}\\[2mm]
|\hat{\mathbf{u}}(t, \xi)|\lesssim e^{-\vartheta|\xi|^2t}(|\hat{\varrho}_0|+|\hat{\mathbf{u}}_0|)+e^{-\bar{\mu}|\xi_h|^2t}|\hat{\mathbf{u}}_0|,
 \nonumber
 \\[2mm]
|\widehat{\mathop{\mathrm{div}}\nolimits \mathbf{u}}(t, \xi)|\lesssim e^{-\vartheta|\xi|^2t} (\vert \xi\vert|\hat{\varrho}_0|+ \vert \widehat{\mathop{\mathrm{div}}\nolimits \mathbf{u}_{0}}\vert),\label{divu0}
\end{gather}
and for any $t \geq 0$ and $ \xi\in \mathcal{A}_2$ that
\begin{gather}
|\hat{\varrho}(t, \xi)|\lesssim (e^{-\vartheta|\xi|^2t}+e^{- \eta t})(|\hat{\varrho}_0|+|\hat{\mathbf{u}}_0|),\label{rho1}\\[2mm]
|\hat{\mathbf{u}}(t, \xi)|\lesssim(e^{-\vartheta|\xi|^2t}+e^{- \eta t})(|\hat{\varrho}_0|+|\hat{\mathbf{u}}_0|)+Ce^{-\bar{\mu}|\xi_h|^2t}|\hat{\mathbf{u}}_0|,
 \nonumber \\[2mm]
|\widehat{\mathop{\mathrm{div}}\nolimits \mathbf{u}}(t, \xi)|\lesssim (e^{-\vartheta|\xi|^2t}+e^{-\eta t}) (\vert \xi\vert|\hat{\varrho}_0|+ \vert \widehat{ \mathop{\mathrm{div}}\nolimits \mathbf{u}_{0}}\vert).\label{divu1}
\end{gather}
 Here $ \vartheta = \frac{\bar{\mu}+\bar{\zeta}}{8} $, $\eta=\frac{\gamma\bar{\rho}^2}{2\bar{\mu}+\bar{\zeta}}$.

\end{lemma}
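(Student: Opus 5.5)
The plan is to read off Lemma \ref{lemma4.3} directly from the representation \eqref{ysf} of Proposition \ref{prop-Green}, feeding in the symbol bounds of Proposition \ref{pro1} region by region.

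For $\hat{\varrho}$, the first row of \eqref{ysf} gives $\hat{\varrho}=\hat G_{11}\hat\varrho_0+\hat G_{12}\cdot\hat{\mathbf u}_0$. The triangle inequality together with \eqref{G-11-12-esti} on $\mathcal A_1$, and the corresponding $\mathcal A_2$ bounds, then yields \eqref{rho} and \eqref{rho1} at once.

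For $\hat{\mathbf u}$, the second row gives $\hat{\mathbf u}=\hat G_{21}\hat\varrho_0+\hat G_{22}\hat{\mathbf u}_0$, and I split $\hat G_{22}$ as in \eqref{G-12-22}. The first piece is $\frac{\lambda_1e^{\lambda_1t}-\lambda_2e^{\lambda_2t}}{\lambda_1-\lambda_2}\tilde\xi\otimes\tilde\xi$. Since $\tilde\xi\otimes\tilde\xi$ has operator norm one, this piece is bounded by $|\hat G_2^\ast|$, which Proposition \ref{pro1} controls by $e^{-\vartheta|\xi|^2t}$ on $\mathcal A_1$ and by $e^{-\vartheta|\xi|^2t}+e^{-\eta t}$ on $\mathcal A_2$. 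The second piece is $e^{-\bar\mu|\xi_h|^2t}(I_3-\tilde\xi\otimes\tilde\xi)$, and it is bounded by $e^{-\bar\mu|\xi_h|^2t}$ because $I_3-\tilde\xi\otimes\tilde\xi$ is an orthogonal projection. This separation is worth keeping explicit: only $\hat{\mathbf u}_0$ feeds the purely horizontal heat factor, which is exactly the form claimed. Combining it with the bound on $\hat G_{21}$ gives both velocity estimates.

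For $\widehat{\mathrm{div}\,\mathbf u}$, I use \eqref{divu-repres}, i.e. $\widehat{\mathrm{div}\,\mathbf u}=\hat G_1^\ast\hat\varrho_0+\hat G_2^\ast\widehat{\mathrm{div}\,\mathbf u_0}$. The coefficient of $\widehat{\mathrm{div}\,\mathbf u_0}$ is handled directly by the $\hat G_2^\ast$ bound in Proposition \ref{pro1}.

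The only step needing care is extracting the factor $|\xi|$ in front of $|\hat\varrho_0|$. The bounds on $\hat G_1^\ast$ in Proposition \ref{pro1} lack this factor, so I will not use them as stated. Instead I use $\lambda_1\lambda_2=\gamma\bar\rho^2|\xi|^2$ to write
\begin{equation*}
\hat G_1^\ast=\gamma\bar\rho|\xi|^2\frac{e^{\lambda_1t}-e^{\lambda_2t}}{\lambda_1-\lambda_2}=i\gamma|\xi|\,\big(\tilde\xi\cdot\hat G_{12}\big).
\end{equation*}
This gives $|\hat G_1^\ast|\le\gamma|\xi||\hat G_{12}|$, and the $\hat G_{12}$ bounds in each region then give the required $|\xi|$ times the decay factor. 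Equivalently, one can reuse the subcase analysis in the proof of Proposition \ref{pro1} for $\hat G_{12}$, since it already bounds $\bar\rho|\xi|\,|(e^{\lambda_1t}-e^{\lambda_2t})/(\lambda_1-\lambda_2)|$.

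With this, \eqref{divu0} and \eqref{divu1} follow, and the lemma is proved. The main obstacle is only this bookkeeping of the $|\xi|$ factor; everything else is direct substitution.
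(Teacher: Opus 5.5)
Your proposal is correct and follows the paper's proof essentially line by line. You bound $\hat\varrho$ and $\hat{\mathbf u}$ from the rows of $\hat G$ using the triangle inequality and Proposition \ref{pro1}, and you extract the factor $|\xi|$ in the density contribution to $\widehat{\mathrm{div}\,\mathbf u}$ exactly as the paper does, via $\lambda_1\lambda_2=\gamma\bar\rho^2|\xi|^2$, which gives $|\hat G_1^\ast|=\gamma|\xi|\,|\hat G_{12}|$. The only cosmetic difference is that you split $\hat G_{22}$ explicitly into its $\hat G_2^\ast\,\tilde\xi\otimes\tilde\xi$ part and its projected horizontal-heat part; the paper instead quotes the assembled bound on $|\hat G_{22}|$ from Proposition \ref{pro1}, which was itself obtained from that same split.
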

\begin{proof}
We will only prove the results for the case $ \xi \in \mathcal{A}_{1} $, the ones for the case $ \xi \in \mathcal{A}_{2} $ can be proved by similar arguments. Let us begin with the estimates of $ \hat{\varrho} $. Indeed, for any $ \xi \in \mathcal{A}_{1} $, it follows directly from \eqref{GREEN-REPRE} and \eqref{G-11-12-esti} that
\begin{align*}
|\hat{\varrho}(t,\xi)|&=|\hat{G}_{11}\hat{\rho}_0+\hat{G}_{12}\hat{\mathbf{u}}_0| \leq |\hat{G}_{11}||\hat{\rho}_0|+|\hat{G}_{12}||\hat{\mathbf{u}}_0|\\[2mm]
& \lesssim e^{-\vartheta|\xi|^2t}(|\hat{\rho}_0|+|\hat{\mathbf{u}}_0|).
\end{align*}
Similarly, we get from \eqref{GREEN-REPRE} and \eqref{G-21-22-esti} that
\begin{align*}
|\hat{\mathbf{u}}(t,\xi)|&=|\hat{G}_{21}\hat{\rho}_0+\hat{G}_{22}\hat{\mathbf{u}}_0|\leq |\hat{G}_{21}||\hat{\rho}_0|+|\hat{G}_{22}||\hat{\mathbf{u}}_0|\\[2mm]
&\lesssim e^{-\vartheta|\xi|^2t}(|\hat{\rho}_0|+|\hat{\mathbf{u}}_0|)+e^{-\bar{\mu}|\xi_h|^2t}|\hat{\mathbf{u}}_0|
\end{align*}
for any $\xi\in \mathcal{A}_1$. As for the estimate of $ \widehat{\mathop{\mathrm{div}}\nolimits \mathbf{u}} $, we first utilize \eqref{G-11-12}, \eqref{divu-repres} and the fact $ \lambda _{1}\lambda _{2}=\gamma \bar{\rho}^{2}\vert \xi\vert ^{2} $ to get
\begin{align*}
\displaystyle  \vert \widehat{\mathop{\mathrm{div}}\nolimits \mathbf{u} }\vert \lesssim \vert \hat{G}_{12}\vert \vert \xi\hat{\varrho}_{0}\vert+\vert \hat{G}_{2}^{\ast}\vert \vert \widehat{\mathop{\mathrm{div}}\nolimits \mathbf{u}_{0}}\vert
\end{align*}
for any $ \xi\neq \mathbf{0} $. This along with \eqref{G-11-12-esti} leads to
\begin{align*}
\displaystyle \vert \widehat{\mathop{\mathrm{div}}\nolimits \mathbf{u}}\vert \lesssim e^{-\vartheta|\xi|^2t}(|\xi\hat{\varrho}_0|+\vert\widehat{\mathop{\mathrm{div}}\nolimits \mathbf{u}_{0}}\vert).
\end{align*}
This gives \eqref{divu0}, and ends the proof of Lemma \ref{lemma4.3}.
\end{proof}

For later use, we denote
\begin{align}\label{wt-U}
\displaystyle \widetilde{\mathbf{u}}_{0}=\frac{1}{\sqrt{2 \pi}} \int _{\mathbb{R}}e ^{- ix _{3}\xi _{3}} \mathbf{u}_{0} (x _{1},x _{2},x _{3})\mathrm{d}x _{3},
\end{align}
and define for any integer $ \ell \geq0 $,
\begin{align}\label{power-defi}
\displaystyle  \left[ \ell(\frac{1}{2}- \frac{1}{q}) \right] _{+}=\begin{cases}
    0 ,& q=2,\\[2mm]
    \left[ \ell(\frac{1}{2}- \frac{1}{q}) \right]+1,& \mbox{otherwise},
\end{cases}
\end{align}
where $ \left[ \;\cdot\; \right]  $ denotes the integer part of the nonnegative argument. Now we are ready to establish the decay estimates of the solution to the linearized homogeneous problem \eqref{new-linear}, \eqref{initial-linear}.

\begin{lemma}\label{xxlemma}
Let $2\leq q\leq \infty$, and let $m, n\geq 0$ be integers. Let $ U=(\varrho, \mathbf{u}) $ be a smooth solution to the problem \eqref{new-linear}, \eqref{initial-linear} satisfying \eqref{GREEN-REPRE} and \eqref{divu-repres}. Then we have the following decay estimates:
\begin{align}
\|\nabla^m\varrho\|_{L^q}&\lesssim (1+t)^{- \frac{m}{2}- \frac{3}{2}(1- \frac{1}{q}) }\|U_0\|_{L^1}+e^{- \eta t}\|\nabla^{m+[3(\frac{1}{2}-\frac{1}{q})]_+}U_0\|_{L^2},\label{vrho-con-line-lem}\\[2mm]
\displaystyle\|\nabla^m \mathop{\mathrm{div}}\nolimits \mathbf{u}\|_{L^q}&\lesssim (1+t)^{- \frac{m+1}{2}- \frac{3}{2}(1- \frac{1}{q}) }\|U_0\|_{L^1}+e^{- \eta t}\|\nabla^{m+1+[3(\frac{1}{2}-\frac{1}{q})]_+}U_0\|_{L^2},
\label{divu-line-lem} \\[2mm]
\displaystyle\|\nabla^m \mathop{\mathrm{div}}\nolimits \mathbf{u}\|_{L^q}&\lesssim (1+t)^{- \frac{m+1}{2}- \frac{3}{2}(1- \frac{1}{q}) }\|U_0\|_{L^1}+e^{- \eta t}\|\nabla^{m+1+[3(\frac{1}{2}-\frac{1}{q})]_+}\rho_0\|_{L^2}
 \nonumber \\[2mm]
& \displaystyle \quad+e^{- \eta t}\|\nabla^{m+[3(\frac{1}{2}-\frac{1}{q})]_+}\mathop{\mathrm{div}}\nolimits\mathbf{u}_0\|_{L^2},
\label{divu-line-lem2}
  \\[2mm]
\displaystyle\|\nabla^m_h \partial^n_3 \mathbf{u}\|_{L^q}&\lesssim (1+t)^{- \frac{m+n}{2}- \frac{3}{2} (1- \frac{1}{q})}\|\varrho_{0}\|_{L ^{1}}+e ^{-\eta t}\|\nabla^{m+n+[3(\frac{1}{2}-\frac{1}{q})]_+}\varrho_0\|_{L ^2}
 \nonumber \\[2mm]
 & \displaystyle \quad+ (1+t)^{-\frac{m+2}{2}+\frac{1}{q}}\|\xi _{3}^{n}\widetilde{\mathbf{u}}_{0}\|_{L _{\xi _{3}}^{q'}L _{x _{1}x _{2}}^{1}}+e ^{-\eta t}\|\nabla^{m+[2(\frac{1}{2}-\frac{1}{q})]_+}\xi _{3}^{n}\widetilde{\mathbf{u}}_0\|_{L _{\xi _{3}}^{q'} L _{x _{1}x _{2}}^2},\label{anisou-lem-con}\\[2mm]
\|\nabla^m_h \mathbf{u}\|_{L^q}&\lesssim (1+t)^{- \frac{m}{2}- \frac{3}{2} (1- \frac{1}{q})}\|\varrho_{0}\|_{L ^{1}}+e ^{-\eta t}\|\nabla^{m+[3(\frac{1}{2}-\frac{1}{q})]_+}\varrho_0\|_{L ^2}
 \nonumber \\
 & \displaystyle \quad+ (1+t)^{-\frac{m+2}{2}+\frac{1}{q}}\|\widetilde{\mathbf{u}}_{0}\|_{L^{q'}_{\xi_3}L^1_{x_1x_2}}+e ^{-\eta t}\|\nabla^{m+[2(\frac{1}{2}-\frac{1}{q})]_+}\widetilde{\mathbf{u}}_0\|_{L _{\xi _{3}}^{q'} L _{x _{1}x _{2}}^2},\label{hori-u-lem-con}\\[2mm]
\|\partial^m_3 \mathbf{u}\|_{L^q}&\lesssim (1+t)^{- \frac{m}{2}- \frac{3}{2} (1- \frac{1}{q})}\|\varrho_{0}\|_{L ^{1}}+e ^{-\eta t}\|\nabla^{m+[3(\frac{1}{2}-\frac{1}{q})]_+}\varrho_0\|_{L ^2}
 \nonumber \\
 & \displaystyle \quad+ (1+t)^{- \frac{1}{q'}}\|\xi_3^m \widetilde{\mathbf{u}}_{0}\|_{L^{q'}_{\xi_3}L^{1}_{x_1x_2}}+e ^{-\eta t}\|\nabla^{[2(\frac{1}{2}-\frac{1}{q})]_+}\xi _{3}^{m}\widetilde{\mathbf{u}}_0\|_{L _{\xi _{3}}^{q'} L _{x _{1}x _{2}}^2}\label{verti-u-conl-lem}
\end{align}
for any $t\geq 0$, where $\frac{1}{q}+\frac{1}{q'}=1$, and $ \eta $ is as in Lemma \ref{lemma4.3}.

\end{lemma}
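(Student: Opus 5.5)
We argue entirely on the Fourier side, combining the pointwise bounds of Lemma \ref{lemma4.3} (and Proposition \ref{pro1}) with the Hausdorff--Young inequality $\|f\|_{L^q}\lesssim\|\hat f\|_{L^{q'}}$, $2\le q\le\infty$. For each quantity we split frequency space as $\mathcal{A}_1\cup\mathcal{A}_2$. The key geometric fact is that $\mathcal{A}_2$ stays away from the origin. Indeed, $3(\bar\mu|\xi_h|^2+(\bar\mu+\bar\zeta)|\xi|^2)^2>16\gamma\bar\rho^2|\xi|^2$ forces $|\xi|\ge r_0>0$. Consequently, on $\mathcal{A}_2$ the heat factor $e^{-\vartheta|\xi|^2t}$ is bounded by $e^{-\vartheta r_0^2 t}$, which we absorb into $e^{-\eta t}$ after possibly shrinking $\eta$.

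\textbf{Isotropic pieces: \eqref{vrho-con-line-lem}, \eqref{divu-line-lem}, \eqref{divu-line-lem2}, and the $\varrho_0$-part of \eqref{anisou-lem-con}--\eqref{verti-u-conl-lem}.} We write $\widehat{\nabla^m\varrho}=|\xi|^m\hat\varrho$, use \eqref{rho}--\eqref{rho1}, and estimate $\||\xi|^m\hat\varrho\|_{L^{q'}}$ on each piece.

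On $\mathcal{A}_1\cap\{|\xi|\le 1\}$ we bound $|\hat U_0|\le\|U_0\|_{L^1}$. The scaling integral $\||\xi|^me^{-\vartheta|\xi|^2t}\|_{L^{q'}(\mathbb{R}^3)}$ then gives the rate $(1+t)^{-\frac m2-\frac32(1-\frac1q)}$.

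On the high-frequency part of $\mathcal{A}_1$, together with $\mathcal{A}_2$, the symbol decays like $e^{-ct}$. By Hölder with exponents $\frac1{q'}=\frac12+(\frac12-\frac1q)$ we get $\||\xi|^m e^{-ct}\hat U_0\|_{L^{q'}}\lesssim e^{-ct}\||\xi|^{m+s}\hat U_0\|_{L^2}\,\||\xi|^{-s}\|_{L^{r}(|\xi|\ge r_0)}$ with $\frac1r=\frac12-\frac1q$. The last factor is finite once $s r>3$, which is exactly why the loss is $[3(\frac12-\frac1q)]_+$ derivatives. For $q=2$ we use Plancherel directly, with no loss.

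For $\operatorname{div}\mathbf{u}$ the same argument applies with \eqref{divu0}--\eqref{divu1}. The extra factor $|\xi|$ multiplying $\hat\varrho_0$ produces the additional $-\frac12$ in the rate and the extra derivative in the $e^{-\eta t}$ term. Writing $\widehat{\operatorname{div}\mathbf{u}_0}=i\xi\cdot\hat{\mathbf u}_0$ gives \eqref{divu-line-lem}, while keeping $\widehat{\operatorname{div}\mathbf u_0}$ unexpanded gives \eqref{divu-line-lem2}.

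For $\mathbf{u}$, we insert \eqref{u-formula-fourie} and treat the compressible terms $\hat G_{21}\hat\varrho_0$ identically. This yields the $\|\varrho_0\|_{L^1}$ and $e^{-\eta t}\|\nabla^{\cdot}\varrho_0\|_{L^2}$ terms.

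\textbf{Anisotropic solenoidal piece.} What remains is the part $e^{-\bar\mu|\xi_h|^2t}(\cdots)\hat{\mathbf u}_0$, which contains both the term $e^{-\bar\mu|\xi_h|^2t}(I_3-\tilde\xi\otimes\tilde\xi)\hat{\mathbf u}_0$ and the $\hat G_{22}$ term involving $\hat{\mathbf u}_0$. This is where the main obstacle lies, because the decay comes only from $\xi_h$. Here I would bound everything by $e^{-c|\xi_h|^2t}|\hat{\mathbf u}_0|$; this is legitimate because $e^{-\vartheta|\xi|^2t}\le e^{-\vartheta|\xi_h|^2t}$ and $|\tilde\xi\otimes\tilde\xi|\le 1$. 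We then write $\hat{\mathbf u}_0(\xi)=\mathcal F_{x_h}\widetilde{\mathbf u}_0(\xi_h,\xi_3)$.

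For fixed $\xi_3$, take the $L^{q'}_{\xi_h}$ norm of $|\xi_h|^m\xi_3^n e^{-c|\xi_h|^2t}|\mathcal F_{x_h}\widetilde{\mathbf u}_0|$. On $|\xi_h|\le1$ we bound $\sup_{\xi_h}|\mathcal F_{x_h}\widetilde{\mathbf u}_0|\le\|\widetilde{\mathbf u}_0(\cdot,\xi_3)\|_{L^1_{x_1x_2}}$. The two-dimensional scaling integral then gives $(1+t)^{-\frac m2-(1-\frac1q)}=(1+t)^{-\frac{m+2}{2}+\frac1q}$. Taking $L^{q'}_{\xi_3}$ afterwards produces $\|\xi_3^n\widetilde{\mathbf u}_0\|_{L^{q'}_{\xi_3}L^1_{x_1x_2}}$.

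On $|\xi_h|\ge1$ we argue as before, now in two dimensions: Hölder in $\xi_h$ with $|\xi_h|^{-s}\in L^r(|\xi_h|\ge1)$ for $sr>2$ gives the loss $[2(\frac12-\frac1q)]_+$, followed by Plancherel in $x_h$. This yields \eqref{anisou-lem-con}. Taking $n=0$ gives \eqref{hori-u-lem-con}. Taking $m=0$ with $n$ replaced by $m$ gives \eqref{verti-u-conl-lem}, where the rate $(1+t)^{-(1-\frac1q)}=(1+t)^{-\frac1{q'}}$ reflects that vertical derivatives bring no gain.

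The delicate point in this step is the $|\xi_h|\ge1$ region with $t$ large. There, $e^{-c|\xi_h|^2t}\le e^{-ct}$, which suffices to produce the stated $e^{-\eta t}$ factor, again after possibly adjusting constants.
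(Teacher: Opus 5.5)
Your route is the paper's: Hausdorff--Young, the pointwise bounds of Lemma~\ref{lemma4.3}, heat-kernel scaling at low frequencies and H\"older at high frequencies. You treat the $\hat\varrho_0$ pieces isotropically and the $\hat{\mathbf u}_0$ pieces in $\xi_h$ alone, as the paper does. However, the reduction that opens your anisotropic step is false.

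\emph{The gap.} You claim the whole $\hat{\mathbf u}_0$-part of $\hat{\mathbf u}$ is bounded by $e^{-c|\xi_h|^2t}|\hat{\mathbf u}_0|$. This fails on $\mathcal{A}_2$. There the compressible part of $\hat G_{22}$, namely $\frac{\lambda_1e^{\lambda_1t}-\lambda_2e^{\lambda_2t}}{\lambda_1-\lambda_2}\tilde\xi\otimes\tilde\xi$, contains the damped acoustic mode $\frac{\lambda_2}{\lambda_2-\lambda_1}e^{\lambda_2t}$. The eigenvalue stays bounded, $-2\gamma\bar\rho^2/(\bar\mu+\bar\zeta)\le\lambda_2\le-\eta$, however large $|\xi_h|$ is. Its coefficient is only of size $|\xi|^{-2}$. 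So for fixed $t>0$ this contribution decays merely polynomially as $|\xi_h|\to\infty$, and it is not $\lesssim e^{-c|\xi_h|^2t}$. This is exactly the $e^{-\eta t}$ in the bound for $|\hat G_{22}|$ on $\mathcal{A}_2$ in Proposition~\ref{pro1}. Your justification ($e^{-\vartheta|\xi|^2t}\le e^{-\vartheta|\xi_h|^2t}$ and $|\tilde\xi\otimes\tilde\xi|\le1$) covers only the heat part.

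\emph{The repair.} Use the paper's term $L_4$: keep $e^{-\eta t}\||\xi_h|^m\xi_3^n\hat{\mathbf u}_0\|_{L^{q'}}$ as a separate term and estimate it in the same anisotropic norms.
On $|\xi_h|\le K$, bound $|\mathcal{F}_{x_h}\widetilde{\mathbf u}_0|$ by $\|\widetilde{\mathbf u}_0(\cdot,\xi_3)\|_{L^1_{x_1x_2}}$. This gives $e^{-\eta t}\|\xi_3^n\widetilde{\mathbf u}_0\|_{L^{q'}_{\xi_3}L^1_{x_1x_2}}$, which the algebraic term absorbs.
On $|\xi_h|\ge K$, use your two-dimensional H\"older argument.
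This term must be handled anisotropically. Routing it through your isotropic paragraph would produce norms $e^{-\eta t}\|\nabla^{k}\mathbf u_0\|_{L^2}$, which do not appear on the right of \eqref{anisou-lem-con} when $q>2$. With this extra term, \eqref{anisou-lem-con}--\eqref{verti-u-conl-lem} follow as you describe.

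\emph{A smaller point.} With your cuts, shrinking $\eta$ is not optional. The distance $r_0$ from $\mathcal{A}_2$ to the origin satisfies $\vartheta r_0^2=\frac{2(\bar\mu+\bar\zeta)}{3(2\bar\mu+\bar\zeta)}\eta<\eta$. Cutting at $|\xi|=1$ or $|\xi_h|=1$ only yields $e^{-\vartheta t}$ or $e^{-ct}$. The statement fixes $\eta$ as in Lemma~\ref{lemma4.3}. To keep it, cut at $K$ with $\min(\vartheta,c)K^2\ge\eta$, as the paper does. Then send the whole ball $\{|\xi|\le K\}$ (respectively $\{|\xi_h|\le K\}$), including its part in $\mathcal{A}_2$, to the $L^1$ terms. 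This is harmless downstream, since only $e^{-\eta t}(1+t)^m\lesssim1$ is used later.
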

\begin{proof}
Take $2\leq q\leq \infty$ and let $m\geq0$ be an integer. Using \eqref{rho}, \eqref{rho1} and the
Hausdorff-Young inequality, we obtain, with $1/q'=1- 1/q $,
\begin{align}\label{vrho-esti-line}
\|\nabla^m\varrho(t)\|_{L^q}&\lesssim \||\xi|^m\hat{\varrho}\|_{L^{q'}}
 \nonumber \\[1mm]
 & \lesssim\||\xi|^m\hat{\varrho}\|_{L^{q'}(\xi\in \mathcal{A}_1)}+\||\xi|^m\hat{\varrho}\|_{L^{q'}(\xi\in \mathcal{A}_2)}
  \nonumber \\[1mm]
  & \lesssim \||\xi|^me^{-\vartheta|\xi|^2t}(\hat{\varrho}_0,\hat{\mathbf{u}}_0)\|_{L^{q'}}+e^{- \eta t}\||\xi|^m(\hat{\varrho_0},\hat{\mathbf{u}}_0)\|_{L^{q'}}
 \nonumber \\[1mm]
 & \displaystyle =:J _{1}+J _{2}.
\end{align}
For $ J _{1} $, take a constant $ K>0 $ such that $ \vartheta K ^{2}> \eta$, then we have
\begin{align}\label{J-1-split}
\displaystyle J _{1}& \leq \underbrace{\||\xi|^me^{-\vartheta|\xi|^2t}(\hat{\varrho}_0,\hat{\mathbf{u}}_0)\|_{L^{q'}(\vert \xi\vert \leq K)}} _{J _{1,1}}+ \underbrace{\||\xi|^me^{-\vartheta|\xi|^2t}(\hat{\varrho}_0,\hat{\mathbf{u}}_0)\|_{L^{q'}(\vert \xi\vert \geq K)}}_{J _{1,2}},
\end{align}
where $ J _{1,1} $ can be estimated as
\begin{align*}
\displaystyle J _{1,1}\lesssim \||\xi|^me^{-\vartheta|\xi|^2t}\| _{L ^{q'}(\vert \xi\vert \leq K)}\|(\hat{\varrho}_0,\hat{\mathbf{u}}_0)\|_{L ^{\infty}(\vert \xi\vert \leq K)} .
\end{align*}
Here the first factor can be estimated in a standard way \cite{K-1983} as
\begin{gather*}
\displaystyle \||\xi|^me^{-\vartheta|\xi|^2t}\| _{L ^{q'}(\vert \xi\vert \leq K)} \lesssim \||\xi|^me^{-\vartheta|\xi|^2(t+1)}\| _{L ^{q'}(\vert \xi\vert \leq K)} \lesssim (1+t)^{- \frac{m}{2}- \frac{3}{2}(1- \frac{1}{q}) }
\end{gather*}
by the change of variable $\xi (1+t)^{1/2}$ to $\xi$, and the second factor can be estimated as $ \|(\hat{\varrho}_0,\hat{\mathbf{u}}_0)\|_{L ^{\infty}(\vert \xi\vert \leq K)} \lesssim \|U _{0}\|_{L ^{1}} $. Therefore we get, for $ J _{1,1} $,
\begin{gather*}
\displaystyle J _{1,1}\lesssim (1+t)^{- \frac{m}{2}- \frac{3}{2}(1- \frac{1}{q}) } \|U _{0}\|_{L ^{1}}.
\end{gather*}
For $ J _{1,2} $, fix a suitably small constant $ \epsilon >0 $, we get, in virtue of the H\"older inequality for $ \frac{1}{q'}=\frac{2-q'}{2q'}+\frac{1}{2} $ with $ \frac{1}{q}+\frac{1}{q'}=1 $,
\begin{align}\label{J-12-esti}
 \displaystyle J _{1,2} &\lesssim e ^{- \vartheta K ^{2}t}\|\vert \xi\vert ^{-3(1+\epsilon)\frac{2-q'}{2q'}}\|_{L ^{\frac{2q'}{2-q'}}(\vert \xi\vert \geq K)} \|\vert \xi\vert ^{m+3(1+\epsilon)\frac{2-q'}{2q'}}(\hat{\varrho}_{0},\hat{\mathbf{u}}_{0})\|_{L ^{2}(\vert \xi\vert \geq K)}
  \nonumber \\[2mm]
  & \displaystyle \leq C _{\epsilon} e ^{- \eta t}\|\vert \xi\vert ^{m+[3(\frac{1}{2}-\frac{1}{q})]_+}\hat{U}_0\|_{L ^{2}(\vert \xi\vert \geq K)} \leq C _{\epsilon}e ^{-\eta t} \|\nabla^{m+[3(\frac{1}{2}-\frac{1}{q})]_+}U_0\|_{L^2},
 \end{align}
 where we have used the fact $ \vartheta K ^{2}>\eta $, and $ [3(\frac{1}{2}-\frac{1}{q})]_+ $ is as in \eqref{power-defi}. Therefore we have from \eqref{J-1-split} that
 \begin{align}\label{J1-esti-con}
 \displaystyle  J _{1} \lesssim (1+t)^{- \frac{m}{2}- \frac{3}{2}(1- \frac{1}{q}) } \|U _{0}\|_{L ^{1}}+e ^{-\eta t} \|\nabla^{m+[3(\frac{1}{2}-\frac{1}{q})]_+}U_0\|_{L^2}.
 \end{align}
 By similar arguments as in deriving \eqref{J-12-esti}, we further deduce for $ J _{2} $ that
 \begin{align*}
 \displaystyle J _{2} &\lesssim      e^{- \eta t}\||\xi|^m(\hat{\varrho_0},\hat{\mathbf{u}}_0)\|_{L^{q'}(\vert \xi\vert \leq K)}+e^{- \eta t}\||\xi|^m(\hat{\varrho_0},\hat{\mathbf{u}}_0)\|_{L^{q'}(\vert \xi\vert \geq K)}
   \nonumber \\
   & \displaystyle \lesssim e ^{-\eta t}\|(\hat{\varrho}_0,\hat{\mathbf{u}}_0)\|_{L ^{\infty}}+e ^{-\eta t} \|\vert \xi\vert ^{-3(1+\epsilon)\frac{2-q'}{2q'}}\|_{L ^{\frac{2q'}{2-q'}}(\vert \xi\vert \geq K)} \|\vert \xi\vert ^{m+3(1+\epsilon)\frac{2-q'}{2q'}}(\hat{\varrho}_{0},\hat{\mathbf{u}}_{0})\|_{L ^{2}(\vert \xi\vert \geq K)}
    \nonumber \\
     & \displaystyle \lesssim e ^{-\eta t}\|U_0\|_{L^1}+e ^{-\eta t}\|\nabla^{m+[3(\frac{1}{2}-\frac{1}{q})]_+}U_0\|_{L^2},
 \end{align*}
 where $ K $ is as in \eqref{J-1-split}. Substituting the estimates of $ J  _{1} $ and $ J _{2} $ into \eqref{vrho-esti-line}, we thus have
 \begin{align*}
 \displaystyle  \|\nabla^m\varrho(t)\|_{L^q}&\lesssim \||\xi|^m\hat{\varrho}\|_{L^{q'}} \lesssim (1+t)^{- \frac{m}{2}- \frac{3}{2}(1- \frac{1}{q}) } \|U _{0}\|_{L ^{1}}+e ^{-\eta t} \|\nabla^{m+[3(\frac{1}{2}-\frac{1}{q})]_+}U_0\|_{L^2}.
 \end{align*}
 This gives \eqref{vrho-con-line-lem}. Similarly, in virtue of \eqref{divu0}, \eqref{divu1}, the Hausdorff-Young inequality and the H\"older inequality, we derive for $ \|\nabla ^{m}\mathop{\mathrm{div}}\nolimits \mathbf{u}\|_{L ^{q}} $ that

 \begin{align*}
 \displaystyle \|\nabla ^{m}\mathop{\mathrm{div}}\nolimits \mathbf{u}\|_{L ^{q}}& \lesssim \| \vert \xi\vert ^{m}\widehat{\mathop{\mathrm{div}}\nolimits \mathbf{u}}\| _{L ^{q'}}\lesssim \| \vert \xi\vert ^{m}\widehat{\mathop{\mathrm{div}}\nolimits \mathbf{u}}\| _{L ^{q'}(\xi \in \mathcal{A}_{1})}+\| \vert \xi\vert ^{m}\widehat{\mathop{\mathrm{div}}\nolimits \mathbf{u}}\| _{L ^{q'}(\xi \in \mathcal{A}_{2})}
  \nonumber \\
  & \displaystyle \lesssim  \||\xi|^{m+1}e^{-\vartheta|\xi|^2t}(\hat{\varrho}_0,\hat{\mathbf{u}}_0)\|_{L^{q'}}+e^{- \eta t}\||\xi|^{m+1}(\hat{\varrho_0},\hat{\mathbf{u}}_0)\|_{L^{q'}}
   \nonumber \\
   &\lesssim (1+t)^{- \frac{m+1}{2}- \frac{3}{2}(1- \frac{1}{q}) }\|U_0\|_{L^1}+e^{- \eta t}\|\nabla^{m+1+[3(\frac{1}{2}-\frac{1}{q})]_+}U_0\|_{L^2},
 \end{align*}
 and that
 \begin{align*}
 \displaystyle \|\nabla ^{m}\mathop{\mathrm{div}}\nolimits \mathbf{u}\|_{L ^{q}}& \lesssim \| \vert \xi\vert ^{m}\widehat{\mathop{\mathrm{div}}\nolimits \mathbf{u}}\| _{L ^{q'}}\lesssim \| \vert \xi\vert ^{m}\widehat{\mathop{\mathrm{div}}\nolimits \mathbf{u}}\| _{L ^{q'}(\xi \in \mathcal{A}_{1})}+\| \vert \xi\vert ^{m}\widehat{\mathop{\mathrm{div}}\nolimits \mathbf{u}}\| _{L ^{q'}(\xi \in \mathcal{A}_{2})}
  \nonumber \\
  & \displaystyle \lesssim  \||\xi|^{m+1}e^{-\vartheta|\xi|^2t}\hat{\varrho}_0\|_{L^{q'}}+\||\xi|^{m}e^{-\vartheta|\xi|^2t}\widehat{\mathop{\mathrm{div}}\nolimits \mathbf{u}_{0}}\|_{L^{q'}}+e^{- \eta t}\||\xi|^{m+1}\hat{\varrho_0}\|_{L^{q'}}
   \nonumber \\
   & \displaystyle \quad+e^{- \eta t}\||\xi|^{m}\widehat{\mathop{\mathrm{div}}\nolimits \mathbf{u}_{0}}\|_{L^{q'}}
   \nonumber \\
   &\lesssim  (1+t)^{- \frac{m+1}{2}- \frac{3}{2}(1- \frac{1}{q}) }\|\varrho_0\|_{L^1}+e^{- \eta t}\|\nabla^{m+1+[3(\frac{1}{2}-\frac{1}{q})]_+}\varrho_0\|_{L^2}
 \nonumber \\[2mm]
& \displaystyle \quad +(1+t)^{- \frac{m+1}{2}- \frac{3}{2}(1- \frac{1}{q}) }\| \mathbf{u}_0\|_{L^1}+e^{- \eta t}\|\nabla^{m+[3(\frac{1}{2}-\frac{1}{q})]_+}\mathop{\mathrm{div}}\nolimits\mathbf{u}_0\|_{L^2}.
 \end{align*}

 As for $ \|\nabla _{h}^{m}\partial _{3}^{n}\mathbf{u}\| _{L ^{q}} $, we get
 \begin{align}\label{aniso-na-u}
\|\nabla^m_h\nabla^n_3 \mathbf{u}(t)\|_{L^q}& \lesssim\||\xi_h|^m\widehat{\partial^n_3\mathbf{u}}\|_{L^{q'}} \lesssim \||\xi_h|^m\xi_3^n\widehat{\mathbf{u}}\|_{L^{q'}(\xi\in A_1)}+\||\xi_h|^m\xi_3^n\widehat{\mathbf{u}}\|_{L^{q'}(\xi\in A_2)}
 \nonumber \\[2mm]
 & \displaystyle \lesssim \|e ^{- \vartheta \vert \xi \vert ^{2}t} |\xi_h|^m\xi_3^n \hat{\varrho}_{0}\|_{L^{q'}}+\|e ^{- \vartheta \vert \xi _{h} \vert ^{2}t} |\xi_h|^m\xi_3^n \hat{\mathbf{u}}_{0}\|_{L^{q'}}
  \nonumber \\[2mm]
  & \displaystyle \quad+e ^{- \eta t}\| \vert \xi_h\vert^m\xi_3^n \hat{\varrho}_{0}\|_{L^{q'}}+e ^{- \eta t}\| \vert \xi_h\vert^m\xi_3^n \hat{\mathbf{u}}_{0}\|_{L^{q'}}
   \nonumber \\[2mm]
    & \displaystyle =L _{1}+L _{2}+L _{3}+L _{4},
\end{align}
where, in virtue of similar arguments as in deriving \eqref{J1-esti-con}, $ L _{1} $ and $ L _{2} $ can be estimated as
\begin{align*}
\displaystyle  L _{1}& \lesssim \|e ^{- \vartheta \vert \xi \vert ^{2}t} |\xi_h|^m\xi_3^n \hat{\varrho}_{0}\|_{L _{\xi }^{q'}(\vert \xi\vert \leq K)}+\|e ^{- \vartheta \vert \xi  \vert ^{2}t} |\xi_h|^m\xi_3^n \hat{\varrho}_{0}\|_{L _{\xi }^{q'}(\vert \xi\vert \geq K)}
 \nonumber \\[2mm]
 & \displaystyle \lesssim (1+t)^{- \frac{m+n}{2}- \frac{3}{2} (1- \frac{1}{q})}\|\varrho_{0}\|_{L ^{1}}+e ^{-\eta t}\|\nabla^{m+n+[3(\frac{1}{2}-\frac{1}{q})]_+}\varrho_0\|_{L ^2},
  \nonumber \\[2mm]
  \displaystyle  L _{2}& \lesssim \|e ^{- \vartheta \vert \xi _{h} \vert ^{2}t} |\xi_h|^m\xi_3^n \hat{\mathbf{u}}_{0}\|_{L _{\xi _{3}}^{q'}(\mathbb{R})L _{\xi _{1}\xi _{2}}^{q'}(\vert \xi _{h}\vert \leq K)}+\|e ^{- \vartheta \vert \xi _{h} \vert ^{2}t} |\xi_h|^m\xi_3^n \hat{\mathbf{u}}_{0}\|_{L _{\xi _{3}}^{q'}(\mathbb{R})L _{\xi _{1}\xi _{2}}^{q'}(\vert \xi _{h}\vert \geq K)}
 \nonumber \\[2mm]
 & \displaystyle \lesssim (1+t)^{- \frac{m}{2}-(1- \frac{1}{q})}\|\xi _{3}^{n}\widetilde{\mathbf{u}}_{0}\|_{L _{\xi _{3}}^{q'}L _{x _{1}x _{2}}^{1}}+e ^{-\eta t}\|\nabla^{m+[2(\frac{1}{2}-\frac{1}{q})]_+}\xi _{3}^{n}\widetilde{\mathbf{u}}_0\|_{L _{\xi _{3}}^{q'} L _{x _{1}x _{2}}^2},
\end{align*}
where $ \widetilde{\mathbf{u}}_{0}  $ is defined by \eqref{wt-U}, and $ K $ is as in \eqref{J-1-split}. While by similar arguments as in deriving \eqref{J-12-esti}, we can derive for $ L _{3} $ and $ L _{4} $ that
\begin{align}
\displaystyle L  _{3}&\lesssim  e^{- \eta t}\||\xi _{h}|^m \xi _{3}^{n}\hat{\varrho}_{0}\|_{L _{\xi }^{q'}(\vert \xi\vert \leq K)}+e^{- \eta t}\||\xi _{h} |^m \xi _{3}^{n}\hat{\varrho}_{0}\|_{L _{\xi }^{q'}(\vert \xi\vert \geq K)}
   \nonumber \\
   & \displaystyle \lesssim e ^{-\eta t}\|\hat{\varrho}_{0}\|_{L _{\xi}^{\infty}}+e ^{-\eta t} \|\vert \xi\vert ^{-3(1+\epsilon)\frac{2-q'}{2q'}}\|_{L ^{\frac{2q'}{2-q'}}(\vert \xi \vert \geq K)} \|\vert \xi\vert ^{m+n+3(1+\epsilon)\frac{2-q'}{2q'}}\hat{\varrho} _{0}\|_{L ^{2}(\vert \xi\vert \geq K)}
    \nonumber \\
     & \displaystyle \lesssim e ^{-\eta t}\|\varrho _{0}\|_{ L ^1}+e ^{-\eta t}\|\nabla^{m+n+[3(\frac{1}{2}-\frac{1}{q})]_+}\varrho_0\|_{L^2},
      \nonumber \\[3mm]
      \displaystyle L  _{4}&\lesssim  e^{- \eta t}\||\xi _{h}|^m \xi _{3}^{n}\hat{\mathbf{u}}_{0}\|_{ L _{\xi _{3}}^{q'}(\mathbb{R}) L _{\xi _{1}\xi _{2}}^{q'}(\vert \xi  _{h}\vert \leq K)}+e^{- \eta t}\||\xi _{h} |^m \xi _{3}^{n}\hat{\mathbf{u}}_{0}\|_{ L _{\xi _{3}}^{q'}(\mathbb{R}) L _{\xi _{1}\xi _{2}}^{q'}(\vert \xi  _{h}\vert \geq K)}
   \nonumber \\
   & \displaystyle \lesssim e ^{-\eta t}\|\xi _{3}^{n}\hat{\mathbf{u}}_{0}\|_{L _{\xi _{3}}^{q'}L _{\xi _{1}\xi _{2}}^{\infty}}+e ^{-\eta t} \|\vert \xi _{h}\vert ^{-2(1+\epsilon)\frac{2-q'}{2q'}}\|_{L _{\xi _{1}\xi _{2}}^{\frac{2q'}{2-q'}}(\vert \xi _{h}\vert \geq K)} \|\vert \xi _{h}\vert ^{m+2(1+\epsilon)\frac{2-q'}{2q'}}\xi _{3}^{n}\hat{\mathbf{u}} _{0}\|_{L _{\xi _{3}}^{q'} L _{\xi _{1}\xi _{2}}^{2}}
    \nonumber \\
     & \displaystyle \lesssim e ^{-\eta t}\|\xi _{3}^{n}\widetilde{\mathbf{u}}_0\|_{L _{\xi _{3}} ^{q'} L _{x _{1}x _{2}}^1}+e ^{-\eta t}\|\nabla^{m+[2(\frac{1}{2}-\frac{1}{q})]_+}\xi _{3}^{n}\widetilde{\mathbf{u}}_0\|_{L_{\xi _{3}}^{q'} L_{x _{1}x_{2}}^2},
      \nonumber
      \end{align}
where $\widetilde{\mathbf{u}}_{0}  $ is defined by \eqref{wt-U}. Therefore we have from \eqref{aniso-na-u} that
\begin{align*}
\displaystyle  \|\nabla^m_h\nabla^n_3 \mathbf{u}(t)\|_{L^q} &\lesssim  (1+t)^{- \frac{m+n}{2}- \frac{3}{2} (1- \frac{1}{q})}\|\varrho_{0}\|_{L ^{1}}+e ^{-\eta t}\|\nabla^{m+n+[3(\frac{1}{2}-\frac{1}{q})]_+}\varrho_0\|_{L ^2}
 \nonumber \\[2mm]
 &\displaystyle \quad+ (1+t)^{-\frac{m+2}{2}+\frac{1}{q}}\|\xi _{3}^{n}\widetilde{\mathbf{u}}_0\|_{L _{\xi _{3}} ^{q'} L _{x _{1}x _{2}}^1}+e ^{-\eta t}\|\nabla^{m+[2(\frac{1}{2}-\frac{1}{q})]_+}\xi _{3}^{n}\widetilde{\mathbf{u}}_0\|_{L _{\xi _{3}}^{q'} L _{x _{1}x _{2}}^2}.
\end{align*}
Then, we get \eqref{anisou-lem-con}.
Similarly, one can immediately prove \eqref{hori-u-lem-con} and \eqref{verti-u-conl-lem}. The proof of Lemma \ref{xxlemma} is complete.
\end{proof}


\vspace{4mm}

\section{Nonlinear stability of the constant state} 
\label{sub:nonlinear_stability}
In this section, we are devoted to establishing the decay estimates of solutions to \eqref{1-1}, and then finish the proof of Theorem \ref{thm2}. Now, let us define some energy functionals that will enable us to achieve our desired estimate.
\begin{align}
\displaystyle \mathcal{E}_0(t)&=\sup_{0\leq \tau\leq t}\|(\varrho,\mathbf{u})(\tau,\cdot)\|^2_{H^4}+\int^t_0(\|\nabla\varrho(\tau,\cdot)\|^2_{H^3}+\|(\nabla_h\mathbf{u},\mathop{\mathrm{div}}\nolimits\mathbf{u})(\tau,\cdot)\|^2_{H^4})\mathrm{d}\tau,
 \nonumber \\[2mm]
\displaystyle  \mathcal{E}_1(t)&=\sup _{0 \leq \tau \leq t}(1+\tau)^{2}\Big(\|\mathop{\mathrm{div}}\nolimits \mathbf{u}(\tau,\cdot)\| _{L ^{2}}^2+\|\nabla\mathop{\mathrm{div}}\nolimits \mathbf{u}(\tau,\cdot)\| _{L ^{2}}^2 \Big)
 \nonumber \\
 & \displaystyle \quad+\int^t_0(1+\tau)^{\frac{2}{15}}(\|\nabla\mathop{\mathrm{div}}\nolimits\mathbf{u},\Delta_h\mathbf{u})(\tau,\cdot)\|^2_{H^2}\mathrm{d}\tau,
 \nonumber \\[2mm]
 \displaystyle \mathcal{E}_2(t)&=\sup_{0\leq\tau\leq t}\Big((1+\tau)^{\frac{3}{2}}(\log(1+\tau))^{-2}\|\varrho(\tau,\cdot)\|^2_{L^2}+(1+\tau)(\|\mathbf{u}(\tau,\cdot)\|^2_{L^2}+\|\partial_3 \mathbf{u}(\tau,\cdot)\|^2_{L^2})\Big.\nonumber\\[2mm]
&\displaystyle \quad+\Big.(1+\tau)^{2}(\|\nabla\varrho(\tau,\cdot)\|^2_{L^2}+\|\nabla_h \mathbf{u}(\tau,\cdot)\|^2_{L^2})\Big.\nonumber\\[2mm]
&\displaystyle \quad+\Big.\sum_{i=1,2;j=1,2}(1+\tau)^{\frac{32}{15}}\|\partial_i\partial_j \mathbf{u}(\tau,\cdot)\|^2_{L^2}
+(1+\tau)^{2}\sum_{i=1,2}\|\partial_i\partial_3u(\tau,\cdot)\|^2_{L^2}\Big.
 \nonumber \\[2mm]
 & \displaystyle \quad+\Big.(1+\tau)(\log(1+\tau))^{-2}\|\partial^2_3 \mathbf{u}(\tau,\cdot)\|^2_{L^2}\Big).\label{E-2-defi}
 \end{align}
In what follows, we denote
\begin{gather}\label{E-defi}
\displaystyle  \mathcal{E}(t)=\mathcal{E}_{0}(t)+\mathcal{E}_{1}(t)+\mathcal{E}_{2}(t),
\end{gather}
and assume that
\begin{align}\label{appri-assum-Nonlin}
\displaystyle  \mathcal{E}(t) \lesssim \Lambda,
\end{align}
for some $ \Lambda $ to be determined later. Clearly, similar to \eqref{apri-conlu}, we have
\begin{gather}\label{sec4-appri-conlu}
  \displaystyle 0<\frac{\bar{\rho}}{2} \leq \varrho+ \bar{\rho} \leq \frac{3\bar{\rho}}{2},\ \ \  \|\nabla(\varrho,\mathbf{u})\|_{L ^{\infty}}\lesssim \Lambda,
  \end{gather}
  provided $ \Lambda $ is small enough. The goal of the present subsection is to show that under the \emph{a priori }assumption \eqref{appri-assum-Nonlin} with suitably small $ \Lambda $,
\begin{align}\label{goal-esti}
\displaystyle \mathcal{E}(t) \lesssim \mbox{initial norms}+ \mathcal{E}^{\frac{3}{2}}(t)+\mathcal{E}^{2}(t).
\end{align}
Taking $ N=4 $ in \eqref{esti-exist-final}, we get
\begin{align}\label{dissip-1}
\displaystyle  \|(\varrho ,\mathbf{u})(t, \cdot)\|_{H ^{4}}^{2}+ \int _{0}^{t}\left(\|\nabla_h\mathbf{u}\|^2_{H ^{4}}+\|\mathop{\mathrm{div}}\nolimits\mathbf{u}\|^2_{H^4}+\|\nabla \varrho\|_{H ^{3}}^{2}\right)\mathrm{d}\tau \leq \|(\varrho _{0},\mathbf{u}_{0})\|_{H ^{4}}^{2},
\end{align}
that is,
\begin{align}\label{E-0-esti-con}
\displaystyle \mathcal{E}_{0}(t) \lesssim \mathcal{E}_{0}(0).
\end{align}
Therefore, in order to get \eqref{goal-esti}, the key of matter is to establish estimates for $ \mathcal{E}_{1}(t) $ and $ \mathcal{E}_{2}(t) $. To this attempt, we need to derive the representations of solutions to the nonlinear system \eqref{new} first. In view of \eqref{ysf} and the Duhamel principle, we have the following representation of $ (\varrho,\mathbf{u}) $ to nonlinear system \eqref{new} in the frequency space:
\begin{align}\label{rhobiaoshi}
\displaystyle
    \left(\begin{matrix}
\hat{\varrho}(t,\xi)\\[2mm]
\hat{\mathbf{u}}(t,\xi)
\end{matrix}\right)
=\hat{G}(t,\xi)\left(\begin{matrix}
\hat{\varrho} _{0}(\xi)\\[2mm]
\hat{\mathbf{u}}_{0}(\xi)
\end{matrix}
\right)+ \int _{0}^{t}\hat{G}(t- \tau,\xi)\left(\begin{matrix}
\hat{S} _{1}(\tau,\xi)\\[2mm]
\hat{S}_{2}(\tau,\xi)
\end{matrix}
\right)\mathrm{d}\tau.
\end{align}
Similarly, we get for $ \mathop{\mathrm{div}}\nolimits \mathbf{u} $ that
\begin{align}\label{divu-nonlinear-rep}
\displaystyle \widehat{\mathop{\mathrm{div}}\nolimits \mathbf{u}}(t,\xi)= \hat{G}^{\ast}(t, \xi)\left(\begin{matrix}
\hat{\varrho} _{0}\\[2mm]
\widehat{\mathop{\mathrm{div}}\nolimits\mathbf{u}_{0}}
\end{matrix}
\right)+ \int _{0}^{t}\hat{G}^{\ast}(t- \tau,\xi)\left(\begin{matrix}
\hat{S} _{1}(\tau,\xi)\\[2mm]
\widehat{\mathop{\mathrm{div}}\nolimits S _{2}}(\tau,\xi)
\end{matrix}
\right)\mathrm{d}\tau,
\end{align}
where we have used \eqref{new}, \eqref{divu-repres} and the Duhamel principle. Here $ \hat{G}^{\ast} $ and $ \hat{G} $ are as in \eqref{divu-repres} and \eqref{ysf}, respectively.

\subsection{Estimates of $ \mathop{\mathrm{div}}\nolimits \mathbf{u} $} 
\label{sub:estimates_of_the_divergence_fo_gs}


Now we are ready to establish the estimates of the solution. To begin with, let us derive some decay estimates and dissipation estimates for $\mathop{\mathrm{div}}\nolimits\mathbf{u}$.
\begin{proposition}\label{prop4.1}
Assume that $(\varrho,u)$ is a smooth solution to \eqref{new} satisfying \eqref{appri-assum-Nonlin} and \eqref{sec4-appri-conlu}. Then it holds that
\begin{equation*}
\mathcal{E}_1(t)\lesssim\|(\varrho_0,\mathbf{u}_0)\|^2_{L^1}+\|(\nabla\varrho_0,\nabla\mathbf{u}_0)\|^2_{H^1}+\mathcal{E}^{\frac{3}{2}}(t)+\mathcal{E}^2(t).
\end{equation*}
\end{proposition}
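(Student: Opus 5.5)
The estimate of $\mathcal{E}_1(t)$ splits into two parts: the weighted decay of $\operatorname{div}\mathbf{u}$ and $\nabla\operatorname{div}\mathbf{u}$, and the time-weighted dissipation integral. For the decay part we start from the Duhamel formula \eqref{divu-nonlinear-rep}. The linear term is handled by \eqref{divu-line-lem2} with $q=2$ and $m=0,1$. This gives $(1+t)^{-\frac54}\|U_0\|_{L^1}$ and $(1+t)^{-\frac74}\|U_0\|_{L^1}$, plus $e^{-\eta t}\|(\nabla\varrho_0,\nabla\mathbf{u}_0)\|_{H^1}$. Both are better than the required $(1+t)^{-1}$ and use only the initial norms in the statement. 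For the nonlinear part we use the pointwise bounds on $\hat G^\ast$ from Proposition \ref{pro1} on $\mathcal{A}_1\cup\mathcal{A}_2$. These give $|\hat G_1^\ast|,|\hat G_2^\ast|\lesssim e^{-\vartheta|\xi|^2(t-\tau)}+e^{-\eta(t-\tau)}$, which is isotropic heat-type decay with no horizontal-only factor. Since $\hat G_1^\ast$ carries $\lambda_1\lambda_2=\gamma\bar\rho^2|\xi|^2$, one factor $|\xi|$ can be moved onto $\hat S_1$. We then split $\int_0^t=\int_0^{t/2}+\int_{t/2}^t$.

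On $[0,t/2]$ we put the nonlinearities in $L^1$, so $\|\hat S\|_{L^\infty}\le\|S\|_{L^1}$. The kernel then gives $(1+t-\tau)^{-\frac54-\frac{m}{2}}$ after extracting one derivative from the divergence structure ($S_1$ and $\operatorname{div}S_2$ are of the form $\xi\cdot\hat F$). The $L^1$ norms are bounded by products of $L^2$ norms from $\mathcal{E}_2$. Examples are $\|\mathbf{u}\cdot\nabla\mathbf{u}\|_{L^1}\le\|\mathbf{u}\|_{L^2}\|\nabla\mathbf{u}\|_{L^2}\lesssim(1+\tau)^{-1}\mathcal{E}$ and $\|\varrho\operatorname{div}\mathbf{u}\|_{L^1}\lesssim(1+\tau)^{-\frac74}\log(1+\tau)\,\mathcal{E}$. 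The viscous and pressure terms are rewritten in conservative form, $g(\varrho)\Delta_h\mathbf{u}=\nabla_h\cdot(g\nabla_h\mathbf{u})-\nabla_h g\cdot\nabla_h\mathbf{u}$, so that a further derivative falls on the kernel. Lemma \ref{decaylemma} then yields at worst $(1+t)^{-\frac54}\log(1+t)$, or $(1+t)^{-1}$ after absorbing the logarithm. On $[t/2,t]$ we instead put the sources in $L^2$ and the kernel in $L^\infty$, which gives $(1+t-\tau)^{-1/2}$ or better with one derivative, integrable near $\tau=t$. For $\nabla\operatorname{div}\mathbf{u}$, one further derivative is placed on $S$ and controlled in $L^2$ through $\mathcal{E}_0$ and $\mathcal{E}_2$. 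High frequencies carry $e^{-\eta(t-\tau)}$ and are bounded by $H^3$ norms of $S$ using $\mathcal{E}_0$. All products are at least quadratic in $\mathcal{E}$, which produces the $\mathcal{E}^{3/2}+\mathcal{E}^2$ on the right (a square root appears because we bound $\sqrt{\mathcal{E}_1}$).

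For the dissipation integral $\int_0^t(1+\tau)^{2/15}\|(\nabla\operatorname{div}\mathbf{u},\Delta_h\mathbf{u})\|_{H^2}^2\,d\tau$ we use energy estimates instead. We take the $H^2$ energy estimates for $(\nabla\varrho,\nabla\mathbf{u})$, i.e.\ Steps 1--2 of Lemma \ref{priori} applied at derivative levels $1$ to $3$. The dissipation then includes $\|\nabla\nabla_h\mathbf{u}\|_{H^2}^2+\|\nabla\operatorname{div}\mathbf{u}\|_{H^2}^2$, which dominates $\|\Delta_h\mathbf{u}\|_{H^2}^2$. We multiply by $(1+\tau)^{2/15}$ and integrate in time. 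This creates the extra term $\frac{2}{15}\int(1+\tau)^{-13/15}\|\nabla(\varrho,\mathbf{u})\|_{H^2}^2\,d\tau$. The parts $\|\nabla\varrho\|^2$ and $\|\nabla_h\mathbf{u}\|^2$ are integrable against this weight thanks to the $(1+\tau)^{-2}$ decay in $\mathcal{E}_2$. The difficult piece is $\|\partial_3\mathbf{u}\|_{H^2}^2$, which decays only like $(1+\tau)^{-1}$.

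\textbf{Main obstacle.} This $\partial_3\mathbf{u}$ contribution is where I expect the real difficulty. My plan is to avoid putting $\partial_3\mathbf{u}$ in the energy at all and estimate only the coupled quantities $(\nabla\varrho,\operatorname{div}\mathbf{u})$ through the system \eqref{coupling-divu}. Its energy is $\gamma\bar\rho\|\nabla\varrho\|_{H^2}^2+\|\operatorname{div}\mathbf{u}\|_{H^2}^2$ plus a small cross term, with dissipation $\|\nabla\operatorname{div}\mathbf{u}\|_{H^2}^2+\|\nabla^2\varrho\|_{H^1}^2$. The weighted energy then decays like $(1+\tau)^{-2}$ (from the first part and $\mathcal{E}_2$), so multiplying by $(1+\tau)^{2/15}$ is harmless. $\Delta_h\mathbf{u}$ is treated separately via the curl equation, or by horizontal derivatives of the momentum equation, using weighted $\|\nabla_h\mathbf{u}\|_{H^3}^2$ decay. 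The exponent $\frac{2}{15}$ is presumably chosen exactly so that $(1+\tau)^{-13/15}\|\nabla_h\mathbf{u}\|^2$ and the nonlinear commutators $\mathbf{u}\cdot\nabla\mathbf{u}$ remain integrable. I would check these via Lemma \ref{lemma2.1}, the relation $\partial_3u_3=\operatorname{div}\mathbf{u}-\operatorname{div}_h\mathbf{u}_h$, and the $\frac{32}{15}$ rate of $\partial_h^2\mathbf{u}$ in $\mathcal{E}_2$.
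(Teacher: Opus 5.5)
Your architecture is the paper's. Lemmas \ref{divulemma} and \ref{divulemma1} bound $(1+t)\|(\operatorname{div}\mathbf{u},\nabla\operatorname{div}\mathbf{u})\|_{L^2}$ using the Duhamel formula \eqref{divu-nonlinear-rep} and the isotropic bounds on $\hat G^\ast$. Lemma \ref{haosan} obtains the weighted dissipation exactly as in your ``main obstacle'' paragraph: an $H^2$ energy estimate for $(\nabla\varrho,\operatorname{div}\mathbf{u})$ via \eqref{coupling-divu}, plus a second one from testing \eqref{new} with $(\Delta_h\varrho,\Delta_h\mathbf{u})$, so that $\partial_3\mathbf{u}$ never enters an energy. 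Two of your justifications, however, do not work as stated.

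\emph{The time weight.} You say the term $\tfrac{2}{15}\int_0^t(1+\tau)^{-13/15}\big(\|\nabla\varrho\|_{H^2}^2+\|\operatorname{div}\mathbf{u}\|_{H^2}^2\big)\,d\tau$ (and its analogue with $\|(\nabla_h\varrho,\nabla_h\mathbf{u})\|_{H^2}^2$) is harmless because these energies decay like $(1+\tau)^{-2}$. No such decay is available. $\mathcal{E}_1$ and $\mathcal{E}_2$ give rates only up to $\nabla^2\varrho$, $\nabla\operatorname{div}\mathbf{u}$ and second derivatives of $\mathbf{u}$; $\|\nabla^3\varrho\|_{L^2}$, $\|\nabla^2\operatorname{div}\mathbf{u}\|_{L^2}$ and $\|\nabla^2\nabla_h\mathbf{u}\|_{L^2}$ are merely bounded. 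The correct and simpler reason, used in \eqref{dissip-integral}, is that $(1+\tau)^{-13/15}\le 1$ and $\int_0^t\big(\|\nabla\varrho\|_{H^3}^2+\|(\nabla_h\mathbf{u},\operatorname{div}\mathbf{u})\|_{H^4}^2\big)\,d\tau\le\mathcal{E}_0(t)\lesssim\mathcal{E}_0(0)$. So the value $\frac{2}{15}$ plays no role in this step; any exponent in $(0,1]$ works. Its value matters only for the nonlinear terms and for the $\frac{16}{15}$ rate it feeds (cf.\ \eqref{T42}). This also sharpens your diagnosis of the first attempt. $(1+\tau)^{-13/15}\|\partial_3\mathbf{u}\|_{L^2}^2\lesssim(1+\tau)^{-28/15}$ is integrable; the real obstruction is $\partial_3^3\mathbf{u}_h$, which neither decays nor is time-integrable. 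Note too that this energy route, like the paper's, puts $\mathcal{E}_0(0)=\|(\varrho_0,\mathbf{u}_0)\|_{H^4}^2$ on the right-hand side, not merely $\|(\nabla\varrho_0,\nabla\mathbf{u}_0)\|_{H^1}^2$. This is harmless for Theorem \ref{thm2}.

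\emph{The split at $t/2$.} Switching to $L^2$ sources on $[t/2,t]$ against the kernel $(1+t-\tau)^{-1/2}$ forces $\|S\|_{L^2}\lesssim(1+\tau)^{-3/2}$, which sits exactly at the threshold.
\begin{itemize}
\item For $\operatorname{div}\mathbf{u}$, the bound $\|u_3\|_{L^\infty}\|\partial_3\mathbf{u}_h\|_{L^2}$ gives precisely $(1+\tau)^{-3/2}$.
\item For $\nabla\operatorname{div}\mathbf{u}$, placing the extra derivative on $S_2$ as you propose produces $u_3\partial_3^2\mathbf{u}_h$. For this term $\mathcal{E}$ yields only $(1+\tau)^{-3/2}\log(1+\tau)$, hence $(1+t)^{-1}\log(1+t)$ instead of $(1+t)^{-1}$.
\end{itemize}
The paper never uses $L^2$ sources at low frequency. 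For $|\xi|\le K$ it uses the $L^1$ bound on all of $[0,t]$, because the kernels $(1+t-\tau)^{-5/4}$ and $(1+t-\tau)^{-7/4}$ have exponent $>1$. Lemma \ref{decaylemma} with $\|\mathbf{u}\cdot\nabla\mathbf{u}\|_{L^1}\le\|\mathbf{u}\|_{L^2}\|\nabla\mathbf{u}\|_{L^2}\lesssim(1+\tau)^{-1}$ then gives $(1+t)^{-1}$ directly (see \eqref{N3}). The $L^2$ norms of $\nabla S_1$, $\operatorname{div}S_2$, $\nabla^2S_1$, $\nabla\operatorname{div}S_2$ enter only at high frequency. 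There $e^{-\eta(t-\tau)}$ needs just $(1+\tau)^{-1}$ decay of these norms, which comes from interpolating decaying low-order norms against $\mathcal{E}_0$; boundedness from $\mathcal{E}_0$ alone would not suffice. Simply extend your $L^1$ treatment to the whole interval.
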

\begin{proof}
 The proof of Proposition \ref{prop4.1} consists of Lemmas \ref{divulemma}--\ref{haosan} below.
\end{proof}
\begin{lemma}\label{divulemma}
 Under the assumptions of Theroem \ref{thm2},
it holds that
\begin{equation*}
(1+t)\|\mathop{\mathrm{div}}\nolimits\mathbf{u}\|_{L^2}\lesssim \|(\varrho_0,\mathbf{u}_0)\|_{L^1}+\|(\nabla\varrho_0,\nabla\mathbf{u}_0)\|_{L^2}+\mathcal{E}(t).
\end{equation*}
\end{lemma}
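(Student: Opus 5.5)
We start from the Duhamel representation \eqref{divu-nonlinear-rep} of $\widehat{\mathop{\mathrm{div}}\nolimits\mathbf{u}}$ and use Plancherel:
\[
\|\mathop{\mathrm{div}}\nolimits\mathbf{u}(t)\|_{L^2}\le \Big\|\hat G^\ast(t)\big(\hat\varrho_0,\widehat{\mathop{\mathrm{div}}\nolimits\mathbf{u}_0}\big)^T\Big\|_{L^2_\xi}+\int_0^t\Big\|\hat G^\ast(t-\tau)\big(\hat S_1,\widehat{\mathop{\mathrm{div}}\nolimits S_2}\big)^T(\tau)\Big\|_{L^2_\xi}\,\mathrm{d}\tau .
\]

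\textbf{Linear part.} We apply \eqref{divu-line-lem2} with $q=2$, $m=0$. This gives the bound $(1+t)^{-5/4}\|U_0\|_{L^1}+e^{-\eta t}\|(\nabla\varrho_0,\mathop{\mathrm{div}}\nolimits\mathbf{u}_0)\|_{L^2}$, which is better than $(1+t)^{-1}$.

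\textbf{Nonlinear part: pointwise bounds.} The key structural fact, from the proof of Lemma \ref{lemma4.3} (see \eqref{divu0}, \eqref{divu1}), is that each entry of $\hat G^\ast$ carries a gain of one factor of $|\xi|$ in its action on the data. Indeed $|\hat G^\ast_1|\lesssim|\xi||\hat G_{12}|$. Moreover $\widehat{\mathop{\mathrm{div}}\nolimits S_2}=i\xi\cdot\hat S_2$, and the bound for $\hat G_2^\ast$ in Proposition \ref{pro1} applies to it. Hence
\[
\big|\hat G^\ast(t-\tau)(\hat S_1,\widehat{\mathop{\mathrm{div}}\nolimits S_2})^T\big|\lesssim \big(e^{-\vartheta|\xi|^2(t-\tau)}+e^{-\eta(t-\tau)}\big)|\xi|\big(|\hat S_1|+|\hat S_2|\big).
\]

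\textbf{Nonlinear part: time splitting.} We split $\int_0^t$ at $\tau=t/2$.
\begin{itemize}
\item On $[0,t/2]$ we use $\||\xi|e^{-\vartheta|\xi|^2(t-\tau)}\hat f\|_{L^2}\lesssim(1+t-\tau)^{-5/4}\|f\|_{L^1}$ on low frequencies, plus exponential decay times $\|S\|_{L^2}$ on high frequencies. We write $S_1$ and the convective part of $S_2$ in divergence or product form, $\varrho\mathop{\mathrm{div}}\nolimits\mathbf{u}+\mathbf{u}\cdot\nabla\varrho=\mathop{\mathrm{div}}\nolimits(\varrho\mathbf{u})$, and treat $g(\varrho)\nabla\mathop{\mathrm{div}}\nolimits\mathbf{u}$ and $f(\varrho)\nabla\varrho$ similarly. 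Their $L^1$ norms are bounded by products of $L^2$ norms controlled by $\mathcal{E}_2$. For instance, $\|\mathbf{u}\cdot\nabla\mathbf{u}\|_{L^1}\le\|\mathbf{u}\|_{L^2}\|\nabla\mathbf{u}\|_{L^2}\lesssim(1+\tau)^{-1}\mathcal{E}$, using $\|\mathbf{u}\|_{L^2},\|\partial_3\mathbf{u}\|_{L^2}\lesssim(1+\tau)^{-1/2}\mathcal{E}^{1/2}$. This yields $\int_0^{t/2}(1+t-\tau)^{-5/4}(1+\tau)^{-1}\mathrm{d}\tau\lesssim(1+t)^{-5/4}\log(1+t)\lesssim(1+t)^{-1}$ by Lemma \ref{decaylemma}.
\item On $[t/2,t]$ we instead put the $|\xi|$ onto $S$. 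We use $\|e^{-\vartheta|\xi|^2(t-\tau)}\widehat{\nabla S}\|_{L^2}\le\|\nabla S\|_{L^2}$, or $(t-\tau)^{-1/2}\|S\|_{L^2}$. We then need $\|S\|_{H^1}\lesssim(1+\tau)^{-a}\mathcal{E}$ with $a>1$, or a combination with an integrable kernel, to get $(1+t)^{-1}$ from the second part of Lemma \ref{decaylemma}.
\end{itemize}

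\textbf{Main obstacle: the convective term.} The hard term is $u_3\partial_3\mathbf{u}_h$ inside $S_2$, whose $L^2$ norm a priori decays only like $\|\mathbf{u}\|_{L^\infty}\|\partial_3\mathbf{u}\|_{L^2}$. That is too slow, roughly $(1+\tau)^{-1}$ times weak $L^\infty$ decay. The first remedy is to use the anisotropic inequalities \eqref{two-two-deri-ani}, \eqref{three-one-deri-ani} so that only horizontal derivatives of $u_3$ appear. We also replace $\partial_3u_3$ by $\mathop{\mathrm{div}}\nolimits\mathbf{u}-\mathop{\mathrm{div}}\nolimits_h\mathbf{u}_h$, which decay at rate $(1+\tau)^{-1}$ by $\mathcal{E}_1,\mathcal{E}_2$. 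This gives $\|u_3\partial_3\mathbf{u}_h\|_{L^2}\lesssim\|u_3\|^{1/4}\|\partial_hu_3\|^{1/4}\|\partial_3u_3\|^{1/4}\|\partial_h\partial_3u_3\|^{1/4}\|\partial_3\mathbf{u}_h\|^{1/2}\|\partial_{h}\partial_3\mathbf{u}_h\|^{1/2}$, whose decay exponent exceeds $1$. Terms with $\nabla$ of $S$ are handled analogously using the $(1+\tau)^{32/15}$ and $(1+\tau)^2$ weights in $\mathcal{E}_2$. When a factor only has time-integrated control (the $\mathcal{E}_0$ dissipation), we use Cauchy--Schwarz in $\tau$.

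Collecting the linear and nonlinear contributions, the resulting products are of the form $\mathcal{E}$, bounded by $\mathcal{E}^{1/2}\cdot\mathcal{E}^{1/2}$, which gives the lemma.
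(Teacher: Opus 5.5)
Your linear part, the pointwise bound on $\hat G^\ast$, and the $[0,t/2]$ estimate are fine. The weak point is $[t/2,t]$.

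\textbf{The gap on $[t/2,t]$.} There you drop the low-frequency $L^1$ bound. Instead you use $\|e^{-\vartheta|\xi|^2(t-\tau)}\widehat{\nabla S}\|_{L^2}\le\|\nabla S\|_{L^2}$ or $(t-\tau)^{-1/2}\|S\|_{L^2}$, and claim that $\|S\|_{H^1}\lesssim(1+\tau)^{-a}\mathcal{E}$ with $a>1$ then gives $(1+t)^{-1}$. It does not. These kernels are not integrable, and the last case of the second estimate in Lemma \ref{decaylemma} gives only $\int_{t/2}^t(1+\tau)^{-a}\,\mathrm{d}\tau\lesssim(1+t)^{1-a}$ and $\int_{t/2}^t(t-\tau)^{-1/2}(1+\tau)^{-a}\,\mathrm{d}\tau\lesssim(1+t)^{1/2-a}$. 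So you actually need $\|\nabla S\|_{L^2}\lesssim(1+\tau)^{-2}$ or $\|S\|_{L^2}\lesssim(1+\tau)^{-3/2}$.

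The first requirement is out of reach. The piece $\partial_3\mathbf{u}\cdot\nabla u_3$ of $\mathop{\mathrm{div}}\nolimits(\mathbf{u}\cdot\nabla\mathbf{u})$ carries $\partial_3\mathbf{u}$, and $\|\partial_3\mathbf{u}\|_{L^2}$, $\|\partial_3^2\mathbf{u}\|_{L^2}$ decay only at the two-dimensional rate. The paper's bound \eqref{N413} yields only about $(1+\tau)^{-11/8}$. So ``terms with $\nabla$ of $S$ are handled analogously'' fails, and checking that the exponent for $u_3\partial_3\mathbf{u}_h$ ``exceeds $1$'' is the wrong test.

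The $(t-\tau)^{-1/2}$ variant can be rescued, but only by verifying the threshold $3/2$. Your own anisotropic bound does give $\|u_3\partial_3\mathbf{u}\|_{L^2}\lesssim(1+\tau)^{-13/8}\mathcal{E}$, and the other parts of $S_1,S_2$ decay at least as fast. Separately, you still need $\|\nabla S_1\|_{L^2}+\|\mathop{\mathrm{div}}\nolimits S_2\|_{L^2}\lesssim(1+\tau)^{-1}\mathcal{E}$ for the $e^{-\eta(t-\tau)}$ contribution from $\mathcal{A}_2$, where the factor $|\xi|$ cannot be absorbed by a heat kernel. (On $[0,t/2]$ that same contribution likewise needs $\|\nabla S\|_{L^2}$ rather than $\|S\|_{L^2}$, but mere boundedness suffices there.)

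\textbf{The simpler repair, which is the paper's proof.} The switch on $[t/2,t]$ is unnecessary. The paper applies \eqref{divu-line-lem} and \eqref{divu-line-lem2} inside the Duhamel integral over all of $[0,t]$:
\begin{itemize}
\item Low frequencies are bounded by $(1+t-\tau)^{-5/4}(\|S_1\|_{L^1}+\|S_2\|_{L^1})$. This kernel is integrable, so $\|S\|_{L^1}\lesssim(1+\tau)^{-1}\mathcal{E}$ suffices by Lemma \ref{decaylemma}.
\item The remaining part is $e^{-\eta(t-\tau)}(\|\nabla S_1\|_{L^2}+\|\mathop{\mathrm{div}}\nolimits S_2\|_{L^2})$, again with an integrable kernel, so decay $(1+\tau)^{-1}$ is enough.
\end{itemize}
In this route $u_3\partial_3\mathbf{u}_h$ is harmless, since in $L^1$ it is bounded by $\|\mathbf{u}\|_{L^2}\|\partial_3\mathbf{u}\|_{L^2}\lesssim(1+\tau)^{-1}\mathcal{E}$. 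The ``main obstacle'' you describe is therefore an artifact of your $[t/2,t]$ choice. The only delicate term is $\mathop{\mathrm{div}}\nolimits(\mathbf{u}\cdot\nabla\mathbf{u})$ in $L^2$. The paper splits it as $\mathbf{u}\cdot\nabla\mathop{\mathrm{div}}\nolimits\mathbf{u}+\nabla\mathbf{u}_h\cdot\nabla_h\mathbf{u}+\partial_3\mathbf{u}\cdot\nabla u_3$ and bounds each piece with \eqref{two-two-deri-ani}. The resulting rate $(1+\tau)^{-11/8}$ up to a logarithm is more than enough against the exponential kernel.
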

\begin{proof}
Recalling the representation of $ \mathop{\mathrm{div}}\nolimits \mathbf{u} $ in \eqref{divu-nonlinear-rep},  using Plancherel's theorem, \eqref{divu-line-lem}, \eqref{divu-line-lem2} and the fact $e^{-\eta t}(1+t)^m\leq C(\eta,m)$ for any $m\geq 0$, we have
\begin{align}\label{divuL20}
&\|\mathop{\mathrm{div}}\nolimits\mathbf{u}\|_{L^2}=\|\widehat{\mathop{\mathrm{div}}\nolimits\mathbf{u}}\|_{L^2}\nonumber\\[2mm]
& ~\displaystyle\lesssim (1+t)^{-\frac{5}{4}}(\|(\varrho_0, \mathbf{u} _{0})\|_{L^1}+\|(\nabla \varrho _{0},\nabla\mathbf{u}_0)\|_{L^2})+\int^t_0(1+t-\tau)^{-\frac{5}{4}}\|S_1\|_{L^1}\mathrm{d}\tau\nonumber\\[2mm]
& ~\quad+\int^t_0(1+t-\tau)^{-m}\|\nabla S_1\|_{L^2}\mathrm{d}\tau+\int^t_0(1+t-\tau)^{-\frac{5}{4}}\|S_2\|_{L^1}\mathrm{d}\tau
 \nonumber \\[2mm]
 & ~\displaystyle \quad+\int^t_0(1+t-\tau)^{-m}\|\mathop{\mathrm{div}}\nolimits{S}_2\|_{L^2}\mathrm{d}\tau\nonumber\\[2mm]
&~\displaystyle \lesssim (1+t)^{-\frac{5}{4}}(\|(\varrho_0, \mathbf{u} _{0})\|_{L^1}+\|(\nabla \varrho _{0},\nabla\mathbf{u}_0)\|_{L^2})+\sum^{4}_{i=1}N_i.
\end{align}
For the integral $N_1$,  using the H\"older inequality, Lemma \ref{decaylemma} and $S_1=\varrho\mathop{\mathrm{div}}\nolimits\mathbf{u}+\mathbf{u}\cdot\nabla\varrho$, we get
\begin{align}\label{N1}
N_1&=\int^t_0(1+t-\tau)^{-\frac{5}{4}}\|S_1\|_{L^1}\mathrm{d}\tau\nonumber\\[2mm]
&\lesssim \sup_{0\leq \tau\leq t}\Big[(1+\tau)^{\frac{3}{4}}(\log(1+\tau))^{-1}\|\varrho\|_{L^2}(1+\tau)\|\mathop{\mathrm{div}}\nolimits\mathbf{u}\|_{L^2}\Big]\int^t_0(1+t-\tau)^{-\frac{5}{4}}(1+\tau)^{-\frac{7}{4}}\log(1+\tau)\mathrm{d}\tau\nonumber\\[2mm]
& \quad+ \sup_{0\leq \tau\leq t}\Big[(1+\tau)^{\frac{1}{2}}\|\mathbf{u}\|_{L^2}(1+\tau)\|\nabla\varrho\|_{L^2}\Big]\int^t_0(1+t-\tau)^{-\frac{5}{4}}(1+\tau)^{-\frac{3}{2}}\mathrm{d}\tau\nonumber\\[2mm]
& \lesssim\mathcal{E}^{\frac{1}{2}}_2(t)\mathcal{E}^{\frac{1}{2}}_1(t)(1+t)^{-\frac{5}{4}}+\mathcal{E}_2(t)(1+t)^{-\frac{5}{4}}\nonumber\\[2mm]
& \lesssim\mathcal{E}(t)(1+t)^{-\frac{5}{4}}.
\end{align}
For the term $N_2$,  using the definition of $S_1$ in \eqref{nonlinear}, we split it into two terms.
\begin{align}\label{N2*}
N_2&=\int^t_0(1+t-\tau)^{-m}\|\nabla(\varrho\mathop{\mathrm{div}}\nolimits\mathbf{u})\|_{L^2}\mathrm{d}\tau+\int^t_0(1+t-\tau)^{-m}\|\nabla(\mathbf{u}\cdot\nabla\varrho)\|_{L^2}\mathrm{d}\tau\nonumber\\
&=N_{21}+N_{22},
\end{align}
where, in view of Lemma \ref{decaylemma} and the Sobolev embedding theorem, it holds that
\begin{align}\label{N21}
N_{21}&\leq\int^t_0(1+t-\tau)^{-m}(\|\nabla\varrho\|_{L^{\infty}}\|\mathop{\mathrm{div}}\nolimits\mathbf{u}\|_{L^2}+\|\varrho\|_{L^{\infty}}\|\nabla\mathop{\mathrm{div}}\nolimits\mathbf{u}\|_{L^2})\mathrm{d}\tau\nonumber\\[2mm]
&\displaystyle \lesssim\sup_{0\leq\tau\leq t}\Big[(1+\tau)^{\frac{3}{4}}\|\nabla^2\varrho\|^{\frac{3}{4}}_{L^2}\|\nabla^4\varrho\|^{\frac{1}{4}}_{L^2}(1+\tau)\|\mathop{\mathrm{div}}\nolimits\mathbf{u}\|_{L^2}\Big]
 \nonumber \\[2mm]
 & \displaystyle ~\qquad \cdot\int^t_0(1+t-\tau)^{-m}(1+\tau)^{-\frac{7}{4}}\mathrm{d}\tau\nonumber\\[2mm]
&\quad+ \sup_{0\leq\tau\leq t}\Big[(1+\tau)^{\frac{1}{2}}\|\nabla\varrho\|^{\frac{1}{2}}_{L^2}(1+\tau)^{\frac{1}{2}}\|\nabla^2\varrho\|^{\frac{1}{2}}_{L^2}(1+\tau)\|\nabla\mathop{\mathrm{div}}\nolimits\mathbf{u}\|_{L^2}\Big]\nonumber\\[2mm]
&~\quad\qquad\cdot\int^t_0(1+t-\tau)^{-m}(1+\tau)^{-2}\mathrm{d}\tau\nonumber\\[2mm]
&\displaystyle \lesssim\mathcal{E}^{\frac{3}{8}}_2(t)\mathcal{E}^{\frac{1}{8}}_0(t)\mathcal{E}^{\frac{1}{2}}_1(t)(1+t)^{-\frac{7}{4}}+\mathcal{E}^{\frac{1}{2}}_2(t)\mathcal{E}^{\frac{1}{2}}_1(t)(1+t)^{-2}\nonumber\\[2mm]
& \displaystyle \lesssim\mathcal{E}(t)(1+t)^{-\frac{7}{4}}.
\end{align}
For the term $N_{22}$, by \eqref{two-two-deri-ani} and Lemma \ref{decaylemma}, we have
\begin{align}\label{N22}
N_{22}&\leq \int^t_0(1+t-\tau)^{-m}(\|\nabla\mathbf{u}\cdot\nabla\varrho\|_{L^2}+\|(\mathbf{u}\cdot\nabla)\nabla\varrho\|_{L^2})\mathrm{d}\tau\nonumber\\[2mm]
&\displaystyle \lesssim \sup_{0\leq\tau\leq t}\Big[(1+\tau)^{\frac{1}{8}}\|\nabla\mathbf{u}\|^{\frac{1}{4}}_{L^2}(1+\tau)^{\frac{1}{4}}\|\nabla\partial_1\mathbf{u}\|^{\frac{1}{4}}_{L^2}(1+\tau)^{\frac{3}{8}}\|\nabla\partial_2\mathbf{u}\|^{\frac{3}{8}}_{L^2}\|\nabla\partial^2_1\partial_2\mathbf{u}\|^{\frac{1}{8}}_{L^2}\Big.\nonumber\\[2mm]
& \displaystyle \qquad \quad\Big.\cdot(1+\tau)^{\frac{1}{2}}\|\nabla\varrho\|^{\frac{1}{2}}_{L^2}(1+\tau)^{\frac{1}{2}}\|\nabla\partial_3\varrho\|^{\frac{1}{2}}_{L^2}\Big]\int^t_0(1+t-\tau)^{-m}(1+\tau)^{-\frac{7}{4}}\mathrm{d}\tau\nonumber\\[2mm]
&\quad+\sup_{0\leq \tau\leq t}\Big[(1+\tau)^{\frac{1}{8}}\|\mathbf{u}\|^{\frac{1}{4}}_{L^2}(1+\tau)^{\frac{1}{4}}\|\partial_1\mathbf{u}\|^{\frac{1}{4}}_{L^2}(1+\tau)^{\frac{1}{4}}\|\partial_2\mathbf{u}\|^{\frac{1}{4}}_{L^2}(1+\tau)^{\frac{4}{15}}\|\partial_1\partial_2\mathbf{u}\|^{\frac{1}{4}}_{L^2}\Big.\nonumber\\[2mm]
& \qquad \qquad \quad\Big.\cdot(1+\tau)^{\frac{3}{4}}\|\nabla ^{2} \varrho\|^{\frac{3}{4}}_{L^2}\|\nabla ^{2}\partial^2_3\varrho\|^{\frac{1}{4}}_{L^2}\Big]\int^t_0(1+t-\tau)^{-m}(1+\tau)^{-\frac{197}{120}}\mathrm{d}\tau\nonumber\\[2mm]
&\displaystyle\lesssim \mathcal{E}^{\frac{15}{16}}_2(t)\mathcal{E}^{\frac{1}{16}}_0(t)(1+t)^{-\frac{7}{4}}+\mathcal{E}^{\frac{7}{8}}_2(t)\mathcal{E}^{\frac{1}{8}}_0(t)(1+t)^{-\frac{197}{120}}\nonumber\\[2mm]
& \displaystyle \lesssim\mathcal{E}(t)(1+t)^{-\frac{197}{120}}.
\end{align}
Substituting \eqref{N21} and \eqref{N22} into \eqref{N2*}, we get
\begin{align}\label{N2}
N_2 \lesssim\mathcal{E}(t)(1+t)^{-\frac{197}{120}}.
\end{align}
For $N_3$, using the H\"older inequality, Lemma \ref{decaylemma} and the definition of $S_2$ in \eqref{nonlinear}, we obtain that
\begin{align}\label{N3}
N_3&\displaystyle\lesssim \sup_{0\leq \tau\leq t}\Big[(1+\tau)^{\frac{1}{2}}\|\mathbf{u}\|_{L^2}(1+\tau)^{\frac{1}{2}}\|\nabla\mathbf{u}\|_{L^2}\Big]\int^t_0(1+t-\tau)^{-\frac{5}{4}}(1+\tau)^{-1}\mathrm{d}\tau\nonumber\\[2mm]
&\displaystyle~ \quad+\sup_{0\leq\tau\leq t}\Big[(1+\tau)^{\frac{3}{4}}(\log(1+\tau))^{-1}\|\varrho\|_{L^2}(1+\tau)^{\frac{16}{15}}\|\Delta_h\mathbf{u}\|_{L^2}\Big]\nonumber\\[2mm]
&\displaystyle~ \qquad\cdot\int^t_0(1+t-\tau)^{-\frac{5}{4}}(1+\tau)^{-\frac{109}{60}}\log(1+\tau)\mathrm{d}\tau\nonumber\\[2mm]
&\displaystyle~ \quad+\sup_{0\leq\tau\leq t}\Big[(1+\tau)^{\frac{3}{4}}(\log(1+\tau))^{-1}\|\varrho\|_{L^2}(1+\tau)\|\nabla\mathop{\mathrm{div}}\nolimits\mathbf{u}\|_{L^2}\Big]\nonumber\\[2mm]
&\displaystyle~ \qquad\cdot\int^t_0(1+t-\tau)^{-\frac{5}{4}}(1+\tau)^{-\frac{7}{4}}\log(1+\tau)\mathrm{d}\tau\nonumber\\[2mm]
&~\displaystyle \quad+\sup_{0\leq \tau\leq t}\Big[(1+\tau)^{\frac{3}{4}}(\log(1+\tau))^{-1}\|\varrho\|_{L^2}(1+\tau)\|\nabla\varrho\|_{L^2}\Big]\nonumber\\[2mm]
&\displaystyle~ \qquad\cdot\int^t_0(1+t-\tau)^{-\frac{5}{4}}(1+\tau)^{-\frac{7}{4}}\log(1+\tau)\mathrm{d}\tau\nonumber\\[2mm]
&\displaystyle \lesssim \mathcal{E}_2(t)(1+t)^{-1}+\mathcal{E}_2(t)(1+t)^{- \frac{5}{4}} +\mathcal{E}^{\frac{1}{2}}_2(t)\mathcal{E}^{\frac{1}{2}}_1(t)(1+t)^{-\frac{5}{4}}\nonumber\\[2mm]
&\displaystyle \lesssim\mathcal{E}(t)(1+t)^{-1}.
\end{align}
For the last term $N_4$, we split it into four terms.
\begin{align}\label{N40}
N_4&=\int^t_0(1+t-\tau)^{-m}\|\mathop{\mathrm{div}}\nolimits S_2\|_{L^2}\mathrm{d}\tau\nonumber\\[2mm]
&\lesssim \int^t_0 (1+t-\tau)^{-m}\Big (\|\mathop{\mathrm{div}}(\mathbf{u}\cdot\nabla\mathbf{u})\|_{L^2}+\|\mathop{\mathrm{div}}(g(\varrho)\Delta_h\mathbf{u})\|_{L^2}\Big.
 \nonumber \\[2mm]
& \displaystyle~\Big. \qquad \quad+\|\mathop{\mathrm{div}}(g(\varrho)\nabla\mathop{\mathrm{div}}\nolimits\mathbf{u})\|_{L^2} +\|\mathop{\mathrm{div}}(f(\varrho)\nabla\varrho)\|_{L^2}\Big)\mathrm{d}\tau\nonumber\\[2mm]
&=:\sum^{4}_{j=1}N_{4j}.
\end{align}
Next we shall handle the terms on the right hand side of \eqref{N40}. First, it holds that
\begin{align}\label{N410}
\displaystyle N_{41}&=\int^t_0(1+t-\tau)^{-m}\|\mathop{\mathrm{div}}(\mathbf{u}\cdot\nabla \mathbf{u})\|_{L^2}\mathrm{d}\tau\nonumber\\[2mm]
&\displaystyle \lesssim\int^t_0(1+t-\tau)^{-m}(\|\mathbf{u}\cdot\nabla\mathop{\mathrm{div}}\mathbf{u}\|_{L^2}+\|\nabla\mathbf{u}_h\cdot\nabla_h\mathbf{u}\|_{L^{2}}+\|\partial_3\mathbf{u}\cdot\nabla u_3\|_{L^2})\mathrm{d}\tau\nonumber\\[2mm]
&\displaystyle \lesssim N_{411}+N_{412}+N_{413}.
\end{align}
By \eqref{two-two-deri-ani} and Lemma \ref{decaylemma}, we can bound $N_{411}$-$N_{413}$ as follows:
\begin{align}\label{N411}
N_{411}&\lesssim
\int^t_0(1+t-\tau)^{-m}\Big(\|\mathbf{u}\|^{\frac{1}{4}}_{L^2}\|\partial_1\mathbf{u}\|^{\frac{1}{4}}_{L^2}\|\partial_2\mathbf{u}\|^{\frac{1}{4}}_{L^2}\|\partial_1\partial_2\mathbf{u}\|^{\frac{1}{4}}_{L^2}\|\nabla\mathop{\mathrm{div}}\mathbf{u}\|^{\frac{1}{2}}_{L^2}\|\nabla\partial_3\mathop{\mathrm{div}}\mathbf{u}\|^{\frac{1}{2}}_{L^2}\Big)\mathrm{d}\tau\nonumber\\[3mm]
&\displaystyle \lesssim\sup_{0\leq\tau\leq t}\Big[(1+\tau)^{\frac{1}{8}}\|\mathbf{u}\|^{\frac{1}{4}}_{L^2}(1+\tau)^{\frac{1}{4}}\|\partial_1\mathbf{u}\|^{\frac{1}{4}}_{L^2}(1+\tau)^{\frac{1}{4}}\|\partial_2\mathbf{u}\|^{\frac{1}{4}}_{L^2}(1+\tau)^{\frac{4}{15}}\|\partial_1\partial_2\mathbf{u}\|^{\frac{1}{4}}_{L^2}\Big.\nonumber\\[3mm]
&~\qquad \qquad \Big.\cdot(1+\tau)^{\frac{3}{4}}\|\nabla\mathop{\mathrm{div}}\nolimits\mathbf{u}\|^{\frac{3}{4}}_{L^2}\|\nabla^3\mathop{\mathrm{div}}\nolimits\mathbf{u}\|^{\frac{1}{4}}_{L^2}\Big]\int^t_0(1+t-\tau)^{-m}(1+\tau)^{-\frac{197}{120}}\mathrm{d}\tau\nonumber\\[3mm]
& \displaystyle \lesssim\mathcal{E}^{\frac{1}{2}}_2(t)\mathcal{E}^{\frac{3}{8}}_1(t)\mathcal{E}^{\frac{1}{8}}_0(t)\int^t_0(1+t-\tau)^{-m}(1+\tau)^{-\frac{197}{120}}\mathrm{d}\tau\nonumber\\[3mm]
& \displaystyle \lesssim\mathcal{E}(t)(1+t)^{-\frac{197}{120}},
\end{align}
\begin{align}
&\displaystyle N_{412}
 \nonumber \\
 &~\displaystyle\lesssim \int^t_0(1+t-\tau)^{-m}(\|\nabla\mathbf{u}_h\|^{\frac{1}{4}}_{L^2}\|\nabla\partial_1\mathbf{u}_h\|^{\frac{1}{4}}_{L^2}\|\nabla\partial_2\mathbf{u}_h\|^{\frac{1}{4}}_{L^2}\|\nabla\partial_1\partial_2\mathbf{u}_h\|^{\frac{1}{4}}_{L^2}\|\nabla_h\mathbf{u}\|^{\frac{1}{2}}_{L^2}\|\nabla_h\partial_3\mathbf{u}\|^{\frac{1}{2}}_{L^2})\mathrm{d}\tau\nonumber\\
&~\displaystyle \lesssim \sup_{0\leq \tau\leq t}\Big[(1+\tau)^{\frac{1}{8}}\|\nabla\mathbf{u}_h\|_{L^2}^{\frac{1}{4}}(1+\tau)^{\frac{1}{4}}\|\nabla\partial_1\mathbf{u}_h\|_{L^2}^{\frac{1}{4}}(1+\tau)^{\frac{3}{8}}\|\nabla\partial_2\mathbf{u}_h\|_{L^2}^{\frac{3}{8}}\|\nabla\partial^2_1\partial_2\mathbf{u}_h\|_{L^2}^{\frac{1}{8}}\Big.\nonumber\\
&~~\qquad \qquad\Big.\cdot(1+\tau)^{\frac{1}{2}}\|\nabla_h\mathbf{u}\|^{\frac{1}{2}}_{L^2}(1+\tau)^{\frac{1}{2}}\|\nabla_h\partial_3\mathbf{u}\|^{\frac{1}{2}}_{L^2}\Big]\int^t_0(1+t-\tau)^{-m}(1+\tau)^{-\frac{7}{4}}\mathrm{d}\tau\nonumber\\
&~\displaystyle \lesssim \mathcal{E}^{\frac{15}{16}}_2(t) \mathcal{E}^{\frac{1}{16}}_0(t)\int^t_0(1+t-\tau)^{-m}(1+\tau)^{-\frac{7}{4}}\mathrm{d}\tau\nonumber\\[2mm]
&~\displaystyle \lesssim \mathcal{E}(t)(1+t)^{-\frac{7}{4}},
\end{align}
and
\begin{align}\label{N413}
N_{413}&= \int^t_0(1+t-\tau)^{-m}(\|\nabla u_3\|^{\frac{1}{4}}_{L^2}\|\nabla\partial_1 u_3\|^{\frac{1}{4}}_{L^2}\|\nabla\partial_2 u_3\|^{\frac{1}{4}}_{L^2}\|\nabla\partial_1\partial_2 u_3\|^{\frac{1}{4}}_{L^2}\|\partial_3\mathbf{u}\|^{\frac{1}{2}}_{L^2}\|\partial^2_3\mathbf{u}\|^{\frac{1}{2}}_{L^2})\mathrm{d}\tau\nonumber\\[2mm]
& \displaystyle \lesssim\sup_{0\leq \tau\leq t}\Big[ (1+\tau)^{\frac{1}{4}}\|\nabla u_3\|^{\frac{1}{4}}_{L^2}(1+\tau)^{\frac{1}{4}}\|\nabla\partial_1 u_3\|^{\frac{1}{4}}_{L^2}(1+\tau)^{\frac{3}{8}}\|\nabla\partial_2 u_3\|^{\frac{3}{8}}_{L^2}\|\nabla\partial^2_1\partial_2 u_3\|^{\frac{1}{8}}_{L^2}\Big.\nonumber\\[2mm]
&~~ \Big.\cdot(1+\tau)^{\frac{1}{4}}\|\partial_3\mathbf{u}\|^{\frac{1}{2}}_{L^2}(1+\tau)^{\frac{1}{4}}(\log(1+\tau))^{-\frac{1}{2}}\|\partial^2_3\mathbf{u}\|^{\frac{1}{2}}_{L^2}\Big]\nonumber\\[2mm]
&~~\cdot\int^t_0(1+t-\tau)^{-m}(1+\tau)^{-\frac{11}{8}}(\log(1+\tau))^{\frac{1}{2}}\mathrm{d}\tau \nonumber\\[2mm]
&\displaystyle \lesssim \mathcal{E}^{\frac{7}{16}}_1(t)\mathcal{E}^{\frac{1}{16}}_0(t)\mathcal{E}^{\frac{1}{2}}_2(t)\int^t_0(1+t-\tau)^{-m}(1+\tau)^{-\frac{11}{8}}(\log(1+\tau))^{\frac{1}{2}}\mathrm{d}\tau\nonumber\\[2mm]
&\displaystyle \lesssim\mathcal{E}(t)(1+t)^{-\frac{11}{8}}(\log(1+\tau))^{\frac{1}{2}}.
\end{align}
Substituting \eqref{N411}--\eqref{N413} into \eqref{N410}, we then get
\begin{align}\label{N41}
N_{41}\lesssim\mathcal{E}(t)(1+t)^{-\frac{11}{8}}(\log(1+\tau))^{\frac{1}{2}}.
\end{align}
For $N_{42}$, by using \eqref{soboleve-ineq}, \eqref{sec4-appri-conlu} and Lemma \ref{decaylemma}, we get
\begin{align}
N_{42}&=\int^t_0(1+t-\tau)^{-m}\|\mathop{\mathrm{div}}\nolimits(g(\varrho)\Delta_h\mathbf{u})\|_{L^2}\mathrm{d}\tau\nonumber\\[2mm]
&\displaystyle \lesssim \sup_{0\leq\tau\leq t}\Big[(1+\tau)^{\frac{3}{4}}\|\nabla^2\varrho\|^{\frac{3}{4}}_{L^2}\|\nabla^4\varrho\|^{\frac{1}{4}}_{L^2}(1+\tau)^{\frac{16}{15}}\|\Delta_h\mathbf{u}\|_{L^2}\Big]\int^t_0(1+t-\tau)^{-m}(1+\tau)^{-\frac{109}{60}}\mathrm{d}\tau\nonumber\\[2mm]
&\displaystyle \quad+\sup_{0\leq\tau\leq t}\Big[(1+\tau)^{\frac{1}{2}}\|\nabla\varrho\|^{\frac{1}{2}}_{L^2}(1+\tau)^{\frac{1}{2}}\|\nabla^2\varrho\|^{\frac{1}{2}}_{L^2}(1+\tau)^{\frac{1}{2}}\|\nabla_h\mathop{\mathrm{div}}\nolimits\mathbf{u}\|^{\frac{1}{2}}_{L^2}\|\nabla_h\Delta_h\mathop{\mathrm{div}}\nolimits\mathbf{u}\|^{\frac{1}{2}}_{L^2}\Big]\nonumber\\[2mm]
&~~~\qquad \qquad\cdot\int^t_0(1+t-\tau)^{-m}(1+\tau)^{-\frac{3}{2}}\mathrm{d}\tau\nonumber\\[2mm]
&\lesssim \mathcal{E}^{\frac{3}{8}}_2(t)\mathcal{E}^{\frac{1}{8}}_0(t)(1+t)^{-\frac{109}{60}}+\mathcal{E}^{\frac{1}{2}}_2(t)\mathcal{E}^{\frac{1}{4}}_1(t)\mathcal{E}^{\frac{1}{4}}_0(t)(1+t)^{-\frac{3}{2}}\nonumber\\[2mm]
&\lesssim \mathcal{E}(t)(1+t)^{-\frac{3}{2}}.
\end{align}
Similarly, we have
\begin{align}
N_{43}&=\int^t_0(1+t-\tau)^{-m}\|\mathop{\mathrm{div}}\nolimits(g(\varrho)\nabla \mathop{\mathrm{div}}\nolimits\mathbf{u})\|_{L^2}\mathrm{d}\tau\nonumber\\[2mm]
&\displaystyle \lesssim \sup_{0\leq \tau\leq t}\Big[
(1+\tau)^{\frac{3}{4}}\|\nabla^2\varrho\|^{\frac{3}{4}}_{L^2}\|\nabla^4\varrho\|^{\frac{1}{4}}_{L^2}(1+\tau)\|\nabla \mathop{\mathrm{div}}\nolimits\mathbf{u}\|_{L^2}\Big]\int^t_0(1+t-\tau)^{-m}(1+\tau)^{-\frac{7}{4}}\mathrm{d}\tau\nonumber\\[2mm]
&\displaystyle \quad+\sup_{0\leq\tau \leq t}\Big[(1+\tau)^{\frac{1}{2}}\|\nabla\varrho\|^{\frac{1}{2}}_{L^2}(1+\tau)^{\frac{1}{2}}\|\nabla^2\varrho\|^{\frac{1}{2}}_{L^2}(1+\tau)^{\frac{1}{2}}\|\nabla\mathop{\mathrm{div}}\nolimits\mathbf{u}\|^{\frac{1}{2}}_{L^2}\|\nabla\Delta \mathop{\mathrm{div}}\nolimits\mathbf{u}\|^{\frac{1}{2}}_{L^2}\Big]\nonumber\\[2mm]
&~~\qquad\cdot\int^t_0(1+t-\tau)^{-m}(1+\tau)^{-\frac{3}{2}}\mathrm{d}\tau\nonumber\\[2mm]
&\displaystyle \lesssim\mathcal{E}^{\frac{3}{8}}_2(t)\mathcal{E}^{\frac{1}{8}}_0(t)\mathcal{E}^{\frac{1}{2}}_1(t)(1+t)^{- \frac{7}{4}}+\mathcal{E}^{\frac{1}{2}}_2(t)\mathcal{E}^{\frac{1}{4}}_1(t)\mathcal{E}^{\frac{1}{4}}_0(1+t)^{-\frac{3}{2}}\nonumber\\[2mm]
& \lesssim \displaystyle \mathcal{E}(t)(1+t)^{-\frac{3}{2}}
\end{align}
and
\begin{align}\label{N44}
N_{44}& \displaystyle \lesssim\int^t_0(1+t-\tau)^{-m}(\|\nabla\varrho \cdot\nabla\varrho\|_{L^2}+\|\varrho\Delta\varrho\|_{L^2})\mathrm{d}\tau\nonumber\\[2mm]
&\displaystyle \lesssim\sup_{0\leq\tau\leq t}\Big[(1+\tau)^{\frac{3}{4}}\|\nabla^2\varrho\|^{\frac{3}{4}}_{L^2}\|\nabla^4\varrho\|^{\frac{1}{4}}_{L^2}(1+\tau)\|\nabla\varrho\|_{L^2}\Big]
 \nonumber \\[2mm]
 & \displaystyle ~\displaystyle  \qquad\cdot\int^t_0(1+t-\tau)^{-m}(1+\tau)^{-\frac{7}{4}}\mathrm{d}\tau\nonumber\\[2mm]
&\displaystyle \quad+\sup_{0\leq \tau\leq t} \Big[(1+\tau)^{\frac{1}{2}}\|\nabla\varrho\|^{\frac{1}{2}}_{L^2}(1+\tau)^{\frac{1}{2}}\|\nabla^2\varrho\|^{\frac{1}{2}}_{L^2}(1+\tau)\|\nabla^2\varrho\|_{L^2}\Big]\nonumber\\[2mm]
&\displaystyle ~\qquad\cdot\int^t_0(1+t-\tau)^{-m}(1+\tau)^{-2}\mathrm{d}\tau\nonumber\\[2mm]
&\displaystyle \lesssim \mathcal{E}^{\frac{7}{8}}_2(t)\mathcal{E}^{\frac{1}{8}}_0(t)(1+t)^{-\frac{7}{4}}+\mathcal{E}_2(t)(1+t)^{-2}\nonumber\\[2mm]
&\displaystyle \lesssim \mathcal{E}(t)(1+t)^{-\frac{7}{4}}.
\end{align}
Inserting \eqref{N41}--\eqref{N44} into \eqref{N40}, we get that
\begin{align}\label{N4}
N_4=\int^t_0(1+t-\tau)^{-m}\|\mathop{\mathrm{div}}S_2\|_{L^2}\mathrm{d}\tau \lesssim\mathcal{E}(t)(1+t)^{-\frac{11}{8}}(\log(1+\tau))^{\frac{1}{2}}.
\end{align}
Hence, substituting the estimates \eqref{N1}, \eqref{N2}, \eqref{N3} and \eqref{N4} into \eqref{divuL20}, we derive that
\begin{align*}
\displaystyle (1+t)\|\mathop{\mathrm{div}}\nolimits\mathbf{u}\|_{L^2} \lesssim \|(\varrho_0,\mathbf{u}_0)\|_{L^1}+\|(\nabla\varrho_0,\nabla\mathbf{u}_0)\|_{L^2}+\mathcal{E}(t).
\end{align*}
The proof of Lemma \ref{divulemma} is complete.
\end{proof}
\begin{lemma}\label{divulemma1}
Assume that $(\varrho,u)$ is a smooth solution to the problem \eqref{new}. Then it holds for any $ t \in [0,\infty) $ that
\begin{equation*}
(1+t)\|\nabla\mathop{\mathrm{div}}\nolimits\mathbf{u}\|_{L^2}\leq \mathcal{E}(t)+\|(\varrho_0, \mathbf{u}_0)\|_{L^1}+\|(\nabla^2\varrho_0,\nabla^2\mathbf{u}_0)\|_{L^2}.
\end{equation*}
\end{lemma}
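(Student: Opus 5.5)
The estimate follows the proof of Lemma \ref{divulemma} with one more derivative. Starting from the Duhamel representation \eqref{divu-nonlinear-rep}, we apply $\nabla$ and use Plancherel. The linear part is handled by \eqref{divu-line-lem} with $m=1$, $q=2$, which gives $(1+t)^{-\frac{7}{4}}\|U_0\|_{L^1}+e^{-\eta t}\|\nabla^2U_0\|_{L^2}$. This is bounded by $(1+t)^{-1}$ times the initial norms on the right-hand side of the statement.

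For the nonlinear part, we use the pointwise bounds of Lemma \ref{lemma4.3} on $\hat G^\ast$ and split the frequency space into $|\xi|\le K$ and $|\xi|\ge K$ (together with the $\mathcal A_2$ part, which carries the factor $e^{-\eta(t-\tau)}$). The resulting bound is
\[
\|\nabla\mathrm{div}\,\mathbf u\|_{L^2}\lesssim (1+t)^{-\frac74}(\cdots)+\int_0^t(1+t-\tau)^{-\frac74}\|(S_1,S_2)\|_{L^1}\,\mathrm d\tau+\int_0^t(1+t-\tau)^{-m}\big(\|\nabla^2 S_1\|_{L^2}+\|\nabla\mathrm{div}\,S_2\|_{L^2}\big)\mathrm d\tau .
\]
In the high-frequency region, the symbol $|\xi|\hat G^\ast_1\hat S_1$ costs one derivative more than $\hat G^\ast_2\widehat{\mathrm{div}S_2}$. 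We keep $\hat S_1$ with $|\xi|^2$ and $\widehat{\mathrm{div} S_2}$ with $|\xi|$, which is exactly the structure of \eqref{divu-line-lem2}. The $L^1$ integrals are estimated exactly as $N_1$ and $N_3$, but now with the kernel $(1+t-\tau)^{-7/4}$. Lemma \ref{decaylemma} gives $(1+t)^{-1}$ from the $\mathbf u\cdot\nabla\mathbf u$ term, since $\|\mathbf u\|_{L^2}\|\nabla\mathbf u\|_{L^2}\lesssim\mathcal E(1+\tau)^{-1}$ and $b=1<$ the kernel exponent. The remaining $L^1$ terms decay faster.

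The high-order terms are the main work. The term $\|\nabla^2(\varrho\,\mathrm{div}\,\mathbf u+\mathbf u\cdot\nabla\varrho)\|_{L^2}$ is expanded by Leibniz. Each product is bounded by the anisotropic inequality \eqref{two-two-deri-ani}, placing the factor containing $\mathbf u$ in the form with $\|\mathbf u\|^{1/4}\|\partial_1\mathbf u\|^{1/4}\|\partial_2\mathbf u\|^{1/4}\|\partial_1\partial_2\mathbf u\|^{1/4}$, and the weights are taken from $\mathcal E_2$ as in $N_{22}$. Highest-order factors such as $\nabla^3\varrho$ and $\nabla^2\mathrm{div}\,\mathbf u$ are interpolated, for example $\|\nabla^3\varrho\|\le\|\nabla^2\varrho\|^{1/2}\|\nabla^4\varrho\|^{1/2}$, and the time-integrable dissipation in $\mathcal E_0$ and $\mathcal E_1$ absorbs the leftover factor. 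The time-weighted integral $\int(1+\tau)^{2/15}\|(\nabla\mathrm{div}\,\mathbf u,\Delta_h\mathbf u)\|_{H^2}^2$ in $\mathcal E_1$ is convenient here after Cauchy--Schwarz in $\tau$. The term $\nabla\mathrm{div}(\mathbf u\cdot\nabla\mathbf u)$ is split as in \eqref{N410} into $\mathbf u\cdot\nabla\nabla\mathrm{div}\,\mathbf u$, horizontal terms, and terms containing $\partial_3\mathbf u\cdot\nabla\partial_3 u_3$ and $\partial_3^2\mathbf u\cdot\nabla u_3$. For these last terms we write $\partial_3u_3=\mathrm{div}\,\mathbf u-\mathrm{div}_h\mathbf u_h$, so that every $u_3$-gradient becomes a horizontal derivative or a divergence, both of which decay like $(1+\tau)^{-1}$.

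I expect the main obstacle to be the vertical terms, especially $\partial_3^2\mathbf u$ paired with $\nabla\partial_h\mathbf u$. Here $\|\partial_3^2\mathbf u\|$ decays only like $(1+\tau)^{-1/2}\log(1+\tau)$, so it must be paired with a factor decaying faster than $(1+\tau)^{-1}$ to keep the integral above $(1+t)^{-1}$. I would first try \eqref{two-two-deri-ani}, with the $\partial_1\partial_2$-interpolation placed on $\nabla u_3$ or $\nabla_h\mathbf u$, using the improved rate $(1+\tau)^{-16/15}$ of $\partial_i\partial_j\mathbf u$ and the rate $(1+\tau)^{-1}$ of $\partial_h\partial_3\mathbf u$. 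This gives a total exponent of roughly $\frac{11}{8}$ up to logarithms, as in \eqref{N413}, which exceeds $1$. The residual factor $\|\nabla^2\partial_3^2\mathbf u\|$ is then controlled only by $\mathcal E_0$, which is acceptable since $N=4$. Summing all the pieces gives $(1+t)\|\nabla\mathrm{div}\,\mathbf u\|_{L^2}\lesssim$ initial norms $+\,\mathcal E(t)$.
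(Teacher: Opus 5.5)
Your proposal is correct and follows the paper's proof essentially step for step. The shared steps are: Duhamel for $\nabla\mathrm{div}\,\mathbf u$; the linear part via \eqref{divu-line-lem} with $m=1$; the $L^1$ integrals against $(1+t-\tau)^{-7/4}$, where the $\mathbf u\cdot\nabla\mathbf u$ piece gives exactly the limiting rate $(1+t)^{-1}$; the high-frequency terms $\|\nabla^2S_1\|_{L^2}$ and $\|\nabla\mathrm{div}\,S_2\|_{L^2}$ against $(1+t-\tau)^{-m}$; and the weighted dissipation of $\mathcal E_1$, via Cauchy--Schwarz in $\tau$, for $\varrho\nabla^2\Delta_h\mathbf u$.

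One minor correction to your ``main obstacle'' discussion: the term carrying $\partial_3^2\mathbf u$ is $\partial_h u_3\,\partial_3^2\mathbf u_h$, so $\partial_3^2\mathbf u$ is paired with a first-order factor rather than $\nabla\partial_h\mathbf u$. Since $\partial_3^3\mathbf u$ has no decay in $\mathcal E_2$, your route through \eqref{two-two-deri-ani} yields about $(1+\tau)^{-31/24}(\log(1+\tau))^{3/4}$, not the $\frac{11}{8}$ of \eqref{N413}; the paper instead gets $(1+\tau)^{-9/8}(\log(1+\tau))^{1/4}$ from $\|\nabla\partial_3\mathbf u\|_{L^\infty}\|\nabla u_3\|_{L^2}$. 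Both exponents still exceed $1$, so your conclusion is unaffected.
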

\begin{proof}
From \eqref{divu-nonlinear-rep}, we get
\begin{align*}
\displaystyle \widehat{\mathop{\mathrm{div}}\nolimits \partial _{k}\mathbf{u}}(t,\xi)= \hat{G}^{\ast}(t, \xi)\left(\begin{matrix}
\widehat{\partial _{k} \varrho} _{0}\\[2mm]
\widehat{\mathop{\mathrm{div}}\nolimits \partial _{k}\mathbf{u}_{0}}
\end{matrix}
\right)+ \int _{0}^{t}\hat{G}^{\ast}(t- \tau,\xi)\left(\begin{matrix}
\widehat{\partial _{k}S} _{1}(\tau,\xi)\\[2mm]
\widehat{\mathop{\mathrm{div}}\nolimits \partial _{k} S _{2}}(\tau,\xi)
\end{matrix}
\right)\mathrm{d}\tau
\end{align*}
for $ k=1,2,3 $. This along with \eqref{divu-line-lem}, \eqref{divu-line-lem2} and the fact $e^{-\eta t}(1+t)^m\leq C(\eta,m)$ for any $m\geq 0$ further implies that
\begin{align}\label{divH1}
&\displaystyle \|\nabla \mathop{\mathrm{div}}\nolimits \mathbf{u}\| _{L ^{2}}= \|\widehat{\nabla \mathop{\mathrm{div}}\nolimits \mathbf{u}}\|_{L ^{2}}
 \nonumber \\[2mm]
 &~\displaystyle \lesssim (1+t)^{-\frac{7}{4}}(\|(\varrho_0,\mathbf{u}_{0})\|_{L^1}+\|(\nabla^2\varrho_0,\nabla^2\mathbf{u}_0)\|_{L^2})+\int^t_0(1+t-\tau)^{-\frac{7}{4}}\|S_1\|_{L^1}\mathrm{d}\tau\nonumber\\[2mm]
&~\displaystyle\quad+\int^t_0(1+t-\tau)^{-m}\|\nabla^2 S_1\|_{L^2}\mathrm{d}\tau+\int^t_0(1+t-\tau)^{-\frac{7}{4}}\|S_2\|_{L^1}\mathrm{d}\tau\nonumber\\[2mm]
&~ \displaystyle \quad+\int^t_0(1+t-\tau)^{-m}\|\nabla\mathop{\mathrm{div}}\nolimits{S}_2\|_{L^2}\mathrm{d}\tau\nonumber\\[2mm]
&~\displaystyle=:(1+t)^{-\frac{7}{4}}(\|(\varrho_0,\mathbf{u}_{0})\|_{L^1}+\|(\nabla^2\varrho_0,\nabla^2\mathbf{u}_0)\|_{L^2})+\sum^{4}_{i=1}T_i.
\end{align}
In the same manner as \eqref{N1}, by choosing $\varepsilon<\frac{1}{6}$, we get
\begin{align}\label{T1}
\displaystyle T_{1}
&\displaystyle \lesssim \sup_{0\leq \tau\leq t}(1+\tau)^{\frac{3}{4}}(\log(1+\tau))^{-1}\|\varrho\|_{L^2}(1+\tau)\|\mathop{\mathrm{div}}\nolimits\mathbf{u}\|_{L^2}\int^t_0(1+t-\tau)^{-\frac{7}{4}}(1+\tau)^{-\frac{7}{4}}\log(1+\tau)\mathrm{d}\tau\nonumber\\[2mm]
&~\quad+ \sup_{0\leq \tau\leq t}(1+\tau)\|\nabla\varrho\|_{L^2}(1+\tau)^{\frac{1}{2}}\| \mathbf{u}\|_{L^2}\int^t_0(1+t-\tau)^{-\frac{7}{4}}(1+\tau)^{-\frac{3}{2}}\mathrm{d}\tau\nonumber\\[2mm]
&\displaystyle \lesssim\mathcal{E}^{\frac{1}{2}}_2(t)\mathcal{E}^{\frac{1}{2}}_1(t)(1+t)^{-\frac{7}{4}}\log(1+t)+
  \mathcal{E}_{2}(t)(1+t)^{-\frac{3}{2}}\nonumber\\[2mm]
&\displaystyle \lesssim\mathcal{E}(t)(1+t)^{-\frac{3}{2}},
\end{align}
where we have used Lemma \ref{decaylemma} and the H\"older inequality. For the term $T_2$, we split it into two terms.
\begin{align}\label{T20}
T_2
& \lesssim\int^t_0(1+t-\tau)^{-m}\|\nabla ^{2}(\varrho \mathop{\mathrm{div}}\nolimits\mathbf{u})\|_{L^2}\mathrm{d}\tau+\int^t_0(1+t-\tau)^{-m}\|\nabla ^{2}(\mathbf{u}\cdot\nabla\varrho)\|_{L^2}\mathrm{d}\tau\nonumber\\[2mm]
&=T_{21}+T_{22}.
\end{align}
It follows from \eqref{soboleve-ineq} that
\begin{align*}
\displaystyle\|\nabla^2(\varrho \mathop{\mathrm{div}}\nolimits\mathbf{u})\|_{L^2}& \lesssim \|\nabla^2\varrho \mathop{\mathrm{div}}\nolimits\mathbf{u}\|_{L^2}+2\|\nabla\varrho\nabla\mathop{\mathrm{div}}\nolimits\mathbf{u}\|_{L^2}+\|\varrho\nabla^2\mathop{\mathrm{div}}\nolimits\mathbf{u}\|_{L^2}\nonumber\\[2mm]
\displaystyle&\lesssim\|\mathop{\mathrm{div}}\nolimits\mathbf{u}\|^{\frac{1}{4}}_{L^2}\|\nabla^2\mathop{\mathrm{div}}\nolimits\mathbf{u}\|^{\frac{3}{4}}_{L^2}\|\nabla^2\varrho\|_{L^2}+\|\nabla\varrho\|^{\frac{1}{4}}_{L^2}\|\nabla^3\varrho\|^{\frac{3}{4}}_{L^2}\|\nabla\mathop{\mathrm{div}}\nolimits\mathbf{u}\|_{L^2}\nonumber\\[2mm]
&\displaystyle \quad+\|\nabla\varrho\|^{\frac{1}{2}}_{L^2}\|\nabla^2\varrho\|^{\frac{1}{2}}_{L^2}\|\nabla^2\mathop{\mathrm{div}}\nolimits\mathbf{u}\|_{L^2}.
\end{align*}
Therefore we have for $ T _{21} $ that
\begin{align}\label{T21}
T_{21}&\lesssim\sup_{0\leq\tau\leq t}\Big[(1+\tau)^{\frac{1}{4}}\|\mathop{\mathrm{div}}\nolimits\mathbf{u}\|^{\frac{1}{4}}_{L^2}(1+\tau)^{\frac{3}{8}}\|\nabla \mathop{\mathrm{div}}\nolimits\mathbf{u}\|^{\frac{3}{8}}_{L^2}\|\nabla^3 \mathop{\mathrm{div}}\nolimits\mathbf{u}\|^{\frac{3}{8}}_{L^2}(1+\tau)\|\nabla^2\varrho\|_{L^2}\Big]\nonumber\\[2mm]
&~~\displaystyle\qquad\cdot\int^t_0(1+t-\tau)^{-m}(1+\tau)^{-\frac{13}{8}}\mathrm{d}\tau\nonumber\\[2mm]
&\displaystyle~\quad+\sup_{0\leq\tau\leq t}\Big[(1+\tau)^{\frac{1}{4}}\|\nabla\varrho\|^{\frac{1}{4}}_{L^2}(1+\tau)^{\frac{3}{8}}\|\nabla^2\varrho\|^{\frac{3}{8}}_{L^2}\|\nabla^4\varrho\|^{\frac{3}{8}}_{L^2}(1+\tau)\|\nabla\mathop{\mathrm{div}}\nolimits\mathbf{u}\|_{L^2}\Big]
 \nonumber \\[2mm]
 & ~\displaystyle ~\qquad \cdot\int^t_0(1+t-\tau)^{-m}(1+\tau)^{-\frac{13}{8}}\mathrm{d}\tau\nonumber\\
&\displaystyle~ \quad+\sup_{0\leq\tau\leq t}\Big[(1+\tau)^{\frac{1}{2}}\|\nabla\varrho\|^{\frac{1}{2}}_{L^2}(1+\tau)^{\frac{1}{2}}\|\nabla^2\varrho\|^{\frac{1}{2}}_{L^2}(1+\tau)^{\frac{1}{2}}\|\nabla\mathop{\mathrm{div}}\nolimits\mathbf{u}\|^{\frac{1}{2}}_{L^2}\|\nabla^3\mathop{\mathrm{div}}\nolimits\mathbf{u}\|^{\frac{1}{2}}_{L^2}\Big]
 \nonumber \\[2mm]
 & ~\displaystyle ~\qquad  \cdot\int^t_0(1+t-\tau)^{-m}(1+\tau)^{-\frac{3}{2}}\mathrm{d}\tau\nonumber\\
&\displaystyle \lesssim \mathcal{E}^{\frac{5}{16}}_1(t)\mathcal{E}^{\frac{3}{16}}_{0}(t)\mathcal{E}^{\frac{1}{2}}_2(t)(1+t)^{-\frac{13}{8}}+\mathcal{E}_{2}^{\frac{5}{16}}(t)\mathcal{E}_{0}^{\frac{3}{16}}(t) \mathcal{E}_{1}^{\frac{1}{2}}(t)(1+t)^{-\frac{13}{8}}
 \nonumber \\[2mm]
 &\displaystyle \quad
+\mathcal{E}_{2}^{\frac{1}{2}}(t)\mathcal{E}_{1}^{\frac{1}{4}}(t)\mathcal{E}_{0}^{\frac{1}{4}}(t) (1+t)^{-\frac{3}{2}}
 \nonumber \\[2mm]
 & \displaystyle \lesssim \mathcal{E}(t)(1+t)^{-\frac{3}{2}},
\end{align}
where we have used Lemma \ref{decaylemma}.  Similarly, for the term $T_{22}$, we utilize \eqref{two-two-deri-ani}  in Lemma \ref{lemma2.1} and \eqref{soboleve-ineq} to derive that
\begin{align*}
\displaystyle\|\nabla^2(\mathbf{u}\cdot\nabla\varrho)\|_{L^2}&\lesssim \|\nabla^2\mathbf{u}\cdot\nabla\varrho\|_{L^2}+\|\nabla\mathbf{u}\cdot\nabla^2\varrho\|_{L^2}+\|\mathbf{u}\cdot\nabla^3\varrho\|_{L^2}\nonumber\\[2mm]
&\displaystyle \lesssim\|\nabla^2\mathbf{u}\|^{\frac{1}{4}}_{L^2}\|\nabla^3\mathbf{u}\|^{\frac{3}{4}}_{L^2}\|\nabla\varrho\|^{\frac{1}{4}}_{L^2}\|\nabla^2\varrho\|^{\frac{3}{4}}_{L^2}+\|\nabla\mathbf{u}\|^{\frac{1}{4}}_{L^2}\|\nabla^3\mathbf{u}\|^{\frac{3}{4}}_{L^2}\|\nabla^2\varrho\|_{L^2}\nonumber\\[2mm]
&\displaystyle \quad+\|\mathbf{u}\|^{\frac{1}{4}}_{L^2}\|\partial_1\mathbf{u}\|^{\frac{1}{4}}_{L^2}\|\partial_2\mathbf{u}\|^{\frac{1}{4}}_{L^2}\|\partial_1\partial_2\mathbf{u}\|^{\frac{1}{4}}_{L^2}\|\nabla^3\varrho\|^{\frac{1}{2}}_{L^2}\|\nabla^3\partial_3\varrho\|^{\frac{1}{2}}_{L^2},
\end{align*}
which along with Lemma \ref{decaylemma} further yields that
\begin{align}\label{T22}
T_{22}&\lesssim\sup_{0\leq\tau\leq t}\Big [ (1+\tau)^{\frac{5}{16}}(\log(1+\tau))^{-\frac{5}{8}}\|\nabla^2\mathbf{u}\|^{\frac{5}{8}}_{L^2}\|\nabla^4\mathbf{u}\|^{\frac{3}{8}}_{L^2}(1+\tau)^{\frac{1}{4}}\|\nabla\varrho\|^{\frac{1}{4}}_{L^2}(1+\tau)^{\frac{3}{4}}\|\nabla^2\varrho\|^{\frac{3}{4}}_{L^2}\Big]\nonumber\\[2mm]
&~\qquad \quad\cdot\int^t_0(1+t-\tau)^{-m}(1+\tau)^{-\frac{21}{16}}\log(1+\tau)^{\frac{5}{8}}\mathrm{d}\tau\nonumber\\[2mm]
&\displaystyle \quad+\sup_{0\leq\tau\leq t}\Big[(1+\tau)^{\frac{1}{8}}\|\nabla\mathbf{u}\|^{\frac{1}{4}}_{L^2}(1+\tau)^{\frac{3}{16}}(\log(1+\tau))^{-\frac{3}{8}}\|\nabla^2\mathbf{u}\|^{\frac{3}{8}}_{L^2}\|\nabla^4\mathbf{u}\|^{\frac{3}{8}}_{L^2}(1+\tau)\|\nabla^2\varrho\|_{L^2}\Big]
 \nonumber \\[2mm]
 &~\displaystyle \qquad \cdot\int^t_0(1+t-\tau)^{-m}(1+\tau)^{-\frac{21}{16}}(\log(1+\tau))^{\frac{3}{8}}\mathrm{d}\tau\nonumber\\[2mm]
&\displaystyle \quad+\sup_{0\leq\tau\leq t}\Big[(1+\tau)^{\frac{1}{8}}\|\mathbf{u}\|^{\frac{1}{4}}_{L^2}(1+\tau)^{\frac{1}{4}}\|\partial_1\mathbf{u}\|^{\frac{1}{4}}_{L^2}(1+\tau)^{\frac{1}{4}}\|\partial_2\mathbf{u}\|^{\frac{1}{4}}_{L^2}(1+\tau)^{\frac{4}{15}}\|\partial_1\partial_2\mathbf{u}\|^{\frac{1}{4}}_{L^2}\Big.\nonumber\\[2mm]
&\displaystyle~ \quad\qquad \qquad\Big. \cdot(1+\tau)^{\frac{1}{4}}\|\nabla^2\varrho\|^{\frac{1}{4}}_{L^2}\|\nabla^4\varrho\|^{\frac{1}{4}}_{L^2}\|\nabla^3\partial_3\varrho\|^{\frac{1}{2}}_{L^2}\Big]\int^t_0(1+t-\tau)^{-m}(1+\tau)^{-\frac{137}{120}}\mathrm{d}\tau\nonumber\\[2mm]
& \displaystyle \lesssim  \mathcal{E}_{2}^{\frac{13}{16}}(t)\mathcal{E}_{0}^{\frac{3}{16}}(t) (1+t)^{-\frac{21}{16}}\log(1+t)^{\frac{3}{8}}
+\mathcal{E}_{2}^{\frac{5}{8}}(t)\mathcal{E}_{0}^{\frac{3}{8}}(t) (1+t)^{-\frac{137}{120}} \notag\\[2mm]
&\displaystyle \lesssim \mathcal{E}(t)(1+t)^{-\frac{137}{120}}.
\end{align}
Substituting \eqref{T21}--\eqref{T22} into \eqref{T20}, we have
\begin{align}\label{T2}
T_2 \lesssim \mathcal{E}(t)(1+t)^{-\frac{3}{2}}+\mathcal{E}(t)(1+t)^{-\frac{137}{120}} \lesssim \mathcal{E}(t)(1+t)^{-\frac{137}{120}}.
\end{align}
We now proceed to the estimate of $T_3$. Using the H\"older inequality again, we deduce that
\begin{align}\label{T3}
T_3
&\displaystyle \lesssim\sup_{0\leq \tau\leq t}\Big[(1+\tau)^{\frac{1}{2}}\|\mathbf{u}\|_{L^2}(1+\tau)^{\frac{1}{2}}\|\nabla\mathbf{u}\|_{L^2}\Big]\int^t_0(1+t-\tau)^{-\frac{7}{4}}(1+\tau)^{-1}\mathrm{d}\tau\nonumber\\[2mm]
&\displaystyle \quad+\sup_{0\leq \tau\leq t}\Big[(1+\tau)^{\frac{3}{4}}(\log(1+\tau))^{-1}\|\varrho\|_{L^2}(1+\tau)^{\frac{16}{15}}\|\Delta_h\mathbf{u}\|_{L^2}\Big]\nonumber\\[2mm]
&\displaystyle \quad\cdot\int^t_0(1+t-\tau)^{-\frac{7}{4}}(1+\tau)^{-\frac{109}{60}}\log(1+\tau)\mathrm{d}\tau\nonumber\\[2mm]
&\displaystyle \quad+\sup_{0\leq \tau\leq t}\Big[(1+\tau)^{\frac{3}{4}}(\log(1+\tau))^{-1}\|\varrho\|_{L^2}(1+\tau)\|\nabla\mathop{\mathrm{div}}\nolimits\mathbf{u}\|_{L^2}\Big]\nonumber\\[2mm]
&\displaystyle \quad\cdot\int^t_0(1+t-\tau)^{-\frac{7}{4}}(1+\tau)^{-\frac{7}{4}}\log(1+\tau)\mathrm{d}\tau\nonumber\\[2mm]
&\displaystyle \quad+\sup_{0\leq \tau\leq t}\Big[(1+\tau)^{\frac{3}{4}}(\log(1+\tau))^{-1}\|\varrho\|_{L^2}(1+\tau)\|\nabla\varrho\|_{L^2}\Big]\nonumber\\[2mm]
&\displaystyle \quad\cdot\int^t_0(1+t-\tau)^{-\frac{7}{4}}(1+\tau)^{-\frac{7}{4}}\log(1+\tau)\mathrm{d}\tau\nonumber\\[2mm]
& \lesssim\mathcal{E}_2(t)(1+t)^{-1}+\mathcal{E}_2(t)((1+t)^{-\frac{7}{4}}+(1+t)^{-\frac{7}{4}}\log(1+t))+\mathcal{E}^{\frac{1}{2}}_2(t)\mathcal{E}^{\frac{1}{2}}_1(t)(1+t)^{-\frac{7}{4}}\log(1+t)\nonumber\\[2mm]
&\displaystyle \lesssim\mathcal{E}(t)(1+t)^{-1}.
\end{align}
For the last term $T_4$, notice that
\begin{align*}
\displaystyle\|\nabla\mathop{\mathrm{div}}S_2\|_{L^2}\lesssim\|\nabla\mathop{\mathrm{div}}(\mathbf{u}\cdot\nabla \mathbf{u})\|_{L^2}+\|\nabla^2(g(\varrho)\Delta_h\mathbf{u})\|_{L^2}+\|\nabla^2(g(\varrho)\nabla \mathop{\mathrm{div}}\nolimits\mathbf{u})\|_{L^2}+\|\nabla^2(f(\varrho)\nabla\varrho)\|_{L^2}.
\end{align*}
Then the term $T_4$ can be split into four terms:
\begin{align*}
  T_4
&\displaystyle \lesssim\int^t_0(1+t-\tau)^{-m}\|\nabla\mathop{\mathrm{div}}(\mathbf{u}\cdot\nabla \mathbf{u})\|_{L^2}\mathrm{d}\tau
+ \int^t_0(1+t-\tau)^{-m}\|\nabla^2(g(\varrho)\Delta_h\mathbf{u})\|_{L^2}\mathrm{d}\tau \nonumber\\[2mm]
&\displaystyle \quad+ \int^t_0(1+t-\tau)^{-m}\|\nabla^2(g(\varrho)\nabla \mathop{\mathrm{div}}\nolimits\mathbf{u})\|_{L^2}\mathrm{d}\tau
+ \int^t_0(1+t-\tau)^{-m}\|\nabla^2(f(\varrho)\nabla\varrho)\|_{L^2}\mathrm{d}\tau\nonumber\\[2mm]
&=: T_{41}+T_{42}+T_{43}+T_{44}.
\end{align*}
By \eqref{two-two-deri-ani} and the Sobolev inequality \eqref{soboleve-ineq}, we obtain that
\begin{align*}
\|\nabla\mathop{\mathrm{div}}(\mathbf{u}\cdot\nabla \mathbf{u})\|_{L^2}
&\lesssim \|\nabla^2\mathbf{u}_h\|_{L^{\infty}}\|\nabla_h\mathbf{u}\|_{L^2}+\|\nabla^2u_3\|_{L^2}\|\partial_3\mathbf{u}\|_{L^{\infty}}+\|\nabla\mathbf{u}_h\|_{L^{\infty}}\| \nabla \nabla_h\mathbf{u}\|_{L^2}\nonumber\\[2mm]
&\quad+\|\nabla\partial_3\mathbf{u}\|_{L^{\infty}}\|\nabla u_3\|_{L^2}+\|\mathbf{u}\cdot\nabla^2\mathop{\mathrm{div}}\mathbf{u}\|_{L^2}\\[2mm]
&\displaystyle \lesssim\|\nabla^2\mathbf{u}_h\|^{\frac{1}{4}}_{L^2}\|\nabla^4\mathbf{u}_h\|^{\frac{3}{4}}_{L^2}\|\nabla_h\mathbf{u}\|_{L^2}+\|\partial_3\mathbf{u}\|^{\frac{1}{4}}_{L^2}\|\nabla^2\partial_3 \mathbf{u}\|^{\frac{3}{4}}_{L^2}\|\nabla^2u_3\|_{L^2}\\[2mm]
&\displaystyle \quad+\|\nabla\mathbf{u}_h\|^{\frac{1}{4}}_{L^2}\|\nabla^3\mathbf{u}_h\|^{\frac{3}{4}}_{L^2}\|\nabla_h\nabla\mathbf{u}\|_{L^2}+\|\nabla\partial_3\mathbf{u}\|^{\frac{1}{4}}_{L^2}\|\nabla^3\partial_3\mathbf{u}\|^{\frac{3}{4}}_{L^2}\|\nabla u_3\|_{L^2}\\[2mm]
&\displaystyle \quad+\|\mathbf{u}\|^{\frac{1}{4}}_{L^2}\|\partial_1\mathbf{u}\|^{\frac{1}{4}}_{L^2}\|\partial_2\mathbf{u}\|^{\frac{1}{4}}_{L^2}\|\partial_1\partial_2\mathbf{u}\|^{\frac{1}{4}}_{L^2}\|\nabla^2\mathop{\mathrm{div}}\mathbf{u}\|^{\frac{1}{2}}_{L^2}\|\nabla^3\partial_3\mathbf{u}\|^{\frac{1}{2}}_{L^2}.
\end{align*}
Thus, we have
\begin{align}\label{T41}
T_{41}&\lesssim \sup_{0\leq\tau\leq t} \Big[(1+\tau)^{\frac{1}{8}}(\log(1+\tau))^{-\frac{1}{4}}\|\nabla^2\mathbf{u}_h\|^{\frac{1}{4}}_{L^2}\|\nabla^4\mathbf{u}_h\|^{\frac{3}{4}}_{L^2}(1+\tau)\|\nabla_h\mathbf{u}\|_{L^2}\Big]\nonumber\\ &\displaystyle\qquad\cdot\int^t_0(1+t-\tau)^{-m}(1+\tau)^{-\frac{9}{8}}(\log(1+\tau))^{\frac{1}{4}}\mathrm{d}\tau\nonumber\\[2mm]
&\displaystyle \quad+ \sup_{0\leq\tau\leq t} \Big[(1+\tau)^{\frac{1}{8}}\|\nabla\mathbf{u}\|^{\frac{1}{4}}_{L^2}(1+\tau)^{\frac{3}{16}}(\log(1+\tau))^{-\frac{3}{8}}\|\nabla^2\mathbf{u}\|^{\frac{3}{8}}_{L^2}\|\nabla^4\mathbf{u}\|^{\frac{3}{8}}_{L^2}(1+\tau)\|\nabla^2u_3\|_{L^2}\Big]\nonumber\\
&\displaystyle\qquad\cdot\int^t_0(1+t-\tau)^{-m}(1+\tau)^{-\frac{21}{16}}(\log(1+\tau))^{\frac{3}{8}}\mathrm{d}\tau\nonumber\\[2mm]
&\displaystyle \quad+\sup_{0\leq\tau\leq t}\Big[ (1+\tau)^{\frac{1}{8}}\|\nabla\mathbf{u}_h\|^{\frac{1}{4}}_{L^2}(1+\tau)^{\frac{3}{16}}(\log(1+\tau))^{-\frac{3}{8}}\|\nabla^2\mathbf{u}_h\|^{\frac{3}{8}}_{L^2}\|\nabla^4\mathbf{u}_h\|^{\frac{3}{8}}_{L^2}(1+\tau)\|\nabla_h\nabla\mathbf{u}\|_{L^2}\Big]
 \nonumber \\[2mm]
 & ~\displaystyle \qquad \cdot\int^t_0(1+t-\tau)^{-m}(1+\tau)^{-\frac{21}{16}}(\log(1+\tau))^{\frac{3}{8}}\mathrm{d}\tau\nonumber\\
&\displaystyle \quad+\sup_{0\leq\tau\leq t} \Big [(1+\tau)^{\frac{1}{8}}(\log(1+\tau))^{-\frac{1}{4}}\|\nabla^2\mathbf{u}\|^{\frac{1}{4}}_{L^2}\|\nabla^4\mathbf{u}\|^{\frac{3}{4}}_{L^2}(1+\tau)\|\nabla u_3\|_{L^2}\Big]
 \nonumber \\[2mm]
 & ~\displaystyle \qquad \cdot\int^t_0(1+t-\tau)^{-m}(1+\tau)^{-\frac{9}{8}}(\log(1+\tau))^{\frac{1}{4}}\mathrm{d}\tau\nonumber\\[2mm]
&\displaystyle  \quad+\sup_{0\leq\tau\leq t}\Big[(1+\tau)^{\frac{1}{8}}\|\mathbf{u}\|^{\frac{1}{4}}_{L^2}(1+\tau)^{\frac{1}{4}}\|\partial_1\mathbf{u}\|^{\frac{1}{4}}_{L^2}(1+\tau)^{\frac{1}{4}}\|\partial_2\mathbf{u}\|^{\frac{1}{4}}_{L^2}(1+\tau)^{\frac{4}{15}}\|\partial_1\partial_2\mathbf{u}\|^{\frac{1}{4}}_{L^2}\Big.\nonumber\\[2mm]
&~~~\displaystyle \qquad\; \Big. \cdot(1+\tau)^{\frac{1}{4}}\|\nabla\mathop{\mathrm{div}}\mathbf{u}\|^{\frac{1}{4}}_{L^2}\|\nabla^4\mathbf{u}\|^{\frac{1}{4}}_{L^2}\|\nabla^3\partial_3\mathbf{u}\|^{\frac{1}{2}}_{L^2} \Big]\cdot\int^t_0(1+t-\tau)^{-m}(1+\tau)^{-\frac{137}{120}}\mathrm{d}\tau\nonumber\\[2mm]
&\lesssim\Big(\mathcal{E}^{\frac{5}{8}}_2(t)\mathcal{E}^{\frac{3}{8}}_0(t)+\mathcal{E}_{2}^{\frac{1}{8}}(t)\mathcal{E}_{0}^{\frac{3}{8}}(t)\mathcal{E}^{\frac{1}{2}}_1(t)\Big)(1+t)^{-\frac{9}{8}}(\log(1+t))^{\frac{1}{4}}+\mathcal{E}_{2}^{\frac{1}{2}}(t)\mathcal{E}_{1}^{\frac{1}{8}}(t)\mathcal{E}_{0}^{\frac{3}{8}}(t)(1+t)^{-\frac{137}{120}}\nonumber\\[2mm]
&\displaystyle \quad+ \Big(\mathcal{E}^{\frac{5}{16}}_2(t)\mathcal{E}^{\frac{3}{16}}_0(t)\mathcal{E}^{\frac{1}{2}}_1(t)+\mathcal{E}^{\frac{13}{16}}_2(t)\mathcal{E}^{\frac{3}{16}}_0(t) \Big)(1+t)^{-\frac{21}{16}}(\log(1+t))^{\frac{3}{8}}\nonumber\\
&\lesssim\mathcal{E}(t)(1+t)^{-\frac{9}{8}}(\log(1+t))^{\frac{1}{4}}.
\end{align}
For the term $T_{42}$, by the Sobolev inequality \eqref{soboleve-ineq}, we have
\begin{align*}
\displaystyle\|\nabla^2(\varrho\Delta_h\mathbf{u})\|_{L^2}& \lesssim\|\nabla^2\varrho\Delta_h\mathbf{u}\|_{L^2}+\|\nabla\varrho\nabla\Delta_h\mathbf{u}\|_{L^2}
+\|\varrho\nabla^2\Delta_h\mathbf{u}\|_{L^2}\nonumber\\[2mm]
& \displaystyle \lesssim \|\Delta _{h}\mathbf{u}\|_{L ^{\infty}}\|\nabla ^{2}\varrho\| _{L ^{2}}+ \|\nabla \varrho\| _{L ^{4}}\|\nabla \Delta _{h}\mathbf{u}\|_{L ^{4}}+ \|\varrho\|_{L ^{\infty}} \|\nabla ^{2}\Delta _{h}\mathbf{u}\|_{L ^{2}}
 \nonumber \\[2mm]
 &\displaystyle \lesssim\|\Delta_h\mathbf{u}\|^{\frac{1}{4}}_{L^2}\|\nabla^2\Delta_h\mathbf{u}\|^{\frac{3}{4}}_{L^2}\|\nabla^2\varrho\|_{L^2}+\|\nabla\varrho\|^{\frac{1}{4}}_{L^2}\|\nabla^2\varrho\|^{\frac{3}{4}}_{L^2}\|\nabla\Delta_h\mathbf{u}\|^{\frac{1}{4}}_{L^2}\|\nabla^2\Delta_h\mathbf{u}\|^{\frac{3}{4}}_{L^2}\nonumber\\[2mm]
&\displaystyle \quad+\|\nabla\varrho\|^{\frac{1}{2}}_{L^2}\|\nabla^2\varrho\|^{\frac{1}{2}}_{L^2}\|\nabla^2\Delta_h\mathbf{u}\|_{L^2}.
\end{align*}
Hence, we have
\begin{align}\label{T42}
T_{42}&\lesssim \sup_{0\leq\tau\leq t}\Big[(1+\tau)^{\frac{4}{15}}\|\Delta_h\mathbf{u}\|^{\frac{1}{4}}_{L^2}\|\nabla^2\Delta_h\mathbf{u}\|^{\frac{3}{4}}_{L^2}(1+\tau)\|\nabla^2\varrho\|_{L^2}\Big]\int^t_0(1+t-\tau)^{-m}(1+\tau)^{-\frac{19}{15}}\mathrm{d}\tau\nonumber\\[2mm]
&\displaystyle \quad+\sup_{0\leq \tau\leq t}\Big[(1+\tau)^{\frac{1}{4}}\|\nabla\varrho\|^{\frac{1}{4}}_{L^2}(1+\tau)^{\frac{3}{4}}\|\nabla^2\varrho\|^{\frac{3}{4}}_{L^2}(1+\tau)^{\frac{1}{8}}\|\nabla_h\nabla\mathbf{u}\|^{\frac{1}{8}}_{L^2}\|\nabla\nabla_h\Delta_h\mathbf{u}\|^{\frac{1}{8}}_{L^2}\|\nabla^2\Delta_h\mathbf{u}\|^{\frac{3}{4}}_{L^2}\Big]\nonumber\\[2mm]
&~\displaystyle \qquad\cdot\int^t_0(1+t-\tau)^{-m}(1+\tau)^{-\frac{9}{8}}\mathrm{d}\tau\nonumber\\[2mm]
&\displaystyle \quad+\sup_{0\leq \tau\leq t}\Big[(1+\tau)^{\frac{1}{2}}\|\nabla\varrho\|^{\frac{1}{2}}_{L^2}(1+\tau)^{\frac{1}{2}}\|\nabla^2\varrho\|^{\frac{1}{2}}_{L^2}\Big]
 \nonumber \\[2mm]
 & \displaystyle ~\qquad \cdot\Big(\int^t_0[(1+t-\tau)^{-m}(1+\tau)^{-\frac{16}{15}}]^2\mathrm{d}\tau\Big)^{\frac{1}{2}}\int^t_0(1+\tau)^{\frac{2}{15}}\|\nabla^2\Delta_h\mathbf{u}\|^2_{L^2}\mathrm{d}\tau\nonumber\\
&\displaystyle \lesssim\mathcal{E}_{2}^{\frac{5}{8}}(t)\mathcal{E}_{0}^{\frac{3}{8}}(t) (1+t)^{-\frac{19}{15}}
+\mathcal{E}_{2}^{\frac{9}{16}}(t)\mathcal{E}_{0}^{\frac{7}{16}}(t)(1+t)^{-\frac{9}{8}}
+\mathcal{E}_{2}^{\frac{1}{2}}(t)\mathcal{E}_{1}^{\frac{1}{2}}(t)(1+t)^{-\frac{16}{15}}
 \notag\\[2mm]
&\displaystyle \lesssim\mathcal{E}(t)(1+t)^{-\frac{16}{15}}.
\end{align}
Similarly, we have
\begin{align}\label{T43}
T_{43}&=\int^t_0(1+t-\tau)^{-m}\|\nabla^2(g(\varrho)\nabla \mathop{\mathrm{div}}\nolimits\mathbf{u})\|_{L^2}\mathrm{d}\tau\nonumber\\[2mm]
& \lesssim\mathcal{E}(t)(1+t)^{-\frac{16}{15}}.
\end{align}
For the term $T_{44}$,  using the Sobolev inequality \eqref{soboleve-ineq} again, we get
\begin{align*}
\displaystyle&\|\nabla^2(\varrho\nabla\varrho)\|_{L^2}\\[2mm]
&~\displaystyle \lesssim \|\nabla^2\varrho\nabla\varrho\|_{L^2}
+\|\varrho\nabla^3\varrho\|_{L^2}\\[2mm]
&~\displaystyle \lesssim \|\nabla\varrho\|^{\frac{1}{4}}_{L^2}\|\nabla^3\varrho\|^{\frac{3}{4}}_{L^2}\|\nabla^2\varrho\|_{L^2}
+\|\nabla\varrho\|_{L^2}^{\frac{1}{2}}\|\nabla^2\varrho\|_{L^2}^{\frac{1}{2}}\|\nabla^3\varrho\|_{L^2}.
\end{align*}
Then, we have
\begin{align}\label{T44}
\displaystyle T_{44}
& \lesssim\sup_{0\leq\tau\leq t}\Big[(1+\tau)^{\frac{1}{4}}\|\nabla\varrho\|^{\frac{1}{4}}_{L^2}(1+\tau)^{\frac{11}{8}}\|\nabla^2\varrho\|^{\frac{11}{8}}_{L^2}\|\nabla^4\varrho\|^{\frac{3}{8}}_{L^2}\Big]
 \nonumber \\
 & \displaystyle ~\qquad \cdot\int^t_0(1+t-\tau)^{-m}(1+\tau)^{-\frac{13}{8}}\mathrm{d}\tau\nonumber\\
&\displaystyle \quad+\sup_{0\leq\tau\leq t}\Big[(1+\tau)^{\frac{1}{2}}\|\nabla\varrho\|^{\frac{1}{2}}_{L^2}(1+\tau)\|\nabla^2\varrho\|_{L^2}\|\nabla^4\varrho\|^{\frac{1}{2}}_{L^2}\Big]
 \nonumber \\
 & \displaystyle ~\qquad \cdot\int^t_0(1+t-\tau)^{-m}(1+\tau)^{-\frac{3}{2}}\mathrm{d}\tau\nonumber\\
&\displaystyle \lesssim\mathcal{E}_{2}^{\frac{13}{16}}(t)\mathcal{E}_{0}^{\frac{3}{16}}(t) (1+t)^{-\frac{13}{8}}
+\mathcal{E}_{2}^{\frac{3}{4}}(t)\mathcal{E}_{0}^{\frac{1}{4}}(t)(1+t)^{-\frac{3}{2}}
 \notag\\[2mm]
&\displaystyle \lesssim \mathcal{E}(t)(1+t)^{-\frac{3}{2}},
\end{align}
where we have used \eqref{sec4-appri-conlu} and Lemma \ref{decaylemma}. Combining
\eqref{T41}--\eqref{T44}, we then arrive at
\begin{align}\label{T4}
\displaystyle T_4 &\lesssim \mathcal{E}(t)(1+t)^{-\frac{9}{8}}(\log(1+t))^{\frac{1}{4}}+\mathcal{E}(t)(1+t)^{-\frac{16}{15}}+\mathcal{E}(t)(1+t)^{-\frac{3}{2}}
 \nonumber \\[2mm]
 & \displaystyle \lesssim \mathcal{E}(t)(1+t)^{-\frac{16}{15}}.
\end{align}
Hence, substituting \eqref{T1}, \eqref{T2}, \eqref{T3} and \eqref{T4} into \eqref{divH1}, we derive that
\begin{align*}
\displaystyle (1+t)\|\nabla\mathop{\mathrm{div}}\nolimits\mathbf{u}\|_{L^2} \lesssim\mathcal{E}(t)+(\|(\rho_0,\mathbf{u}_0)\|_{L^1}+\|(\nabla^2\varrho_0,\nabla^2\mathbf{u}_0)\|_{L^2}).
\end{align*}
We complete the proof of Lemma \ref{divulemma1}.
%
%
%
%
\end{proof}

\subsection{Coupling estimates} 
\label{sub:coupling_estimates}


In the following lemma, we shall derive the dissipation estimates of the high-order derivatives of $ \mathbf{u} $ which are of importance in deriving the estimate of $ \mathcal{E}_{2}(t) $.
\begin{lemma}\label{haosan}
Let $(\varrho,u)$ be a smooth solution to the problem \eqref{new}. Then, we have
\begin{gather}\label{con-dissip-lem}
\displaystyle\int^t_0(1+\tau)^{\frac{2}{15}}\|(\nabla\mathop{\mathrm{div}}\nolimits\mathbf{u},\Delta_h\mathbf{u})\|^2_{H^2}\mathrm{d}\tau\lesssim\mathcal{E}_0(0)+\mathcal{E}^{\frac{3}{2}}(t).
\end{gather}
\end{lemma}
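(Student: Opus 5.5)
Instead of working with the Duhamel representation, I will obtain this bound from a time-weighted version of the higher-order energy inequality of Section \ref{FIR-PROOF}, restricted to derivatives of order one through three. Applying $\partial^\alpha$ with $1\le|\alpha|\le 3$ to \eqref{new} and repeating Step 1 of Lemma \ref{priori} gives
\begin{align*}
\frac{1}{2}\frac{\mathrm d}{\mathrm dt}\big(\gamma\|\nabla\varrho\|_{H^2}^2+\|\nabla\mathbf u\|_{H^2}^2\big)+\bar\mu\|\nabla\nabla_h\mathbf u\|_{H^2}^2+(\bar\mu+\bar\zeta)\|\nabla\mathop{\mathrm{div}}\nolimits\mathbf u\|_{H^2}^2\le \mathcal{R}(t),
\end{align*}
where $\mathcal R$ collects the cubic terms $I_1,\dots,I_6$. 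The estimates of Section \ref{FIR-PROOF} already bound $\mathcal R$ by $\|U\|_{H^4}\big(\|\nabla\varrho\|_{H^3}^2+\|(\nabla_h\mathbf u,\mathop{\mathrm{div}}\nolimits\mathbf u)\|_{H^4}^2\big)$. Since $\|\Delta_h\mathbf u\|_{H^2}\lesssim\|\nabla\nabla_h\mathbf u\|_{H^2}$, the left-hand dissipation controls $\|(\nabla\mathop{\mathrm{div}}\nolimits\mathbf u,\Delta_h\mathbf u)\|_{H^2}^2$. I will multiply by $(1+t)^{2/15}$ and integrate over $[0,t]$.

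Two new contributions appear. The first is the weight-derivative term
\begin{align*}
\frac{2}{15}\int_0^t(1+\tau)^{-\frac{13}{15}}\big(\|\nabla\varrho\|_{H^2}^2+\|\nabla\mathbf u\|_{H^2}^2\big)\mathrm d\tau .
\end{align*}
\begin{itemize}
\item The pieces $\|(\nabla_h\mathbf u,\mathop{\mathrm{div}}\nolimits\mathbf u)\|_{H^3}^2$ and $\|\nabla\varrho\|_{H^2}^2$ are bounded by $\int_0^t\mathscr D\,\mathrm d\tau\lesssim\mathcal E_0(0)$, using the unweighted dissipation \eqref{dissip-1} together with $(1+\tau)^{-13/15}\le1$.
\item The only uncontrolled piece is the vertical part $\|\partial_3\mathbf u\|_{H^2}^2$, and only its solenoidal portion. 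Using $\|\nabla^k\nabla u_3\|\lesssim\|\nabla^k\nabla_h\mathbf u\|+\|\nabla^k\mathop{\mathrm{div}}\nolimits\mathbf u\|$, this reduces to $\partial_3\mathbf u_h$ (and $\partial_3^{j}\mathbf u_h$ for $j\le3$).
\item For $\partial_3\mathbf u_h$ I will use the decay information in $\mathcal E_2$. There $\|\partial_3\mathbf u\|_{L^2}^2\lesssim(1+\tau)^{-1}\mathcal E$ and $\|\partial_3^2\mathbf u\|_{L^2}^2\lesssim(1+\tau)^{-1}\log^2(1+\tau)\,\mathcal E$. The higher derivatives $\nabla^2\partial_3\mathbf u$ will be interpolated between $\partial_3^2\mathbf u$ and $\|\mathbf u\|_{H^4}\le\mathcal E_0^{1/2}$.
\end{itemize}
This yields an integrand of size $(1+\tau)^{-13/15-\sigma}\mathcal E$ for some $\sigma>2/15$, which is integrable, with bound $\mathcal E$. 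I expect this to be exactly the reason for the choice of exponent $2/15$. Because the lemma claims only $\mathcal E_0(0)+\mathcal E^{3/2}$, I must avoid a linear $\mathcal E$ term. To do that I would keep only the part where $\partial_3\mathbf u$ multiplies a factor with extra smallness, or write $\|\nabla\mathbf u\|_{H^2}^2\le\|\mathbf u\|_{H^4}^{1/2}\cdot(\text{decaying})$ to gain a half power of $\mathcal E$. This step is the main obstacle. If it fails, I would settle for $\mathcal E_0(0)+\Lambda\,\mathcal E$ with small $\Lambda$, which closes the bootstrap equally well.

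The second new contribution is the weighted nonlinear term $\int_0^t(1+\tau)^{2/15}\mathcal R\,\mathrm d\tau$. Here I will reuse the term-by-term bounds of Section \ref{FIR-PROOF}. In each cubic term, one factor is placed in a decaying $L^2$ or $L^\infty$ norm controlled by $\mathcal E_1,\mathcal E_2$ (for instance $\|\mathop{\mathrm{div}}\nolimits\mathbf u\|_{L^\infty}$ or $\|\nabla\varrho\|_{L^\infty}\lesssim(1+\tau)^{-1}\mathcal E^{1/2}$) to absorb the weight $(1+\tau)^{2/15}$. The other two factors are dissipation norms integrable in time by $\mathcal E_0$, or weighted dissipation from $\mathcal E_1$ itself. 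This gives $\mathcal E^{3/2}$.

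The density terms $\|\nabla\varrho\|_{H^2}$ on the left-hand side need not be weighted, since they are not claimed. Terms where the transport $\mathbf u\cdot\nabla$ acts on top derivatives are handled by integration by parts, as in $I_{3,1}$ and $I_2$, producing $\mathop{\mathrm{div}}\nolimits\mathbf u$ factors that decay like $(1+\tau)^{-1}$. Summing the estimates and using $\mathcal E_0(t)\lesssim\mathcal E_0(0)$ gives \eqref{con-dissip-lem}.
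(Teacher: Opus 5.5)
Your treatment of the cubic terms is plausible. With the weight you cannot simply reuse the bound by $\|U\|_{H^4}$ times the dissipation from Section~\ref{FIR-PROOF}, but splitting anisotropically and placing a decaying factor from $\mathcal{E}_1,\mathcal{E}_2$ in one slot works, much as in the paper's estimates of $Q_1,Q_2$.

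The genuine gap is the one you flagged yourself, and it cannot be closed inside your framework.

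\textbf{Why the full energy fails.} Because you weight the full energy $\gamma\|\nabla\varrho\|_{H^2}^2+\|\nabla\mathbf{u}\|_{H^2}^2$, your integrated inequality also bounds $(1+t)^{2/15}\|\partial_3\mathbf{u}_h(t)\|_{H^2}^2$. It must also pay the weight-derivative term $\frac{2}{15}\int_0^t(1+\tau)^{-13/15}\|\partial_3\mathbf{u}_h\|_{H^2}^2\,\mathrm{d}\tau$. Both are quadratic quantities built from the vertical solenoidal part, which is absent from the dissipation.

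Take the linearized flow with solenoidal $H^4$ data concentrated at horizontal frequencies $|\xi_h|\le\delta$. The factor $e^{-\bar\mu|\xi_h|^2t}$ in $\hat G_{22}$ keeps $\|\partial_3\mathbf{u}(t)\|_{L^2}$ essentially constant up to $t\sim\delta^{-2}$. Both quantities therefore grow like $\delta^{-4/15}\|\mathbf{u}_0\|_{H^4}^2$ as $\delta\to0$. So they are not bounded by $\mathcal{E}_0(0)$, and, being quadratic, not by $\mathcal{E}^{3/2}$ either.

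\textbf{What your route actually yields.} You necessarily get an extra term $C\mathcal{E}_2(t)$, plus $C\mathcal{E}_2^{1/2}\mathcal{E}_0^{1/2}$ from interpolating $\partial_3^3\mathbf{u}$, with an $O(1)$ constant. Writing $\|\nabla\mathbf{u}\|_{H^2}^2\le\|\mathbf{u}\|_{H^4}^{1/2}\cdot(\cdots)$ does not change the degree in $\mathcal{E}$. The fallback ``$\Lambda\mathcal{E}$ with small $\Lambda$'' is not what you get: the coefficient is not small, and an $O(1)\,\mathcal{E}$ term does not close \eqref{E} by itself. The bootstrap could still be rescued by the triangular structure $\mathcal{E}_2\lesssim F(\varrho_0,\mathbf{u}_0)+\mathcal{E}^2$ from Lemmas~\ref{lemma5.5}--\ref{lemma4.8}. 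That would prove a weaker statement than the lemma, however, and you did not invoke it.

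\textbf{The missing idea.} Weight only quantities whose unweighted time integrals are already controlled by \eqref{dissip-1}. The paper never puts $\partial_3\mathbf{u}$ into the weighted energy:
\begin{itemize}
\item It applies $\nabla$ to the mass equation and $\mathrm{div}$ to the momentum equation, and tests the resulting system \eqref{coupling-divu} in $H^2$ against $(\nabla\varrho,\mathrm{div}\,\mathbf{u})$. This produces the dissipation $\|\nabla\mathrm{div}\,\mathbf{u}\|_{H^2}^2$.
\item Separately, it tests \eqref{new} in $H^2$ against $(\Delta_h\varrho,\Delta_h\mathbf{u})$. This produces $\|\Delta_h\mathbf{u}\|_{H^2}^2$.
\end{itemize}
After multiplying by $(1+t)^{2/15}$, the weight-derivative terms are only $\int_0^t(1+\tau)^{-13/15}\|(\mathrm{div}\,\mathbf{u},\nabla\varrho,\nabla_h\varrho,\nabla_h\mathbf{u})\|_{H^2}^2\,\mathrm{d}\tau\lesssim\mathcal{E}_0(0)$. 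Everything else is cubic and gives $\mathcal{E}^{3/2}$. Replacing your full $\partial^\alpha$-energy by these two restricted energies repairs the argument.
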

\begin{proof}
Applying $\nabla$ and $\mathop{\mathrm{div}}\nolimits $ to $ \eqref{new} _{1} $ and $ \eqref{new} _{2} $, respectively, and taking the $H^2$-inner product of the resulting equations with $ \nabla\varrho $ and $ \mathop{\mathrm{div}}\nolimits \mathbf{u} $, respectively, we get
\begin{align}\label{diff-dispa-divu}
&\displaystyle \frac{1}{2}\frac{\mathrm{d}}{\mathrm{d}t} (\|\mathop{\mathrm{div}}\nolimits \mathbf{u}\|^2_{H^2}+\|\nabla\varrho\|^2_{H^2})+  (\bar{\mu}\|\nabla_h\mathop{\mathrm{div}}\nolimits \mathbf{u}\|^2_{H^2}+(\bar{\mu}+\bar{\zeta})\|\nabla \mathop{\mathrm{div}}\nolimits \mathbf{u}\|^2_{H^2})
 \nonumber \\[2mm]
 &~\displaystyle = - (S_2,\nabla \mathop{\mathrm{div}}\nolimits \mathbf{u})_{H^2}+(\nabla S_1,\nabla\varrho)_{H^2}=:Q _{1}+Q  _{2}.
\end{align}
Multiplying \eqref{diff-dispa-divu} by $ (1+t)^{\frac{2}{15}} $, we get, after integrating the resulting identity over $ (0,t) $, that
\begin{align}\label{dissip-integral}
&\displaystyle \makebox[-4pt]{~}  (1+t)^{\frac{2}{15}}(\|\mathop{\mathrm{div}}\nolimits \mathbf{u}\|^2_{H^2}+\|\nabla\varrho\|^2_{H^2})+\int^t_0(1+\tau)^{\frac{2}{15}}(\|\nabla_h\mathop{\mathrm{div}}\nolimits \mathbf{u}\|^2_{H^2}+\|\nabla \mathop{\mathrm{div}}\nolimits \mathbf{u}\|^2_{H^2})\mathrm{d}\tau
 \nonumber \\[2mm]
 &~\displaystyle \makebox[-4pt]{~} \lesssim \|\mathop{\mathrm{div}}\nolimits \mathbf{u}_{0}\| _{H ^{2}}^{2}+ \|\nabla \varrho _{0}\|_{H ^{2}}^{2}+ \int _{0}^{t}(\|\mathop{\mathrm{div}}\nolimits \mathbf{u}\|^2_{H^2}+\|\nabla\varrho\|^2_{H^2})\mathrm{d}\tau + \int _{0}^{t}(1+\tau)^{\frac{2}{15}}(Q _{1}+Q _{2})\mathrm{d}\tau
  \nonumber \\[2mm]
  &~\displaystyle  \makebox[-4pt]{~}\lesssim\mathcal{E}_{0}(0)+ \int _{0}^{t}(1+\tau)^{\frac{2}{15}}Q _{1}\mathrm{d}\tau+ \int _{0}^{t}(1+\tau)^{\frac{2}{15}}Q _{2}\mathrm{d}\tau,
\end{align}
where we have used \eqref{E-0-esti-con}. Notice that
\begin{align*}
Q_1&=-\int_{\mathbb{R}^3}S_2\cdot\nabla \mathop{\mathrm{div}}\nolimits \mathbf{u}\mathrm{d}x-\int_{\mathbb{R}^3}\nabla S_2:\nabla^2\mathop{\mathrm{div}}\nolimits \mathbf{u}\mathrm{d}x-\int_{\mathbb{R}^3}\nabla^2 S_2:\nabla^3\mathop{\mathrm{div}}\nolimits \mathbf{u}\mathrm{d}x=Q_{11}+Q_{12}+Q_{13}.
\end{align*}
Then we have
\begin{align}\label{Q-1-split}
\displaystyle  \int _{0}^{t}(1+\tau)^{\frac{2}{15}} Q _{1}\mathrm{d}\tau= \int _{0}^{t}(1+\tau)^{\frac{2}{15}} Q _{11}\mathrm{d}\tau+\int _{0}^{t}(1+\tau)^{\frac{2}{15}} Q _{12}\mathrm{d}\tau+\int _{0}^{t}(1+\tau)^{\frac{2}{15}}Q_{13}\mathrm{d}\tau.
\end{align}
In virtue of \eqref{three-one-deri-ani}, \eqref{soboleve-ineq}, \eqref{sec4-appri-conlu} and the H\"older inequality, we get
\begin{align*}
Q_{11}&=-\int _{\mathbb{R}^{3}}\Big(\mathbf{u}\cdot\nabla \mathbf{u}+g(\varrho)\Delta_h \mathbf{u}+g(\varrho)\nabla \mathop{\mathrm{div}}\nolimits \mathbf{u}+f(\varrho)\nabla\varrho \Big)\cdot\nabla \mathop{\mathrm{div}}\nolimits \mathbf{u}\mathrm{d}x\nonumber\\[2mm]
&\lesssim \|\mathbf{u}\|^{\frac{1}{2}}_{L^2}\|\partial_1\mathbf{u}\|^{\frac{1}{2}}_{L^2}\|\nabla \mathbf{u}\|^{\frac{1}{2}}_{L^2}\|\nabla \partial_2\mathbf{u}\|^{\frac{1}{2}}_{L^2}\|\nabla \mathop{\mathrm{div}}\nolimits \mathbf{u}\|^{\frac{1}{2}}_{L^2}\|\nabla \partial_3\mathop{\mathrm{div}}\nolimits \mathbf{u}\|^{\frac{1}{2}}_{L^2}\nonumber\\[2mm]
&\displaystyle \quad+\|\varrho\|_{H^2}\|\Delta_h\mathbf{u}\|_{L^2}\|\nabla \mathop{\mathrm{div}}\nolimits \mathbf{u}\|_{L^2}+\|\varrho\|_{H^2}\|\nabla \mathop{\mathrm{div}}\nolimits \mathbf{u}\|^2_{L^2}+\|\varrho\|_{H^2}\|\nabla\varrho\|_{L^2}\|\nabla \mathop{\mathrm{div}}\nolimits \mathbf{u}\|_{L^2}.
\end{align*}
Therefore,
\begin{align}\label{Q11**}
&\int^t_0(1+\tau)^{\frac{2}{15}}Q_{11}\mathrm{d}\tau\nonumber\\[2mm]
&\displaystyle \lesssim\sup_{0\leq \tau\leq t}\Big[(1+\tau)^{\frac{1}{2}}\|\mathbf{u}\|_{H^1}\Big]\int^t_0(1+\tau)^{\frac{1}{15}}\|\nabla \mathop{\mathrm{div}}\nolimits \mathbf{u}\|_{H^1}(\|\partial_1\mathbf{u}\|^{\frac{1}{2}}_{L^2}\|\partial_2\nabla \mathbf{u}\|^{\frac{1}{2}}_{L^2})\mathrm{d}\tau\nonumber\\[2mm]
&\displaystyle \quad+\sup_{0\leq\tau\leq t}\Big[(1+\tau)^{\frac{3}{4}}\log^{-1}(1+\tau)\|\varrho\|_{H^2}\Big]\int^t_0(1+\tau)^{\frac{1}{15}}\|\nabla \mathop{\mathrm{div}}\nolimits \mathbf{u}\|_{L^2}\|\Delta_h\mathbf{u}\|_{L^2}\mathrm{d}\tau\nonumber\\
&\displaystyle \quad+\sup_{0\leq\tau\leq t}\Big[(1+\tau)^{\frac{3}{4}}\log^{-1}(1+\tau)\|\varrho\|_{H^2}\Big]\int^t_0(1+\tau)^{\frac{1}{15}}\|\nabla \mathop{\mathrm{div}}\nolimits \mathbf{u}\|^2_{L^2}\mathrm{d}\tau\nonumber\\[2mm]
&\displaystyle \quad+\sup_{0\leq\tau\leq t}\Big[(1+\tau)^{\frac{3}{4}}\log^{-1}(1+\tau)\|\varrho\|_{H^2}\Big]\int^t_0(1+\tau)^{\frac{1}{15}}\|\nabla \mathop{\mathrm{div}}\nolimits \mathbf{u}\|_{L^2}\|\nabla\varrho\|_{L^2}\mathrm{d}\tau\nonumber\\[2mm]
&\displaystyle \lesssim\mathcal{E}^{\frac{1}{2}}_2(t)\mathcal{E}^{\frac{1}{2}}_1(t)\mathcal{E}^{\frac{1}{2}}_0(t)+\mathcal{E}^{\frac{1}{2}}_2(t)\mathcal{E}_1(t)\nonumber\\[2mm]
&\displaystyle \lesssim\mathcal{E}^{\frac{3}{2}}(t).
\end{align}

Similarly, for the term $Q_{12}$, we get
\begin{align*}%
Q_{12}&=-\int _{\mathbb{R}^{3}}\nabla \Big (\mathbf{u}\cdot\nabla \mathbf{u}+g(\varrho)\Delta_h \mathbf{u}+g(\varrho)\nabla \mathop{\mathrm{div}}\nolimits \mathbf{u}+f(\varrho)\nabla\varrho \Big):\nabla^2\mathop{\mathrm{div}}\nolimits \mathbf{u}\mathrm{d}x\nonumber\\
&=Q_{{12}_1}+Q_{{12}_2}+Q_{{12}_3}+Q_{{12}_4}.
\end{align*}
Then we have
\begin{align}\label{Q12*}
 \displaystyle \int _{0}^{t}(1+\tau)^{\frac{2}{15}} Q _{12}\mathrm{d}\tau&= \int _{0}^{t}(1+\tau)^{\frac{2}{15}} Q _{12 _{1}}\mathrm{d}\tau+\int _{0}^{t}(1+\tau)^{\frac{2}{15}} Q _{12 _{2}}\mathrm{d}\tau+\int _{0}^{t}(1+\tau)^{\frac{2}{15}} Q _{12 _{3}}\mathrm{d}\tau\nonumber\\
 &~~~+\int^t_0(1+\tau)^{\frac{2}{15}}Q_{{12}_4}\mathrm{d}\tau.
 \end{align}
Using \eqref{three-two-deri-ani}, we get
\begin{align*}
Q_{{12}_1}&\lesssim\|\nabla \mathbf{u}\|^{\frac{1}{4}}_{L^2}\|\nabla \partial_1\mathbf{u}\|^{\frac{1}{4}}_{L^2}\|\nabla\partial_3 \mathbf{u}\|^{\frac{1}{4}}_{L^2}\|\nabla \partial_1\partial_3\mathbf{u}\|^{\frac{1}{4}}_{L^2}\|\nabla \mathbf{u}\|^{\frac{1}{2}}_{L^2}\|\nabla \partial_2\mathbf{u}\|^{\frac{1}{2}}_{L^2}\|\nabla^2\mathop{\mathrm{div}}\nolimits \mathbf{u}\|_{L^2}\nonumber\\[2mm]
&\displaystyle \quad+\|\mathbf{u}\|^{\frac{1}{4}}_{L^2}\|\partial_1\mathbf{u}\|^{\frac{1}{4}}_{L^2}\|\partial_3\mathbf{u}\|^{\frac{1}{4}}_{L^2}\|\partial_1\partial_3\mathbf{u}\|^{\frac{1}{4}}_{L^2}\|\nabla^2\mathbf{u}\|^{\frac{1}{2}}_{L^2}\|\nabla^2\partial_2\mathbf{u}\|^{\frac{1}{2}}_{L^2}\|\nabla^2\mathop{\mathrm{div}}\nolimits \mathbf{u}\|_{L^2}.
\end{align*}
It thus holds that
\begin{align}\label{Q121}
&\displaystyle\int^t_0(1+\tau)^{\frac{2}{15}}Q_{{12}_1}\mathrm{d}\tau\nonumber\\[2mm]
&~\displaystyle\lesssim\sup_{0\leq \tau\leq t}\Big[(1+\tau)^{\frac{3}{8}}\|\nabla  \mathbf{u}\|^{\frac{3}{4}}_{L^2}(1+\tau)^{\frac{1}{8}}\log^{-\frac{1}{4}}(1+\tau)\|\nabla \partial_3\mathbf{u}\|^{\frac{1}{4}}_{L^2}\Big]\nonumber\\[2mm]
&~\displaystyle ~\qquad\cdot\int^t_0(1+\tau)^{\frac{1}{15}}\|\nabla^2 \mathop{\mathrm{div}}\nolimits \mathbf{u}\|_{L^2}\|\nabla\partial_1\mathbf{u}\|^{\frac{1}{4}}_{L^2}\|\nabla\partial_2\mathbf{u}\|^{\frac{1}{2}}_{L^2}\|\nabla\partial_1\partial_3\mathbf{u}\|^{\frac{1}{4}}_{L^2}\mathrm{d}\tau\nonumber\\[2mm]
&~\displaystyle \quad+\sup_{0\leq \tau\leq t}\Big[(1+\tau)^{\frac{1}{8}}\|\mathbf{u}\|^{\frac{1}{4}}_{L^2}(1+\tau)^{\frac{1}{8}}\|\partial_3\mathbf{u}\|^{\frac{1}{4}}_{L^2}(1+\tau)^{\frac{1}{4}}\log^{-\frac{1}{2}}(1+\tau)\|\nabla^2\mathbf{u}\|^{\frac{1}{2}}_{L^2}\Big]\nonumber\\[2mm]
&~\displaystyle~\qquad\cdot\int^t_0(1+\tau)^{\frac{1}{15}}\|\nabla^2\mathop{\mathrm{div}}\nolimits \mathbf{u}\|_{L^2}\|\partial_1\mathbf{u}\|^{\frac{1}{4}}_{L^2}\|\partial_1\partial_3\mathbf{u}\|^{\frac{1}{4}}_{L^2}\|\nabla^2\partial_2\mathbf{u}\|^{\frac{1}{2}}_{L^2}\mathrm{d}\tau\nonumber\\[2mm]
&~\displaystyle\lesssim \mathcal{E}^{\frac{1}{2}}_2(t)\mathcal{E}^{\frac{1}{2}}_1(t)\mathcal{E}^{\frac{1}{2}}_0(t)\nonumber\\[2mm]
 &~\displaystyle\lesssim \mathcal{E}^{\frac{3}{2}}(t).
\end{align}
Similarly, we get in virtue of the Sobolev inequality \eqref{soboleve-ineq} that
\begin{align*}
\displaystyle\makebox[-8pt]{~} Q_{{12}_2}&\lesssim \|\nabla\varrho\|^{\frac{1}{2}}_{L^2}\|\nabla^2\varrho\|^{\frac{1}{2}}_{L^2}\|\nabla\Delta_h\mathbf{u}\|_{L^2}\|\nabla^2 \mathop{\mathrm{div}}\nolimits \mathbf{u}\|_{L^2}+\|\Delta_h\mathbf{u}\|_{L^2}\|\nabla^2\varrho\|^{\frac{1}{2}}_{L^2}\|\nabla^3\varrho\|^{\frac{1}{2}}_{L^2}\|\nabla^2 \mathop{\mathrm{div}}\nolimits \mathbf{u}\|_{L^2},\nonumber\\[2mm]
\displaystyle \makebox[-8pt]{~}Q_{{12}_3}& \lesssim\|\nabla\varrho\|^{\frac{1}{2}}_{L^2}\|\nabla^2\varrho\|^{\frac{1}{2}}_{L^2}\|\nabla^2\mathop{\mathrm{div}}\nolimits \mathbf{u}\|^2_{L^2}+\|\nabla^2\varrho\|^{\frac{1}{2}}_{L^2}\|\nabla^3\varrho\|^{\frac{1}{2}}_{L^2}\|\nabla \mathop{\mathrm{div}}\nolimits \mathbf{u}\|_{L^2}\|\nabla^2\mathop{\mathrm{div}}\nolimits \mathbf{u}\|_{L^2},
 \end{align*}
 and that
 \begin{align*}
 \displaystyle  Q_{{12}_4}& \lesssim\|\nabla^2\varrho\|^{\frac{1}{2}}_{L^2}\|\nabla^3\varrho\|^{\frac{1}{2}}_{L^2}\|\nabla\varrho\|_{L^2}\|\nabla^2 \mathop{\mathrm{div}}\nolimits \mathbf{u}\|_{L^2}+\|\nabla\varrho\|^{\frac{1}{2}}_{L^2}\|\nabla^2\varrho\|^{\frac{1}{2}}_{L^2}\|\nabla^2\varrho\|_{L^2}\|\nabla^2 \mathop{\mathrm{div}}\nolimits \mathbf{u}\|_{L^2}.
 \end{align*}
 Therefore we can further derive that
\begin{align}\label{Q122}
&\int^t_0(1+\tau)^{\frac{2}{15}}Q_{{12}_2}\mathrm{d}\tau\nonumber\\[2mm]
&~\displaystyle \lesssim\sup_{0\leq \tau\leq t}\Big[(1+\tau)^{\frac{1}{2}}\|\nabla\varrho\|^{\frac{1}{2}}_{L^2}(1+\tau)^{\frac{1}{2}}\|\nabla^2\varrho\|^{\frac{1}{2}}_{L^2}\Big]\int^t_0(1+\tau)^{\frac{1}{15}}\|\nabla^2 \mathop{\mathrm{div}}\nolimits \mathbf{u}\|_{L^2}\|\nabla\Delta_h\mathbf{u}\|_{L^2}\mathrm{d}\tau\nonumber\\[2mm]
&~\displaystyle \quad+\sup_{0\leq \tau\leq t}\Big[(1+\tau)^{\frac{16}{15}}\|\Delta_h\mathbf{u}\|_{L^2}\Big]\int^t_0(1+\tau)^{\frac{1}{15}}\|\nabla^2 \mathop{\mathrm{div}}\nolimits \mathbf{u}\|_{L^2}\|\nabla^2\varrho\|^{\frac{1}{2}}_{L^2}\|\nabla^3\varrho\|^{\frac{1}{2}}_{L^2}\mathrm{d}\tau\nonumber\\[2mm]
&~\displaystyle\lesssim\mathcal{E}^{\frac{1}{2}}_2(t)\mathcal{E}^{\frac{1}{2}}_1(t)\mathcal{E}^{\frac{1}{2}}_0(t)\nonumber\\[2mm]
 &~\lesssim \mathcal{E}^{\frac{3}{2}}(t),
\end{align}
\begin{align}\label{Q123}
&\int^t_0(1+\tau)^{\frac{2}{15}}Q_{{12}_3}\mathrm{d}\tau\nonumber\\[2mm]
&\leq\sup_{0\leq \tau\leq t}\Big[(1+\tau)^{\frac{1}{2}}\|\nabla\varrho\|^{\frac{1}{2}}_{L^2}(1+\tau)^{\frac{1}{2}}\|\nabla^2\varrho\|^{\frac{1}{2}}_{L^2}\Big]\int^t_0(1+\tau)^{\frac{2}{15}}\|\nabla^2 \mathop{\mathrm{div}}\nolimits \mathbf{u}\|^2_{L^2}\mathrm{d}\tau\nonumber\\[2mm]
&\displaystyle \quad+\sup_{0\leq\tau\leq t}\Big[(1+\tau)\|\nabla \mathop{\mathrm{div}}\nolimits \mathbf{u}\|_{L^2}\Big]\int^t_0(1+\tau)^{\frac{1}{15}}\|\nabla^2\mathop{\mathrm{div}}\nolimits \mathbf{u}\|_{L^2}\|\nabla^2\varrho\|^{\frac{1}{2}}_{L^2}\|\nabla^3\varrho\|^{\frac{1}{2}}_{L^2}\mathrm{d}\tau\nonumber\\[2mm]
 &\lesssim \mathcal{E}^{\frac{1}{2}}_2(t)\mathcal{E}_1(t)+\mathcal{E}_{1}(t)\mathcal{E}^{\frac{1}{2}}_0(t)\nonumber\\[2mm]
 &\lesssim \mathcal{E}^{\frac{3}{2}}(t),
\end{align}
and
\begin{align}\label{Q124}
&\int^t_0(1+\tau)^{\frac{2}{15}}Q_{{12}_4}\mathrm{d}\tau\nonumber\\[2mm]
&~\displaystyle \lesssim\sup_{0\leq \tau\leq t}\Big[(1+\tau)\|\nabla\varrho\|_{L^2}\Big]\int^t_0(1+\tau)^{\frac{1}{15}}\|\nabla^2 \mathop{\mathrm{div}}\nolimits \mathbf{u}\|_{L^2}\|\nabla^2\varrho\|^{\frac{1}{2}}_{L^2}\|\nabla^3\varrho\|^{\frac{1}{2}}_{L^2}\mathrm{d}\tau\nonumber\\[2mm]
&~\quad+\sup_{0\leq \tau\leq t}\Big[(1+\tau)^{\frac{1}{2}}\|\nabla\varrho\|^{\frac{1}{2}}_{L^2}(1+\tau)^{\frac{1}{2}}\|\nabla^2\varrho\|^{\frac{1}{2}}_{L^2}\Big]\int^t_0(1+\tau)^{\frac{1}{15}}\|\nabla^2 \mathop{\mathrm{div}}\nolimits \mathbf{u}\|_{L^2}\|\nabla^2\varrho\|_{L^2}\mathrm{d}\tau\nonumber\\[2mm]
&~\displaystyle\lesssim\mathcal{E}_2^{\frac{1}{2}}(t)\mathcal{E}_1^{\frac{1}{2}}(t)\mathcal{E}^{\frac{1}{2}}_0(t)\nonumber\\[2mm]
&~\displaystyle\lesssim \mathcal{E}^{\frac{3}{2}}(t).
\end{align}
Inserting \eqref{Q121}--\eqref{Q124} into \eqref{Q12*}, we then get
\begin{align}\label{Q12**}
\int^t_0(1+\tau)^{\frac{2}{15}}Q_{12}\mathrm{d}\tau \lesssim\mathcal{E}^{\frac{3}{2}}(t).
\end{align}
Similarly, for the term $Q_{13}$, by using the same method as deriving the estimates of $Q_{12}$, we also have
\begin{align*}
\int^t_0(1+\tau)^{\frac{2}{15}}Q_{13}\mathrm{d}\tau \lesssim\mathcal{E}^{\frac{3}{2}}(t).
\end{align*}
This along with \eqref{Q-1-split},\eqref{Q11**} and \eqref{Q12**} yields that
\begin{align}\label{Q-1-con}
\displaystyle \int _{0}^{t}(1+\tau)^{\frac{2}{15}}Q _{1}\mathrm{d}\tau   \lesssim\mathcal{E}^{\frac{3}{2}}(t).
\end{align}
To bound the last term on the right hand side of \eqref{dissip-integral}, we split $Q_2$ into three parts
\begin{align*}
Q_2&=\int  _{\mathbb{R}^{3}}\nabla S_1 \cdot\nabla\varrho \mathrm{d}x-\int _{\mathbb{R}^{3}}\nabla S_1 \cdot \nabla\Delta\varrho \mathrm{d}x-\int _{\mathbb{R}^{3}}\nabla^2 S_1 \cdot \nabla^2\Delta\varrho \mathrm{d}x =:Q_{21}+Q_{22}+Q_{23}.
\end{align*}
And thus,
\begin{align}\label{Q-2-SPLIT}
\displaystyle \int _{0}^{t}(1+\tau)^{\frac{2}{15}}Q _{2}\mathrm{d}\tau=  \int _{0}^{t}(1+\tau)^{\frac{2}{15}}Q _{21}\mathrm{d}\tau+\int _{0}^{t}(1+\tau)^{\frac{2}{15}}Q _{22}\mathrm{d}\tau+\int _{0}^{t}(1+\tau)^{\frac{2}{15}}Q _{23}\mathrm{d}\tau.
\end{align}
By \eqref{three-one-deri-ani}, \eqref{soboleve-ineq} and integration by parts, we have
\begin{align*}
\makebox[-4pt]{~}Q_{21}
&=-\int _{\mathbb{R}^{3}}\nabla(\varrho \mathop{\mathrm{div}}\nolimits \mathbf{u})\cdot\nabla\varrho \mathrm{d}x-\int _{\mathbb{R}^{3}}\nabla(\mathbf{u}\cdot\nabla\varrho)\cdot\nabla\varrho \mathrm{d}x\nonumber\\[2mm]
&\lesssim \int _{\mathbb{R}^{3}}\vert \varrho\vert \vert \nabla \mathop{\mathrm{div}}\nolimits \mathbf{u}\vert \vert \nabla\varrho \vert\mathrm{d}x+\int _{\mathbb{R}^{3}} \vert \nabla \mathbf{u}\vert \vert \nabla\varrho\vert ^{2}\mathrm{d}x+\int _{\mathbb{R}^{3}} |\nabla\varrho|^2 \vert \mathop{\mathrm{div}}\nolimits \mathbf{u}\vert \mathrm{d}x\nonumber\\[2mm]
&\lesssim \|\varrho\|^{\frac{1}{2}}_{L^2}\|\partial_1\varrho\|^{\frac{1}{2}}_{L^2}\|\nabla\varrho\|^{\frac{1}{2}}_{L^2}\|\nabla\partial_2\varrho\|^{\frac{1}{2}}_{L^2}\|\nabla \mathop{\mathrm{div}}\nolimits \mathbf{u}\|^{\frac{1}{2}}_{L^2}\nonumber\|\nabla \partial_3\mathop{\mathrm{div}}\nolimits \mathbf{u}\|^{\frac{1}{2}}_{L^2}
+\|\nabla\varrho\|^2_{L^4}\|\nabla \mathbf{u}\|_{L^2}\nonumber\\[2mm]
& \displaystyle \quad+\|\nabla\varrho\|^2_{L^2}\|\nabla\mathop{\mathrm{div}}\nolimits \mathbf{u}\|^{\frac{1}{2}}_{L^2}\|\nabla^2\mathop{\mathrm{div}}\nolimits \mathbf{u}\|^{\frac{1}{2}}_{L^2}
 \nonumber \\[2mm]
 &\lesssim\|\varrho\|^{\frac{1}{2}}_{L^2}\|\partial_1\varrho\|^{\frac{1}{2}}_{L^2}\|\nabla\varrho\|^{\frac{1}{2}}_{L^2}\|\nabla\partial_2\varrho\|^{\frac{1}{2}}_{L^2}\|\nabla \mathop{\mathrm{div}}\nolimits \mathbf{u}\|^{\frac{1}{2}}_{L^2}\|\nabla \partial_3\mathop{\mathrm{div}}\nolimits \mathbf{u}\|^{\frac{1}{2}}_{L^2}+\|\nabla\varrho\|^{\frac{1}{2}}_{L^2}\|\nabla^2\varrho\|^{\frac{3}{2}}_{L^2}\|\nabla \mathbf{u}\|_{L^2}
  \nonumber \\[2mm]
  & \displaystyle \quad+\|\nabla\varrho\|_{L^2}(\|\nabla\mathop{\mathrm{div}}\nolimits \mathbf{u}\|_{L^2}+\|\nabla^2\mathop{\mathrm{div}}\nolimits \mathbf{u}\|_{L^2})\|\nabla\varrho\|_{L^2}.
\end{align*}
This further implies that
\begin{align}\label{Q21**}
&\displaystyle\int^t_0(1+\tau)^{\frac{2}{15}}Q_{21}\mathrm{d}\tau\nonumber\\
&~\displaystyle\lesssim \sup_{0\leq \tau\leq t}\Big[(1+\tau)^{\frac{3}{8}}\log^{-\frac{1}{2}}(1+\tau)\|\varrho\|^{\frac{1}{2}}_{L^2}(1+\tau)^{\frac{1}{2}}\|\partial_1\varrho\|^{\frac{1}{2}}_{L^2}\Big]
 \nonumber \\[2mm]
 & ~\displaystyle \qquad \cdot\int^t_0(1+\tau)^{\frac{1}{15}}\|\nabla \mathop{\mathrm{div}}\nolimits \mathbf{u}\|^{\frac{1}{2}}_{L^2}\|\nabla \partial_3\mathop{\mathrm{div}}\nolimits \mathbf{u}\|^{\frac{1}{2}}_{L^2}\cdot\|\nabla\varrho\|^{\frac{1}{2}}_{L^2}\|\nabla\partial_2\varrho\|^{\frac{1}{2}}_{L^2}\mathrm{d}\tau \nonumber \\[2mm]
 &~ \displaystyle \quad
+\sup_{0\leq\tau\leq t}\Big[(1+\tau)^{\frac{1}{2}}\|\nabla\varrho\|^{\frac{1}{2}}_{L^2}(1+\tau)^{\frac{3}{2}}\|\nabla^2\varrho\|^{\frac{3}{2}}_{L^2}\|\nabla \mathbf{u}\|_{L^2}\Big]\int^t_0(1+\tau)^{-\frac{28}{15}}\mathrm{d}\tau\nonumber\\
&~ \displaystyle \quad+\sup_{0\leq \tau \leq t}\Big[(1+\tau)\|\nabla\varrho\|_{L^2}\Big]\int^t_0(1+\tau)^{\frac{1}{15}}(\|\nabla\mathop{\mathrm{div}}\nolimits \mathbf{u}\|_{L^2}+\|\nabla^2\mathop{\mathrm{div}}\nolimits \mathbf{u}\|_{L^2})\|\nabla\varrho\|_{L^2}\mathrm{d}\tau\nonumber\\[2mm]
&~\displaystyle\lesssim\mathcal{E}^{\frac{1}{2}}_2(t)\mathcal{E}^{\frac{1}{2}}_1(t)\mathcal{E}^{\frac{1}{2}}_0(t)+\mathcal{E}_2(t)\mathcal{E}^\frac{1}{2}_0(t)\nonumber\\[2mm]
&~\displaystyle\lesssim\mathcal{E}^{\frac{3}{2}}(t).
\end{align}
For the second term on the right hand side of \eqref{Q-2-SPLIT}, by \eqref{three-two-deri-ani}, \eqref{soboleve-ineq} and integration by parts, we obtain that
\begin{align*}
Q_{22}&=\int _{\mathbb{R}^{3}}\nabla(\mathbf{u}\cdot\nabla\varrho)\cdot\nabla\Delta\varrho \mathrm{d}x+\int _{\mathbb{R}^{3}}\nabla(\varrho \mathop{\mathrm{div}}\nolimits \mathbf{u})\cdot\nabla\Delta\varrho \mathrm{d}x\nonumber\\[2mm]
&=-\int _{\mathbb{R}^{3}}\Delta^2\varrho \mathbf{u}\cdot\nabla\varrho\mathrm{d}x+\int _{\mathbb{R}^{3}} \mathop{\mathrm{div}}\nolimits \mathbf{u}\nabla\varrho \cdot\nabla\Delta\varrho \mathrm{d}x+\int _{\mathbb{R}^{3}}(\varrho\nabla \mathop{\mathrm{div}}\nolimits \mathbf{u})\cdot\nabla\Delta\varrho \mathrm{d}x\nonumber\\[2mm]
&\lesssim \|\mathbf{u}\|^{\frac{1}{4}}_{L^2}\|\partial_1\mathbf{u}\|^{\frac{1}{4}}_{L^2}\|\partial_2\mathbf{u}\|^{\frac{1}{4}}_{L^2}\|\partial_1\partial_2\mathbf{u}\|^{\frac{1}{4}}_{L^2}\|\nabla\varrho\|^{\frac{1}{2}}_{L^2}\|\partial_3\nabla\varrho\|^{\frac{1}{2}}_{L^2}\|\nabla^4\varrho\|_{L^2}\nonumber\\[2mm]
&\displaystyle \quad+\|\nabla\varrho\|_{L^2}\|\nabla\mathop{\mathrm{div}}\nolimits \mathbf{u}\|^{\frac{1}{2}}_{L^2}\|\nabla^2\mathop{\mathrm{div}}\nolimits \mathbf{u}\|^{\frac{1}{2}}_{L^2}\|\nabla\Delta\varrho\|_{L^2}\nonumber\\[2mm]
&\displaystyle \quad+\|\nabla\varrho\|^{\frac{1}{2}}_{L^2}\|\nabla^2\varrho\|^{\frac{1}{2}}_{L^2}\|\nabla \mathop{\mathrm{div}}\nolimits \mathbf{u}\|_{L^2}\|\nabla\Delta\varrho\|_{L^2}.
\end{align*}
Then we have
\begin{align}\label{Q22**}
&\displaystyle\int^t_0(1+\tau)^{\frac{2}{15}}Q_{22}\mathrm{d}\tau\nonumber\\[2mm]
&~\displaystyle\lesssim \sup_{0\leq \tau\leq t}\Big[(1+\tau)^{\frac{1}{8}}\|\mathbf{u}\|^{\frac{1}{4}}_{L^2}(1+\tau)^{\frac{1}{4}}\|\partial_1\mathbf{u}\|^{\frac{1}{4}}_{L^2}(1+\tau)^{\frac{1}{4}}\|\partial_2\mathbf{u}\|^{\frac{1}{4}}_{L^2}(1+\tau)^{\frac{4}{15}}\|\partial_1\partial_2\mathbf{u}\|^{\frac{1}{4}}_{L^2}\Big.\nonumber\\[2mm]
&~\displaystyle\qquad\Big.\cdot(1+\tau)^{\frac{1}{2}}\|\nabla\varrho\|^{\frac{1}{2}}_{L^2}(1+\tau)^{\frac{1}{2}}\|\partial_3\nabla\varrho\|^{\frac{1}{2}}_{L^2}\|\nabla^4\varrho\|_{L^2}\Big]\int^t_0(1+\tau)^{-\frac{211}{120} }\mathrm{d}\tau\nonumber\\[2mm]
&~\displaystyle \quad+\sup_{0\leq \tau\leq t}\Big[(1+\tau)\|\nabla\varrho\|_{L^2}\Big]\int^t_0(1+\tau)^{\frac{1}{15}}(\|\nabla\mathop{\mathrm{div}}\nolimits \mathbf{u}\|_{L^2}+\|\nabla^2\mathop{\mathrm{div}}\nolimits \mathbf{u}\|_{L^2})\|\nabla\Delta\varrho\|_{L^2}\mathrm{d}\tau\nonumber\\[2mm]
&~\displaystyle \quad+\sup_{0\leq\tau\leq t}\Big[(1+\tau)^{\frac{1}{2}}\|\nabla\varrho\|^{\frac{1}{2}}_{L^2}(1+\tau)^{\frac{1}{2}}\|\nabla^2\varrho\|^{\frac{1}{2}}_{L^2}\Big]\int^t_0(1+\tau)^{\frac{1}{15}}\|\nabla \mathop{\mathrm{div}}\nolimits \mathbf{u}\|_{L^2}\|\nabla\Delta\varrho\|_{L^2}\mathrm{d}\tau\nonumber\\[2mm]
&~\displaystyle\lesssim \mathcal{E}_2(t)\mathcal{E}^{\frac{1}{2}}_0(t)+\mathcal{E}^{\frac{1}{2}}_2(t)\mathcal{E}^{\frac{1}{2}}_1(t)\mathcal{E}^{\frac{1}{2}}_0(t)+\mathcal{E}^{\frac{1}{2}}_2(t)\mathcal{E}^{\frac{1}{2}}_1(t)\mathcal{E}^{\frac{1}{2}}_0(t)\nonumber\\[2mm]
&~\displaystyle\lesssim \mathcal{E}^{\frac{3}{2}}(t).
\end{align}

For the third term on the right hand side of \eqref{Q-2-SPLIT}, by \eqref{three-two-deri-ani}, \eqref{soboleve-ineq} and integration by parts, we obtain that
\begin{align*}
Q_{23}&=\int _{\mathbb{R}^{3}}\nabla^2(\mathbf{u}\cdot\nabla\varrho)\cdot\nabla^2\Delta\varrho \mathrm{d}x+\int _{\mathbb{R}^{3}}\nabla^2(\varrho \mathop{\mathrm{div}}\nolimits \mathbf{u})\cdot\nabla^2\Delta\varrho \mathrm{d}x\nonumber\\[2mm]
&=\int _{\mathbb{R}^{3}} \mathbf{u}\cdot\nabla^3\varrho\nabla^2\Delta\varrho\mathrm{d}x+\int _{\mathbb{R}^{3}} \nabla\mathbf{u}\cdot\nabla^2\varrho\nabla^2\Delta\varrho\mathrm{d}x+\int _{\mathbb{R}^{3}} \nabla^2\mathbf{u}\cdot\nabla\varrho\nabla^2\Delta\varrho\mathrm{d}x\nonumber\\
&\quad+\int _{\mathbb{R}^{3}} \nabla^2\varrho \mathop{\mathrm{div}}\nolimits \mathbf{u}\cdot\nabla^2\Delta\varrho \mathrm{d}x+\int _{\mathbb{R}^{3}} \nabla\varrho\nabla\mathop{\mathrm{div}}\nolimits \mathbf{u} \cdot\nabla^2\Delta\varrho \mathrm{d}x+\int _{\mathbb{R}^{3}}(\varrho\nabla^2 \mathop{\mathrm{div}}\nolimits \mathbf{u})\cdot\nabla^2\Delta\varrho \mathrm{d}x\nonumber\\[2mm]
&\lesssim \|\mathbf{u}\|^{\frac{1}{4}}_{L^2}\|\partial_1\mathbf{u}\|^{\frac{1}{4}}_{L^2}\|\partial_2\mathbf{u}\|^{\frac{1}{4}}_{L^2}\|\partial_1\partial_2\mathbf{u}\|^{\frac{1}{4}}_{L^2}\|\nabla^2\varrho\|^{\frac{1}{4}}_{L^2}\|\nabla^4\varrho\|^{\frac{7}{4}}_{L^2}\nonumber\\[2mm]
&\displaystyle \quad+\|\nabla\mathbf{u}\|^{\frac{1}{4}}_{L^2}\|\partial_1\nabla\mathbf{u}\|^{\frac{1}{4}}_{L^2}\|\partial_2\nabla\mathbf{u}\|^{\frac{1}{4}}_{L^2}\|\partial_1\partial_2\mathbf{u}\|^{\frac{1}{8}}_{L^2}\|\partial_1\partial_2\nabla^2\mathbf{u}\|^{\frac{1}{8}}_{L^2}\|\nabla^2\varrho\|^{\frac{3}{4}}_{L^2}\|\nabla^4\varrho\|^{\frac{5}{4}}_{L^2}\nonumber\\[2mm]
&\displaystyle\quad+\|\nabla^2\varrho\|^{\frac{3}{4}}_{L^2}\|\nabla^4\varrho\|^{\frac{1}{4}}_{L^2}\|\nabla^2\mathbf{u}\|_{L^2}\|\nabla^2\Delta\varrho\|_{L^2}+\|\nabla^2\varrho\|_{L^2}\|\nabla\mathop{\mathrm{div}}\nolimits \mathbf{u}\|^{\frac{1}{2}}_{L^2}\|\nabla^2\mathop{\mathrm{div}}\nolimits \mathbf{u}\|^{\frac{1}{2}}_{L^2}\|\nabla^2\Delta\varrho\|_{L^2}\nonumber\\[2mm]
&\displaystyle \quad+\|\nabla\varrho\|_{L^2}\|\nabla^2\mathop{\mathrm{div}}\nolimits \mathbf{u}\|^{\frac{1}{2}}_{L^2}\|\nabla^3\mathop{\mathrm{div}}\nolimits \mathbf{u}\|^{\frac{1}{2}}_{L^2}\|\nabla^2\Delta\varrho\|_{L^2}+\|\nabla\varrho\|^{\frac{1}{2}}_{L^2}\|\nabla^2\varrho\|^{\frac{1}{2}}_{L^2}\|\nabla^2\mathop{\mathrm{div}}\nolimits \mathbf{u}\|_{L^2}\|\nabla^2\Delta\varrho\|_{L^2}.
\end{align*}
Then we have
\begin{align}\label{Q23**}
&\displaystyle\int^t_0(1+\tau)^{\frac{2}{15}}Q_{23}\mathrm{d}\tau\nonumber\\[2mm]
&~\displaystyle\lesssim \sup_{0\leq \tau\leq t}\Big[(1+\tau)^{\frac{1}{8}}\|\mathbf{u}\|^{\frac{1}{4}}_{L^2}(1+\tau)^{\frac{1}{4}}\|\partial_1\mathbf{u}\|^{\frac{1}{4}}_{L^2}(1+\tau)^{\frac{1}{4}}\|\partial_2\mathbf{u}\|^{\frac{1}{4}}_{L^2}(1+\tau)^{\frac{4}{15}}\|\partial_1\partial_2\mathbf{u}\|^{\frac{1}{4}}_{L^2}\Big.\nonumber\\[2mm]
&~\displaystyle\qquad\Big.\cdot(1+\tau)^{\frac{1}{4}}\|\nabla^2\varrho\|^{\frac{1}{4}}_{L^2}\|\nabla^4\varrho\|^{\frac{7}{4}}_{L^2}\Big]\int^t_0(1+\tau)^{-\frac{121}{120} }\mathrm{d}\tau\nonumber\\[2mm]
&~\displaystyle \quad+\sup_{0\leq\tau\leq t}\Big[(1+\tau)^{\frac{1}{8}}\|\nabla\mathbf{u}\|^{\frac{1}{4}}_{L^2}(1+\tau)^{\frac{1}{4}}\|\partial_1\nabla\mathbf{u}\|^{\frac{1}{4}}_{L^2}(1+\tau)^{\frac{1}{4}}\|\partial_2\nabla\mathbf{u}\|^{\frac{1}{4}}_{L^2}(1+\tau)^{\frac{2}{15}}\|\partial_1\partial_2\mathbf{u}\|^{\frac{1}{8}}_{L^2}\Big.\nonumber\\[2mm]
&~\displaystyle\qquad\Big.\cdot\|\partial_1\partial_2\nabla^2\mathbf{u}\|^{\frac{1}{8}}_{L^2}(1+\tau)^{\frac{3}{4}}\|\nabla^2\varrho\|^{\frac{3}{4}}_{L^2}\|\nabla^4\varrho\|^{\frac{5}{4}}_{L^2}\Big]\int^t_0(1+\tau)^{-\frac{11}{8}}\mathrm{d}\tau\nonumber\\[2mm]
&~\displaystyle \quad+\sup_{0\leq \tau\leq t} \Big[(1+\tau)^{\frac{3}{4}}\|\nabla^2\varrho\|^{\frac{3}{4}}_{L^2}\|\nabla^4\varrho\|^{\frac{1}{4}}_{L^2}(1+\tau)^{\frac{1}{2}}(\log(1+\tau))^{-1}\|\nabla^2\mathbf{u}\|_{L^2}\|\nabla^2\Delta\varrho\|_{L^2}\Big]\nonumber\\
&~\displaystyle\qquad\cdot\int^t_0(1+\tau)^{-\frac{67}{60}}\log(1+\tau)\mathrm{d}\tau\nonumber\\[2mm]
&~\displaystyle \quad+\sup_{0\leq \tau\leq t}\Big[(1+\tau)\|\nabla^2\varrho\|_{L^2}\Big]\int^t_0(1+\tau)^{\frac{1}{15}}(\|\nabla\mathop{\mathrm{div}}\nolimits \mathbf{u}\|_{L^2}+\|\nabla^2\mathop{\mathrm{div}}\nolimits \mathbf{u}\|_{L^2})\|\nabla^2\Delta\varrho\|_{L^2}\mathrm{d}\tau\nonumber\\[2mm]
&~\displaystyle \quad+\sup_{0\leq \tau\leq t}\Big[(1+\tau)\|\nabla\varrho\|_{L^2}\Big]\int^t_0(1+\tau)^{\frac{1}{15}}(\|\nabla^2\mathop{\mathrm{div}}\nolimits \mathbf{u}\|_{L^2}+\|\nabla^3\mathop{\mathrm{div}}\nolimits \mathbf{u}\|_{L^2})\|\nabla^2\Delta\varrho\|_{L^2}\mathrm{d}\tau\nonumber\\[2mm]
&~\displaystyle \quad+\sup_{0\leq\tau\leq t}\Big[(1+\tau)^{\frac{1}{2}}\|\nabla\varrho\|^{\frac{1}{2}}_{L^2}(1+\tau)^{\frac{1}{2}}\|\nabla^2\varrho\|^{\frac{1}{2}}_{L^2}\Big]\int^t_0(1+\tau)^{\frac{1}{15}}\|\nabla^2 \mathop{\mathrm{div}}\nolimits \mathbf{u}\|_{L^2}\|\nabla^2\Delta\varrho\|_{L^2}\mathrm{d}\tau\nonumber\\[2mm]
&~\displaystyle\lesssim \mathcal{E}^{\frac{5}{8}}_2(t)\mathcal{E}^{\frac{7}{8}}_0(t)+\mathcal{E}^{\frac{13}{16}}_2(t)\mathcal{E}^{\frac{11}{16}}_0(t)+\mathcal{E}_{2}(t)\mathcal{E}^{\frac{1}{2}}_0(t)+\mathcal{E}^{\frac{1}{2}}_2(t)\mathcal{E}^{\frac{1}{2}}_1(t)\mathcal{E}^{\frac{1}{2}}_0(t)\nonumber\\[2mm]
&~\displaystyle\lesssim \mathcal{E}^{\frac{3}{2}}(t).
\end{align}
Therefore, we have from \eqref{Q-2-SPLIT}, \eqref{Q21**}, \eqref{Q22**} and \eqref{Q23**} that
\begin{align*}
\displaystyle \int _{0}^{t}(1+ \tau)^{\frac{2}{15}}Q _{2}\mathrm{d}\tau \lesssim \mathcal{E}^{\frac{3}{2}}(t).
\end{align*}

This alongside \eqref{dissip-integral} as well as \eqref{Q-1-con} leads to
\begin{gather}\label{divujiaquan}
\displaystyle(1+t)^{\frac{2}{15}}\|(\mathop{\mathrm{div}}\nolimits \mathbf{u},\nabla\varrho)\|^2_{H^2}+\int^t_0(1+\tau)^{\frac{2}{15}}\|(\nabla_h\mathop{\mathrm{div}}\nolimits \mathbf{u},\nabla \mathop{\mathrm{div}}\nolimits \mathbf{u})\|^2_{H^2}\mathrm{d}\tau\lesssim\mathcal{E}_0(0)+\mathcal{E}^{\frac{3}{2}}(t).
\end{gather}
Taking the $H^2$ inner product of the equations in \eqref{new} with $\Delta_h\varrho$ and $\Delta_h \mathbf{u}$, respectively, and multiplying the resulting identity by $(1+t)^{\frac{2}{15}}$, we get by similar arguments as in the derivation of \eqref{divujiaquan} that
\begin{gather}\label{sptjiaquan}
\displaystyle(1+t)^{\frac{2}{15}}\|(\nabla_h\varrho,\nabla_h \mathbf{u})\|^{2}_{H^2}+\int^t_0(1+\tau)^{\frac{2}{15}}(\|(\Delta_h \mathbf{u},\nabla _{h} \mathop{\mathrm{div}}\nolimits \mathbf{u})\|^2_{H^2}\mathrm{d}\tau
\lesssim\mathcal{E}_0(0)+C\mathcal{E}^{\frac{3}{2}}(t).
\end{gather}
Gathering \eqref{divujiaquan} and \eqref{sptjiaquan}, we then get \eqref{con-dissip-lem}, and thus finish the proof of Lemma \ref{haosan}.
\end{proof}

\subsection{Estimates of $ (\varrho,\mathbf{u}) $} 
\label{sub:estimates_of_gs}

In what follows, let us turn to the estimates of $ \mathcal{E}_{2}(t) $. For later use, we split $\mathcal{E}_2(t)$ into three parts
\begin{equation*}
\mathcal{E}_2(t)=\mathcal{E}_{21}(t)+\mathcal{E}_{22}(t)+\mathcal{E}_{23}(t),
\end{equation*}
where
\begin{align*}
\mathcal{E}_{21}(t)&=\sup_{0\leq\tau\leq t}\Big((1+\tau)^{\frac{3}{2}}\log^{-2}(1+\tau)\|\varrho(\tau,\cdot)\|^2_{L^2}+(1+\tau)\|\mathbf{u}(\tau,\cdot)\|^2_{L^2}\Big),
\\[2mm]
\mathcal{E}_{22}(t)&=\sup_{0\leq\tau\leq t}\Big((1+\tau)^{2}\|\nabla\varrho(\tau,\cdot)\|^2_{L^2}+(1+\tau)^{2}\|\nabla_h\mathbf{u}(\tau,\cdot)\|^2_{L^2} +(1+\tau)\|\partial_3\mathbf{u}(\tau,\cdot)\|^2_{L^2}\Big),
\\[2mm]
\mathcal{E}_{23}(t)&=\sup_{0\leq\tau\leq t}\Big(\sum_{i=1,2;j=3}(1+\tau)^{2}\|(\partial_i\partial_j\mathbf{u})(\tau,\cdot)\|^2_{L^2}
+(1+\tau)^{2}\|\nabla^2\varrho(\tau,\cdot)\|^2_{L^2}\Big.\nonumber\\[2mm]
&\displaystyle \qquad+\Big.\sum_{i=1,2;j=1,2}(1+\tau)^{\frac{32}{15}}\|(\partial_i\partial_j\mathbf{u})(\tau,\cdot)\|^2_{L^2}+(1+\tau)\log^{-2}(1+\tau)\|\partial^2_3\mathbf{u}(\tau,\cdot)\|^2_{L^2}\Big).
\end{align*}
The estimates of $ \mathcal{E}_{2}(t) $ will be established via three steps based on the above split. \\[1mm]

{\bf Step One:} The estimate of $ \mathcal{E}_{21}(t) $. Clearly, we have
\begin{lemma}\label{lemma5.5}
 Assume that $(\varrho,u)$ is a smooth solution to the problem \eqref{new} satisfying \eqref{appri-assum-Nonlin} and \eqref{sec4-appri-conlu}. Then we have
\begin{equation*}
\mathcal{E}_{21}(t) \lesssim\mathcal{E}^2(t)+\|(\varrho_0,\mathbf{u}_0)\|^2_{L^1}+\|\mathbf{u}_0\|^2_{L^2_{x_3}L^1_{x_1x_2}}+\|(\varrho_0,\mathbf{u}_0)\|^2_{L^2}.
\end{equation*}
\end{lemma}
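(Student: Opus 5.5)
We will prove Lemma \ref{lemma5.5} from the Duhamel representation \eqref{rhobiaoshi}, following the pattern of Lemma \ref{divulemma}. By Plancherel, $\|\varrho(t)\|_{L^2}=\|\hat\varrho(t)\|_{L^2}$ and $\|\mathbf{u}(t)\|_{L^2}=\|\hat{\mathbf u}(t)\|_{L^2}$.

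\textbf{Linear part.} Lemma \ref{xxlemma} with $q=2$ and $m=n=0$ applies. Estimate \eqref{vrho-con-line-lem} gives $\|\varrho\|_{L^2}\lesssim(1+t)^{-3/4}\|U_0\|_{L^1}+e^{-\eta t}\|U_0\|_{L^2}$. Estimate \eqref{hori-u-lem-con} gives, for $\mathbf u$, an extra contribution $(1+t)^{-1/2}\|\widetilde{\mathbf u}_0\|_{L^2_{\xi_3}L^1_{x_1x_2}}$. By Plancherel in $x_3$ and Minkowski's inequality,
\[
\|\widetilde{\mathbf u}_0\|_{L^2_{\xi_3}L^1_{x_1x_2}}\lesssim\|\mathbf u_0\|_{L^2_{x_3}L^1_{x_1x_2}}.
\]
This contribution is the source of the $(1+t)^{-1/2}$ rate for $\mathbf u$ and of the anisotropic initial norm in the statement.

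\textbf{Nonlinear part: splitting.} For the Duhamel integral we insert the pointwise bounds of Lemma \ref{lemma4.3}, giving a heat-type factor $e^{-\vartheta|\xi|^2(t-\tau)}$ on $(\hat S_1,\hat S_2)$, a horizontal factor $e^{-\bar\mu|\xi_h|^2(t-\tau)}$ on $\hat S_2$ only, and an exponential remainder $e^{-\eta(t-\tau)}$. We split the time integral at $t/2$.

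\textbf{Heat-type and exponential parts.} On $[0,t/2]$ we use $L^1\to L^2$ bounds, gaining $(1+t-\tau)^{-3/4}\|S_i\|_{L^1}$. On $[t/2,t]$ we use $L^2\to L^2$ bounds together with the exponential part, gaining $(1+t-\tau)^{-3/4}\|S_i\|_{L^1}$ or simply $\|S_i\|_{L^2}$. The terms $\|S_1\|_{L^1}$ and $\|S_2\|_{L^1}$ are quadratic. Examples are $\|\varrho\|_{L^2}\|\mathrm{div}\,\mathbf u\|_{L^2}\lesssim\mathcal E(1+\tau)^{-7/4}\log(1+\tau)$ and $\|\mathbf u\|_{L^2}\|\nabla\mathbf u\|_{L^2}$. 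The latter is the worst term: since $\|\partial_3\mathbf u\|_{L^2}$ decays only like $(1+\tau)^{-1/2}$, it is $\lesssim\mathcal E(1+\tau)^{-1}$. Lemma \ref{decaylemma} with $a=3/4$, $b=1$ then gives $(1+t)^{-3/4}\log(1+t)$. This is exactly why $\mathcal E_{21}$ carries the weight $\log^{-2}$ for $\varrho$, and it is acceptable there. For $\mathbf u$ the same bound is far better than $(1+t)^{-1/2}$. The $L^2$-norms of $S_i$ are bounded by $\mathcal E(1+\tau)^{-1}$ or better, as in \eqref{N3} and \eqref{N4}. Terms with $g(\varrho)\Delta_h\mathbf u$ or $g(\varrho)\nabla\mathrm{div}\,\mathbf u$ appear only multiplied by $\|\varrho\|$. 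All products are written as $\mathcal E^{1/2}\cdot\mathcal E^{1/2}$, so these contributions are $\lesssim\mathcal E(t)$ after multiplying by the weight. Squaring then yields $\mathcal E^2$.

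\textbf{Horizontal part (main obstacle).} The main obstacle is the term $\int_0^t e^{-\bar\mu|\xi_h|^2(t-\tau)}\hat S_2\,d\tau$, which has no vertical smoothing. For $\tau\le t/2$ we use the 2D heat bound
\[
\|e^{-\bar\mu|\xi_h|^2(t-\tau)}\hat S_2\|_{L^2}\lesssim(1+t-\tau)^{-1/2}\|S_2\|_{L^2_{x_3}L^1_{x_1x_2}}.
\]
By \eqref{two} in Lemma \ref{lemma2.1},
\[
\|\mathbf u\cdot\nabla\mathbf u\|_{L^2_{x_3}L^1_{x_1x_2}}\lesssim\|\mathbf u\|_{L^2}^{1/2}\|\partial_3\mathbf u\|_{L^2}^{1/2}\|\nabla\mathbf u\|_{L^2}\lesssim\mathcal E(1+\tau)^{-1},
\]
and the $\varrho$-terms decay faster. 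Lemma \ref{decaylemma} then gives $(1+t)^{-1/2}\log(1+t)$, which is a logarithm too many. We would therefore refine this term. Writing $\mathbf u\cdot\nabla\mathbf u=\mathbf u_h\cdot\nabla_h\mathbf u+u_3\partial_3\mathbf u$, we use $\|\nabla_h\mathbf u\|\lesssim(1+\tau)^{-1}$. For $u_3\partial_3\mathbf u$ we use $\partial_3u_3=\mathrm{div}\,\mathbf u-\mathrm{div}_h\mathbf u_h$ and the better decay of $u_3$, or we place $\partial_3$ on $u_3$ after integrating by parts and write $u_3\partial_3\mathbf u=\partial_3(u_3\mathbf u)-\partial_3u_3\,\mathbf u$. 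This gives a rate $(1+\tau)^{-1-\epsilon}$ with $b>1$ in Lemma \ref{decaylemma}, hence $(1+t)^{-1/2}$. For $\tau\in[t/2,t]$ we use only $\|S_2\|_{L^2}\lesssim\mathcal E(1+\tau)^{-5/4}$ or better, integrated against a bounded kernel, which yields at least $(1+t)^{-1/4}$. This is insufficient alone, so there we instead use $(1+t-\tau)^{-1/2}\|S_2\|_{L^2_{x_3}L^1_{x_1x_2}}$, which is integrable ($a=1/2<1$). It gives $(1+t)^{-(a+b-1)}=(1+t)^{-1/2-\epsilon}$ when $b=1+\epsilon$.

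Collecting the linear and nonlinear bounds, multiplying by the weights in $\mathcal E_{21}$ and squaring gives the stated inequality.
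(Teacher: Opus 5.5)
Your plan is the paper's proof. It uses Duhamel and Plancherel with the linear bounds of Lemma~\ref{xxlemma}, and the $L^1$-norms of $S_1,S_2$ against the acoustic--diffusive kernel, where $\|\mathbf u\|_{L^2}\|\nabla\mathbf u\|_{L^2}\lesssim\mathcal E(1+\tau)^{-1}$ produces the logarithm for $\varrho$ (the paper's $M_{03}$). The $L^2$-norms are paired with the exponential remainder. For the horizontal heat factor, $\|S_2\|_{L^2_{x_3}L^1_{x_1x_2}}$ is estimated by \eqref{two} with $\partial_3$ on $u_3$ and $\partial_3u_3=\mathrm{div}\,\mathbf u-\mathrm{div}_h\mathbf u_h$, giving $(1+\tau)^{-5/4}$ exactly as in $H_{{03}_1}$. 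The paper does not split at $t/2$ by hand; it integrates over $[0,t]$ and lets Lemma~\ref{decaylemma} do the splitting.

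One step is justified incorrectly: the inequality $\|\widetilde{\mathbf u}_0\|_{L^2_{\xi_3}L^1_{x_1x_2}}\lesssim\|\mathbf u_0\|_{L^2_{x_3}L^1_{x_1x_2}}$.

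\emph{Why the stated inequality fails.} Minkowski only gives $\|\widetilde{\mathbf u}_0\|_{L^2_{\xi_3}L^1_{x_1x_2}}\le\|\mathbf u_0\|_{L^1_{x_1x_2}L^2_{x_3}}$, which is the larger mixed norm. The claimed inequality is in fact false, because the Fourier transform in $x_3$ is not bounded on $L^2(\mathbb R;L^1(\mathbb R^2))$. Take a component $w=\sum_{k=1}^N\phi_k(x_1,x_2)\chi_{[k,k+1)}(x_3)$ with $\phi_k\geq0$ disjointly supported and $\|\phi_k\|_{L^1}=1$. Then $\|w\|_{L^2_{x_3}L^1_{x_1x_2}}=\sqrt N$, but $\|\widetilde w\|_{L^2_{\xi_3}L^1_{x_1x_2}}=N\|\widehat{\chi_{[0,1)}}\|_{L^2}=N$.

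\emph{How to repair it.} The bound you need is still true, but take the transforms in the other order. For fixed $\xi_h$, apply Plancherel in the scalar variable $x_3$, then use $|\mathcal F_{x_1x_2}f(\xi_h,x_3)|\le\|f(\cdot,x_3)\|_{L^1_{x_1x_2}}$. This gives
\[
\|\mathbf 1_{\{|\xi_h|\le K\}}e^{-c|\xi_h|^2t}\hat f\|_{L^2_\xi}^2\le\Big(\int_{|\xi_h|\le K}e^{-2c|\xi_h|^2t}\,\mathrm d\xi_h\Big)\|f\|_{L^2_{x_3}L^1_{x_1x_2}}^2\lesssim(1+t)^{-1}\|f\|_{L^2_{x_3}L^1_{x_1x_2}}^2 .
\]
This is the two-dimensional $L^1\to L^2$ heat estimate applied slice by slice in $x_3$. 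It justifies both your linear term and your displayed Duhamel bound for $S_2$. The paper makes the same unstated passage from \eqref{anisou-lem-con}, which is written with $\widetilde{\mathbf u}_0$, to \eqref{uL2}.

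Two smaller points:
\begin{enumerate}
\item Of your two options for $u_3\partial_3\mathbf u$, only the first works. Writing $u_3\partial_3\mathbf u=\partial_3(u_3\mathbf u)-\partial_3u_3\,\mathbf u$ gains nothing, since $e^{-\bar\mu|\xi_h|^2(t-\tau)}$ gives no smoothing in $\xi_3$. Also, the gain comes from the decay of $\partial_3u_3$, not of $u_3$.
\item On $[t/2,t]$, keep the heat-type part as $(1+t-\tau)^{-3/4}\|S_i\|_{L^1}$, as the paper does on all of $[0,t]$. Pair $\|S_i\|_{L^2}$ only with the exponential remainder, since $\int_{t/2}^t\|S_2\|_{L^2}\,\mathrm d\tau$ with a bounded kernel loses a full power of $1+t$.
\end{enumerate}
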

\begin{proof}
Recalling \eqref{rhobiaoshi}, by using the Plancherel theorem and \eqref{vrho-con-line-lem}, we get
\begin{align}\label{rhoL2}
\displaystyle\|\varrho\|_{L^2}&=\|\widehat{\varrho}\|_{L^2}
 \nonumber \\
 &\displaystyle\lesssim (1+t)^{-\frac{3}{4}}(\|(\varrho_0,\mathbf{u}_{0})\|_{L^1}+\|(\varrho_0,\mathbf{u}_{0})\|_{L^2})+\int^t_0(1+t-\tau)^{-\frac{3}{4}}\|S_1\|_{L^1}\mathrm{d}\tau\nonumber\\
&\displaystyle \quad+\int^t_0(1+t-\tau)^{-m}\|S_1\|_{L^2}\mathrm{d}\tau+\int^t_0(1+t-\tau)^{-\frac{3}{4}}\|S_2\|_{L^1}\mathrm{d}\tau
 \nonumber \\
 &\displaystyle \quad+\int^t_0(1+t-\tau)^{-m}\|S_2\|_{L^2}\mathrm{d}\tau\nonumber\\
&\displaystyle\lesssim (1+t)^{-\frac{3}{4}}(\|(\varrho_0,\mathbf{u}_{0})\|_{L^1}+\|(\varrho_0,\mathbf{u}_{0})\|_{L^2})+\sum^{4}_{i=1}M_{0i}.
\end{align}
For term $M_{01}$,  using Lemma \ref{decaylemma} and the H\"older inequality, we get
\begin{align}\label{M01}
\displaystyle \makebox[-8pt]{~}
M_{01}& \lesssim\sup_{0\leq \tau\leq t}\Big[(1+\tau)^{\frac{3}{4}}(\log(1+\tau))^{-1}\|\varrho\|_{L^2}(1+\tau)\|\mathop{\mathrm{div}}\nolimits \mathbf{u}\|_{L^2}\Big]\int^t_0(1+t-\tau)^{-\frac{3}{4}}(1+\tau)^{-\frac{7}{4}}\log(1+\tau)\mathrm{d}\tau\nonumber\\[2mm]
&\displaystyle \quad+\sup_{0\leq \tau\leq t}\Big[(1+\tau)\|\nabla\varrho\|_{L^2}(1+\tau)^{\frac{1}{2}}\|\mathbf{u}\|_{L^2}\Big]\int^t_0(1+t-\tau)^{-\frac{3}{4}}(1+\tau)^{-\frac{3}{2}}\mathrm{d}\tau\nonumber\\[2mm]
&\displaystyle \lesssim\mathcal{E}^{\frac{1}{2}}_2(t)\mathcal{E}^{\frac{1}{2}}_{1}(t)(1+t)^{-\frac{3}{4}}+\mathcal{E}_2(t)(1+t)^{-\frac{3}{4}}\nonumber\\[2mm]
&\displaystyle \lesssim\mathcal{E}(t)(1+t)^{-\frac{3}{4}}.
\end{align}
Similarly, we get
\begin{align}
\displaystyle \makebox[-10pt]{~} M_{03}&\lesssim\sup_{0\leq\tau\leq t}\Big[(1+\tau)^{\frac{1}{2}}\|\mathbf{u}\|_{L^2}(1+\tau)^{\frac{1}{2}}\|\nabla \mathbf{u}\|_{L^2}\Big]\int^t_0(1+t-\tau)^{-\frac{3}{4}}(1+\tau)^{-1}\mathrm{d}\tau\nonumber\\[2mm]
&\displaystyle \quad+\sup_{0\leq \tau\leq t }\Big[(1+\tau)^{\frac{3}{4}}(\log(1+\tau))^{-1}\|\varrho\|_{L^2}(1+\tau)^{\frac{16}{15}}\|\Delta_h\mathbf{u}\|_{L^2}\Big]\nonumber\\[2mm]
&\displaystyle \quad\cdot\int^t_0(1+t-\tau)^{-\frac{3}{4}}(1+\tau)^{-\frac{109}{60}}\log(1+\tau)\mathrm{d}\tau\nonumber\\[2mm]
&\displaystyle \quad+\sup_{0\leq \tau\leq t}\Big[(1+\tau)^{\frac{3}{4}}(\log(1+\tau))^{-1}\|\varrho\|_{L^2}(1+\tau)\|\nabla \mathop{\mathrm{div}}\nolimits \mathbf{u}\| _{L ^{2}}\Big]\nonumber\\[2mm]
&\displaystyle \quad\cdot\int^t_0(1+t-\tau)^{-\frac{3}{4}}(1+\tau)^{-\frac{7}{4}}\log(1+\tau)\mathrm{d}\tau\nonumber\\[2mm]
&\displaystyle \quad+\sup_{0\leq \tau\leq t}\Big[ (1+\tau)^{\frac{3}{4}}(\log(1+\tau))^{-1}\|\varrho\|_{L^2}(1+\tau)\|\nabla\varrho\|_{L ^{2}}\Big]\nonumber\\[2mm]
&\displaystyle \quad\cdot\int^t_0(1+t-\tau)^{-\frac{3}{4}}(1+\tau)^{-\frac{7}{4}}\log(1+\tau)\mathrm{d}\tau\nonumber\\[2mm]
&\displaystyle \lesssim\mathcal{E}_2(t)(1+t)^{-\frac{3}{4}}\log(1+t)+\mathcal{E}_2(t)(1+t)^{-\frac{3}{4}}+\mathcal{E}^{\frac{1}{2}}_2(t)\mathcal{E}^{\frac{1}{2}}_1(t)(1+t)^{-\frac{3}{4}}\nonumber\\[2mm]
&\displaystyle \lesssim\mathcal{E}(t)(1+t)^{-\frac{3}{4}}\log(1+t).
\end{align}
Using \eqref{soboleve-ineq} and Lemma \ref{decaylemma}, we can estimate $M_{02}$ as follows:
\begin{align}\label{M02}
M_{02}&\lesssim\sup_{0\leq\tau\leq t}\Big[(1+\tau)^{\frac{3}{16}}(\log(1+\tau))^{-\frac{1}{4}}\|\varrho\|^{\frac{1}{4}}_{L^2}(1+\tau)^{\frac{3}{4}}\|\nabla^2\varrho\|^{\frac{3}{4}}_{L^2}(1+\tau)\|\mathop{\mathrm{div}}\nolimits \mathbf{u}\|_{L^2}\Big]\nonumber\\[2mm]
&~\displaystyle \qquad\cdot\int^t_0(1+t-\tau)^{-m}(1+\tau)^{-\frac{31}{16}}(\log(1+\tau))^{\frac{1}{4}}\mathrm{d}\tau\nonumber\\
&\displaystyle \quad+\sup_{0\leq\tau\leq t} \Big[(1+\tau)^{\frac{1}{8}}\|\mathbf{u}\|^{\frac{1}{4}}_{L^2}(1+\tau)^{\frac{3}{8}}(\log(1+\tau))^{-\frac{3}{4}}\|\nabla^2\mathbf{u}\|^{\frac{3}{4}}_{L^2}(1+\tau)\|\nabla\varrho\|_{L^2}\Big]\nonumber\\[2mm]
&~\displaystyle \qquad\cdot\int^t_0(1+t-\tau)^{-m}(1+\tau)^{-\frac{3}{2}}(\log(1+\tau))^{\frac{3}{4}}\mathrm{d}\tau\nonumber\\
&\displaystyle \lesssim\mathcal{E}^{\frac{1}{2}}_2(t)\mathcal{E}^{\frac{1}{2}}_1(t)(1+t)^{-\frac{31}{16}}(\log(1+t))^{\frac{1}{4}}+\mathcal{E}_2(t)(1+t)^{-\frac{3}{2}}(\log(1+t))^{\frac{3}{4}}\nonumber\\[2mm]
&\displaystyle \lesssim \mathcal{E}(t)(1+t)^{-\frac{3}{2}}(\log(1+t))^{\frac{3}{4}}.
\end{align}
In the same manner as in the derivation of \eqref{M02}, we can estimate the  last term $M_{04}$. Precisely, we split it into four terms.
\begin{align}\label{M04*}
M_{04}&\lesssim \int^t_0(1+t-\tau)^{-m}(\|\mathbf{u}\cdot\nabla\mathbf{u}\|_{L^2}+\|g(\varrho)\Delta_h\mathbf{u}\|_{L^2}+\|g(\varrho)\nabla \mathop{\mathrm{div}}\nolimits \mathbf{u}\|_{L^2}+\|f(\varrho)\nabla\varrho\|_{L ^{2}})\mathrm{d}\tau\nonumber\\[2mm]
&=: M_{{04}_1}+M_{{04}_2}+M_{{04}_3}+M_{{04}_4}.
\end{align}
First, for the term $M_{{04}_1}$, using \eqref{soboleve-ineq}, Lemma \ref{decaylemma} and the H\"older inequality, we get
\begin{align}\label{M041}
M_{{04}_1}
&\lesssim\int^t_0(1+t-\tau)^{-m}\|\mathbf{u}\|_{L^4}\|\nabla \mathbf{u}\|_{L^4}\mathrm{d}\tau\nonumber\\
&\displaystyle\lesssim\int^t_0(1+t-\tau)^{-m}\|\mathbf{u}\|^{\frac{1}{4}}_{L^2}\|\nabla \mathbf{u}\|_{L^2}\|\nabla^2\mathbf{u}\|^{\frac{3}{4}}_{L^2}\mathrm{d}\tau\nonumber\\
&\lesssim\sup_{0\leq\tau\leq t}\Big[(1+\tau)^{\frac{1}{8}}\|\mathbf{u}\|^{\frac{1}{4}}_{L^2}(1+\tau)^{\frac{1}{2}}\|\nabla \mathbf{u}\|_{L^2}(1+\tau)^{\frac{3}{8}}(\log(1+\tau))^{-\frac{3}{4}}\|\nabla^2\mathbf{u}\|^{\frac{3}{4}}_{L^2}\Big]\nonumber\\
&\displaystyle ~\quad\cdot\int^t_0(1+t-\tau)^{-m}(1+\tau)^{-1}(\log(1+\tau))^{\frac{3}{4}}\mathrm{d}\tau\nonumber\\
&\lesssim\mathcal{E}_2(t)\int^t_0(1+t-\tau)^{-m}(1+\tau)^{-1}(\log(1+\tau))^{\frac{3}{4}}\mathrm{d}\tau\nonumber\\
&\lesssim\mathcal{E}_2(t)(1+t)^{-1}(\log(1+t))^{\frac{3}{4}},
\end{align}
where $m>1$. For the second term on the right hand side of \eqref{M04*}, in view of the Sobolev inequality \eqref{soboleve-ineq}, it is clear that
\begin{align}
M_{{04}_2}&\displaystyle \lesssim\int^t_0(1+t-\tau)^{-m}\|\varrho\|_{L^{\infty}}\|\Delta_h\mathbf{u}\|_{L^2}\mathrm{d}\tau\nonumber\\[2mm]
&\displaystyle \lesssim\int^t_0(1+t-\tau)^{-m}\|\varrho\|^{\frac{1}{4}}_{L^{2}}\|\nabla^2\varrho\|^{\frac{3}{4}}_{L^2}\|\Delta_h\mathbf{u}\|_{L^2}\mathrm{d}\tau\nonumber\\[2mm]
&\displaystyle \lesssim\sup_{0\leq \tau\leq t}\Big[(1+\tau)^{\frac{3}{16}}(\log(1+\tau))^{-\frac{1}{4}}\|\varrho\|^{\frac{1}{4}}_{L^2}(1+\tau)^{\frac{3}{4}}\|\nabla^2\varrho\|^{\frac{3}{4}}_{L^2}(1+\tau)^{\frac{16}{15}}\|\Delta_h\mathbf{u}\|_{L^2}\Big]\nonumber\\[2mm]
&\displaystyle \quad\cdot\int^t_0(1+t-\tau)^{-m}(1+\tau)^{-\frac{481}{240}}(\log(1+\tau))^{\frac{1}{4}}\mathrm{d}\tau\nonumber\\[2mm]
&\displaystyle \lesssim \mathcal{E}_2(t)(1+t)^{-\frac{481}{240}}(\log(1+t))^{\frac{1}{4}}.
\end{align}
Similarly, we get
\begin{align}
M_{{04}_3}\lesssim \mathcal{E}^{\frac{1}{2}}_2(t)\mathcal{E}^{\frac{1}{2}}_1(t)(1+t)^{-\frac{481}{240}}(\log(1+t))^{\frac{1}{4}},
\end{align}
and
\begin{align}\label{M044}
\displaystyle M_{{04}_4}
&\displaystyle \lesssim \int^t_0(1+t-\tau)^{-m}\|\varrho\|_{L^4}\|\nabla\varrho\|_{L^4}\mathrm{d}\tau\nonumber\\[2mm]
&\displaystyle \lesssim \int^t_0(1+t-\tau)^{-m}\|\varrho\|^{\frac{1}{4}}_{L^2}\|\nabla\varrho\|_{L^2}\|\nabla^2\varrho\|_{L^2}^{\frac{3}{4}}\mathrm{d}\tau\nonumber\\[2mm]
&\displaystyle \lesssim \sup_{0\leq \tau\leq t}\Big[(1+\tau)^{\frac{3}{16}}(\log(1+\tau))^{-\frac{1}{4}}\|\varrho\|^{\frac{1}{4}}_{L^2}(1+\tau)\|\nabla\varrho\|_{L^2}(1+\tau)^{\frac{3}{4}}\|\nabla^2\varrho\|_{L^2}^{\frac{3}{4}}\Big]\nonumber\\[2mm]
&\displaystyle ~\quad\cdot\int^t_0(1+t-\tau)^{-m}(1+\tau)^{-\frac{31}{16}}(\log(1+\tau))^{\frac{1}{4}}\mathrm{d}\tau\nonumber\\[2mm]
&\displaystyle \lesssim \mathcal{E}_2(t)(1+t)^{-\frac{31}{16}}(\log(1+t))^{\frac{1}{4}}.
\end{align}
As a consequence, we have from \eqref{M041}--\eqref{M044} that
\begin{align}\label{M04}
M_{04}=\int^t_0(1+t-\tau)^{-m}\|S_2\|_{L^2}\mathrm{d}\tau \lesssim \mathcal{E}(t)(1+t)^{-1}(\log(1+t))^{\frac{3}{4}}.
\end{align}
Substituting \eqref{M01}--\eqref{M02} and \eqref{M04} into \eqref{rhoL2}, we get that
\begin{align*}
\displaystyle (1+t)^{\frac{3}{4}}(\log(1+t))^{-1}\|\varrho\|_{L^2} \lesssim\mathcal{E}(t)+\|(\varrho_0,\mathbf{u}_0)\|_{L^1(\mathbb{R}^3)}+\|(\varrho_0,\mathbf{u}_0)\|_{L^2}.
\end{align*}
Next, let us turn to the estimate of $\|\mathbf{u}\|_{L^2}$. Using the Plancherel theorem again and \eqref{anisou-lem-con}, we get for any $ m \geq 0 $,
\begin{align}\label{uL2}
&\|\mathbf{u}\|_{L^2}=\|\widehat{\mathbf{u}}\|_{L^2}
 \nonumber \\[2mm]
 &~\displaystyle \lesssim (1+t)^{-\frac{3}{4}}(\|\varrho_0\|_{L^1}+\|\varrho_0\|_{L^2})+(1+t)^{-\frac{1}{2}}(\|\mathbf{u}_0\|_{L^2_{x_3}L^1_{x_1x_2}}+\|\mathbf{u}_0\|_{L^2})\nonumber\\[2mm]
&~\displaystyle \quad+\int^t_0(1+t-\tau)^{-\frac{3}{4}}\|S_1\|_{L^1}\mathrm{d}\tau+\int^t_0(1+t-\tau)^{-m}\|S_1\|_{L^2}\mathrm{d}\tau\nonumber\\[2mm]
&~\displaystyle \quad+\int^t_0(1+t-\tau)^{-\frac{1}{2}}\|S_2\|_{L^2_{x_3}L^1_{x_1x_2}}\mathrm{d}\tau+\int^t_0(1+t-\tau)^{-m}\|S_2\|_{L^2}\mathrm{d}\tau\nonumber\\[2mm]
&~\displaystyle \lesssim (1+t)^{-\frac{3}{4}}(\|\varrho_0\|_{L^1}+\|\varrho_0\|_{L^2})+(1+t)^{-\frac{1}{2}}(\|\mathbf{u}_0\|_{L^2_{x_3}L^1_{x_1x_2}}+\|\mathbf{u}_0\|_{L^2})+\sum^{4}_{i=1}H_{0i}.
\end{align}
By the same method as in the proof of estimates for $M_{01}$, $M_{02}$ and $M_{04}$, we can estimate $H_{01}$, $H_{02}$ and $H_{04}$ as follows:
\begin{align}\label{H0-1-2-4}
\displaystyle \begin{cases}
    \displaystyle H_{01} \lesssim\mathcal{E}(t)(1+t)^{-\frac{3}{4}},\ \ H_{02} \lesssim\mathcal{E}(t)(1+t)^{-\frac{3}{2}}(\log(1+t))^{\frac{3}{4}},\\[3mm]
\displaystyle H_{04} \lesssim\mathcal{E}(t)(1+t)^{-1}(\log(1+t))^{\frac{3}{4}}.
\end{cases}
\end{align}
For the term $H_{03}$, recalling the definition of $S_2$ in \eqref{nonlinear}, we get
\begin{align}\label{H030}
\displaystyle H_{03}& \lesssim \int^t_0(1+t-\tau)^{-\frac{1}{2}}\|\mathbf{u}\cdot\nabla\mathbf {u}\|_{L^2_{x_3}L^1_{x_1x_2}} \mathrm{d}\tau+ \int _{0}^{t}(1+t-\tau)^{-\frac{1}{2}}\|\varrho\Delta_h\mathbf {u}\|_{L^2_{x_3}L^1_{x_1x_2}}\mathrm{d}\tau
 \nonumber \\[2mm]
 & \displaystyle \quad +\int^t_0(1+t-\tau)^{-\frac{1}{2}}\|\varrho\nabla\mathop{\mathrm{div}}\nolimits \mathbf {u}\|_{L^2_{x_3}L^1_{x_1x_2}}\mathrm{d}\tau+\int^t_0(1+t-\tau)^{-\frac{1}{2}}\|\varrho\nabla\varrho\|_{L^2_{x_3}L^1_{x_1x_2}}\mathrm{d}\tau\nonumber\\[2mm]
&=: \sum^{4}_{i=1}H_{{03}_i}.
\end{align}
 We shall estimate $ H_{{03}_i}\;(1 \leq i \leq 4) $ term by term. For the term $H_{{03}_1}$, using \eqref{two} and Lemma \ref{decaylemma}, we derive that
\begin{align}\label{H031}
H_{{03}_1}& \lesssim\int^t_0(1+t-\tau)^{-\frac{1}{2}}\Big(\|\mathbf{u}_h\|^{\frac{1}{2}}_{L^2}\|\partial_3\mathbf{u}_h\|^{\frac{1}{2}}_{L^2}\|\nabla_h \mathbf{u}\|_{L^2}+\|u_3\|^{\frac{1}{2}}_{L^2}\|\partial_3u_3\|^{\frac{1}{2}}_{L^2}\|\partial_3 \mathbf{u}\|_{L^2}\Big)\mathrm{d}\tau\nonumber\\[2mm]
& \displaystyle \lesssim\sup_{0\leq \tau\leq t}\Big[(1+\tau)^{\frac{1}{4}}\|\mathbf{u}_h\|^{\frac{1}{2}}_{L^2}(1+\tau)^{\frac{1}{4}}\|\partial_3\mathbf{u}_h\|^{\frac{1}{2}}_{L^2}(1+\tau)\|\nabla_h\mathbf{u}\|_{L^2} \Big]\nonumber\\[2mm]
&\displaystyle ~\qquad \cdot \int^t_0(1+t-\tau)^{-\frac{1}{2}}(1+\tau)^{-\frac{3}{2}}\mathrm{d}\tau\nonumber\\[2mm]
&\displaystyle \quad+\sup_{0\leq \tau\leq t}\Big[(1+\tau)^{\frac{1}{4}}\|u_3\|^{\frac{1}{2}}_{L^2}(1+\tau)^{\frac{1}{2}}\|\partial_3u_3\|^{\frac{1}{2}}_{L^2}(1+\tau)^{\frac{1}{2}}\|\partial_3\mathbf{u}\|_{L^2}\Big]\nonumber\\[2mm]
&\displaystyle ~\qquad \cdot \int^t_0(1+t-\tau)^{-\frac{1}{2}}(1+\tau)^{-\frac{5}{4}}\mathrm{d}\tau\nonumber\\[2mm]
&\displaystyle \lesssim \mathcal{E}_2(t)(1+t)^{-\frac{1}{2}}+\mathcal{E}^{\frac{3}{4}}_2(t)\mathcal{E}^{\frac{1}{4}}_1(t)(1+t)^{-\frac{1}{2}}\nonumber\\[2mm]
&\displaystyle \lesssim\mathcal{E}(t)(1+t)^{-\frac{1}{2}}.
\end{align}
Similarly, using \eqref{two} and Lemma \ref{decaylemma} again, we deduce that
\begin{align}
\displaystyle H_{{03}_2}& \lesssim\sup_{0\leq \tau\leq t}\Big[(1+\tau)^{\frac{3}{8}}(\log(1+\tau))^{-\frac{1}{2}}\|\varrho\|^{\frac{1}{2}}_{L^2}(1+\tau)^{\frac{1}{2}}\|\partial_3\varrho\|^{\frac{1}{2}}_{L^2}(1+\tau)^{\frac{16}{15}}\|\Delta_h\mathbf{u}\|_{L^2}\Big]\nonumber\\[2mm]
&\displaystyle ~\qquad \cdot \int^t_0(1+t-\tau)^{-\frac{1}{2}}(1+\tau)^{-\frac{233}{120}}(\log(1+\tau))^{\frac{1}{2}}\mathrm{d}\tau\nonumber\\[2mm]
& \displaystyle \lesssim\mathcal{E}_2(t)(1+t)^{-\frac{1}{2}},
\end{align}
\begin{align}
\displaystyle H_{{03}_3}
& \lesssim\sup_{0\leq \tau\leq t}\Big[(1+\tau)^{\frac{3}{8}}(\log(1+\tau))^{-\frac{1}{2}}\|\varrho\|^{\frac{1}{2}}_{L^2}(1+\tau)^{\frac{1}{2}}\|\partial_3\varrho\|^{\frac{1}{2}}_{L^2}(1+\tau)\|\nabla \mathop{\mathrm{div}}\nolimits \mathbf{u}\|_{L^2}\Big]\nonumber\\[2mm]
&\displaystyle ~ \quad \cdot\int^t_0(1+t-\tau)^{-\frac{1}{2}}(1+\tau)^{-\frac{15}{8}}(\log(1+\tau))^{\frac{1}{2}}\mathrm{d}\tau\nonumber\\[2mm]
&\displaystyle \lesssim\mathcal{E}^{\frac{1}{2}}_2(t)\mathcal{E}^{\frac{1}{2}}_1(t)(1+t)^{-\frac{1}{2}}\nonumber\\[2mm]
&\displaystyle \lesssim\mathcal{E}(t)(1+t)^{-\frac{1}{2}}
\end{align}
and
\begin{align}\label{H034}
\displaystyle H_{{03}_4}&\lesssim\sup_{0\leq \tau\leq t}\Big[(1+\tau)^{\frac{3}{8}}(\log(1+\tau))^{-\frac{1}{2}}\|\varrho\|^{\frac{1}{2}}_{L^2}(1+\tau)^{\frac{1}{2}}\|\partial_3\varrho\|^{\frac{1}{2}}_{L^2}(1+\tau)\|\nabla\varrho\|_{L^2}\Big]\nonumber\\[2mm]
&\displaystyle ~\quad \cdot\int^t_0(1+t-\tau)^{-\frac{1}{2}}(1+\tau)^{-\frac{15}{8}}(\log(1+\tau))^{\frac{1}{2}}\mathrm{d}\tau\nonumber\\[2mm]
&\displaystyle \lesssim\mathcal{E}_2(t)(1+t)^{-\frac{1}{2}}.
\end{align}

Thereby, substituting \eqref{H031}--\eqref{H034} into \eqref{H030}, we infer that
\begin{equation}\label{H03}
H_{03} \lesssim\mathcal{E}(t)(1+t)^{-\frac{1}{2}}.
\end{equation}
Therefore, substituting \eqref{H0-1-2-4} and \eqref{H03} into \eqref{uL2}, we obtain that
\begin{align*}
(1+t)^{\frac{1}{2}}\|\mathbf{u}\|_{L^2} \lesssim \mathcal{E}(t)+\|\varrho_0\|_{L^1}+\|\mathbf{u}_0\|_{L^2_{x_3}L^1_{x_1,x_2}}+\|(\varrho_0,\mathbf{u}_0)\|_{L^2}.
\end{align*}
Hence, we complete the proof of Lemma \ref{lemma5.5}.
\end{proof}

{\bf Step two:} The estimate of $ \mathcal{E}_{22}(t) $. In what follows, we are devoted to proving the following lemma.
\begin{lemma}\label{lemma5.6}
Assume that the assumptions of Theroem \ref{thm2} hold. Then we have
\begin{equation*}
\mathcal{E}_{22}(t) \lesssim\mathcal{E}^2(t)+\|(\varrho_0,\mathbf{u}_0)\|^2_{L^1}+\|(\mathbf{u}_0,\partial_3\mathbf{u}_0)\|^2_{L^2_{x_3}L^1_{x_1x_2}}+\|(\nabla\varrho_0,\nabla \mathbf{u}_0)\|^2_{L^2}.
\end{equation*}
\end{lemma}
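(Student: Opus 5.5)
The plan follows the template of Lemma \ref{lemma5.5}: use the Duhamel representation \eqref{rhobiaoshi} with Plancherel, and bound each of the three quantities in $\mathcal{E}_{22}(t)$ separately by the corresponding linear estimate of Lemma \ref{xxlemma} with $q=2$.

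For $\|\nabla\varrho\|_{L^2}$ I would apply \eqref{vrho-con-line-lem} with $m=1$. This gives the linear part $(1+t)^{-5/4}(\|U_0\|_{L^1}+\|\nabla U_0\|_{L^2})$. The Duhamel part is then
\[
\int_0^t(1+t-\tau)^{-\frac54}\|(S_1,S_2)\|_{L^1}\,\mathrm{d}\tau+\int_0^t(1+t-\tau)^{-m}\|\nabla(S_1,S_2)\|_{L^2}\,\mathrm{d}\tau .
\]
The $L^1$ terms are exactly those already bounded in $N_1$ and $N_3$ of Lemma \ref{divulemma}, so they are $\lesssim\mathcal{E}(t)(1+t)^{-1}$. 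The $\nabla S_1$ piece is $N_2$. The $\nabla S_2$ piece is handled like $N_4$, using \eqref{two-two-deri-ani} and \eqref{soboleve-ineq}; the only new ingredients are $\|\nabla(\mathbf{u}\cdot\nabla\mathbf{u})\|_{L^2}$ and $\|\nabla(g\Delta_h\mathbf{u})\|_{L^2}$, which cost one derivative fewer than in $T_4$. Each of these should be $\lesssim\mathcal{E}(t)(1+t)^{-1}$, so multiplying by $(1+t)$ and squaring gives the bound for $\nabla\varrho$.

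For $\|\nabla_h\mathbf{u}\|_{L^2}$ I would use \eqref{hori-u-lem-con} with $m=1$, $q=2$. This gives $(1+t)^{-5/4}\|\varrho_0\|_{L^1}$ plus $(1+t)^{-1}\|\mathbf{u}_0\|_{L^2_{x_3}L^1_{x_1x_2}}$ (note $\|\widetilde{\mathbf{u}}_0\|_{L^2_{\xi_3}L^1}\le\|\mathbf{u}_0\|_{L^2_{x_3}L^1}$), plus the corresponding Duhamel integrals. For $\|\partial_3\mathbf{u}\|_{L^2}$ I would use \eqref{verti-u-conl-lem} with $m=1$, which gives $(1+t)^{-1/2}\|\partial_3\mathbf{u}_0\|_{L^2_{x_3}L^1_{x_1x_2}}$. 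In both cases the critical Duhamel term is the anisotropic one, $\int_0^t(1+t-\tau)^{-1}\|S_2\|_{L^2_{x_3}L^1_{x_1x_2}}\,\mathrm{d}\tau$, respectively $\int_0^t(1+t-\tau)^{-1/2}\|\partial_3S_2\|_{L^2_{x_3}L^1}\,\mathrm{d}\tau$. The density contributions of $S_1$ decay faster and are harmless.

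To bound $\|S_2\|_{L^2_{x_3}L^1}$ I would use \eqref{two} as in $H_{03}$. The worst term is $u_3\partial_3\mathbf{u}_h$, which is of size $(1+\tau)^{-1/2}\cdot(1+\tau)^{-1/2}$, i.e. it decays only like $(1+\tau)^{-1}$. Against the kernel $(1+t-\tau)^{-1}$, Lemma \ref{decaylemma} with $a=b=1$ would produce a $\log$ loss. To avoid this, I would trade $\partial_3u_3$ for $\operatorname{div}\mathbf{u}-\operatorname{div}_h\mathbf{u}_h$ via $\mathcal{E}_1$, and put $u_3$ in the $\|f\|^{1/2}\|\partial_3f\|^{1/2}$ slot with $\partial_3u_3$ decaying like $(1+\tau)^{-1}$. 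This raises the source decay to $(1+\tau)^{-5/4}$, giving $(1+t)^{-1}$. The same splitting $\mathbf{u}\cdot\nabla=\mathbf{u}_h\cdot\nabla_h+u_3\partial_3$ handles $\partial_3(\mathbf{u}\cdot\nabla\mathbf{u})$, with $\partial_3^2\mathbf{u}$ (controlled with a $\log$ weight in $\mathcal{E}_{23}$) paired against $u_3$.

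The main obstacle is exactly this: keeping the $\nabla_h\mathbf{u}$ Duhamel integral at rate $(1+t)^{-1}$ without a logarithm. If the splitting is not enough, the fallback is to move one horizontal derivative onto the kernel on $[0,t/2]$, so that the symbol $|\xi_h|e^{-\bar\mu|\xi_h|^2(t-\tau)}$ acts on $S_2$ directly, and to use the $H^4$ bound $\mathcal{E}_0$ together with the weighted dissipation of Lemma \ref{haosan} on $[t/2,t]$. Collecting all terms, each nonlinear contribution is $\lesssim\mathcal{E}(t)$ times the target rate, and squaring gives the $\mathcal{E}^2(t)$ in the stated bound.
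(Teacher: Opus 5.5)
Your plan follows the paper's proof of this lemma. The paper writes the Duhamel formula for each $\partial_i(\varrho,\mathbf u)$ and applies Lemma \ref{xxlemma} with $q=2$. It reuses $N_1$ and $N_3$ from Lemma \ref{divulemma} for the $L^1$ pieces, and puts the $L^2$ norms of $\partial_iS_1,\partial_iS_2$ against the fast-decaying kernel. It closes the two anisotropic integrals with your device. In \eqref{two}, $u_3$ takes the slot $\|u_3\|_{L^2}^{1/2}\|\partial_3u_3\|_{L^2}^{1/2}$, and $\partial_3u_3=\operatorname{div}\mathbf u-\partial_1u_1-\partial_2u_2$ decays like $(1+\tau)^{-1}$ by $\mathcal E_1,\mathcal E_2$. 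This gives the source rate $(1+\tau)^{-5/4}$, which is the factor $\mathcal E_2^{3/4}\mathcal E_1^{1/4}$ in the paper. The term $u_3\partial_3^2\mathbf u$ is handled the same way using the logarithmic bound on $\partial_3^2\mathbf u$. Your fallback is therefore not needed.

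One point needs correcting, and the paper glosses over it in the same way. Your parenthetical inequality $\|\widetilde{\mathbf u}_0\|_{L^2_{\xi_3}L^1_{x_1x_2}}\le\|\mathbf u_0\|_{L^2_{x_3}L^1_{x_1x_2}}$ is false in general. Minkowski plus Plancherel in $x_3$ only bounds the left side by the larger norm $\|\mathbf u_0\|_{L^1_{x_1x_2}L^2_{x_3}}$.

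Here is a counterexample. Take $f(x)=\sum_{k=1}^N\phi(x_1-kb,x_2)\psi(x_3-kL)$ with $\phi\ge0$ and $L$ large, so the $x_3$-supports are disjoint. Then $\|f\|_{L^2_{x_3}L^1_{x_1x_2}}\sim\sqrt N$. For each $\xi_3$, choose $\xi_1\in[-\pi/b,\pi/b]$ with $b\xi_1+L\xi_3\in2\pi\mathbb Z$; this aligns all $N$ phases. Taking $b$ large then gives $\sup_{|\xi_h|\le K}|\hat f(\xi_h,\xi_3)|\gtrsim N|\hat\psi(\xi_3)|$. So even the quantity actually used in the proof of Lemma \ref{xxlemma} is of order $N$. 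The same identification is made for $\|S_2\|_{L^2_{x_3}L^1_{x_1x_2}}$ and $\|\partial_3S_2\|_{L^2_{x_3}L^1_{x_1x_2}}$ in the Duhamel terms.

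For $q=2$ the step can be repaired without changing the norm:
\begin{itemize}
\item Bound the multiplier pointwise by $e^{-c|\xi_h|^2t}+e^{-\eta t}$, using $e^{-\vartheta|\xi|^2t}\le e^{-\vartheta|\xi_h|^2t}$.
\item Return to physical space by Plancherel.
\item Estimate $\|\nabla_he^{ct\Delta_h}F\|_{L^2(\mathbb R^3)}$ on each slice $\{x_3=\mathrm{const}\}$ by Young's inequality with the two-dimensional heat kernel.
\end{itemize}
This gives $t^{-1}\|F\|_{L^2_{x_3}L^1_{x_1x_2}}$, and $t^{-1/2}\|\partial_3F\|_{L^2_{x_3}L^1_{x_1x_2}}$ for $\partial_3$, both for the data and for each $\tau$ in the Duhamel integral. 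Hence the $q=2$ cases of \eqref{hori-u-lem-con} and \eqref{verti-u-conl-lem} that you use hold with $\widetilde{\mathbf u}_0$ replaced by $\mathbf u_0$. With that justification in place of your parenthetical, your argument goes through.
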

\begin{proof}
From \eqref{rhobiaoshi}, we get, for $i=1,2,3$,
\begin{align}\label{deri-rho-u-reprens}
\displaystyle
    \left(\begin{matrix}
\widehat{\partial _{i}\varrho}(t,\xi)\\[2mm]
\widehat{\partial _{i}\mathbf{u}}(t,\xi)
\end{matrix}\right)
=\hat{G}(t,\xi)\left(\begin{matrix}
\widehat{\partial _{i}\varrho} _{0}(\xi)\\[2mm]
\widehat{\partial _{i}\mathbf{u}}_{0}(\xi)
\end{matrix}
\right)+ \int _{0}^{t}\hat{G}(t- \tau,\xi)\left(\begin{matrix}
\widehat{\partial _{i}S} _{1}(\tau,\xi)\\[2mm]
\widehat{\partial _{i}S}_{2}(\tau,\xi)
\end{matrix}
\right)\mathrm{d}\tau.
\end{align}
We shall divide the analysis into two cases: (i) $ i=1~\mbox{or}~2 $ ; (ii) $ i=3 $. Let us begin with {\bf the case (i): $i=1$ or $i=2$}.
We now focus on the case $i=2$. Using \eqref{vrho-con-line-lem}, we have
\begin{align}\label{varrhobiaoshi1}
\displaystyle\|\partial_2\varrho(t)\|_{L^2}
&\lesssim (1+t)^{-\frac{5}{4}}(\|(\varrho_0,\mathbf{u}_0)\|_{L^1}+\|(\partial_2\varrho_0,\partial_2\mathbf{u}_0)\|_{L^2})+\int^t_0(1+t-\tau)^{-\frac{5}{4}}\|S_1\|_{L^1}\mathrm{d}\tau\nonumber\\[2mm]
&\displaystyle \quad+\int^t_0(1+t-\tau)^{-m}\|\partial_2S_1\|_{L^2}\mathrm{d}\tau+\int^t_0(1+t-\tau)^{-\frac{5}{4}}\|S_2\|_{L^1}\mathrm{d}\tau \nonumber\\[2mm]
&\displaystyle \quad+\int^t_0(1+t-\tau)^{-m}\|\partial_2S_2\|_{L^2}\mathrm{d}\tau\nonumber\\[2mm]
&\displaystyle=:(1+t)^{-\frac{5}{4}}(\|(\varrho_0,\mathbf{u}_0)\|_{L^1}+\|(\partial_2\varrho_0,\partial_2\mathbf{u}_0)\|_{L^2})+\sum^{4}_{i=1}M_{2i}.
\end{align}
Since the term $M_{21}$ and $M_{23}$ are exactly the same as $N_{1}$ and $N_{3}$ in Lemma \ref{divulemma}, respectively, we thus directly get
\begin{gather}\label{M21-23}
\displaystyle  M_{21} \lesssim \mathcal{E}(t)(1+t)^{- \frac{5}{4}},\ \ M_{23} \lesssim \mathcal{E}(t)(1+t)^{-1}.
\end{gather}
For the term $M_{22}$, as in the derivation of $N_2$ in Lemma \ref{divulemma}, we split it into two terms.
 \begin{align}\label{M220}
 M_{22}&=\int^t_0(1+t-\tau)^{-m}\|\partial_2(\varrho\mathop{\mathrm{div}}\nolimits \mathbf{u})\|_{L^2}\mathrm{d}\tau+\int^t_0(1+t-\tau)^{-m}\|\partial_2(u\cdot\nabla\varrho)\|_{L^2}\mathrm{d}\tau\nonumber\\[2mm]
 &=:M_{221}+M_{222},
 \end{align}
 where, in view of Lemma \ref{decaylemma} and the Sobolev inequality \eqref{soboleve-ineq}, $M_{221}$ can be controlled as
\begin{align}\label{M221}
\displaystyle M_{221}&\leq\int^t_0(1+t-\tau)^{-m}(\|\partial_2\varrho \mathop{\mathrm{div}}\nolimits \mathbf{u}\|_{L^2}+\|\varrho \partial_2\mathop{\mathrm{div}}\nolimits \mathbf{u}\|_{L^2})\mathrm{d}\tau\nonumber\\[2mm]
&\displaystyle \lesssim \sup_{0\leq\tau\leq t}\Big[(1+\tau)^{\frac{3}{4}}\|\nabla\partial_2\varrho\|^{\frac{3}{4}}_{L^2}\|\nabla^3\partial_2\varrho\|^{\frac{1}{4}}_{L^2}(1+\tau)\|\mathop{\mathrm{div}}\nolimits \mathbf{u}\|_{L^2}\Big]
 \nonumber \\[2mm]
 & \displaystyle ~\qquad \cdot\int^t_0(1+t-\tau)^{-m}(1+\tau)^{-\frac{7}{4}}\mathrm{d}\tau\nonumber\\[2mm]
&\displaystyle \quad+\sup_{0\leq \tau\leq t}\Big[(1+\tau)^{\frac{1}{2}}\|\nabla\varrho\|^{\frac{1}{2}}_{L^2}(1+\tau)^{\frac{1}{2}}\|\nabla^2\varrho\|^{\frac{1}{2}}_{L^2}(1+\tau)\|\partial_2\mathop{\mathrm{div}}\nolimits \mathbf{u}\|_{L^2}\Big]\nonumber\\[2mm]
&\displaystyle ~ \qquad\cdot\int^t_0(1+t-\tau)^{-m}(1+\tau)^{-2}\mathrm{d}\tau\nonumber\\[2mm]
&\lesssim \mathcal{E}^{\frac{3}{8}}_2(t)\mathcal{E}^{\frac{1}{8}}_0(t)\mathcal{E}^{\frac{1}{2}}_1(t)(1+t)^{-\frac{7}{4}}+\mathcal{E}^{\frac{1}{2}}_2(t)\mathcal{E}^{\frac{1}{2}}_1(t)(1+t)^{-2}\nonumber\\[2mm]
&\lesssim\mathcal{E}(t)(1+t)^{-\frac{7}{4}}.
\end{align}
For $M_{222}$, thanks to \eqref{two-two-deri-ani} and Lemma \ref{decaylemma}, we derive that
\begin{align}\label{M222}
\displaystyle \makebox[-8pt]{~} M_{222}&\leq\int^t_0(1+t-\tau)^{-m}(\|\partial_2\mathbf{u}\cdot\nabla\varrho\|_{L^2}+\|\mathbf{u}\cdot\nabla\partial_2\varrho\|_{L^2})\mathrm{d}\tau\nonumber\\[2mm]
&\displaystyle \lesssim\sup_{0\leq\tau\leq t}\Big[(1+\tau)^{\frac{3}{4}}\|\nabla^2\varrho\|^{\frac{3}{4}}_{L^2}\|\nabla^4\varrho\|^{\frac{1}{4}}_{L^2}(1+\tau)\|\partial_2\mathbf{u}\|_{L^2}\Big]\int^t_0(1+t-\tau)^{-m}(1+\tau)^{-\frac{7}{4}}\mathrm{d}\tau\nonumber\\[2mm]
&\displaystyle \quad+\sup_{0\leq \tau\leq t}\Big[(1+\tau)^{\frac{1}{8}}\|\mathbf{u}\|^{\frac{1}{4}}_{L^2}(1+\tau)^{\frac{1}{4}}\|\partial_1\mathbf{u}\|^{\frac{1}{4}}_{L^2}(1+\tau)^{\frac{1}{4}}\|\partial_2\mathbf{u}\|^{\frac{1}{4}}_{L^2}(1+\tau)^{\frac{4}{15}}\|\partial_1\partial_2\mathbf{u}\|^{\frac{1}{4}}_{L^2}\Big.
\nonumber\\[2mm]
&\displaystyle \quad\qquad \qquad~ \qquad\Big.\cdot(1+\tau)^{\frac{3}{4}}\|\nabla\partial_2\varrho\|^{\frac{3}{4}}_{L^2}\|\nabla\partial_2\partial^2_3\varrho\|^{\frac{1}{4}}_{L^2}\Big]\int^t_0(1+t-\tau)^{-m}(1+\tau)^{-\frac{197}{120}}\mathrm{d}\tau\nonumber\\[2mm]
&\lesssim \mathcal{E}^{\frac{7}{8}}_2(t)\mathcal{E}^{\frac{1}{8}}_0(t)(1+t)^{-\frac{7}{4}}+\mathcal{E}^{\frac{7}{8}}_2(t)\mathcal{E}^{\frac{1}{8}}_0(t)(1+t)^{-\frac{197}{120}}\nonumber\\[2mm]
&\lesssim\mathcal{E}(t)(1+t)^{-\frac{197}{120}}.
\end{align}
Substituting \eqref{M221} and \eqref{M222} into \eqref{M220}, we have
\begin{align}\label{M22}
M_{22} \lesssim\mathcal{E}(t)(1+t)^{-\frac{197}{120}}.
\end{align}
Now let us turn to the last term $M_{24}$. Recalling the definition of $S_2$ in \eqref{nonlinear}, we get
\begin{align}\label{M240}
\displaystyle M_{24}&\leq \int^t_0(1+t-\tau)^{-m}\Big(\|\partial_2(\mathbf{u}\cdot\nabla \mathbf{u})\|_{L^2}+\|\partial_2(g(\varrho)\Delta_h\mathbf{u})\|_{L^2}\mathrm{d}\tau \Big.\nonumber\\[2mm]
&\displaystyle\qquad \qquad+\Big.\|\partial_2(g(\varrho)\nabla\mathop{\mathrm{div}}\nolimits \mathbf{u})\|_{L^2}+\|\partial_2(f(\varrho)\nabla\varrho)\|_{L^2}\Big)\mathrm{d}\tau\nonumber\\[2mm]
&\displaystyle=:\sum^{4}_{i=1}M_{{24}_i}.
\end{align}
It follows from the Sobolev inequality \eqref{soboleve-ineq} that
for $m>2$ ,
\begin{align}\label{M241}
\displaystyle M_{{24}_1}&\lesssim \int^t_0(1+t-\tau)^{-m}(\|\partial_2\mathbf{u}\|_{L^2}\|\nabla\mathbf{u}\|_{L^{\infty}}+\|\mathbf{u}\|_{L^{\infty}}\|\nabla\partial_2\mathbf{u}\|_{L^2})\mathrm{d}\tau\nonumber\\[2mm]
&\displaystyle \lesssim\sup_{0\leq t\leq \tau}\Big[(1+\tau)\|\partial_2\mathbf{u}\|_{L^2}(1+\tau)^{\frac{3}{8}}(\log(1+\tau))^{-\frac{3}{4}}\|\nabla^2\mathbf{u}\|^{\frac{3}{4}}_{L^2}\|\nabla^4\mathbf{u}\|^{\frac{1}{4}}_{L^2}\Big]\nonumber\\[2mm]
&~\displaystyle \qquad\cdot\int^t_0(1+t-\tau)^{-m}(1+\tau)^{-\frac{11}{8}}(\log(1+\tau))^{\frac{3}{4}}\mathrm{d}\tau\nonumber\\[2mm]
&\displaystyle \quad+\sup_{0\leq t\leq \tau}\Big[(1+\tau)^{\frac{1}{4}}\|\nabla \mathbf{u}\|^{\frac{1}{2}}_{L^2}(1+\tau)^{\frac{1}{4}}(\log(1+\tau))^{-\frac{1}{2}}\|\nabla^2\mathbf{u}\|^{\frac{1}{2}}_{L^2}(1+\tau)\|\nabla\partial_2\mathbf{u}\|_{L^2}\Big]\nonumber\\
&~\displaystyle \qquad \cdot\int^t_0(1+t-\tau)^{-m}(1+\tau)^{-\frac{3}{2}}(\log(1+\tau))^{\frac{1}{2}}\mathrm{d}\tau\nonumber\\[2mm]
&\displaystyle \lesssim\mathcal{E}^{\frac{7}{8}}_2(t)\mathcal{E}^{\frac{1}{8}}_0(t)(1+t)^{-\frac{11}{8}}(\log(1+t))^{\frac{3}{4}}+\mathcal{E}_2(t)(1+t)^{-\frac{3}{2}}(\log(1+t))^{\frac{1}{2}}\nonumber\\[2mm]
& \displaystyle \lesssim\mathcal{E}(t)(1+t)^{-\frac{11}{8}}(\log(1+t))^{\frac{3}{4}}.
\end{align}
Similarly, we have
\begin{align}\label{M244}
\displaystyle M_{{24}_4}&\leq\int^t_0(1+t-\tau)^{-m}(\|\partial_2\varrho\|_{L^{\infty}}\|\nabla \varrho\|_{L^2}+\|\varrho\|_{L^{\infty}}\|\nabla\partial_2\varrho\|_{L^2})\mathrm{d}\tau\nonumber\\[2mm]
&\displaystyle\leq\sup_{0\leq\tau\leq t}\Big[(1+\tau)^{\frac{3}{4}}\|\nabla\partial_2\varrho\|^{\frac{3}{4}}_{L^2}\|\nabla^3\partial_2\varrho\|^{\frac{1}{4}}_{L^2}(1+\tau)\|\nabla\varrho\|_{L^2}\Big]
 \nonumber \\
 & ~\displaystyle \qquad \cdot\int^t_0(1+t-\tau)^{-m}(1+\tau)^{-\frac{7}{4}}\mathrm{d}\tau\nonumber\\[2mm]
&\displaystyle \quad+\sup_{0\leq \tau\leq t}\Big[(1+\tau)^{\frac{1}{2}}\|\nabla\varrho\|^{\frac{1}{2}}_{L^2}(1+\tau)^{\frac{1}{2}}\|\nabla^2\varrho\|^{\frac{1}{2}}_{L^2}(1+\tau)\|\nabla\partial_2\varrho\|_{L^2}\Big]\nonumber\\[2mm]
&~\displaystyle \qquad\cdot\int^t_0(1+t-\tau)^{-m}(1+\tau)^{-2}\mathrm{d}\tau\nonumber\\[2mm]
& \displaystyle \lesssim\mathcal{E}^{\frac{7}{8}}_2(t)\mathcal{E}^{\frac{1}{8}}_0(t)(1+t)^{-\frac{7}{4}}+\mathcal{E}_2(t)(1+t)^{-2}\nonumber\\[2mm]
& \displaystyle \lesssim\mathcal{E}(t)(1+t)^{-\frac{7}{4}}.
\end{align}
Using the Sobolev inequality \eqref{soboleve-ineq} again along with the H\"older inequality, we can bound $M_{{24}_2}$ and $M_{{24}_3}$ as follows:
\begin{align}
\displaystyle M_{{24}_2}& \lesssim\int^t_0(1+t-\tau)^{-m}(\|\partial_2\varrho\|_{L^{\infty}}\|\Delta_h\mathbf{u}\|_{L^2}+\|\varrho\|_{L^{\infty}}\|\partial_2\Delta_h\mathbf{u}\|_{L^2})\mathrm{d}\tau\nonumber\\[2mm]
&\displaystyle \lesssim \sup_{0\leq\tau\leq t}\Big[(1+\tau)^{\frac{3}{4}}\|\nabla\partial_2\varrho\|^{\frac{3}{4}}_{L^2}\|\nabla^3\partial_2\varrho\|^{\frac{1}{4}}_{L^2}(1+\tau)^{\frac{16}{15}}\|\Delta_h\mathbf{u}\|_{L^2}\Big]
 \nonumber \\
 & ~\displaystyle \qquad \cdot\int^t_0(1+t-\tau)^{-m}(1+\tau)^{-\frac{109}{60}}\mathrm{d}\tau\nonumber\\[2mm]
&\displaystyle \quad+\sup_{0\leq\tau\leq t}\Big[(1+\tau)^{\frac{1}{2}}\|\nabla\varrho\|^{\frac{1}{2}}_{L^2}(1+\tau)^{\frac{1}{2}}\|\nabla^2\varrho\|^{\frac{1}{2}}_{L^2}(1+\tau)^{\frac{8}{15}}\|\partial_2\nabla_h\mathbf{u}\|^{\frac{1}{2}}_{L^2}\|\partial_2\nabla_h\Delta_h\mathbf{u}\|^{\frac{1}{2}}_{L^2}\Big]\nonumber\\[2mm]
&~\displaystyle \qquad\cdot \int^t_0(1+t-\tau)^{-m}(1+\tau)^{-\frac{23}{15}}\mathrm{d}\tau\nonumber\\[2mm]
& \displaystyle \lesssim\mathcal{E}^{\frac{7}{8}}_2(t)\mathcal{E}^{\frac{1}{8}}_0(t)(1+t)^{-\frac{109}{60}}+\mathcal{E}^{\frac{3}{4}}_2(t)\mathcal{E}^{\frac{1}{4}}_0(t)(1+t)^{-\frac{23}{15}}\nonumber\\[2mm]
& \displaystyle \lesssim\mathcal{E}(t)(1+t)^{-\frac{23}{15}},
\end{align}
and
\begin{align}\label{M243}
\displaystyle M_{{24}_3}&\lesssim\int^t_0(1+t-\tau)^{-m}(\|\partial_2\varrho\|_{L^{\infty}}\|\nabla \mathop{\mathrm{div}}\nolimits\mathbf{u}\|_{L^2}+\|\varrho\|_{L^{\infty}}\|\nabla \partial_2\mathop{\mathrm{div}}\nolimits\mathbf{u}\|_{L^2})\mathrm{d}\tau\nonumber\\[2mm]
\displaystyle&\lesssim \sup_{0\leq \tau\leq t}\Big[
(1+\tau)^{\frac{3}{4}}\|\nabla\partial_2\varrho\|^{\frac{3}{4}}_{L^2}\|\nabla^3\partial_2\varrho\|^{\frac{1}{4}}_{L^2}(1+\tau)\|\nabla \mathop{\mathrm{div}}\nolimits\mathbf{u}\|_{L^2}\Big]
 \nonumber \\[2mm]
 &~\displaystyle \qquad \cdot\int^t_0(1+t-\tau)^{-m}(1+\tau)^{-\frac{7}{4}}\mathrm{d}\tau\nonumber\\[2mm]
&\displaystyle \quad+\sup_{0\leq \tau \leq t} \Big [(1+\tau)^{\frac{1}{2}}\|\nabla\varrho\|^{\frac{1}{2}}_{L^2}(1+\tau)^{\frac{1}{2}}\|\nabla^2\varrho\|^{\frac{1}{2}}_{L^2}(1+\tau)^{\frac{1}{2}}\|\nabla \mathop{\mathrm{div}}\nolimits\mathbf{u}\|^{\frac{1}{2}}_{L^2}\|\nabla \partial^2_2\mathop{\mathrm{div}}\nolimits\mathbf{u}\|^{\frac{1}{2}}_{L^2}\Big]\nonumber\\[2mm]
&~\displaystyle \qquad \cdot\int^t_0(1+t-\tau)^{-m}(1+\tau)^{-\frac{3}{2}}\mathrm{d}\tau\nonumber\\[2mm]
&\displaystyle \lesssim \mathcal{E}^{\frac{7}{8}}_2(t)\mathcal{E}^{\frac{1}{8}}_0(t)\mathcal{E}^{\frac{1}{2}}_1(t)\left(\int^t_0(1+t-\tau)^{-m}(1+\tau)^{-\frac{7}{4}}\mathrm{d}\tau\right)\nonumber\\[2mm]
&\displaystyle \quad+\mathcal{E}^{\frac{1}{2}}_2(t)\mathcal{E}^{\frac{1}{4}}_0(t)\mathcal{E}^{\frac{1}{4}}_0(t)\int^t_0(1+t-\tau)^{-m}(1+\tau)^{-\frac{3}{2}}\mathrm{d}\tau\nonumber\\[2mm]
& \displaystyle \lesssim\mathcal{E}^{\frac{1}{2}}_2(t)\mathcal{E}^{\frac{1}{4}}_0(t)\mathcal{E}^{\frac{1}{2}}_1(t)(1+t)^{-\frac{7}{4}}+\mathcal{E}^{\frac{1}{2}}_2(t)\mathcal{E}^{\frac{1}{2}}_1(t)(1+t)^{-\frac{3}{2}}\nonumber\\[2mm]
&\displaystyle \lesssim\mathcal{E}(t)(1+t)^{-\frac{3}{2}}.
\end{align}
Substituting \eqref{M241}--\eqref{M243} into \eqref{M240}, we get
\begin{gather}\label{M24}
M_{24} \lesssim\mathcal{E}(t)(1+t)^{-\frac{11}{8}}(\log(1+t))^{\frac{3}{4}}.
\end{gather}
This along with \eqref{varrhobiaoshi1}, \eqref{M21-23} and \eqref{M22} further implies that
\begin{gather*}
(1+t)\|\partial_2\varrho\|_{L^2} \lesssim\mathcal{E}(t)+\|(\varrho_0,\mathbf{u}_0)\|_{L^1}+\|(\partial_2\varrho_0,\partial_2\mathbf{u}_0)\|_{L^2}.
\end{gather*}
By similar arguments for the case $ i=2 $, we get for the case $ i=1 $ that
\begin{equation*}
(1+t)\|\partial_1\varrho\|_{L^2} \lesssim\mathcal{E}(t)+\|(\varrho_0,\mathbf{u}_0)\|_{L^1}+\|(\partial_1\varrho_0,\partial_1\mathbf{u}_0)\|_{L^2}.
\end{equation*}
As for $\|\partial_2\mathbf{u}\|_{L^2}$, we utilize \eqref{anisou-lem-con} and \eqref{deri-rho-u-reprens} to get
\begin{align}\label{ubiaoshi1}
\displaystyle\|\partial_2\mathbf{u}\|_{L^2}
&\lesssim (1+t)^{-\frac{5}{4}}(\|\varrho_0\|_{L^1}+\|\partial_2\varrho_0\|_{L^2})+(1+t)^{-1}(\|\mathbf{u}_0\|_{L^2_{x_3}L^1_{x_1x_2}}+\|\partial_2\mathbf{u}_0\|_{L^2})\nonumber\\[2mm]
&\displaystyle \quad+\int^t_0(1+t-\tau)^{-\frac{5}{4}}\|S_1\|_{L^1}\mathrm{d}\tau+\int^t_0(1+t-\tau)^{-m}\|\partial_2S_1\|_{L^2}\mathrm{d}\tau\nonumber\\[2mm]
&\displaystyle \quad+\int^t_0(1+t-\tau)^{-1}\|S_2\|_{L^2_{x_3}L^1_{x_1x_2}}\mathrm{d}\tau
+\int^t_0(1+t-\tau)^{-m}\|\partial_2S_2\|_{L^2}\mathrm{d}\tau\nonumber\\[2mm]
&\displaystyle \lesssim (1+t)^{-\frac{5}{4}}(\|\varrho_0\|_{L^1}+\|\partial_2\varrho_0\|_{L^2})+(1+t)^{-1}(\|\mathbf{u}_0\|_{L^2_{x_3}L^1_{x_1x_2}}+\|\partial_2\mathbf{u}_0\|_{L^2})
 \nonumber \\[2mm]
 & \displaystyle \quad
+\sum^4_{i=1}H_{2i}.
\end{align}
In the same method as in the derivation of \eqref{M21-23}, \eqref{M22} and \eqref{M24}, we have
\begin{gather*}
\displaystyle
H_{21}\lesssim\mathcal{E}(t)(1+t)^{-\frac{5}{4}},\ \ \ H_{22}\lesssim\mathcal{E}(t)(1+t)^{-\frac{197}{120}},\ \ \ H_{24} \lesssim\mathcal{E}(t)(1+t)^{-\frac{11}{8}}(\log(1+t))^{\frac{3}{4}}.
\end{gather*}
By similar arguments as in the derivation of \eqref{H03}, we can estimate $H_{23}$ as follows:
\begin{align*}\label{H23}
\displaystyle H_{23}& \lesssim \int^t_0(1+t-\tau)^{-1}\Big(\|\mathbf{u}\cdot\nabla \mathbf{u}\|_{L^2_{x_3}L^1_{x_1x_2}}+\|g(\varrho)\Delta_h\mathbf{u}\|_{L^2_{x_3}L^1_{x_1x_2}}\Big.\nonumber\\[2mm]
&\displaystyle \qquad \qquad \qquad \qquad\Big.+\|g(\varrho)\nabla \mathop{\mathrm{div}}\nolimits\mathbf{u}\|_{L^2_{x_3}L^1_{x_1x_2}}+\|f(\varrho)\nabla\varrho\|_{L^2_{x_3}L^1_{x_1x_2}}\Big)\mathrm{d}\tau\nonumber\\[2mm]
&\displaystyle \lesssim \mathcal{E}_2(t)\int^t_0(1+t-\tau)^{-1}(1+\tau)^{-\frac{3}{2}}\mathrm{d}\tau+\mathcal{E}^{\frac{3}{4}}_2(t)\mathcal{E}^{\frac{1}{4}}_1(t)\int^t_0(1+t-\tau)^{-1}(1+\tau)^{-\frac{5}{4}}\mathrm{d}\tau\nonumber\\[2mm]
&\displaystyle \quad+ (\mathcal{E}_2(t)+\mathcal{E}^{\frac{1}{2}}_2(t)\mathcal{E}^{\frac{1}{2}}_1(t))\int^t_0(1+t-\tau)^{-1}(1+\tau)^{-\frac{15}{8}}(\log(1+\tau))^{\frac{1}{2}}\mathrm{d}\tau\nonumber\\[2mm]
&\displaystyle \lesssim\mathcal{E}(t)(1+t)^{-1},
\end{align*}
where we have used \eqref{two} and Lemma \ref{decaylemma}. Therefore we have from \eqref{ubiaoshi1} that
\begin{gather*}
(1+t)\|\partial_2\mathbf{u}\|_{L^2}\lesssim\mathcal{E}(t)+\|\varrho_0\|_{L^1}+\|\mathbf{u}_0\|_{L^2_{x_3}L^1_{x_1x_2}}+\|(\partial_2\varrho_0,\partial_2\mathbf{u}_0)\|_{L^2}.
\end{gather*}
Similarly, we can derive that
\begin{gather*}
(1+t)\|\partial_1\mathbf{u}\|_{L^2}\lesssim\mathcal{E}(t)+\|\varrho_0\|_{L^1}+\|\mathbf{u}_0\|_{L^2_{x_3}L^1_{x_1x_2}}+\|(\partial_1\varrho_0,\partial_1\mathbf{u}_0)\|_{L^2}.
\end{gather*}

Next let us turn to {\bf the case (ii)}: $ i=3 $. By \eqref{vrho-con-line-lem} and \eqref{deri-rho-u-reprens}, we get
\begin{align}\label{varrhobiaoshi13}
\displaystyle\|\partial_3\varrho\|_{L^2}& \lesssim (1+t)^{-\frac{5}{4}}(\|(\varrho_0,\mathbf{u}_0)\|_{L^1}+\|(\partial_3\varrho_0,\partial_3\mathbf{u}_0)\|_{L^2})\nonumber\\[2mm]
&\displaystyle \quad+\int^t_0(1+t-\tau)^{-\frac{5}{4}}\|S_1\|_{L^1}\mathrm{d}\tau+\int^t_0(1+t-\tau)^{-m}\|\partial_3S_1\|_{L^2}\mathrm{d}\tau\nonumber\\[2mm]
&\displaystyle \quad+\int^t_0(1+t-\tau)^{-\frac{5}{4}}\|S_2\|_{L^1}\mathrm{d}\tau+\int^t_0(1+t-\tau)^{-m}\|\partial_3S_2\|_{L^2}\mathrm{d}\tau\nonumber\\[2mm]
&\displaystyle=:(1+t)^{-\frac{5}{4}}(\|(\varrho_0,\mathbf{u}_0)\|_{L^1}+\|(\partial_3\varrho_0,\partial_3\mathbf{u}_0)\|_{L^2})+\sum^{4}_{i=1}M_{3i},
\end{align}
where, in view of similar arguments in the derivation of $N_1$, $N_2$ and $N_3$ in Lemma \ref{divulemma}, we readily obtain that
\begin{gather}\label{M31}
M_{31}\lesssim\mathcal{E}(t)(1+t)^{-\frac{5}{4}},\ \ \ M _{32}\lesssim\mathcal{E}(t)(1+t)^{-\frac{197}{120}},\ \ \ M _{33}\lesssim \mathcal{E}(t)(1+t)^{-1}.
\end{gather}
We now proceed to the estimate of $M_{34}$. It follows from direct computation that
\begin{align}\label{M340}
\displaystyle M_{34}& \lesssim \int^t_0(1+t-\tau)^{-m}\|\partial_3(\mathbf{u}\cdot\nabla \mathbf{u})\|_{L^2}\mathrm{d}\tau+\int^t_0(1+t-\tau)^{-m}\|\partial_3(g(\varrho)\Delta_h\mathbf{u})\|_{L^2}\mathrm{d}\tau\nonumber\\[2mm]
&\displaystyle\quad+\int^t_0(1+t-\tau)^{-m}\|\partial_3(g(\varrho)\nabla\mathop{\mathrm{div}}\nolimits \mathbf{u})\|_{L^2}\mathrm{d}\tau+\int^t_0(1+t-\tau)^{-m}\|\partial_3(f(\varrho)\nabla\varrho)\|_{L^2}\mathrm{d}\tau\nonumber\\[2mm]
&=\sum^{4}_{i=1}M_{{34}_i}.
\end{align}
For the term $M_{34 _{1}}$, we have
\begin{align}\label{M3410}
\displaystyle
&\makebox[-6pt]{~}M_{{34}_1}
 \nonumber \\
 &\makebox[-6pt]{~}~\displaystyle \lesssim \int^t_0(1+t-\tau)^{-m}\Big(\|\partial_3\mathbf{u}_h\cdot\nabla_h\mathbf{u}\|_{L^2}+\|\partial_3u_3\partial_3\mathbf{u}\|_{L^{2}}+\|u_h\cdot\nabla_h\partial_3\mathbf{u}\|_{L^2}+\|u_3\partial^2_3\mathbf{u}\|_{L^2}\Big)\mathrm{d}\tau\nonumber\\[2mm]
&\makebox[-6pt]{~}~\displaystyle=: M_{{34}_{11}}+M_{{34}_{12}}+M_{{34}_{13}}+M_{{34}_{14}}.
\end{align}
In the following, we shall handle the terms on the right hand side of \eqref{M3410}. Thanks to \eqref{two-two-deri-ani}, we get
\begin{align*}
\displaystyle \|\partial_3\mathbf{u}_h\cdot\nabla_h\mathbf{u}\|_{L^2} \lesssim \|\partial_3\mathbf{u}_h\|^{\frac{1}{4}}_{L^2}\|\partial_1\partial_3\mathbf{u}_h\|^{\frac{1}{4}}_{L^2}\|\partial_2\partial_3\mathbf{u}_h\|^{\frac{1}{4}}_{L^2}\|\partial_1\partial_2\partial_3\mathbf{u}_h\|^{\frac{1}{4}}_{L^2}\|\nabla_h\mathbf{u}\|^{\frac{1}{2}}_{L^2}\|\nabla_h\partial_3\mathbf{u}\|^{\frac{1}{2}}_{L^2}.
\end{align*}
Therefore we derive that
\begin{align}\label{M3411}
M_{{34}_{11}}&=\int^t_0(1+t-\tau)^{-m}\|\partial_3\mathbf{u}_h\cdot\nabla_h\mathbf{u}\|_{L^2}\mathrm{d}\tau
 \nonumber \\[2mm]
 &\displaystyle \lesssim\sup_{0\leq \tau\leq t}\Big[(1+\tau)^{\frac{1}{8}}\|\partial_3\mathbf{u}_h\|_{L^2}^{\frac{1}{4}}(1+\tau)^{\frac{1}{4}}\|\partial_1\partial_3\mathbf{u}_h\|_{L^2}^{\frac{1}{4}}(1+\tau)^{\frac{3}{8}}\|\partial_2\partial_3\mathbf{u}_h\|_{L^2}^{\frac{3}{8}}\|\partial^2_1\partial_2\partial_3\mathbf{u}_h\|_{L^2}^{\frac{1}{8}}\Big.\nonumber\\[2mm]
&~\displaystyle \qquad\quad \Big. \cdot(1+\tau)^{\frac{1}{2}}\|\nabla_h\mathbf{u}\|^{\frac{1}{2}}_{L^2}(1+\tau)^{\frac{1}{2}}\|\nabla_h\partial_3\mathbf{u}\|^{\frac{1}{2}}_{L^2} \Big]\int^t_0(1+t-\tau)^{-m}(1+\tau)^{-\frac{7}{4}}\mathrm{d}\tau\nonumber\\
&\displaystyle \lesssim \mathcal{E}^{\frac{15}{16}}_2(t) \mathcal{E}^{\frac{1}{16}}_0(t)\int^t_0(1+t-\tau)^{-m}(1+\tau)^{-\frac{7}{4}}\mathrm{d}\tau\nonumber\\
&\displaystyle \lesssim\mathcal{E}(t)(1+t)^{-\frac{7}{4}},
\end{align}
where we have used Lemma \ref{decaylemma}. Similarly, we can bound $ M _{34 _{12}} $--$ M _{34  _{14}} $ as follows:
\begin{align}
\displaystyle \makebox[-8pt]{~}
M_{{34}_{12}}&\lesssim\int^t_0(1+t-\tau)^{-m}\|\partial_3u_3\|^{\frac{1}{4}}_{L^2}\|\partial_1\partial_3u_3\|^{\frac{1}{4}}_{L^2}\|\partial_2\partial_3u_3\|^{\frac{1}{4}}_{L^2}\|\partial_1\partial_2\partial_3u_3\|^{\frac{1}{4}}_{L^2}\|\partial_3\mathbf{u}\|^{\frac{1}{2}}_{L^2}\|\partial^2_3\mathbf{u}\|^{\frac{1}{2}}_{L^2}\mathrm{d}\tau\nonumber\\[2mm]
&\displaystyle \lesssim\sup_{0\leq \tau\leq t}\Big[(1+\tau)^{\frac{1}{4}}\|\partial_3u_3\|^{\frac{1}{4}}_{L^2}(1+\tau)^{\frac{1}{4}}\|\partial_1\partial_3u_3\|^{\frac{1}{4}}_{L^2}(1+\tau)^{\frac{3}{8}}\|\partial_2\partial_3u_3\|^{\frac{3}{8}}_{L^2}\|\partial_1\partial^2_2\partial_3u_3\|^{\frac{1}{8}}_{L^2} \Big. \nonumber\\[2mm]
&~~\displaystyle \Big. \cdot(1+\tau)^{\frac{1}{4}}\|\partial_3\mathbf{u}\|^{\frac{1}{2}}_{L^2}(1+\tau)^{\frac{1}{4}}(\log(1+\tau))^{-\frac{1}{2}}\|\partial^2_3\mathbf{u}\|^{\frac{1}{2}}_{L^2}\Big]
\int^t_0(1+t-\tau)^{-m}(1+\tau)^{-\frac{11}{8}}(\log(1+\tau))^{\frac{1}{2}}\mathrm{d}\tau\nonumber\\[2mm]
&\displaystyle \lesssim \mathcal{E}^{\frac{1}{16}}_0(t)\mathcal{E}^{\frac{15}{16}}_2(t)\int^t_0(1+t-\tau)^{-m}(1+\tau)^{-\frac{11}{8}}(\log(1+\tau))^{\frac{1}{2}}\mathrm{d}\tau\nonumber\\[2mm]
& \displaystyle \lesssim\mathcal{E}(t)(1+t)^{-\frac{11}{8}}\log(1+t),
\end{align}
\begin{align}
\displaystyle  \makebox[-8pt]{~}M_{{34}_{13}}& \lesssim
\int^t_0(1+t-\tau)^{-m}\|\mathbf{u}_h\|^{\frac{1}{4}}_{L^2}\|\partial_1\mathbf{u}_h\|^{\frac{1}{4}}_{L^2}\|\partial_2\mathbf{u}_h\|^{\frac{1}{4}}_{L^2}\|\partial_1\partial_2\mathbf{u}_h\|^{\frac{1}{4}}_{L^2}\|\nabla_h\partial_3\mathbf{u}\|^{\frac{1}{2}}_{L^2}\|\nabla_h\partial^2_3\mathbf{u}\|^{\frac{1}{2}}_{L^2}\mathrm{d}\tau\nonumber\\[2mm]
&\displaystyle \lesssim\sup_{0\leq \tau\leq t}\Big[(1+\tau)^{\frac{1}{8}}\|\mathbf{u}_h\|^{\frac{1}{4}}_{L^2}(1+\tau)^{\frac{1}{4}}\|\partial_1\mathbf{u}_h\|^{\frac{1}{4}}_{L^2}(1+\tau)^{\frac{1}{4}}\|\partial_2\mathbf{u}_h\|^{\frac{1}{4}}_{L^2}(1+\tau)^{\frac{4}{15}}\|\partial_1\partial_2\mathbf{u}_h\|^{\frac{1}{4}}_{L^2} \Big.\nonumber\\[2mm]
&~\displaystyle \qquad \qquad \Big.\cdot(1+\tau)^{\frac{3}{4}}\|\nabla_h\partial_3\mathbf{u}\|^{\frac{3}{4}}_{L^2}\|\nabla_h\partial^3_3\mathbf{u}\|^{\frac{1}{4}}_{L^2}\Big] \int^t_0(1+t-\tau)^{-m}(1+\tau)^{-\frac{197}{120}}\mathrm{d}\tau\nonumber\\[2mm]
&\leq \mathcal{E}^{\frac{7}{8}}_2(t)\mathcal{E}^{\frac{1}{8}}_0(t)\int^t_0(1+t-\tau)^{-m}(1+\tau)^{-\frac{197}{120}}\mathrm{d}\tau\nonumber\\[2mm]
&\leq \mathcal{E}(t)(1+t)^{-\frac{197}{120}},
\end{align}
and
\begin{align}\label{M3414}
\displaystyle  \makebox[-8pt]{~}M_{{34}_{14}}& \lesssim
\int^t_0(1+t-\tau)^{-m}(\|u_3\|^{\frac{1}{4}}_{L^2}\|\partial_1u_3\|^{\frac{1}{4}}_{L^2}\|\partial_2u_3\|^{\frac{1}{4}}_{L^2}\|\partial_1\partial_2u_3\|^{\frac{1}{4}}_{L^2}\|\partial^2_3\mathbf{u}\|^{\frac{1}{2}}_{L^2}\|\partial^3_3\mathbf{u}\|^{\frac{1}{2}}_{L^2})\mathrm{d}\tau\nonumber\\[2mm]
&\displaystyle \lesssim\sup_{0\leq\tau\leq t} \Big [(1+\tau)^{\frac{1}{8}}\|u_3\|^{\frac{1}{4}}_{L^2}(1+\tau)^{\frac{1}{4}}\|\partial_1u_3\|^{\frac{1}{4}}_{L^2}(1+\tau)^{\frac{1}{4}}\|\partial_2u_3\|^{\frac{1}{4}}_{L^2}(1+\tau)^{\frac{4}{15}}\|\partial_1\partial_2u_3\|^{\frac{1}{4}}_{L^2}\Big.\nonumber\\[2mm]
&~\displaystyle \qquad \Big.\cdot(1+\tau)^{\frac{3}{8}}(\log(1+\tau))^{-\frac{3}{4}}\|\partial^2_3\mathbf{u}\|^{\frac{3}{4}}_{L^2}\|\partial^4_3\mathbf{u}\|^{\frac{1}{4}}_{L^2}\Big]\int^t_0(1+t-\tau)^{-m}(1+\tau)^{-\frac{19}{15}}(\log(1+\tau))^{\frac{3}{4}}\mathrm{d}\tau\nonumber\\[2mm]
&\displaystyle \lesssim\mathcal{E}^{\frac{7}{8}}_2(t)\mathcal{E}^{\frac{1}{8}}_0(t)\int^t_0(1+t-\tau)^{-m}(1+\tau)^{-\frac{19}{15}}(\log(1+\tau))^{\frac{3}{4}}\mathrm{d}\tau\nonumber\\[2mm]
&\displaystyle \lesssim\mathcal{E}(t)(1+t)^{-\frac{19}{15}}(\log(1+t))^{\frac{3}{4}}.
\end{align}
Here we have used the fact
\begin{align*}
\displaystyle \partial _{3}u _{3}=\mathop{\mathrm{div}}\nolimits \mathbf{u}- \partial _{1}u _{1}- \partial _{2}u _{2}.
\end{align*}
Substituting \eqref{M3411}--\eqref{M3414} into \eqref{M3410}, we get
\begin{align}\label{M341}
M_{{34}_1}=\int^t_0(1+t-\tau)^{-m}\|\partial_3(\mathbf{u}\cdot\nabla \mathbf{u})\|_{L^2}\mathrm{d}\tau \lesssim\mathcal{E}(t)(1+t)^{-\frac{19}{15}}(\log(1+t))^{\frac{3}{4}}.
\end{align}
 The terms $M_{{34}_2}$--$M_{{34}_4}$ can be estimated by the same method as in $M_{{24}_2}$--$M_{{24}_4}$~(see precisely \eqref{M244}--\eqref{M343}), as follows:
\begin{align}\label{M342}
\displaystyle M_{{34}_2}&\lesssim \sup_{0\leq\tau\leq t}\Big[(1+\tau)^{\frac{3}{4}}\|\nabla\partial_3\varrho\|^{\frac{3}{4}}_{L^2}\|\nabla^3\partial_3\varrho\|^{\frac{1}{4}}_{L^2}(1+\tau)^{\frac{16}{15}}\|\Delta_h\mathbf{u}\|_{L^2}\Big]
 \nonumber \\[2mm]
 & ~\displaystyle \qquad \cdot\int^t_0(1+t-\tau)^{-m}(1+\tau)^{-\frac{109}{60}}\mathrm{d}\tau\nonumber\\[2mm]
&\displaystyle \quad+\sup_{0\leq\tau\leq t}\Big[(1+\tau)^{\frac{1}{2}}\|\nabla\varrho\|^{\frac{1}{2}}_{L^2}(1+\tau)^{\frac{1}{2}}\|\nabla^2\varrho\|^{\frac{1}{2}}_{L^2}(1+\tau)^{\frac{1}{2}}\|\partial_3\nabla_h\mathbf{u}\|^{\frac{1}{2}}_{L^2}\|\partial_3\nabla_h\Delta_h\mathbf{u}\|^{\frac{1}{2}}_{L^2}\Big]\nonumber\\[2mm]
&~\displaystyle \qquad\cdot\int^t_0(1+t-\tau)^{-m}(1+\tau)^{-\frac{3}{2}}\mathrm{d}\tau\nonumber\\[2mm]
&\lesssim \mathcal{E}^{\frac{7}{8}}_2(t)\mathcal{E}^{\frac{1}{8}}_0(t)(1+t)^{-\frac{109}{60}}+\mathcal{E}^{\frac{3}{4}}_2(t)\mathcal{E}^{\frac{1}{4}}_0(t)(1+t)^{-\frac{3}{2}}\nonumber\\[2mm]
&\lesssim \mathcal{E}(t)(1+t)^{-\frac{3}{2}},
\end{align}
\begin{align}\label{M343}
\displaystyle M_{{34}_3}&\leq \sup_{0\leq \tau\leq t}
\Big[(1+\tau)^{\frac{3}{4}}\|\nabla\partial_3\varrho\|^{\frac{3}{4}}_{L^2}\|\nabla^3\partial_3\varrho\|^{\frac{1}{4}}_{L^2}(1+\tau)\|\nabla \mathop{\mathrm{div}}\nolimits\mathbf{u}\|_{L^2}\Big]
 \nonumber \\[2mm]
 & ~\displaystyle \qquad \cdot\int^t_0(1+t-\tau)^{-m}(1+\tau)^{-\frac{7}{4}}\mathrm{d}\tau\nonumber\\[2mm]
&\displaystyle \quad+\sup_{0\leq\tau}\Big[(1+\tau)^{\frac{1}{2}}\|\nabla\varrho\|^{\frac{1}{2}}_{L^2}(1+\tau)^{\frac{1}{2}}\|\nabla^2\varrho\|^{\frac{1}{2}}_{L^2}(1+\tau)^{\frac{1}{2}}\|\nabla\mathop{\mathrm{div}}\nolimits\mathbf{u}\|^{\frac{1}{2}}_{L^2}\|\nabla\partial^2_3 \mathop{\mathrm{div}}\nolimits\mathbf{u}\|^{\frac{1}{2}}_{L^2}\Big]\nonumber\\[2mm]
&~\displaystyle \qquad \cdot\int^t_0(1+t-\tau)^{-m}(1+\tau)^{-\frac{3}{2}}\mathrm{d}\tau\nonumber\\[2mm]
&\displaystyle \lesssim\mathcal{E}^{\frac{3}{8}}_2(t)\mathcal{E}^{\frac{1}{8}}_0(t)\mathcal{E}^{\frac{1}{2}}_1(t)(1+t)^{-\frac{7}{4}}+\mathcal{E}^{\frac{1}{2}}_2(t)\mathcal{E}^{\frac{1}{4}}_1(t)\mathcal{E}^{\frac{1}{4}}_0(t)(1+t)^{-\frac{3}{2}}\nonumber\\[2mm]
&\displaystyle \lesssim\mathcal{E}(t)(1+t)^{-\frac{3}{2}},
\end{align}
and
\begin{align}\label{M344}
\displaystyle M_{{34}_4}
& \lesssim\sup_{0\leq\tau\leq t}\Big[(1+\tau)^{\frac{3}{4}}\|\nabla\partial_3\varrho\|^{\frac{3}{4}}_{L^2}\|\nabla^3\partial_3\varrho\|^{\frac{1}{4}}_{L^2}(1+\tau)\|\nabla\varrho\|_{L^2}\Big]
 \nonumber \\
 &~\displaystyle \qquad \cdot\int^t_0(1+t-\tau)^{-m}(1+\tau)^{-\frac{7}{4}}\mathrm{d}\tau\nonumber\\[2mm]
&\displaystyle \quad+\sup_{0\leq \tau\leq t}\Big[(1+\tau)^{\frac{1}{2}}\|\nabla\varrho\|^{\frac{1}{2}}_{L^2}(1+\tau)^{\frac{1}{2}}\|\nabla^2\varrho\|^{\frac{1}{2}}_{L^2}(1+\tau)\|\nabla\partial_3\varrho\|_{L^2}\Big]\nonumber\\[2mm]
&~\displaystyle \qquad\cdot\int^t_0(1+t-\tau)^{-m}(1+\tau)^{-\frac{3}{2}}\mathrm{d}\tau\nonumber\\[2mm]
&\displaystyle \lesssim\mathcal{E}^{\frac{7}{8}}_2(t)\mathcal{E}^{\frac{1}{8}}_0(t)(1+t)^{-\frac{7}{4}}+\mathcal{E}_2(t)(1+t)^{-\frac{3}{2}}\nonumber\\[2mm]
&\displaystyle \lesssim\mathcal{E}(t)(1+t)^{-\frac{3}{2}}.
\end{align}
Gathering \eqref{M341}--\eqref{M344}, we get from \eqref{M340} that
\begin{align*}
M_{34}=\int^t_0(1+t-\tau)^{-m}\|\partial_3S_2\|_{L^2}\mathrm{d}\tau \lesssim\mathcal{E}(t)(1+t)^{-\frac{19}{15}}(\log(1+t))^{\frac{3}{4}}.
\end{align*}
This along with \eqref{varrhobiaoshi13} and \eqref{M31} implies that
\begin{gather*}
(1+t)\|\partial_3\varrho\| _{L ^{2}} \lesssim\mathcal{E}(t)+\|(\varrho_0,\mathbf{u}_0)\|_{L^1}+\|(\partial_3\varrho_0,\partial_3\mathbf{u}_0)\|_{L^2}.
\end{gather*}
Now let us turn to the estimate of $\|\partial_3\mathbf{u}(t)\|_{L^2}$. From \eqref{verti-u-conl-lem} and \eqref{deri-rho-u-reprens}, we have
\begin{align}\label{ubiaoshi13}\displaystyle
\|\partial_3\mathbf{u}\|_{L^2}
&\displaystyle \lesssim (1+t)^{-\frac{5}{4}}(\|\varrho_0\|_{L^1}+\|\partial_3\varrho_0\|_{L^2})+(1+t)^{-\frac{1}{2}}(\|\partial_3\mathbf{u}_0\|_{L^2_{x_3}L^1_{x_1x_2}}+\|\partial_3\mathbf{u}_0\|_{L^2})\nonumber\\[2mm]
&\displaystyle \quad+\int^t_0(1+t-\tau)^{-\frac{5}{4}}\|S_1\|_{L^1}\mathrm{d}\tau+\int^t_0(1+t-\tau)^{-m}\|\partial_3S_1\|_{L^2}\mathrm{d}\tau\nonumber\\[2mm]
&\displaystyle \quad+\int^t_0(1+t-\tau)^{-\frac{1}{2}}\|\partial_3S_2\|_{L^2_{x_3}L^1_{x_1x_2}}\mathrm{d}\tau+\int^t_0(1+t-\tau)^{-m}\|\partial_3S_2\|_{L^2}\mathrm{d}\tau\nonumber\\[2mm]
&\displaystyle \lesssim (1+t)^{-\frac{5}{4}}(\|\varrho_0\|_{L^1}+\|\partial_2\varrho_0\|_{L^2})+(1+t)^{-\frac{1}{2}}(\|\partial_3\mathbf{u}_0\|_{L^2_{x_3}L^1_{x_1x_2}}+\|\partial_3\mathbf{u}_0\|_{L^2})\nonumber\\[2mm]
&~\displaystyle \quad+H_{31}+H_{32}+H_{33}+H_{34},
\end{align}
where $H_{31}$, $H_{32}$ and $H_{34}$ can be estimated by similar arguments as in the derivation of the estimates of $M_{31}$, $M_{32}$ and $M_{34}$. Specifically, we get
\begin{gather}\label{H-3-124}
 H_{31}\lesssim \mathcal{E}(t)(1+t)^{-\frac{5}{4}},\ \  H _{32} \lesssim \mathcal{E}(t)(1+t)^{-\frac{197}{120}},\ \mbox{and \ } H _{34} \lesssim \mathcal{E}(t)(1+t)^{-\frac{19}{15}}(\log(1+t))^{\frac{3}{4}}.
\end{gather}
For the term $H_{33}$, we have
\begin{align}\label{H330}
&\makebox[-8pt]{~}\displaystyle H_{33}
 \nonumber \\
 &\makebox[-8pt]{~}~\displaystyle \lesssim \int^t_0(1+t-\tau)^{-\frac{1}{2}}\|\partial_3(\mathbf{u}\cdot\nabla \mathbf{u})\|_{L^2_{x_3}L^1_{x_1x_2}} \mathrm{d}\tau +\int^t_0(1+t-\tau)^{-\frac{1}{2}}\|\partial_3(g(\varrho)\Delta_h\mathbf{u})\|_{L^2_{x_3}L^1_{x_1x_2}}\mathrm{d}\tau\nonumber\\
&\makebox[-8pt]{~}~~\displaystyle  \quad+\int^t_0(1+t-\tau)^{-\frac{1}{2}}\|\partial_3(g(\varrho)\nabla\mathop{\mathrm{div}}\nolimits\mathbf{u})\|_{L^2_{x_3}L^1_{x_1x_2}}\mathrm{d}\tau
 \nonumber \\
 &\makebox[-8pt]{~}~~\displaystyle  \quad
+\int^t_0(1+t-\tau)^{-\frac{1}{2}}\|\partial_3(f(\varrho)\nabla\varrho)\|_{L^2_{x_3}L^1_{x_1x_2}}\mathrm{d}\tau=:\sum^{4}_{i=1}H_{{33}_i}.
\end{align}
Thanks to \eqref{two}, we derive that
\begin{align}\label{p3d}
\displaystyle
\|\partial_3(\mathbf{u}\cdot\nabla \mathbf{u})\|_{L^2_{x_3}L^1_{x_1x_2}}& \lesssim \|\partial_3\mathbf{u}_h\|^{\frac{1}{2}}_{L^2}\|\partial^2_3\mathbf{u}_h\|^{\frac{1}{2}}_{L^2}\|\nabla_h\mathbf{u}\|_{L^2}+\|\partial_3\mathbf{u}\|^{\frac{1}{2}}_{L^2}\|\partial^2_3\mathbf{u}\|^{\frac{1}{2}}_{L^2}\|\partial_3u_3\|_{L^2}\nonumber\\[2mm]
&\displaystyle \quad+\|\mathbf{u}_h\|^{\frac{1}{2}}_{L^2}\|\partial_3\mathbf{u}_h\|^{\frac{1}{2}}_{L^2}\|\nabla_h\partial_3\mathbf{u}\|_{L^2}+\|u_3\|^{\frac{1}{2}}_{L^2}\|\partial_3u_3\|^{\frac{1}{2}}_{L^2}\|\partial^2_3\mathbf{u}\|_{L^2}.
\end{align}
This along with Lemma \ref{decaylemma} yields that
\begin{align}\label{H33_1}
\displaystyle H_{{33}_1}&\lesssim \sup_{0\leq \tau\leq t}\Big[(1+\tau)^{\frac{1}{4}}\|\partial_3\mathbf{u}_h\|^{\frac{1}{2}}_{L^2}(1+\tau)^{\frac{1}{4}}(\log(1+\tau))^{-\frac{1}{2}}\|\partial^2_3\mathbf{u}_h\|^{\frac{1}{2}}_{L^2}(1+\tau)\|\nabla_h\mathbf{u}\|_{L^2}\Big]\nonumber\\[2mm]
&~\displaystyle \qquad\cdot\int^t_0(1+t-\tau)^{-\frac{1}{2}}(1+\tau)^{-\frac{3}{2}}(\log(1+\tau))^{\frac{1}{2}}\mathrm{d}\tau\nonumber\\[2mm]
&\displaystyle \quad +\sup_{0\leq\tau\leq t}\Big[(1+\tau)^{\frac{1}{4}}\|\partial_3\mathbf{u}\|^{\frac{1}{2}}_{L^2}(1+\tau)^{\frac{1}{4}}(\log(1+\tau))^{-\frac{1}{2}}\|\partial^2_3\mathbf{u}\|^{\frac{1}{2}}_{L^2}(1+\tau)\|\partial_3u_3\|_{L^2}\Big]\nonumber\\[2mm]
&~\displaystyle \quad\qquad\cdot\int^t_0(1+t-\tau)^{-\frac{1}{2}}(1+\tau)^{-\frac{3}{2}}(\log(1+\tau))^{\frac{1}{2}}\mathrm{d}\tau\nonumber\\[2mm]
&\displaystyle \quad+\sup_{0\leq\tau\leq t}\Big[(1+\tau)^{\frac{1}{4}}\|\mathbf{u}_h\|^{\frac{1}{2}}_{L^2}(1+\tau)^{\frac{1}{4}}\|\partial_3\mathbf{u}_h\|^{\frac{1}{2}}_{L^2}(1+\tau)\|\nabla_h\partial_3\mathbf{u}\|_{L^2}\Big]\nonumber\\[2mm]
&~\displaystyle \quad\qquad\cdot\int^t_0(1+t-\tau)^{-\frac{1}{2}}(1+\tau)^{-\frac{3}{2}}\mathrm{d}\tau\nonumber\\[2mm]
&\displaystyle \quad+\sup_{0\leq\tau\leq t}\Big[(1+\tau)^{\frac{1}{4}}\|u_3\|^{\frac{1}{2}}_{L^2}(1+\tau)^{\frac{1}{2}}\|\partial_3u_3\|^{\frac{1}{2}}_{L^2}(1+\tau)^{\frac{1}{2}}(\log(1+\tau))^{-1}\|\partial^2_3\mathbf{u}\|_{L^2})\Big]\nonumber\\[2mm]
&~\displaystyle \qquad\cdot\int^t_0(1+t-\tau)^{-\frac{1}{2}}(1+\tau)^{-\frac{5}{4}}\log(1+\tau)\mathrm{d}\tau\nonumber\\[2mm]
&\displaystyle \lesssim\mathcal{E}_2(t)(1+t)^{-\frac{1}{2}}+\mathcal{E}^{\frac{1}{2}}_2(t)\mathcal{E}^{\frac{1}{2}}_1(t) (1+t)^{-\frac{1}{2}}+\mathcal{E}^{\frac{3}{4}}_2(t)\mathcal{E}^{\frac{1}{4}}_1(t)(1+t)^{-\frac{1}{2}}\nonumber\\[2mm]
&\displaystyle \lesssim\mathcal{E}(t)(1+t)^{-\frac{1}{2}}.
\end{align}
Similarly, using \eqref{two} and Lemma \ref{decaylemma} again, we have
\begin{align}\label{H33_2}
\displaystyle H_{{33}_2}&\lesssim\sup_{0\leq\tau\leq t}\Big[(1+\tau)^{\frac{1}{2}}\|\partial_3\varrho\|^{\frac{1}{2}}_{L^2}(1+\tau)^{\frac{1}{2}}\|\partial^2_3\varrho\|^{\frac{1}{2}}_{L^2}(1+\tau)^{\frac{16}{15}}\|\Delta_h\mathbf{u}\|_{L^2}\Big]\nonumber\\
&\displaystyle \qquad\cdot\int^t_0(1+t-\tau)^{-\frac{1}{2}}(1+\tau)^{-\frac{31}{15}}\mathrm{d}\tau\nonumber\\
&\displaystyle \quad+\sup_{0\leq\tau\leq t}\Big[(1+\tau)^{\frac{3}{8}}(\log(1+\tau))^{-\frac{1}{2}}\|\varrho\|^{\frac{1}{2}}_{L^2}(1+\tau)^{\frac{1}{2}}\|\partial_3\varrho\|^{\frac{1}{2}}_{L^2}(1+\tau)^{\frac{1}{2}}\|\partial_3\nabla_h\mathbf{u}\|^{\frac{1}{2}}_{L^2}\|\partial_3\nabla_h\Delta_h\mathbf{u}\|^{\frac{1}{2}}_{L^2}\Big]\nonumber\\
&\displaystyle \quad \qquad\cdot\int^t_0(1+t-\tau)^{-\frac{1}{2}}(1+\tau)^{-\frac{11}{8}}(\log(1+\tau))^{\frac{1}{2}}\mathrm{d}\tau\nonumber\\
&\displaystyle \lesssim\mathcal{E}(t)(1+t)^{-\frac{1}{2}},
\end{align}
\begin{align}\label{H33_3}
\displaystyle H_{{33}_3}&\lesssim\sup_{0\leq\tau\leq t}\Big[(1+\tau)^{\frac{1}{2}}\|\partial_3\varrho\|^{\frac{1}{2}}_{L^2}(1+\tau)^{\frac{1}{2}}\|\partial^2_3\varrho\|^{\frac{1}{2}}_{L^2}(1+\tau)\|\nabla\mathop{\mathrm{div}}\nolimits\mathbf{u}\|_{L^2}\Big]\nonumber\\
&\displaystyle \qquad\cdot\int^t_0(1+t-\tau)^{-\frac{1}{2}}(1+\tau)^{-\frac{3}{2}}\mathrm{d}\tau\nonumber\\
&\displaystyle \quad+\sup_{0\leq\tau\leq t}\Big[(1+\tau)^{\frac{3}{8}}(\log(1+\tau))^{-\frac{1}{2}}\|\varrho\|^{\frac{1}{2}}_{L^2}(1+\tau)^{\frac{1}{2}}\|\partial_3\varrho\|^{\frac{1}{2}}_{L^2}(1+\tau)^{\frac{1}{2}}\|\nabla\mathop{\mathrm{div}}\nolimits\mathbf{u}\|^{\frac{1}{2}}_{L^2}\|\nabla\partial^2_3\mathop{\mathrm{div}}\nolimits\mathbf{u}\|^{\frac{1}{2}}_{L^2}\Big]\nonumber\\
&\displaystyle \quad \qquad\cdot\int^t_0(1+t-\tau)^{-\frac{1}{2}}(1+\tau)^{-\frac{11}{8}}(\log(1+\tau))^{\frac{1}{2}}\mathrm{d}\tau\nonumber\\
&\displaystyle \lesssim\mathcal{E}(t)(1+t)^{-\frac{1}{2}},
\end{align}
and
\begin{align}\label{H33_4}
\displaystyle H_{{33}_4}
&\lesssim \sup_{0\leq \tau\leq t}\Big[(1+\tau)^{\frac{1}{2}}\|\partial_3\varrho\|^{\frac{1}{2}}_{L^2}(1+\tau)^{\frac{1}{2}}\|\partial^2_3\varrho\|^{\frac{1}{2}}_{L^2}(1+\tau)\|\nabla\varrho\|_{L^2}\Big]\nonumber\\
&\displaystyle \quad \qquad\cdot\int^t_0(1+t-\tau)^{-\frac{1}{2}}(1+\tau)^{-2}\mathrm{d}\tau\nonumber\\
&\displaystyle \quad+\sup_{0\leq \tau\leq t}(1+\tau)^{\frac{3}{8}}(\log(1+\tau))^{-\frac{1}{2}}\|\varrho\|^{\frac{1}{2}}_{L^2}(1+\tau)^{\frac{1}{2}}\|\partial_3\varrho\|^{\frac{1}{2}}_{L^2}(1+\tau)\|\nabla\partial_3\varrho\|_{L^2}\nonumber\\
&\displaystyle \quad \qquad\cdot\int^t_0(1+t-\tau)^{-\frac{1}{2}}(1+\tau)^{-\frac{15}{8}}(\log(1+\tau))^{\frac{1}{2}}\mathrm{d}\tau\nonumber\\
&\displaystyle \lesssim\mathcal{E}(t)(1+t)^{-\frac{1}{2}}.
\end{align}
Then substituting \eqref{H33_1}--\eqref{H33_4} into \eqref{H330}, we obtain that
\begin{align*}
H_{33} \lesssim\mathcal{E}(t)(1+t)^{-\frac{1}{2}}.
\end{align*}
This combined with \eqref{ubiaoshi13} and \eqref{H-3-124} leads to
\begin{equation*}
\displaystyle(1+t)^{\frac{1}{2}}\|\partial_3\mathbf{u}\|_{L^2}\lesssim\mathcal{E}(t)+\|\varrho_0\|_{L^1}+\|\partial_3\mathbf{u}_0\|_{L^2_{x_3}L^1_{x_1x_2}}+\|(\partial_3\varrho_0,\partial_3\mathbf{u}_0)\|_{L^2}.
\end{equation*}
The proof of Lemma \eqref{lemma5.6} is complete.
\end{proof}

\vspace{4mm}
{\bf Step Three:} The estimate of $ \mathcal{E}_{23}(t) $. Precisely, we shall establish some desired estimates of the second-order derivatives of $(\varrho, \mathbf{u})$.
\begin{lemma}\label{lemma4.8}
Assume that $(\varrho,u)$ is a smooth solution to the problem \eqref{new}. Then it holds that
\begin{align*}
\mathcal{E}_{23}(t) \lesssim \mathcal{E}^2(t)+\|(\varrho_0,\mathbf{u}_0)\|^2_{L^1}
+\|(\mathbf{u}_0,\partial_3\mathbf{u}_0,\partial^2_3\mathbf{u}_0)\|^2_{L^2_{x_3}L^1_{x_1x_2}}+\|(\nabla^2 \mathbf{u}_0,\nabla^2\varrho_0)\|^2_{L^2}.
\end{align*}
\end{lemma}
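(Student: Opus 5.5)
We follow the scheme of Lemmas \ref{lemma5.5} and \ref{lemma5.6}, now at the level of second derivatives. Differentiating \eqref{rhobiaoshi} twice, we represent $\partial_i\partial_j\varrho$ and $\partial_i\partial_j\mathbf{u}$ ($i,j\in\{1,2,3\}$) through $\hat G$ applied to $\partial_i\partial_j(\varrho_0,\mathbf{u}_0)$ and $\partial_i\partial_j(S_1,S_2)$. In each Duhamel integral we split the derivatives the same way. For the low-frequency contribution on $[0,t]$, all derivatives go onto the kernel and the source is measured in $L^1$ (or in $L^2_{x_3}L^1_{x_1x_2}$ for the solenoidal part of $\mathbf{u}$). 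For the high-frequency contribution, which decays like $e^{-\eta(t-\tau)}$, the derivatives stay on the source and we use $(1+t-\tau)^{-m}\|\nabla^2 S\|_{L^2}$.

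\textbf{Density and mixed velocity derivatives.} Lemma \ref{xxlemma} with $m=2$, $q=2$ gives the linear rate $(1+t)^{-7/4}$ for $\nabla^2\varrho$. For $\nabla_h\partial_3\mathbf{u}$ we use \eqref{anisou-lem-con} with $m=n=1$: the horizontal heat part decays like $(1+t)^{-1}\|\partial_3\widetilde{\mathbf{u}}_0\|$, which by Plancherel in $x_3$ is controlled by $\|\partial_3\mathbf{u}_0\|_{L^2_{x_3}L^1_{x_1x_2}}$.

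\textbf{Nonlinear terms, first group.} The $L^1$-pieces $\int_0^t(1+t-\tau)^{-7/4}\|S_i\|_{L^1}\,\mathrm{d}\tau$ are bounded exactly as $T_1,T_3$ in Lemma \ref{divulemma1}, giving $\mathcal{E}(t)(1+t)^{-1}$. The $L^2_{x_3}L^1_{x_1x_2}$ pieces for $\nabla_h\partial_3\mathbf{u}$ come with kernel $(1+t-\tau)^{-1}$ acting on $\partial_3S_2$. These are handled by \eqref{two} as in \eqref{H33_1}--\eqref{H33_4}, and Lemma \ref{decaylemma} with $a=1$, $b>1$ gives $(1+t)^{-1}$. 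The high-frequency pieces $\int_0^t(1+t-\tau)^{-m}\|\nabla^2 S_i\|_{L^2}\,\mathrm{d}\tau$ coincide with $T_2,T_4$ of Lemma \ref{divulemma1}, giving at least $(1+t)^{-16/15}$. Where a factor carries $\nabla^2\Delta_h\mathbf{u}$, it is absorbed by the weighted dissipation in $\mathcal{E}_1$, as in \eqref{T42}.

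\textbf{Horizontal second derivatives: the main obstacle.} We need the sharper rate $(1+t)^{-16/15}$ for $\partial_i\partial_j\mathbf{u}$, $i,j\le2$. Linearly, \eqref{hori-u-lem-con} with $m=2$ gives $(1+t)^{-3/2}$, so that part is fine. In the Duhamel term with kernel $(1+t-\tau)^{-3/2}$ and source $\|S_2\|_{L^2_{x_3}L^1_{x_1x_2}}$ decaying like $(1+\tau)^{-3/2}$ (from \eqref{H031}), Lemma \ref{decaylemma} yields $(1+t)^{-3/2}$, which is also fine. The difficulty is the high-frequency piece $\int_0^t(1+t-\tau)^{-m}\|\nabla_h^2 S_2\|_{L^2}\,\mathrm{d}\tau$. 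The convective term $u_3\partial_3\nabla_h^2\mathbf{u}$ is only controlled through weak-decaying vertical derivatives. We plan to estimate it with \eqref{two-two-deri-ani}, putting horizontal derivatives on $u_3$ and using the $(1+\tau)^{2/15}$-weighted time integral of $\|\Delta_h\mathbf{u}\|_{H^2}^2$ from $\mathcal{E}_1$ via Cauchy--Schwarz in time. The exponent $16/15$ is exactly what balances this step, and it is where the numerology is tightest.

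\textbf{Pure vertical derivative.} Finally, for $\partial_3^2\mathbf{u}$ we use \eqref{verti-u-conl-lem} with $m=2$. The Duhamel integral $\int_0^t(1+t-\tau)^{-1/2}\|\partial_3^2 S_2\|_{L^2_{x_3}L^1_{x_1x_2}}\,\mathrm{d}\tau$ is estimated by \eqref{two}. The terms $\partial_3 u_3\,\partial_3^2\mathbf{u}$ and $u_3\,\partial_3^3\mathbf{u}$ produce sources decaying like $(1+\tau)^{-1}$, up to logarithms. By Lemma \ref{decaylemma} with $b=1$, this gives $(1+t)^{-1/2}\log(1+t)$, which explains the logarithmic loss. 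Here $\partial_3u_3$ is rewritten as $\mathop{\mathrm{div}}\mathbf{u}-\mathop{\mathrm{div}}_h\mathbf{u}_h$ to gain decay, and $\partial_3^3\mathbf{u}$ is interpolated against the $H^4$ bound.

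Squaring all bounds and taking the supremum gives the claim.
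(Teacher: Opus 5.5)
Your architecture matches the paper's: Duhamel through $\hat G$ for $\partial_i\partial_j(\varrho,\mathbf{u})$, low frequencies measured in $L^1$ (or in $L^2_{x_3}L^1_{x_1x_2}$ for the $S_2$-dependence of $\mathbf{u}$), high frequencies through $(1+t-\tau)^{-m}\|\partial_i\partial_jS\|_{L^2}$, and the same three cases. There is a genuine gap, however, at the step you single out as the crux: the rate $(1+t)^{-16/15}$ for $\partial_i\partial_j\mathbf{u}$ with $i,j\le 2$. You attribute it to the wrong term, and the estimate you propose there does not close.

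With \eqref{two-two-deri-ani} and the horizontal derivatives on $u_3$, the prefactor $\|u_3\|_{L^2}^{1/4}\|\partial_1u_3\|_{L^2}^{1/4}\|\partial_2u_3\|_{L^2}^{1/4}\|\partial_1\partial_2u_3\|_{L^2}^{1/4}$ decays under $\mathcal{E}_2$ only like $(1+\tau)^{-(1/8+1/4+1/4+4/15)}=(1+\tau)^{-107/120}$. Cauchy--Schwarz against $\int_0^t(1+\tau)^{2/15}\|\Delta_h\mathbf{u}\|_{H^2}^2\,\mathrm{d}\tau$ then yields $\big(\int_0^t(1+t-\tau)^{-2m}(1+\tau)^{-107/60-2/15}\,\mathrm{d}\tau\big)^{1/2}\sim(1+t)^{-23/24}$. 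That falls short even of $(1+t)^{-1}$, so $16/15$ does not ``balance'' this step. (Bounding instead $\|u_3\|_{L^\infty}\lesssim\|\nabla u_3\|_{L^2}^{1/2}\|\nabla^2u_3\|_{L^2}^{1/2}\lesssim(1+\tau)^{-1}$, via $\partial_3u_3=\mathrm{div}\,\mathbf{u}-\mathrm{div}_h\mathbf{u}_h$, would rescue that single piece at exactly $16/15$. The same route fails for $\mathbf{u}_h\cdot\nabla_h\partial_1\partial_2\mathbf{u}$, since $\|\partial_3\mathbf{u}_h\|_{L^2}$ decays only like $(1+\tau)^{-1/2}$, and your plan does not treat that term.) Nor is the transport term covered by your appeal to $T_4$. $T_{41}$ relies on $\mathrm{div}(\mathbf{u}\cdot\nabla\mathbf{u})=\mathbf{u}\cdot\nabla\mathrm{div}\,\mathbf{u}+\partial_iu_j\partial_ju_i$, and the top-order term of $\partial_i\partial_j(\mathbf{u}\cdot\nabla\mathbf{u})$ has no such structure.

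What is missing is the paper's treatment in \eqref{M1241}. The whole term $\mathbf{u}\cdot\nabla\partial_1\partial_2\mathbf{u}$ is estimated pointwise in time, with no time integration, by \eqref{two-two-deri-ani} followed by $\|\nabla\partial_1\partial_2\mathbf{u}\|_{L^2}^{1/2}\le\|\partial_1\partial_2\mathbf{u}\|_{L^2}^{1/4}\|\nabla^2\partial_1\partial_2\mathbf{u}\|_{L^2}^{1/4}$. The $\mathcal{E}_2$-rate of $\partial_1\partial_2\mathbf{u}$ itself then contributes an extra $(1+\tau)^{-4/15}$, the fourth-order factors are bounded by $\mathcal{E}_0$, and the result is $(1+t)^{-139/120}$, with room to spare.

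The exponent $16/15$ is dictated instead by the quasilinear viscous terms $\partial_1\partial_2(g(\varrho)\Delta_h\mathbf{u})$ and $\partial_1\partial_2(g(\varrho)\nabla\mathrm{div}\,\mathbf{u})$ (the paper's $M_{1242}$, $M_{1243}$). Their top-order parts $g(\varrho)\,\partial_1\partial_2\Delta_h\mathbf{u}$ and $g(\varrho)\,\partial_1\partial_2\nabla\mathrm{div}\,\mathbf{u}$ have coefficient $\|g(\varrho)\|_{L^\infty}\lesssim\|\nabla\varrho\|_{L^2}^{1/2}\|\nabla^2\varrho\|_{L^2}^{1/2}\lesssim(1+\tau)^{-1}$. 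Their fourth-order factors are controlled only by the weighted dissipation in $\mathcal{E}_1$. Cauchy--Schwarz therefore gives $\big(\int_0^t(1+t-\tau)^{-2m}(1+\tau)^{-2-2/15}\,\mathrm{d}\tau\big)^{1/2}\sim(1+t)^{-16/15}$, exactly as in \eqref{T42}.

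You already invoke this mechanism in your first group. The repair is to let it carry the $16/15$ and to close the convective terms by the interpolation above. A minor point: because of $u_3\partial_3\mathbf{u}$ (see \eqref{H031}), $\|S_2\|_{L^2_{x_3}L^1_{x_1x_2}}$ decays like $(1+\tau)^{-5/4}$, not $(1+\tau)^{-3/2}$. That Duhamel term therefore gives $(1+t)^{-5/4}$, which is still sufficient.
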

\begin{proof}
The proof is similar to that of Lemma \ref{lemma5.6}. First of all, we have, for $i,j=1,2,3$,
\begin{align}\label{sec-deri-rho-u-reprens}
\displaystyle
    \left(\begin{matrix}
\widehat{\partial _{i}\partial _{j}\varrho}(t,\xi)\\[2mm]
\widehat{\partial _{i}\partial _{j}\mathbf{u}}(t,\xi)
\end{matrix}\right)
=\hat{G}(t,\xi)\left(\begin{matrix}
\widehat{\partial _{i}\partial _{j}\varrho} _{0}(\xi)\\[2mm]
\widehat{\partial _{i}\partial _{j}\mathbf{u}}_{0}(\xi)
\end{matrix}
\right)+ \int _{0}^{t}\hat{G}(t- \tau,\xi)\left(\begin{matrix}
\widehat{\partial _{i}\partial _{j}S} _{1}(\tau,\xi)\\[2mm]
\widehat{\partial _{i}\partial _{j}S}_{2}(\tau,\xi)
\end{matrix}
\right)\mathrm{d}\tau.
\end{align}
As in the proof of Lemma \ref{lemma5.6}, we shall divide the subsequent analysis into three case: $i=1,2,j=1,2 $; $i=1,2,j=3$; and $i=j=3$. Let us begin with the first case for which we have four subcases: $ (i,j)=(1,1) $, $ (i,j)=(1,2) $, $ (i,j)=(2,1) $ and $ (i,j)=(2,2) $. Now let us focus on the case $ (i,j)=(1,2) $. By \eqref{vrho-con-line-lem}, \eqref{sec-deri-rho-u-reprens} and the Duhamel principle, we get
\begin{align}\label{varrhobiaoshi212}
\displaystyle\|\partial_1\partial_2\varrho\|_{L^{2}}&\lesssim (1+t)^{-\frac{7}{4}}(\|(\varrho_0,\mathbf{u}_0)\|_{L^{1}}+\|(\partial_1\partial_2\varrho_0,\partial_1\partial_2\mathbf{u}_0)\|_{L^2})
 \nonumber \\[2mm]
&\displaystyle \quad +  \int _{0}^{t}(1+t-\tau)^{-\frac{7}{4}}
\| S_1\|_{L^1}\mathrm{d}\tau+\int _{0}^{t} (1+t-\tau)^{-m}\|\partial_1\partial_2 S_1\|_{L^2}\mathrm{d}\tau\nonumber\\[2mm]
&\displaystyle \quad+\int _{0}^{t} (1+t-\tau)^{-\frac{7}{4}}\| S_2\|_{L^1}\mathrm{d}\tau+\int _{0}^{t} (1+t-\tau)^{-m}\|\partial_1\partial_2 S_2\|_{L^2}\mathrm{d}\tau\nonumber\\[2mm]
&\displaystyle \lesssim  (1+t)^{-\frac{7}{4}}(\|(\varrho_0,\mathbf{u}_0)\|_{L^{1}}+\|(\partial_1\partial_2\varrho_0,\partial_1\partial_2\mathbf{u}_0)\|_{L^2})+ \sum _{i=1}^{4}M _{12i}.
\end{align}
Next we shall estimate $ M _{12i}~(1 \leq i \leq 4) $ on the right hand side of \eqref{varrhobiaoshi212}. Thanks to the H\"older inequality, we get
\begin{align}\label{M121}
\displaystyle M_{121}
&\lesssim \sup_{0\leq \tau\leq t}\Big[(1+\tau)^{\frac{3}{4}}(\log(1+\tau))^{-1}\|\varrho\|_{L^2}(1+\tau)\|\mathop{\mathrm{div}}\nolimits\mathbf{u}\|_{L^2}\Big]\nonumber\\[2mm]
&\displaystyle \quad\cdot\int^t_0(1+t-\tau)^{-\frac{7}{4}}(1+\tau)^{-\frac{7}{4}}\log(1+\tau)\mathrm{d}\tau\nonumber\\[2mm]
&\displaystyle \quad+\sup_{0\leq \tau\leq t}\Big[(1+\tau)\|\nabla\varrho\|_{L^2}(1+\tau)^{\frac{1}{2}}\| \mathbf{u}\|_{L^2}\Big]\int^t_0(1+t-\tau)^{-\frac{7}{4}}(1+\tau)^{-\frac{3}{2}}\mathrm{d}\tau\nonumber\\[2mm]
&\displaystyle \lesssim \mathcal{E}^{\frac{1}{2}}_2(t)\mathcal{E}^{\frac{1}{2}}_1(t)(1+t)^{-\frac{7}{4}}\log(1+t)+\mathcal{E}_2(t)(1+t)^{-\frac{3}{2}}\nonumber\\[2mm]
&\displaystyle\lesssim\mathcal{E}(t)(1+t)^{-\frac{3}{2}}.
\end{align}
For the term $M_{122}$, we divide it into two terms.
\begin{align}\label{M1220}
M_{122}&\leq\int^t_0(1+t-\tau)^{-m}\|\partial_1\partial_2(\varrho \mathop{\mathrm{div}}\nolimits\mathbf{u})+\partial_1\partial_2(\mathbf{u}\cdot\nabla\varrho)\|_{L^2}\mathrm{d}\tau\nonumber\\
&:=M_{1221}+M_{1222}.
\end{align}
By \eqref{soboleve-ineq} and the H\"older inequality, we get
\begin{align*}
&\displaystyle \|\partial_1\partial_2(\varrho \mathop{\mathrm{div}}\nolimits\mathbf{u})\|_{L^2}
 \nonumber \\[2mm]
 &~\displaystyle \lesssim \|\partial_1\partial_2\varrho \mathop{\mathrm{div}}\nolimits\mathbf{u}\|_{L^2}+\|\partial_1\varrho\partial_2\mathop{\mathrm{div}}\nolimits\mathbf{u}\|_{L^2}+\|\partial_2\varrho\partial_1\mathop{\mathrm{div}}\nolimits\mathbf{u}\|_{L^2}+\|\varrho\partial_1\partial_2\mathop{\mathrm{div}}\nolimits\mathbf{u}\|_{L^2}\nonumber\\[2mm]
&~\displaystyle \lesssim\|\partial_1\partial_2\varrho\|_{L^{\infty}}\|\mathop{\mathrm{div}}\nolimits\mathbf{u}\|_{L^2}+\|\partial_2\mathop{\mathrm{div}}\nolimits\mathbf{u}\|_{L^{\infty}}\|\partial_1\varrho\|_{L^2}+\|\partial_1\mathop{\mathrm{div}}\nolimits\mathbf{u}\|_{L^{\infty}}\|\partial_2\varrho\|_{L^2}
 \nonumber \\[2mm]
 &~\displaystyle \quad+\|\varrho\|_{L^{\infty}}\|\partial_1\partial_2\mathop{\mathrm{div}}\nolimits\mathbf{u}\|_{L^2}\nonumber\\[2mm]
&~\displaystyle \lesssim\|\partial_1\partial_2\varrho\|^{\frac{1}{4}}_{L^2}\|\nabla^2\partial_1\partial_2\varrho\|^{\frac{3}{4}}_{L^2}\|\mathop{\mathrm{div}}\nolimits\mathbf{u}\|_{L^2}+\|\partial_2\mathop{\mathrm{div}}\nolimits\mathbf{u}\|^{\frac{1}{4}}_{L^2}\|\nabla^2\partial_2\mathop{\mathrm{div}}\nolimits\mathbf{u}\|^{\frac{3}{4}}_{L^2}\|\partial_1\varrho\|_{L^2}\nonumber\\[2mm]
&~\displaystyle \quad+\|\partial_1\mathop{\mathrm{div}}\nolimits\mathbf{u}\|^{\frac{1}{4}}_{L^2}\|\nabla^2\partial_1\mathop{\mathrm{div}}\nolimits\mathbf{u}\|^{\frac{3}{4}}_{L^2}\|\partial_2\varrho\|_{L^2}
+\|\nabla\varrho\|^{\frac{1}{2}}_{L^2}\|\nabla^2\varrho\|^{\frac{1}{2}}_{L^2}\|\partial_1\partial_2\mathop{\mathrm{div}}\nolimits\mathbf{u}\|_{L^2}.
\end{align*}
Thus, we can derive that
\begin{align}\label{M1221}
\displaystyle \makebox[-8pt]{~} M_{1221}&=\int^t_0(1+t-\tau)^{-m}\|\partial_1\partial_2(\varrho \mathop{\mathrm{div}}\nolimits\mathbf{u})\|_{L^2}\mathrm{d}\tau\nonumber\\[2mm]
&\displaystyle \lesssim\sup_{0\leq\tau\leq t}\Big[(1+\tau)^{\frac{1}{4}}\|\partial_1\partial_2\varrho\|^{\frac{1}{4}}_{L^2}\|\nabla^2\partial_1\partial_2\varrho\|^{\frac{3}{4}}_{L^2}(1+\tau)\|\mathop{\mathrm{div}}\nolimits\mathbf{u}\|_{L^2}\Big]\nonumber\\
&~\displaystyle \qquad\cdot\int^t_0(1+t-\tau)^{-m}(1+\tau)^{-\frac{5}{4}}\mathrm{d}\tau\nonumber\\[2mm]
&\displaystyle \quad+\sup_{0\leq\tau\leq t}\Big[(1+\tau)^{\frac{1}{4}}\|\partial_2\mathop{\mathrm{div}}\nolimits\mathbf{u}\|^{\frac{1}{4}}_{L^2}\|\nabla^2\partial_2\mathop{\mathrm{div}}\nolimits\mathbf{u}\|^{\frac{3}{4}}_{L^2}(1+\tau)\|\partial_1\varrho\|_{L^2}\Big]\nonumber\\[2mm]
&~\displaystyle \qquad\cdot\int^t_0(1+t-\tau)^{-m}(1+\tau)^{-\frac{5}{4}}\mathrm{d}\tau\nonumber\\[2mm]
&\displaystyle \quad+\sup_{0\leq\tau\leq t}\Big[(1+\tau)^{\frac{1}{4}}\|\partial_1\mathop{\mathrm{div}}\nolimits\mathbf{u}\|^{\frac{1}{4}}_{L^2}\|\nabla^2\partial_1\mathop{\mathrm{div}}\nolimits\mathbf{u}\|^{\frac{3}{4}}_{L^2}(1+\tau)\|\partial_2\varrho\|_{L^2}\Big]\nonumber\\[2mm]
&~\displaystyle \qquad\cdot\int^t_0(1+t-\tau)^{-m}(1+\tau)^{-\frac{5}{4}}\mathrm{d}\tau\nonumber\\[2mm]
&\displaystyle \quad+\sup_{0\leq \tau\leq t}\Big[(1+\tau)^{\frac{1}{2}}\|\nabla\varrho\|^{\frac{1}{2}}_{L^2}(1+\tau)^{\frac{1}{2}}\|\nabla^2\varrho\|^{\frac{1}{2}}_{L^2}(1+\tau)^{\frac{1}{2}}\|\partial_1\mathop{\mathrm{div}}\nolimits\mathbf{u}\|^{\frac{1}{2}}_{L^2}\|\partial_1\partial^2_2\mathop{\mathrm{div}}\nolimits\mathbf{u}\|^{\frac{1}{2}}_{L^2}\Big]\nonumber\\
&~\displaystyle \qquad\cdot\int^t_0(1+t-\tau)^{-m}(1+\tau)^{-\frac{3}{2}}\mathrm{d}\tau\nonumber\\[2mm]
&\displaystyle \lesssim \left(\mathcal{E}_{2}^{\frac{1}{8}}(t)\mathcal{E}_{0}^{\frac{3}{8}}(t)\mathcal{E}_{1}^{\frac{1}{2}}(t)+\mathcal{E}^{\frac{1}{8}}_1(t)\mathcal{E}^{\frac{1}{2}}_2(t)\mathcal{E}^{\frac{3}{8}}_0(t)\right)(1+t)^{-\frac{5}{4}}  + \mathcal{E}_{2}^{\frac{1}{2}}(t)\mathcal{E}_{1}^{\frac{1}{4}}(t)\mathcal{E}_{0}^{\frac{1}{4}}(t)(1+t)^{-\frac{3}{2}} \notag \\[2mm]
&\displaystyle \lesssim\mathcal{E}(t)(1+t)^{-\frac{5}{4}},
\end{align}
where we have used Lemma \ref{decaylemma}. Similarly, we have
\begin{align*}
&\makebox[-6pt]{~}\|\partial_1\partial_2(\mathbf{u}\cdot\nabla\varrho)\|_{L^2}\\[2mm]
&\makebox[-6pt]{~}~\displaystyle \lesssim \|\partial_1\partial_2 \mathbf{u}\cdot \nabla\varrho\|_{L^2}+\|\partial_1\mathbf{u}\cdot\nabla\partial_2\varrho\|_{L^2}+\|\partial_2\mathbf{u}\cdot\nabla\partial_1\varrho\|_{L^2}+\|\mathbf{u}\cdot\nabla\partial_1\partial_2\varrho\|_{L^2}\nonumber\\[2mm]
&\makebox[-6pt]{~}~\displaystyle \lesssim\|\nabla\varrho\|_{L^{\infty}}\|\partial_1\partial_2 \mathbf{u}\|_{L^2}+\|\partial_1\mathbf{u}\|_{L^{\infty}}\|\nabla\partial_2\varrho\|_{L^2}+\|\partial_2\mathbf{u}\|_{L^{\infty}}\|\nabla\partial_1\varrho\|_{L^2}+\|\mathbf{u}\cdot\nabla\partial_1\partial_2\varrho\|_{L^2}\nonumber\\[2mm]
&\makebox[-6pt]{~}~\displaystyle \lesssim\|\nabla^2\varrho\|^{\frac{1}{2}}_{L^2}\|\nabla^3\varrho\|^{\frac{1}{2}}_{L^2}\|\partial_1\partial_2\mathbf{u}\|_{L^2}+\|\nabla\partial_1\mathbf{u}\|^{\frac{1}{2}}_{L^2}\|\nabla^2\partial_1\mathbf{u}\|^{\frac{1}{2}}_{L^2}\|\nabla\partial_2\varrho\|_{L^2}\nonumber\\[2mm]
&\makebox[-6pt]{~}~\displaystyle \quad+\|\nabla\partial_2\mathbf{u}\|^{\frac{1}{2}}_{L^2}\|\nabla^2\partial_2\mathbf{u}\|^{\frac{1}{2}}_{L^2}\|\nabla\partial_1\varrho\|_{L^2}\nonumber\\[2mm]
&\makebox[-6pt]{~}~\displaystyle \quad+\|\mathbf{u}\|^{\frac{1}{4}}_{L^{2}}\|\partial_1\mathbf{u}\|^{\frac{1}{4}}_{L^2}\|\partial_2\mathbf{u}\|^{\frac{1}{4}}_{L^2}\|\partial_1\partial_2\mathbf{u}\|^{\frac{1}{4}}_{L^2}\|\nabla\partial_1\partial_2\varrho\|^{\frac{1}{2}}_{L^2}\|\nabla\partial_1\partial_2\partial_3\varrho\|^{\frac{1}{2}}_{L^2},
\end{align*}
which along with Lemma \ref{decaylemma} further implies that
\begin{align}\label{M1222}
\displaystyle M_{1222}&=\int^t_0(1+t-\tau)^{-m}\|\partial_1\partial_2(\mathbf{u}\cdot\nabla\varrho)\|_{L^2}\mathrm{d}\tau\nonumber\\[2mm]
&\displaystyle \lesssim\sup_{0\leq \tau\leq t}\Big[(1+\tau)^{\frac{3}{4}}\|\nabla^2\varrho\|^{\frac{3}{4}}_{L^2}\|\nabla^4\varrho\|^{\frac{1}{4}}_{L^2}(1+\tau)^{\frac{16}{15}}\|\partial_1\partial_2\mathbf{u}\|_{L^2}\Big]
 \nonumber \\
 &~\displaystyle \qquad \cdot\int^t_0(1+t-\tau)^{-m}(1+\tau)^{-\frac{109}{60}}\mathrm{d}\tau\nonumber\\[2mm]
&~+\sup_{0\leq \tau\leq t}\Big[(1+\tau)^{\frac{3}{4}}\|\nabla\partial_1\mathbf{u}\|^{\frac{3}{4}}_{L^2}\|\nabla^3\partial_1\mathbf{u}\|^{\frac{1}{4}}_{L^2}(1+\tau)\|\nabla\partial_2\varrho\|_{L^2}\Big]
 \nonumber \\
 &~\displaystyle \qquad \cdot\int^t_0(1+t-\tau)^{-m}(1+\tau)^{-\frac{7}{4}}\mathrm{d}\tau\nonumber\\[2mm]
&~+\sup_{0\leq \tau\leq t}\Big[(1+\tau)^{\frac{3}{4}}\|\nabla\partial_2\mathbf{u}\|^{\frac{3}{4}}_{L^2}\|\nabla^3\partial_2\mathbf{u}\|^{\frac{1}{4}}_{L^2}(1+\tau)\|\nabla\partial_1\varrho\|_{L^2}\Big]
 \nonumber \\
 & \displaystyle~ \qquad \cdot\int^t_0(1+t-\tau)^{-m}(1+\tau)^{-\frac{7}{4}}\mathrm{d}\tau\nonumber\\[2mm]
&~+\sup_{0\leq \tau\leq t} \Big[(1+\tau)^{\frac{1}{8}}\|\mathbf{u}\|^{\frac{1}{4}}_{L^{2}}(1+\tau)^{\frac{1}{4}}\|\partial_1\mathbf{u}\|^{\frac{1}{4}}_{L^2}(1+\tau)^{\frac{1}{4}}\|\partial_2\mathbf{u}\|^{\frac{1}{4}}_{L^2}(1+\tau)^{\frac{4}{15}}\|\partial_1\partial_2\mathbf{u}\|^{\frac{1}{4}}_{L^2} \Big.\nonumber\\[2mm]
&~\displaystyle \qquad \qquad\Big.\cdot(1+\tau)^{\frac{1}{4}}\|\nabla\partial_2\varrho\|^{\frac{1}{4}}_{L^2}\|\nabla\partial^2_1\partial_2\varrho\|^{\frac{1}{4}}_{L^2}\|\nabla\partial_1\partial_2\partial_3\varrho\|^{\frac{1}{2}}_{L^2}\Big]\int^t_0(1+t-\tau)^{-m}(1+\tau)^{-\frac{137}{120}}\mathrm{d}\tau\nonumber\\[2mm]
&\displaystyle \lesssim  \mathcal{E}_{2}^{\frac{7}{8}}(t)\mathcal{E}_{0}^{\frac{1}{8}}(t)((1+t)^{-\frac{109}{60}}+ (1+t)^{-\frac{7}{ 4}})
+ \mathcal{E}_{2}^{\frac{5}{8}}(t)\mathcal{E}_{0}^{\frac{3}{8}}(t)(1+t)^{-\frac{137}{120}}   \notag \\[2mm]
&\displaystyle \lesssim \mathcal{E}(t)(1+t)^{-\frac{137}{120}}.
\end{align}
Thus, substituting \eqref{M1221}--\eqref{M1222} into \eqref{M1220}, we find that
\begin{align}\label{M122}
\displaystyle
M_{122} \lesssim\mathcal{E}(t)(1+t)^{-\frac{137}{120}}.
\end{align}
For the term $M_{123}$, in the same method as in the derivation of \eqref{N3}, we have
\begin{align}\label{M123} M_{123}\lesssim\mathcal{E}(t)(1+t)^{-1}.
\end{align}
For the term $M_{124}$, we split it into four terms.
\begin{align}\label{M1240}
&\displaystyle\makebox[-6pt]{~} M_{124}
 \nonumber \\
 &\makebox[-6pt]{~}~\displaystyle\leq \int^t_0(1+t-\tau)^{-m}\|\partial_1\partial_2(\mathbf{u}\cdot\nabla \mathbf{u})\|_{L^2}\mathrm{d}\tau+ \int _{0}^{t}(1+t-\tau)^{-m}\|\partial_1\partial_2(g(\varrho)\Delta_h \mathbf{u})\|_{L^2}\mathrm{d}\tau\Big.\nonumber\\[2mm]
&\makebox[-6pt]{~}\displaystyle \quad + \int^t_0(1+t-\tau)^{-m}\|\partial_1\partial_2(g(\varrho)\nabla \mathop{\mathrm{div}}\nolimits\mathbf{u})\|_{L^2}\mathrm{d}\tau+ \int^t_0(1+t-\tau)^{-m}\|\partial_1\partial_2(f(\varrho)\nabla\varrho)\|_{L^2}\mathrm{d}\tau\nonumber\\[2mm]
\displaystyle&\makebox[-6pt]{~}~:=M_{1241}+M_{1242}+M_{1243}+M_{1244}.
\end{align}
Thanks to \eqref{two-two-deri-ani}, we get
\begin{align*}
\|\partial_1\partial_2(\mathbf{u}\cdot\nabla \mathbf{u})\|_{L^2}& \lesssim \|\partial_1\partial_2\mathbf{u}\cdot\nabla \mathbf{u}\|_{L^2}+\|\partial_1\mathbf{u}\cdot\nabla\partial_2\mathbf{u}\|_{L^2}+\|\partial_2\mathbf{u}\cdot\nabla\partial_1\mathbf{u}\|_{L^2}+\|\mathbf{u}\cdot\nabla\partial_1\partial_2\mathbf{u}\|_{L^2}
 \nonumber \\[2mm]
 & \lesssim \|\nabla \mathbf{u}\|_{L ^{2}}^{\frac{1}{4}}\|\nabla \partial _{1}\mathbf{u}\|_{L ^{2}}^{\frac{1}{4}} \|\nabla \partial _{2}\mathbf{u}\|_{L ^{2}}^{\frac{1}{4}} \|\nabla \partial _{1}\partial _{2}\mathbf{u}\|_{L ^{2}}^{\frac{1}{4}} \|\partial _{1}\partial _{2}\mathbf{u}\|_{L ^{2}}^{\frac{1}{2}} \|\partial _{1}\partial _{2}\partial _{3}\mathbf{u}\|_{L ^{2}}^{\frac{1}{2}}
  \nonumber \\[2mm]
  & \displaystyle \quad+ \|\partial _{1} \mathbf{u}\|_{L ^{2}}^{\frac{1}{4}}\| \partial _{1}^{2}\mathbf{u}\|_{L ^{2}}^{\frac{1}{4}} \| \partial _{1} \partial _{2}\mathbf{u}\|_{L ^{2}}^{\frac{1}{4}} \| \partial _{1}^{2}\partial _{2}\mathbf{u}\|_{L ^{2}}^{\frac{1}{4}} \|\nabla\partial _{2}\mathbf{u}\|_{L ^{2}}^{\frac{1}{2}} \|\nabla\partial _{2}\partial _{3}\mathbf{u}\|_{L ^{2}}^{\frac{1}{2}}
   \nonumber \\[2mm]
   & \displaystyle \quad+ \|\partial _{2} \mathbf{u}\|_{L ^{2}}^{\frac{1}{4}}\| \partial _{1}\partial _{2}\mathbf{u}\|_{L ^{2}}^{\frac{1}{4}} \| \partial _{2}^{2}\mathbf{u}\|_{L ^{2}}^{\frac{1}{4}} \| \partial _{1}\partial _{2}^{2}\mathbf{u}\|_{L ^{2}}^{\frac{1}{4}} \|\nabla\partial _{1}\mathbf{u}\|_{L ^{2}}^{\frac{1}{2}} \|\nabla\partial _{1}\partial _{3}\mathbf{u}\|_{L ^{2}}^{\frac{1}{2}}
    \nonumber \\[2mm]
    & \displaystyle \quad+ \| \mathbf{u}\|_{L ^{2}}^{\frac{1}{4}}\| \partial _{1}\mathbf{u}\|_{L ^{2}}^{\frac{1}{4}} \| \partial _{2}\mathbf{u}\|_{L ^{2}}^{\frac{1}{4}} \| \partial _{1}\partial _{2}\mathbf{u}\|_{L ^{2}}^{\frac{1}{4}} \|\nabla\partial _{1}\partial _{2}\mathbf{u}\|_{L ^{2}}^{\frac{1}{2}} \|\nabla\partial _{1}\partial _{2}\partial _{3}\mathbf{u}\|_{L ^{2}}^{\frac{1}{2}} .
\end{align*}
 Therefore we can bound $ M _{1241} $ as
\begin{align}\label{M1241}
M_{1241}& \lesssim\sup_{0\leq\tau\leq t}\Big[(1+\tau)^{\frac{1}{8}}\|\nabla \mathbf{u}\|^{\frac{1}{4}}_{L^2}(1+\tau)^{\frac{1}{4}}\|\nabla\partial_1 \mathbf{u}\|^{\frac{1}{4}}_{L^2}(1+\tau)^{\frac{3}{8}}\|\nabla\partial_2 \mathbf{u}\|^{\frac{3}{8}}_{L^2}\|\nabla \partial^2_1\partial_2\mathbf{u}\|^{\frac{1}{8}}_{L^2}\Big.\nonumber\\[2mm]
&\displaystyle ~\qquad \qquad\Big.\cdot(1+\tau)^{\frac{4}{5}}\|\partial_1\partial_2\mathbf{u}\|^{\frac{3}{4}}_{L^2}\|\partial_1\partial_2\partial^2_3\mathbf{u}\|^{\frac{1}{4}}_{L^2}\Big]
\int^t_0(1+t-\tau)^{-m}(1+\tau)^{-\frac{31}{20}}\mathrm{d}\tau\nonumber\\[2mm]
&\displaystyle \quad+\sup_{0\leq\tau\leq t}\Big[(1+\tau)^{\frac{1}{4}}\|\partial_1\mathbf{u}\|^{\frac{1}{4}}_{L^2}(1+\tau)^{\frac{4}{15}}\|\partial^2_1\mathbf{u}\|^{\frac{1}{4}}_{L^2}(1+\tau)^{\frac{2}{5}}\|\partial_2\partial_1\mathbf{u}\|^{\frac{3}{8}}_{L^2}\|\partial^3_1\partial_2\mathbf{u}\|^{\frac{1}{8}}_{L^2}\Big.\nonumber\\[2mm]
&\displaystyle ~\qquad \qquad \quad\Big.\cdot(1+\tau)^{\frac{3}{4}}\|\nabla\partial_2\mathbf{ u}\|^{\frac{3}{4}}_{L^2}\|\nabla \partial_2\partial^2_3\mathbf{u}\|^{\frac{1}{4}}_{L^2}\Big]\int^t_0(1+t-\tau)^{-m}(1+\tau)^{-\frac{5}{3}}\mathrm{d}\tau\nonumber\\[2mm]
&\displaystyle \quad+\sup_{0\leq\tau\leq t}\Big[(1+\tau)^{\frac{1}{4}}\|\partial_2\mathbf{u}\|^{\frac{1}{4}}_{L^2}(1+\tau)^{\frac{4}{15}}\|\partial_1\partial_2\mathbf{u}\|^{\frac{1}{4}}_{L^2}(1+\tau)^{\frac{2}{5}}\|\partial^2_2\mathbf{u}\|^{\frac{3}{8}}_{L^2}\|\partial^2_1\partial^2_2\mathbf{u}\|^{\frac{1}{8}}_{L^2}\Big.\nonumber\\[2mm]
&\displaystyle ~ \qquad \qquad \quad \Big.\cdot(1+\tau)^{\frac{3}{4}}\|\nabla \partial_1\mathbf{u}\|^{\frac{3}{4}}_{L^2}\|\nabla \partial_1\partial^2_3\mathbf{u}\|^{\frac{1}{4}}_{L^2}\Big]\int^t_0(1+t-\tau)^{-m}(1+\tau)^{-\frac{5}{3}}\mathrm{d}\tau\nonumber\\
&\displaystyle \quad+\sup_{0\leq\tau\leq t}\Big[(1+\tau)^{\frac{1}{8}}\|\mathbf{u}\|^{\frac{1}{4}}_{L^2}(1+\tau)^{\frac{1}{4}}\|\partial_1\mathbf{u}\|^{\frac{1}{4}}_{L^2}(1+\tau)^{\frac{1}{4}}\|\partial_2\mathbf{u}\|^{\frac{1}{4}}_{L^2}(1+\tau)^{\frac{8}{15}}\|\partial_1\partial_2\mathbf{u}\|^{\frac{1}{2}}_{L^2}\Big.\nonumber\\[2mm]
&\displaystyle ~\qquad \qquad \quad\cdot\|\nabla^2\partial_1\partial_2\mathbf{u}\|^{\frac{1}{4}}_{L^2}\|\nabla\partial_1\partial_2\partial_3\mathbf{u}\|^{\frac{1}{2}}_{L^2}\Big]\int^t_0(1+t-\tau)^{-m}(1+\tau)^{-\frac{139}{120}}\mathrm{d}\tau\nonumber\\[2mm]
&\displaystyle \lesssim\mathcal{E}_{2}^{\frac{13}{16}}(t)\mathcal{E}_{0}^{\frac{3}{16}}(t)\big((1+t)^{-\frac{31}{20}}  + (1+t)^{-\frac{5}{3} } \big)
+ \mathcal{E}_{2}^{\frac{5}{8}}(t)\mathcal{E}_{0}^{\frac{3}{8}}(t)(1+t)^{-\frac{139}{120}  }
 \nonumber \\[2mm]
&\displaystyle \lesssim\mathcal{E}(t)(1+t)^{-\frac{139}{120}}.
\end{align}
Similarly, by \eqref{two-two-deri-ani} and Lemma \ref{decaylemma} again, By the same method as in the proof of estimates for $T_{42}$ and $T_{43}$, we have
\begin{align}
\displaystyle M _{1242} \lesssim  \mathcal{E}(t)(1+t)^{-\frac{16}{15}},\ \ \ M _{1243} \lesssim  \mathcal{E}(t)(1+t)^{-\frac{16}{15}}.
\end{align}
In what follows, we shall estimate $ M _{1244} $. By the Sobolev inequality \eqref{soboleve-ineq}, we get
\begin{align*}
\displaystyle\|\partial_1\partial_2(f(\varrho)\nabla\varrho)\|_{L^2}&\lesssim \|\partial_1\partial_2\varrho\nabla\varrho\|_{L^2}+\|\partial_1\varrho\nabla\partial_2\varrho\|_{L^2}+\|\partial_2\varrho\nabla\partial_1\varrho\|_{L^2}+\|\varrho\nabla\partial_1\partial_2\varrho\|_{L^2}\nonumber\\[2mm]
&\displaystyle \lesssim \|\nabla^2\varrho\|^{\frac{1}{2}}_{L^2}\|\nabla^3\varrho\|^{\frac{1}{2}}_{L^2}\|\partial_1\partial_2\varrho\|_{L^2}+\|\nabla\partial_1\varrho\|^{\frac{1}{2}}_{L^2}\|\nabla^2\partial_1\varrho\|^{\frac{1}{2}}_{L^2}\|\nabla\partial_2\varrho\|_{L^2}\nonumber\\[2mm]
&\displaystyle \quad+\|\nabla\partial_2\varrho\|^{\frac{1}{2}}_{L^2}\|\nabla^2\partial_1\varrho\|^{\frac{1}{2}}_{L^2}\|\nabla\partial_1\varrho\|_{L^2}+\|\nabla\varrho\|^{\frac{1}{2}}_{L^2}\|\nabla^2\varrho\|^{\frac{1}{2}}_{L^2}\|\nabla\partial_1\partial_2\varrho\|_{L^2}.
\end{align*}
This along with Lemma \ref{decaylemma} further yields that
\begin{align}\label{M1244}
\displaystyle M_{1244}&\lesssim\sup_{0\leq\tau\leq t}\Big[(1+\tau)^{\frac{3}{4}}\|\nabla^2\varrho\|^{\frac{3}{4}}_{L^2}\|\nabla^4\varrho\|^{\frac{1}{4}}_{L^2}(1+\tau)\|\partial_1\partial_2\varrho\|_{L^2}\Big]
 \nonumber \\[2mm]
 & \displaystyle \qquad \cdot\int^t_0(1+t-\tau)^{-m}(1+\tau)^{-\frac{7}{4}}\mathrm{d}\tau\nonumber\\[2mm]
&\displaystyle \quad+\sup_{0\leq\tau\leq t}\Big[(1+\tau)^{\frac{3}{4}}\|\nabla\partial_1\varrho\|^{\frac{3}{4}}_{L^2}\|\nabla^3\partial_1\varrho\|^{\frac{1}{4}}_{L^2}(1+\tau)\|\nabla\partial_2\varrho\|_{L^2}\Big]
 \nonumber \\[2mm]
 & \displaystyle \qquad \cdot\int^t_0(1+t-\tau)^{-m}(1+\tau)^{-\frac{7}{4}}\mathrm{d}\tau\nonumber\\[2mm]
&\displaystyle \quad+\sup_{0\leq\tau\leq t}\Big[(1+\tau)^{\frac{3}{4}}\|\nabla\partial_2\varrho\|^{\frac{3}{4}}_{L^2}\|\nabla^3\partial_2\varrho\|^{\frac{1}{4}}_{L^2}(1+\tau)\|\nabla\partial_1\varrho\|_{L^2}\Big]
 \nonumber \\[2mm]
 &\displaystyle \qquad \cdot\int^t_0(1+t-\tau)^{-m}(1+\tau)^{-\frac{7}{4}}\mathrm{d}\tau\nonumber\\[2mm]
&\displaystyle \quad+\sup_{0\leq\tau\leq t}\Big[(1+\tau)^{\frac{1}{2}}\|\nabla\varrho\|^{\frac{1}{2}}_{L^2}(1+\tau)^{\frac{1}{2}}\|\nabla^2\varrho\|^{\frac{1}{2}}_{L^2}(1+\tau)^{\frac{1}{2}}\|\nabla\partial_1\varrho\|^{\frac{1}{2}}_{L^2}\|\nabla\partial_1\partial^2_2\varrho\|^{\frac{1}{2}}_{L^2}\Big]\nonumber\\[2mm]
&\displaystyle \qquad\cdot\int^t_0(1+t-\tau)^{-m}(1+\tau)^{-\frac{3}{2}}\mathrm{d}\tau\nonumber\\[2mm]
&\displaystyle \lesssim
  \mathcal{E}_{2}^{\frac{7}{8}}(t)\mathcal{E}_{0}^{\frac{1}{8}}(t)  (1+t)^{- \frac{7}{4} }
+ \mathcal{E}_{2}^{\frac{3}{4}}(t)\mathcal{E}_{0}^{\frac{1}{4}}(t) (1+t)^{-\frac{3}{2}}
 \nonumber \\[2mm]
 &\displaystyle \lesssim\mathcal{E}(t)(1+t)^{-\frac{3}{2}}.
\end{align}
Thus, substituting \eqref{M1241}--\eqref{M1244} into \eqref{M1240}, we have
\begin{equation}\label{M124}
M_{124}=\int^t_0(1+t-\tau)^{-m}\|\partial_1\partial_2S_2\|_{L^2}\mathrm{d}\tau \lesssim\mathcal{E}(t)(1+t)^{-\frac{16}{15}}.
\end{equation}
Collecting the estimates \eqref{varrhobiaoshi212}, \eqref{M121}, \eqref{M122}, \eqref{M123} and \eqref{M124}, we conclude that
\begin{align}\label{eq_rho_12}
(1+t)\|\partial_1\partial_2\varrho\| _{L ^{2}} \lesssim\mathcal{E}(t)+\|(\varrho_0,\mathbf{u}_0)\|_{L^1}+\|(\partial_1\partial_2\varrho_0,\partial_1\partial_2\mathbf{u}_0)\|_{L^2}.
\end{align}
Now let us establish the estimate of $\partial_1\partial_2\mathbf{u}$. Using the Duhamel principle and \eqref{hori-u-lem-con}, we get
\begin{align}\label{ubiaoshi212}
\displaystyle\makebox[-8pt]{~}\|\partial_1\partial_2\mathbf{u}(t)\|_{L^2}& \lesssim (1+t)^{-\frac{7}{4}}(\|\varrho_0\|_{L^1}+\|\partial_1\partial_2\varrho_0\|_{L^2})+(1+t)^{-\frac{3}{2}}(\|\mathbf{u}_0\|_{L^2_{x_3}L^1_{x_1x_2}}+\|\partial_1\partial_2\mathbf{u}_0\|_{L^2})\nonumber\\[2mm]
&\displaystyle \quad+\int^t_0(1+t-\tau)^{-\frac{7}{4}}\|S_1\|_{L^1}\mathrm{d}\tau+\int^t_0(1+t-\tau)^{-m}\|\partial_1\partial_2S_1\|_{L^2}\mathrm{d}\tau\nonumber\\[2mm]
&\displaystyle \quad+\int^t_0(1+t-\tau)^{-\frac{3}{2}}\|S_2\|_{L^2_{x_3}L^1_{x_1x_2}}\mathrm{d}\tau+\int^t_0(1+t-\tau)^{-m}\|\partial_1\partial_2S_2\|_{L^2}\mathrm{d}\tau\nonumber\\[2mm]
&\displaystyle \lesssim(1+t)^{-\frac{7}{4}}(\|\varrho_0\|_{L^1}+\|\partial_1\partial_2\varrho_0\|_{L^2})+(1+t)^{-\frac{3}{2}}(\|\mathbf{u}_0\|_{L^2_{x_3}L^1_{x_1x_2}}+\|\partial_1\partial_2\mathbf{u}_0\|_{L^2})\nonumber\\[2mm]
&\displaystyle \quad+H_{121}+H_{122}+H_{123}+H_{124}.
\end{align}
By similar arguments as in \eqref{M121}, \eqref{M122} and \eqref{M124}, we can bound $H_{121}$, $H_{122}$ and $H_{124}$ as follows:
\begin{align}\label{H12-1-4}
\displaystyle \begin{cases}
    \displaystyle H_{121}\lesssim\mathcal{E}(t)(1+t)^{-\frac{3}{2}}, \ \ \ H_{122} \lesssim\mathcal{E}(t)(1+t)^{-\frac{137}{120}},\\[2mm]
    \displaystyle H_{124} \lesssim\mathcal{E}(t)(1+t)^{-\frac{16}{15}}.
\end{cases}
\end{align}
Next, we establish the estimate of $H_{123}$. Like the proof of \eqref{H03} and the process of \eqref{H031}--\eqref{H034}, we can estimate $H_{123}$ as follows:
\begin{align*}
\displaystyle H_{123}&\lesssim \int^t_0(1+t-\tau)^{-\frac{3}{2}}\Big(\|\mathbf{u}\cdot\nabla \mathbf{u}\|_{L^2_{x_3}L^1_{x_1x_2}}+\|g(\varrho)\Delta_h\mathbf{u}\|_{L^2_{x_3}L^1_{x_1x_2}}\Big.\nonumber\\[2mm]
&\displaystyle \qquad \quad \qquad \qquad \qquad\Big.+\|g(\varrho)\nabla \mathop{\mathrm{div}}\nolimits\mathbf{u}\|_{L^2_{x_3}L^1_{x_1x_2}}+\|f(\varrho)\nabla\varrho\|_{L^2_{x_3}L^1_{x_1x_2}}\Big)\mathrm{d}\tau\nonumber\\[2mm]
&\displaystyle \lesssim(\mathcal{E}_2(t)+\mathcal{E}^{\frac{3}{4}}_2(t)\mathcal{E}^{\frac{1}{4}}_1(t))\int^t_0(1+t-\tau)^{- \frac{3}{2}}\Big((1+\tau)^{-\frac{3}{2}}+(1+\tau)^{-\frac{5}{4}}\Big)\mathrm{d}\tau\nonumber\\[2mm]
&\displaystyle \quad+ \mathcal{E}_2(t)\int^t_0(1+t-\tau)^{-\frac{3}{2}}(1+\tau)^{-\frac{233}{120}}\log^{\frac{1}{2}}(1+\tau)\mathrm{d}\tau\nonumber\\[2mm]
&\displaystyle \quad+\mathcal{E}^{\frac{1}{2}}_2(t)\mathcal{E}^{\frac{1}{2}}_1(t)\int^t_0(1+t-\tau)^{-\frac{3}{2}}(1+\tau)^{-\frac{15}{8}}\log^{\frac{1}{2}}(1+\tau)\mathrm{d}\tau\nonumber\\
&\displaystyle \quad+\mathcal{E}_2(t)\int^t_0(1+t-\tau)^{-\frac{3}{2}}(1+\tau)^{-\frac{15}{8}}\log^{\frac{1}{2}}(1+\tau)\mathrm{d}\tau\nonumber\\[2mm]
&\displaystyle \lesssim\mathcal{E}(t)(1+t)^{-\frac{5}{4}}.
\end{align*}
This combined with \eqref{ubiaoshi212} and \eqref{H12-1-4} implies that
\begin{equation*}
(1+t)^{\frac{16}{15}}\|\partial_1\partial_2\mathbf{u}\|_{L^2} \lesssim\mathcal{E}(t)+\|\varrho_0\|_{L^1}+\|\mathbf{u}_0\|_{L^2_{x_3}L^1_{x_1x_2}}+\|(\partial_1\partial_2\varrho_0,\partial_1\partial_2\mathbf{u}_0)\|_{L^2}.
\end{equation*}
Therefore we conclude for the case $ (i,j)=(1,2) $ that
\begin{align}
&\displaystyle  (1+t)\|\partial_1\partial_2\varrho\| _{L ^{2}} +(1+t)^{\frac{16}{15}}\|\partial_1\partial_2\mathbf{u}\|_{L^2}
 \nonumber \\[2mm]
 &~\displaystyle\lesssim\mathcal{E}(t)+\|(\varrho_0,\mathbf{u}_0)\|_{L^1}+\|\mathbf{u}_0\|_{L^2_{x_3}L^1_{x_1x_2}}+\|(\partial_1\partial_2\varrho_0,\partial_1\partial_2\mathbf{u}_0)\|_{L^2}.
\end{align}
Similarly, we have for $ (i,j)=(1,1) $ and $ i=2, j=1,2 $ that
\begin{align*}
&\displaystyle  (1+t)\|\partial_i\partial_j\varrho\| _{L ^{2}} +(1+t)^{\frac{16}{15}}\|\partial_i\partial_j\mathbf{u}\|_{L^2}
 \nonumber \\[2mm]
 &~\displaystyle\lesssim\mathcal{E}(t)+\|(\varrho_0,\mathbf{u}_0)\|_{L^1}+\|\mathbf{u}_0\|_{L^2_{x_3}L^1_{x_1x_2}}+\|(\partial_1\partial_2\varrho_0,\partial_1\partial_2\mathbf{u}_0)\|_{L^2}.
\end{align*}

Now let us turn to {\textbf{the case $ i=1,2 $, $ j=3 $}}.
It suffices to investigate the case $i=2,j=3$ since the case $i=1,j=3$ can be dealt with similarly. By \eqref{vrho-con-line-lem} and the Duhamel principle, we get, for any $ m>2 $
\begin{align}\label{varrhobiaoshi223}
\|\partial_2\partial_3\varrho\|_{L^2}
&\displaystyle \lesssim (1+t)^{-\frac{7}{4}}(\|\varrho_0\|_{L^1}+\|\mathbf{u}_0\|_{L^1}+\|\partial_2\partial_3\varrho_0\|_{L^2}+\|\partial_2\partial_3\mathbf{u}_0\|_{L^2})\nonumber\\[2mm]
&\displaystyle \quad+\int^t_0(1+t-\tau)^{-\frac{7}{4}}\|S_1\|_{L^1}\mathrm{d}\tau+\int^t_0(1+t-\tau)^{-m}\|\partial_2\partial_3S_1\|_{L^2}\mathrm{d}\tau\nonumber\\[2mm]
&\displaystyle \quad+\int^t_0(1+t-\tau)^{-\frac{7}{4}}\|S_2\|_{L^1}\mathrm{d}\tau+\int^t_0(1+t-\tau)^{-m}\|\partial_2\partial_3S_2\|_{L^2}\mathrm{d}\tau\nonumber\\[2mm]
&\displaystyle \lesssim(1+t)^{-\frac{7}{4}}(\|\varrho_0\|_{L^1}+\|\mathbf{u}_0\|_{L^1}+\|\partial_2\partial_3\varrho_0\|_{L^2}+\|\partial_2\partial_3\mathbf{u}_0\|_{L^2})\nonumber\\[2mm]
&\displaystyle \quad+M_{231}+M_{232}+M_{233}+M_{234}.
\end{align}
By similar arguments as in $\eqref{M121}$ and \eqref{M123}, we readily derive that
\begin{align}\label{M231}
M_{231} \lesssim\mathcal{E}(t)(1+t)^{-\frac{3}{2}},\ \ \
M_{233} \lesssim \mathcal{E}(t)(1+t)^{-1}.
\end{align}
For the term $M_{232}$, we have
\begin{align}\label{M230}
\displaystyle
M_{232}&\lesssim\int^t_0(1+t-\tau)^{-m}\|\partial_2\partial_3(\varrho \mathop{\mathrm{div}}\nolimits\mathbf{u})\|_{L^2}\mathrm{d}\tau+ \int^t_0(1+t-\tau)^{-m}\|\partial_2\partial_3(\mathbf{u}\cdot\nabla\varrho)\|_{L^2}\mathrm{d}\tau\nonumber\\[2mm]
&:=M_{2321}+M_{2322}.
\end{align}
 Notice that
\begin{align*}
&\displaystyle\|\partial_2\partial_3(\varrho\mathop{\mathrm{div}}\nolimits\mathbf{u})\|_{L^2}
 \nonumber \\[2mm]
 &~ \lesssim \|\partial_2\partial_3\varrho \mathop{\mathrm{div}}\nolimits\mathbf{u}\|_{L^2}+\|\partial_2\varrho\partial_3\mathop{\mathrm{div}}\nolimits\mathbf{u}\|_{L^2}+\|\partial_3\varrho\partial_2\mathop{\mathrm{div}}\nolimits\mathbf{u}\|_{L^2}+\|\varrho\partial_2\partial_3\mathop{\mathrm{div}}\nolimits\mathbf{u}\|_{L^2}\nonumber\\[2mm]
&~\displaystyle \lesssim \|\partial_2\partial_3\varrho\|^{\frac{1}{4}}_{L^2}\|\nabla^2\partial_2\partial_3\varrho\|^{\frac{3}{4}}_{L^2}\|\mathop{\mathrm{div}}\nolimits\mathbf{u}\|_{L^2}+\|\partial_3\mathop{\mathrm{div}}\nolimits\mathbf{u}\|^{\frac{1}{4}}_{L^2}\|\nabla^2\partial_3\mathop{\mathrm{div}}\nolimits\mathbf{u}\|^{\frac{3}{4}}_{L^2}\|\partial_2\varrho\|_{L^2}\nonumber\\[2mm]
&~\displaystyle \quad+\|\partial_2\mathop{\mathrm{div}}\nolimits\mathbf{u}\|^{\frac{1}{4}}_{L^2}\|\nabla^2\partial_2\mathop{\mathrm{div}}\nolimits\mathbf{u}\|^{\frac{3}{4}}_{L^2}\|\partial_3\varrho\|_{L^2}+\|\nabla\varrho\|^{\frac{1}{2}}_{L^2}\|\nabla^2\varrho\|^{\frac{1}{2}}_{L^2}\|\partial_2\partial_3\mathop{\mathrm{div}}\nolimits\mathbf{u}\|_{L^2},
\end{align*}
where we have used \eqref{soboleve-ineq}. Then, by similar arguments as in $M_{1221}$, we get, for $m>2$,
\begin{align}\label{M2321}
M_{2321}& \lesssim\sup_{0\leq\tau\leq t}\Big[(1+\tau)^{\frac{1}{4}}\|\partial_2\partial_3\varrho\|^{\frac{1}{4}}_{L^2}\|\nabla^2\partial_2\partial_3\varrho\|^{\frac{3}{4}}_{L^2}(1+\tau)\|\mathop{\mathrm{div}}\nolimits\mathbf{u}\|_{L^2}\Big]\nonumber\\[2mm]
&\displaystyle \qquad\cdot\int^t_0(1+t-\tau)^{-m}(1+\tau)^{-\frac{5}{4}}\mathrm{d}\tau\nonumber\\[2mm]
&\displaystyle \quad+\sup_{0\leq\tau\leq t}\Big[(1+\tau)^{\frac{1}{4}}\|\partial_3\mathop{\mathrm{div}}\nolimits\mathbf{u}\|^{\frac{1}{4}}_{L^2}\|\nabla^2\partial_3\mathop{\mathrm{div}}\nolimits\mathbf{u}\|^{\frac{3}{4}}_{L^2}(1+\tau)\|\partial_2\varrho\|_{L^2}\Big]\nonumber\\[2mm]
&\displaystyle \qquad\cdot\int^t_0(1+t-\tau)^{-m}(1+\tau)^{-\frac{5}{4}}\mathrm{d}\tau\nonumber\\[2mm]
&\displaystyle \quad+\sup_{0\leq\tau\leq t}\Big[(1+\tau)^{\frac{1}{4}}\|\partial_2\mathop{\mathrm{div}}\nolimits\mathbf{u}\|^{\frac{1}{4}}_{L^2}\|\nabla^2\partial_2\mathop{\mathrm{div}}\nolimits\mathbf{u}\|^{\frac{3}{4}}_{L^2}(1+\tau)\|\partial_3\varrho\|_{L^2}\Big]\nonumber\\[2mm]
&\displaystyle \qquad\cdot\int^t_0(1+t-\tau)^{-m}(1+\tau)^{-\frac{5}{4}}\mathrm{d}\tau\nonumber\\[2mm]
&\displaystyle \quad+\sup_{0\leq\tau\leq t}\Big[(1+\tau)^{\frac{1}{2}}\|\nabla\varrho\|^{\frac{1}{2}}_{L^2}(1+\tau)^{\frac{1}{2}}\ \|\nabla^2\varrho\|^{\frac{1}{2}}_{L^2}(1+\tau)^{\frac{1}{2}}\|\partial_2\mathop{\mathrm{div}}\nolimits\mathbf{u}\|^{\frac{1}{2}}_{L^2}\|\partial_2\partial^2_3\mathop{\mathrm{div}}\nolimits\mathbf{u}\|^{\frac{1}{2}}_{L^2}\Big]\nonumber\\[2mm]
&\displaystyle \qquad\cdot\int^t_0(1+t-\tau)^{-m}(1+\tau)^{-\frac{3}{2}}\mathrm{d}\tau\nonumber\\[2mm]
&\displaystyle \lesssim \Big(\mathcal{E}^{\frac{1}{8}}_2(t)\mathcal{E}^{\frac{3}{8}}_0(t)\mathcal{E}^{\frac{1}{2}}_1(t)+\mathcal{E}^{\frac{1}{8}}_1(t)\mathcal{E}^{\frac{3}{8}}_0(t)\mathcal{E}^{\frac{1}{2}}_2(t)\Big)(1+t)^{-\frac{5}{4}} + + \mathcal{E}_{2}^{\frac{1}{2}}(t)\mathcal{E}^{\frac{1}{4}}_1(t)\mathcal{E}_{0}^{\frac{1}{4}}(t) (1+t)^{-\frac{3}{2}}\nonumber\\[2mm]
&\displaystyle \lesssim\mathcal{E}(t)(1+t)^{-\frac{5}{4}},`
\end{align}
where we have used Lemma \ref{decaylemma}. Thanks to \eqref{two-two-deri-ani} and \eqref{soboleve-ineq}, we have
\begin{align*}
\displaystyle\makebox[-6pt]{~}\|\partial_2\partial_3(\mathbf{u}\cdot\nabla\varrho)\|_{L^2}& \lesssim\|\partial_2\partial_3\mathbf{u}\cdot\nabla\varrho\|_{L^2}+\|\partial_2\mathbf{u}\cdot\nabla\partial_3\varrho\|_{L^2}+\|\partial_3\mathbf{u}\cdot\nabla\partial_2\varrho\|_{L^2}+\|\mathbf{u}\cdot\nabla\partial_2\partial_3\varrho\|_{L^2}\nonumber\\[2mm]
&\lesssim\|\partial_2\partial_3\mathbf{u}\|^{\frac{1}{4}}_{L^2}\|\nabla^2\partial_2\partial_3\mathbf{u}\|^{\frac{3}{4}}_{L^2}\|\nabla\varrho\|_{L^2}+\|\nabla\partial_3\varrho\|^{\frac{1}{4}}_{L^2}\|\nabla^2\partial_3\varrho\|^{\frac{3}{4}}_{L^2}\|\partial_2\mathbf{u}\|_{L^2}\nonumber\\[2mm]
&\displaystyle \quad+\|\mathbf{u}\|^{\frac{1}{4}}_{L^2}\|\partial_1\mathbf{u}\|^{\frac{1}{4}}_{L^2}\|\partial_2\mathbf{u}\|^{\frac{1}{4}}_{L^2}\|\partial_1\partial_2\mathbf{u}\|^{\frac{1}{2}}_{L^2}\|\nabla\partial_2\partial_3\varrho\|^{\frac{1}{2}}_{L^2}\|\nabla\partial_2\partial^2_3\varrho\|^{\frac{1}{2}}_{L^2}
 \nonumber \\[2mm]
 & \displaystyle \quad+\|\partial_3\mathbf{u}\|^{\frac{1}{4}}_{L^2}\|\nabla^2\partial_3\mathbf{u}\|^{\frac{3}{4}}_{L^2}\|\nabla\partial_2\varrho\|_{L^2}.
\end{align*}
This alongside Lemma \ref{decaylemma} leads to
\begin{align}\label{M2322}
\displaystyle
M_{2322}& \lesssim\sup_{0\leq\tau\leq t}(1+\tau)^{\frac{1}{4}}\|\partial_2\partial_3\mathbf{u}\|^{\frac{1}{4}}_{L^2}\|\nabla^2\partial_2\partial_3\mathbf{u}\|^{\frac{3}{4}}_{L^2}(1+\tau)\|\nabla\varrho\|_{L^2}\nonumber\\[2mm]
&\displaystyle \quad \quad\cdot\int^t_0(1+t-\tau)^{-m}(1+\tau)^{-\frac{5}{4}}\mathrm{d}\tau\nonumber\\[2mm]
&\displaystyle \quad+\sup_{0\leq\tau\leq t}\Big[(1+\tau)^{\frac{5}{8}}\|\nabla\partial_3\varrho\|^{\frac{5}{8}}_{L^2}\|\nabla^3\partial_3\varrho\|^{\frac{3}{8}}_{L^2}(1+\tau)\|\partial_2\mathbf{u}\|_{L^2}\Big]
 \nonumber \\[2mm]
 & \displaystyle \qquad \quad \cdot\int^t_0(1+t-\tau)^{-m}(1+\tau)^{-\frac{13}{8}}\mathrm{d}\tau\nonumber\\[2mm]
&\displaystyle \quad+\sup_{0\leq\tau\leq t}\Big[ (1+\tau)^{\frac{1}{8}}\|\mathbf{u}\|^{\frac{1}{4}}_{L^2}(1+\tau)^{\frac{1}{4}}\|\partial_1\mathbf{u}\|^{\frac{1}{4}}_{L^2}(1+\tau)^{\frac{1}{4}}\|\partial_2\mathbf{u}\|^{\frac{1}{4}}_{L^2}(1+\tau)^{\frac{4}{15}}\|\partial_1\partial_2\mathbf{u}\|^{\frac{1}{4}}_{L^2}\Big.\nonumber\\[2mm]
&\displaystyle ~\qquad \qquad \Big.\cdot(1+\tau)^{\frac{1}{4}}
\|\nabla\partial_2\varrho\|^{\frac{1}{4}}_{L^2}\|\nabla\partial_2\partial^2_3\varrho\|^{\frac{3}{4}}_{L^2}\Big]\int^t_0(1+t-\tau)^{-m}(1+\tau)^{-\frac{137}{120}}\mathrm{d}\tau\nonumber\\[2mm]
&\displaystyle \quad+\sup_{0\leq\tau\leq t}\Big[(1+\tau)^{\frac{1}{8}}\|\partial_3\mathbf{u}\|^{\frac{1}{4}}_{L^2}(1+\tau)^{\frac{3}{16}}(\log(1+\tau))^{-\frac{3}{8}}\|\nabla\partial_3\mathbf{u}\|^{\frac{3}{8}}_{L^2}\|\nabla^3\partial_3\mathbf{u}\|^{\frac{3}{8}}_{L^2}(1+\tau)\|\nabla\partial_2\varrho\|_{L^2}\Big]\nonumber\\[2mm]
&\displaystyle \qquad \quad\cdot\int^t_0(1+t-\tau)^{-m}(1+\tau)^{-\frac{21}{16}}(\log(1+\tau))^{\frac{3}{8}}\mathrm{d}\tau\nonumber\\[2mm]
&\displaystyle \lesssim\mathcal{E}^{\frac{5}{8}}_2(t)\mathcal{E}^{\frac{3}{8}}_0(t)\big((1+t)^{-\frac{5}{4}}+(1+t)^{-\frac{137}{120}}\big)+\mathcal{E}^{\frac{13}{16}}_2(t)\mathcal{E}^{\frac{3}{16}}_0(t)((1+t)^{-\frac{13}{8}}+(1+t)^{-\frac{21}{16}}(\log(1+t))^{\frac{3}{8}})\nonumber\\[2mm]
& \lesssim\mathcal{E}(t)(1+t)^{-\frac{137}{120}}.
\end{align}
Combining \eqref{M2321} and \eqref{M2322}, we then get from \eqref{M230} that
\begin{align}\label{M232}
M_{232} \lesssim\mathcal{E}(t)(1+t)^{-\frac{137}{120}}.
\end{align}
Next, we derive the estimate for $M_{234}$.
Notice that
\begin{align}\label{23s2}
\displaystyle\|\partial_2\partial_3S_2\|_{L^2}& \lesssim\|\partial_2\partial_3(\mathbf{u}\cdot\nabla \mathbf{u})\|_{L^2}+\|\partial_2\partial_3(g(\varrho)\Delta_h\mathbf{u})\|_{L^2}+\|\partial_2\partial_3(g(\varrho)\nabla \mathop{\mathrm{div}}\nolimits\mathbf{u})\|_{L^2}\nonumber\\[2mm]
&\displaystyle \quad+\|\partial_2\partial_3(f(\varrho)\nabla\varrho)\|_{L^2}.
\end{align}
Then, we can split $M_{234}$ into four terms as follows
\begin{align}\label{M2340}
  \displaystyle M_{234}& \lesssim \int^t_0(1+t-\tau)^{-m}\|\partial_2\partial_3(\mathbf{u}\cdot\nabla \mathbf{u}) \|_{L^2} \mathrm{d}\tau
  + \int^t_0(1+t-\tau)^{-m}\|\partial_2\partial_3(\varrho\Delta_h\mathbf{u})\|_{L^2}\mathrm{d}\tau \nonumber\\[2mm]
  &\displaystyle \quad+\int^t_0(1+t-\tau)^{-m}\|\partial_2\partial_3(\varrho\nabla \mathop{\mathrm{div}}\nolimits\mathbf{u})\|_{L^2}\mathrm{d}\tau
  +\int^t_0(1+t-\tau)^{-m}\|\partial_2\partial_3(\varrho\nabla\varrho)\|_{L^2}\mathrm{d}\tau\nonumber\\[2mm]
  &\displaystyle:=M_{2341}+M_{2342}+M_{2343}+M_{2344}.
\end{align}
Using \eqref{soboleve-ineq} and the H\"older inequality, we have
\begin{align*}
&\displaystyle\|\partial_2\partial_3(\mathbf{u}\cdot\nabla \mathbf{u})\|_{L^2}
 \nonumber \\[2mm]
 &\displaystyle ~ \lesssim \|\partial_2\partial_3\mathbf{u}\cdot\nabla \mathbf{u}\|_{L^2}+\|\partial_2\mathbf{u}\cdot\nabla\partial_3\mathbf{u}\|_{L^2}+\|\partial_3\mathbf{u}\cdot\nabla\partial_2\mathbf{u}\|_{L^2}+\|\mathbf{u}\cdot\nabla\partial_2\partial_3\mathbf{u}\|_{L^2}\\[2mm]
&~\displaystyle \lesssim \|\nabla \mathbf{u}\|_{L^{\infty}}\|\nabla\partial_2\mathbf{u}\|_{L^2}+\|\partial_2\mathbf{u}\|_{L^4}\|\nabla\partial_3\mathbf{u}\|_{L^4}+\|\mathbf{u}\cdot\nabla\partial_2\partial_3\mathbf{u}\|_{L^2}\\[2mm]
& ~\displaystyle\lesssim \|\nabla^2\mathbf{u}\|^{\frac{1}{2}}_{L^2}\|\nabla^3\mathbf{u}\|^{\frac{1}{2}}_{L^2}\|\nabla\partial_2\mathbf{u}\|_{L^2}+\|\partial_2\mathbf{u}\|^{\frac{1}{4}}_{L^2}\|\nabla\partial_2 \mathbf{u}\|^{\frac{3}{4}}_{L^2}\|\nabla\partial_3\mathbf{u}\|^{\frac{1}{4}}_{L^2}\|\nabla^2\partial_3\mathbf{u}\|^{\frac{3}{4}}_{L^2}\\[2mm]
&~\displaystyle \quad+\|\mathbf{u}\|^{\frac{1}{4}}_{L^2}\|\partial_1\mathbf{u}\|^{\frac{1}{4}}_{L^2}\|\partial_2\mathbf{u}\|^{\frac{1}{4}}_{L^2}\|\partial_1\partial_2\mathbf{u}\|^{\frac{1}{4}}_{L^2}\|\nabla\partial_2\partial_3\mathbf{u}\|^{\frac{1}{2}}_{L^2}\|\nabla\partial_2\partial^2_3\mathbf{u}\|^{\frac{1}{2}}_{L^2},
\end{align*}
where we have used the anisotropic inequality \eqref{two-two-deri-ani} for $\|\mathbf{u}\cdot\nabla\partial_2\partial_3\mathbf{u}\|_{L^2}$. Thus, it holds that
\begin{align}\label{M23410}
&\displaystyle M_{2341}
 \nonumber \\
 &~\displaystyle\lesssim \int^t_0(1+t-\tau)^{-m}\Big(\|\nabla^2\mathbf{u}\|^{\frac{1}{2}}_{L^2}\|\nabla^3\mathbf{u}\|^{\frac{1}{2}}_{L^2}\|\nabla\partial_2\mathbf{u}\|_{L^2} \Big.
  \nonumber\\[2mm]
  &~~ \displaystyle \quad\qquad \qquad \qquad\qquad\Big.+\|\partial_2\mathbf{u}\|^{\frac{1}{4}}_{L^2}\|\nabla\partial_2 \mathbf{u}\|^{\frac{3}{4}}_{L^2}\|\nabla \partial_3\mathbf{u}\|^{\frac{1}{4}}_{L^2}\|\nabla^2\partial_3\mathbf{u}\|^{\frac{3}{4}}_{L^2}\Big)\mathrm{d}\tau\nonumber\\[2mm]
&~\displaystyle \quad+\int^t_0(1+t-\tau)^{-m}\|\mathbf{u}\|^{\frac{1}{4}}_{L^2}\|\partial_1\mathbf{u}\|^{\frac{1}{4}}_{L^2}\|\partial_2\mathbf{u}\|^{\frac{1}{4}}_{L^2}\|\partial_1\partial_2\mathbf{u}\|^{\frac{1}{4}}_{L^2}\|\nabla\partial_2\partial_3\mathbf{u}\|^{\frac{1}{2}}_{L^2}\|\nabla\partial_2\partial_3 ^{2}\mathbf{u}\|^{\frac{1}{2}}_{L^2}\mathrm{d}\tau\nonumber\\[2mm]
&~\displaystyle:=M_{{2341}_1}+M_{{2341}_2},
\end{align}
with
\begin{align}\label{M23411}
&\displaystyle M_{{2341}_1}
 \nonumber \\
 &~\displaystyle\lesssim\sup_{0\leq \tau\leq t}\Big[(1+\tau)\|\nabla\partial_2\mathbf{u}\|_{L^2}(1+\tau)^{\frac{3}{8}}(\log(1+\tau))^{\frac{3}{4}}\|\nabla^2\mathbf{u}\|^{\frac{3}{4}}_{L^2}\|\nabla^4\mathbf{u}\|^{\frac{1}{4}}_{L^2}\Big]\nonumber\\[2mm]
&~\displaystyle \qquad\cdot\int^t_0(1+t-\tau)^{-m}(1+\tau)^{-\frac{11}{8}}(\log(1+\tau))^{\frac{3}{4}}\mathrm{d}\tau\nonumber\\[2mm]
&~\displaystyle \quad+\sup_{0\leq \tau\leq t}\Big[(1+\tau)^{\frac{1}{4}}\|\partial_2\mathbf{u}\|^{\frac{1}{4}}_{L^2}(1+\tau)^{\frac{3}{4}}\|\nabla\partial_2 \mathbf{u}\|^{\frac{3}{4}}_{L^2}(1+\tau)^{\frac{5}{16}}(\log(1+\tau))^{-\frac{5}{8}}\|\nabla\partial_3\mathbf{u}\|^{\frac{5}{8}}_{L^2}\Big.\nonumber\\[2mm]
&~\displaystyle \quad\qquad\qquad \Big. \cdot\|\nabla^3\partial_3\mathbf{u}\|^{\frac{3}{8}}_{L^2}\Big]\int^t_0(1+t-\tau)^{-m}(1+\tau)^{-\frac{21}{16}}(\log(1+\tau))^{\frac{5}{8}}\mathrm{d}\tau\nonumber\\[2mm]
&~\displaystyle \lesssim \mathcal{E}^{\frac{7}{8}}_{2}(t)\mathcal{E}^{\frac{1}{8}}_0(t)(1+t)^{-\frac{11}{8}}(\log(1+t))^{\frac{3}{4}}+\mathcal{E}^{\frac{13}{16}}_{2}(t)\mathcal{E}^{\frac{3}{16}}_0(t)(1+t)^{-\frac{21}{16}}(\log(1+t))^{\frac{5}{8}}\nonumber\\[2mm]
&~\displaystyle \lesssim\mathcal{E}(t)(1+t)^{-\frac{21}{16}}(\log(1+t))^{\frac{5}{8}}
\end{align}
provided $ m>\frac{11}{8}$, and
\begin{align}\label{M23412}
 \displaystyle M_{{2341}_2}& \lesssim\sup_{0\leq \tau\leq t}\Big[(1+\tau)^{\frac{1}{8}}\|\mathbf{u}\|^{\frac{1}{4}}_{L^2}(1+\tau)^{\frac{1}{4}}\|\partial_1\mathbf{u}\|^{\frac{1}{4}}_{L^2}(1+\tau)^{\frac{1}{4}}\|\partial_2\mathbf{u}\|^{\frac{1}{4}}_{L^2}(1+\tau)^{\frac{4}{15}}\|\partial_1\partial_2\mathbf{u}\|^{\frac{1}{4}}_{L^2}\Big.\nonumber\\[2mm]
&\displaystyle \qquad \qquad \Big.\cdot (1+\tau)^{\frac{1}{4}} \|\partial_2\partial_3\mathbf{u}\|^{\frac{1}{4}}_{L^2}\|\nabla\partial_2\partial^2_3\mathbf{u}\|^{\frac{3}{4}}_{L^2}\Big]\int^t_0(1+t-\tau)^{-m}(1+\tau)^{-\frac{137}{120}}\mathrm{d}\tau\nonumber\\[2mm]
&\lesssim  \mathcal{E}^{\frac{5}{8}}_2(t)\mathcal{E}^{\frac{3}{8}}_0(t)(1+t)^{-\frac{137}{120}}
  \lesssim\mathcal{E}(t)(1+t)^{-\frac{137}{120}}
\end{align}
provided $m>\frac{137}{120}$. Here we have used Lemma \ref{decaylemma} and \eqref{dissip-1}. Therefore, substituting \eqref{M23411} and \eqref{M23412} into \eqref{M23410}, we have
\begin{equation}\label{M2341}
M_{2341} \lesssim \int^t_0(1+t-\tau)^{-m}\|\partial_2\partial_3(\mathbf{u}\cdot\nabla \mathbf{u})\|_{L^2}\mathrm{d}\tau \lesssim\mathcal{E}(t)(1+t)^{-\frac{137}{120}}.
\end{equation}
For the term $M_{2342}$, we utilize \eqref{soboleve-ineq} and the H\"older inequality to get
\begin{align*}
&\displaystyle\|\partial_2\partial_3(\varrho\Delta_h\mathbf{u})\|_{L^2}
 \nonumber \\[2mm]
 &~\displaystyle \lesssim\|\partial_2\partial_3\varrho\Delta_h\mathbf{u}\|_{L^2}+\|\partial_2\varrho\partial_3\Delta_h\mathbf{u}\|_{L^2}
+\|\partial_3\varrho\partial_2\Delta_h\mathbf{u}\|_{L^2}+\|\varrho\partial_2\partial_3\Delta_h\mathbf{u}\|_{L^2}\nonumber\\[2mm]
&~\displaystyle \lesssim\|\Delta_h\mathbf{u}\|^{\frac{1}{4}}_{L^2}\|\nabla^2\Delta_h\mathbf{u}\|^{\frac{3}{4}}_{L^2}\|\partial_2\partial_3\varrho\|_{L^2}+\|\partial_2\varrho\|^{\frac{1}{4}}_{L^2}\|\nabla\partial_2\varrho\|^{\frac{3}{4}}_{L^2}\|\partial_3\Delta_h\mathbf{u}\|^{\frac{1}{4}}_{L^2}\|\nabla\partial_3\Delta_h\mathbf{u}\|^{\frac{3}{4}}_{L^2}\nonumber\\[2mm]
&~\displaystyle \quad+\|\partial_3\varrho\|^{\frac{1}{4}}_{L^2}\|\nabla\partial_3\varrho\|^{\frac{3}{4}}_{L^2}\|\partial_2\Delta_h\mathbf{u}\|^{\frac{1}{4}}_{L^2}\|\nabla\partial_2\Delta_h\mathbf{u}\|^{\frac{3}{4}}_{L^2}+\|\nabla\varrho\|^{\frac{1}{2}}_{L^2}\|\nabla^2\varrho\|^{\frac{1}{2}}_{L^2}\|\Delta_h\partial_2\partial_3\mathbf{u}\|_{L^2}.
\end{align*}
This along with Lemma \ref{decaylemma} further leads to
\begin{align}\label{M2342}
\makebox[-6pt]{~}M_{2342}&\leq\sup_{0\leq\tau\leq t}\Big[(1+\tau)^{\frac{4}{15}}\|\Delta_h\mathbf{u}\|^{\frac{1}{4}}_{L^2}\|\nabla^2\Delta_h\mathbf{u}\|^{\frac{3}{4}}_{L^2}(1+\tau)\|\partial_2\partial_3\varrho\|_{L^2}\Big]
 \nonumber \\[2mm]
 & \displaystyle \quad \qquad \cdot\int^t_0(1+t-\tau)^{-m}(1+\tau)^{-\frac{19}{15}}\mathrm{d}\tau\nonumber\\[2mm]
&\displaystyle \quad+\sup_{0\leq \tau\leq t}\Big[(1+\tau)^{\frac{1}{4}}\|\partial_2\varrho\|^{\frac{1}{4}}_{L^2}(1+\tau)^{\frac{3}{4}}\|\nabla\partial_2\varrho\|^{\frac{3}{4}}_{L^2}(1+\tau)^{\frac{2}{15}}\|\Delta_h\mathbf{u}\|^{\frac{1}{8}}_{L^2}\|\nabla\partial_3\Delta_h\mathbf{u}\|^{\frac{7}{8}}_{L^2}\Big]\nonumber\\[2mm]
&\displaystyle \quad \qquad\cdot\int^t_0(1+t-\tau)^{-m}(1+\tau)^{-\frac{17}{15}}\mathrm{d}\tau\nonumber\\[2mm]
&\displaystyle \quad+\sup_{0\leq \tau\leq t}\Big[(1+\tau)^{\frac{1}{4}}\|\partial_3\varrho\|^{\frac{1}{4}}_{L^2}(1+\tau)^{\frac{3}{4}}\|\nabla\partial_3\varrho\|^{\frac{3}{4}}_{L^2}(1+\tau)^{\frac{2}{15}}\|\Delta_h\mathbf{u}\|^{\frac{1}{8}}_{L^2}\|\nabla\partial_2\Delta_h\mathbf{u}\|^{\frac{7}{8}}_{L^2}\Big]\nonumber\\[2mm]
&\displaystyle \quad \qquad\cdot\int^t_0(1+t-\tau)^{-m}(1+\tau)^{-\frac{17}{15}}\mathrm{d}\tau\nonumber\\[2mm]
&\displaystyle \quad+\sup_{0\leq \tau\leq t}(1+\tau)^{\frac{1}{2}}\|\nabla\varrho\|^{\frac{1}{2}}_{L^2}(1+\tau)^{\frac{1}{2}}\|\nabla^2\varrho\|^{\frac{1}{2}}_{L^2}\nonumber\\[2mm]
&\displaystyle \quad \qquad\cdot\Big(\int^t_0(1+t-\tau)^{-2m}(1+\tau)^{-\frac{32}{15}}\mathrm{d}\tau\Big)^{\frac{1}{2}}\Big(\int^t_0(1+\tau)^{\frac{2}{15}}\|\Delta_h\partial_2\partial_3\mathbf{u}\|^2_{L^2}\mathrm{d}\tau\Big)^{\frac{1}{2}}\nonumber\\[2mm]
&\displaystyle \lesssim\mathcal{E}_{2}^{\frac{5}{8}}(t)\mathcal{E}_{0}^{\frac{3}{8}}(t)(1+t)^{-\frac{19}{15}}
+  \mathcal{E}_{2}^{\frac{9}{16}}(t)\mathcal{E}_{0}^{\frac{7}{16}}(t)(1+t)^{-\frac{17}{15}}
+  \mathcal{E}_{2}^{\frac{1}{2}}(t)\mathcal{E}_{1}^{\frac{1}{2}}(t)(1+t)^{-\frac{16}{15}}  \notag\\[2mm]
&\displaystyle \lesssim\mathcal{E}(t)(1+t)^{-\frac{16}{15}}
\end{align}
for $m>2$. Similarly, we have
\begin{align}\label{M2343}
M_{2343}=\int^t_0(1+t-\tau)^{-m}\|\partial_2\partial_3(g(\varrho)\nabla\mathop{\mathrm{div}}\nolimits\mathbf{u})\|_{L ^{2}}\mathrm{d}\tau \lesssim\mathcal{E}(t)(1+t)^{-\frac{16}{15}}.
\end{align}
For the last term $M_{2344}$, using \eqref{soboleve-ineq} again, we have
\begin{align*}
\|\partial_2\partial_3(\varrho\nabla\varrho)\|_{L^2}& \lesssim\|\partial_2\partial_3\varrho\nabla\varrho\|_{L^2}+\|\partial_2\varrho\nabla\partial_3\varrho\|_{L^2}+\|\partial_3\varrho\nabla\partial_2\varrho\|_{L^2}+\|\varrho\nabla\partial_2\partial_3\varrho\|_{L^2}\nonumber\\[2mm]
&\displaystyle \lesssim \|\nabla\varrho\|^{\frac{1}{4}}_{L^2}\|\nabla^3\varrho\|^{\frac{3}{4}}_{L^2}\|\partial_2\partial_3\varrho\|_{L^2}+\|\partial_2\varrho\|^{\frac{1}{4}}_{L^2}\|\nabla^2\partial_2\varrho\|^{\frac{3}{4}}_{L^2}\|\nabla\partial_3\varrho\|_{L^2}\nonumber\\[2mm]
&\displaystyle \quad+\|\partial_3\varrho\|^{\frac{1}{4}}_{L^2}\|\nabla^2\partial_3\varrho\|^{\frac{3}{4}}_{L^2}\|\nabla\partial_2\varrho\|_{L^2}+\|\nabla\varrho\|^{\frac{1}{2}}_{L^2}\|\nabla\varrho\|^{\frac{1}{2}}_{L^2}\|\nabla\partial_2\partial_3\varrho\|_{L^2}.
\end{align*}
Therefore we derive for $ M _{2344} $ that
\begin{align}\label{M2344}
\displaystyle M_{2344}& \lesssim\sup_{0\leq\tau\leq t}\Big[(1+\tau)^{\frac{1}{4}}\|\nabla\varrho\|^{\frac{1}{4}}_{L^2}(1+\tau)^{\frac{3}{8}}\|\nabla^2\varrho\|^{\frac{3}{8}}_{L^2}\|\nabla^4\varrho\|^{\frac{3}{8}}_{L^2}(1+\tau)\|\partial_2\partial_3\varrho\|_{L^2}\Big]\nonumber\\
&\displaystyle \quad \qquad\cdot\int^t_0(1+t-\tau)^{-m}(1+\tau)^{-\frac{13}{8}}\mathrm{d}\tau\nonumber\\
&\displaystyle \quad+\sup_{0\leq\tau\leq t}\Big[(1+\tau)^{\frac{1}{4}}\|\partial_2\varrho\|^{\frac{1}{4}}_{L^2}(1+\tau)^{\frac{3}{8}}\|\nabla\partial_2\varrho\|^{\frac{3}{8}}_{L^2}\|\nabla^3\partial_2\varrho\|^{\frac{3}{8}}_{L^2}(1+\tau)\|\nabla\partial_3\varrho\|_{L^2}\Big]
 \nonumber \\[2mm]
 & \displaystyle \quad\quad \qquad \cdot\int^t_0(1+t-\tau)^{-m}(1+\tau)^{-\frac{13}{8}}\mathrm{d}\tau\nonumber\\[2mm]
&\displaystyle \quad+\sup_{0\leq\tau\leq t}(1+\tau)^{\frac{1}{4}}\|\partial_3\varrho\|^{\frac{1}{4}}_{L^2}(1+\tau)^{\frac{3}{8}}\|\nabla\partial_3\varrho\|^{\frac{3}{8}}_{L^2}\|\nabla^3\partial_3\varrho\|^{\frac{3}{8}}_{L^2}(1+\tau)\|\nabla\partial_2\varrho\|_{L^2}\nonumber\\
&\displaystyle \quad \quad \qquad\cdot\int^t_0(1+t-\tau)^{-m}(1+\tau)^{-\frac{13}{8}}\mathrm{d}\tau\nonumber\\
&\displaystyle \quad+\sup_{0\leq\tau\leq t}(1+\tau)^{\frac{1}{2}}\|\nabla\varrho\|^{\frac{1}{2}}_{L^2}(1+\tau)^{\frac{1}{2}}\|\nabla^2\varrho\|^{\frac{1}{2}}_{L^2}(1+\tau)^{\frac{1}{2}}\|\partial_2\partial_3\varrho\|^{\frac{1}{2}}_{L^2}\|\nabla^2\partial_2\partial_3\varrho\|^{\frac{1}{2}}_{L^2}\nonumber\\
&\displaystyle \quad \quad \qquad\cdot\int^t_0(1+t-\tau)^{-m}(1+\tau)^{-\frac{3}{2}}\mathrm{d}\tau\nonumber\\[2mm]
& \displaystyle \lesssim\mathcal{E}_{2}^{\frac{13}{16}}(t)\mathcal{E}_{0}^{\frac{3}{16}}(t)(1+t)^{-\frac{13}{8}}
+ \mathcal{E}_{2}^{\frac{3}{4}}(t)\mathcal{E}_{0}^{\frac{1}{4}}(t)(1+t)^{-\frac{3}{2}}  \notag\\[2mm]
&\displaystyle \lesssim\mathcal{E}(t)(1+t)^{-\frac{3}{2}},
\end{align}
provided $ m>2 $, where we have used Lemma \ref{decaylemma}. Gathering the estimates \eqref{M2341}, \eqref{M2342}, \eqref{M2343} and \eqref{M2344}, we thus arrive at
\begin{align}\label{M234}
M_{234}=\int^t_0(1+t-\tau)^{-m}\|\partial_2\partial_3S_2\|_{L^2}\mathrm{d}\tau \lesssim\mathcal{E}(t)(1+t)^{-\frac{16}{15}}.
\end{align}
Inserting the estimates \eqref{M231}, \eqref{M232} and \eqref{M234} into \eqref{varrhobiaoshi223}, we then obtain that
\begin{align}\label{23rho}
(1+t)\|\partial_2\partial_3\varrho\|_{L^2} \lesssim\mathcal{E}(t)+\|(\rho_0,\mathbf{u}_0)\|_{L^1}+\|(\partial_{2}\partial_3\varrho_0,\partial_2\partial_3\mathbf{u}_0)\|_{L^2}.
\end{align}
Now let us turn to the estimate of $\|\partial_2\partial_3\mathbf{u}\|_{L^2}$. Using \eqref{anisou-lem-con}, we get
\begin{align}\label{ubiaoshi223}
&\displaystyle\|\partial_2\partial_3\mathbf{u}\|_{L^2}
 \nonumber \\[2mm]
 &~\displaystyle \lesssim(1+t)^{-\frac{7}{4}}(\|\varrho_0\|_{L^1}+\|\partial_2\partial_3\varrho_0\|_{L^2})+(1+t)^{-1}(\|\partial_3\mathbf{u}_0\|_{L^2_{x_3}L^1_{x_1x_2}}+\|\partial_2\partial_3\mathbf{u}_0\|_{L^2})\nonumber\\[2mm]
&~\displaystyle \quad+\int^t_0(1+t-\tau)^{-\frac{7}{4}}\|S_1\|_{L^1}\mathrm{d}\tau+\int^t_0(1+t-\tau)^{-m}\|\partial_2\partial_3S_1\|_{L^2}\mathrm{d}\tau\nonumber\\[2mm]
&~\displaystyle \quad+\int^t_0(1+t-\tau)^{-1}\|\partial_3S_2\|_{L^2_{x_3}L^1_{x_1x_2}}\mathrm{d}\tau+\int^t_0(1+t-\tau)^{-m}\|\partial_2\partial_3S_2\|_{L^2}\mathrm{d}\tau\nonumber\\[2mm]
&~\displaystyle \lesssim(1+t)^{-\frac{7}{4}}(\|\varrho_0\|_{L^1}+\|\partial_2\partial_3\varrho_0\|_{L^2})+(1+t)^{-1}(\|\partial_3\mathbf{u}_0\|_{L^2_{x_3}L^1_{x_1x_2}}+\|\partial_2\partial_3\mathbf{u}_0\|_{L^2})\nonumber\\[2mm]
&~\displaystyle \quad+H_{231}+H_{232}+H_{233}+H_{234}.
\end{align}
By similar arguments as in the derivation of \eqref{M231}, \eqref{M232} and \eqref{M234}, respectively, we can bound $H_{231}$, $H_{232}$ and $H_{234}$ as follows
\begin{gather*}
\displaystyle
    \displaystyle H_{231} \lesssim\mathcal{E}(t)(1+t)^{-\frac{3}{2}},\ \  H_{232} \lesssim\mathcal{E}(t)(1+t)^{-\frac{137}{120}}\ \ \mbox{and\ }\ H_{234}\lesssim\mathcal{E}(t)(1+t)^{-\frac{16}{15}}.
\end{gather*}
As for estimate $H_{233}$, by similar arguments as in \eqref{p3d}--\eqref{H33_4}, we easily get
\begin{equation*}
H_{233}\lesssim\mathcal{E}_2(t)(1+t)^{-1}.
\end{equation*}
Therefore we have from \eqref{ubiaoshi223} that
\begin{equation*}
(1+t)\|\partial_2\partial_3\mathbf{u}\|_{L^2} \lesssim\mathcal{E}(t)+\|\varrho_0\|_{L^1}+\|\partial_3\mathbf{u}_0\|_{L^2_{x_3}L^1_{x_1x_2}}+\|(\partial_2\partial_3\mathbf{u}_0,\partial_2\partial_3\varrho_0)\|_{L^2}.
\end{equation*}
Finally, let us consider {\bf the case $i=j=3.$} We shall establish estimates for $ \|\partial _{3}^{2}\varrho\|_{L ^{2}} $ and $ \|\partial _{3}^{2}\mathbf{u}\|_{L ^{2}} $.
As before, let us begin with the estimate in terms of $ \varrho $. This can be done by similar arguments as in the proof of \eqref{23rho}. Precisely, thanks to \eqref{vrho-con-line-lem}, \eqref{varrhobiaoshi212} and the H\"older inequality, we get
\begin{align}\label{varrhobiaoshi233}
\displaystyle \|\partial^2_3\varrho\|_{L^2}
& \lesssim (1+t)^{-\frac{7}{4}}(\|(\rho_0,\mathbf{u}_0)\|_{L^1}+\|(\partial^2_3\varrho_0,\partial^2_3\mathbf{u}_0)\|_{L^2})\nonumber\\[2mm]
&\displaystyle \quad+\int^t_0(1+t-\tau)^{-\frac{7}{4}}\|S_1\|_{L^1}\mathrm{d}\tau+\int^t_0(1+t-\tau)^{-m}\|\partial^2_3S_1\|_{L^2}\mathrm{d}\tau\nonumber\\[2mm]
&\displaystyle \quad+\int^t_0(1+t-\tau)^{-\frac{7}{4}}\|S_2\|_{L^1}\mathrm{d}\tau+\int^t_0(1+t-\tau)^{-m}\|\partial^2_3S_2\|_{L^2}\mathrm{d}\tau\nonumber\\[2mm]
&\displaystyle \lesssim (1+t)^{-\frac{7}{4}}(\|(\rho_0,\mathbf{u}_0)\|_{L^1}+\|(\partial^2_3\varrho_0,\partial^2_3\mathbf{u}_0)\|_{L^2})+M_{331}+M_{332}+M_{333}+M_{334},
\end{align}
where, by similar arguments as in the derivation of $\eqref{M121}$, \eqref{M122}, \eqref{M123} and \eqref{M234}, we can control $M_{33i}~(1 \leq i \leq 4)$ as follows
\begin{align*}
\begin{cases}
 \displaystyle   M_{331} \lesssim\mathcal{E}(t)(1+t)^{-\frac{3}{2}},\ \ \ M_{333} \lesssim\mathcal{E}(t)(1+t)^{-1},\\[2mm]
    \displaystyle M_{332}\lesssim\mathcal{E}(t)(1+t)^{-\frac{137}{120}},\ \ \ M_{334} \lesssim\mathcal{E}(t)(1+t)^{-\frac{16}{15}}.
\end{cases}
\end{align*}
Therefore, we have from \eqref{varrhobiaoshi233} that
\begin{align*}
\displaystyle (1+t)\|\partial^2_3\varrho\|_{L^2}\lesssim\mathcal{E}(t)+\|(\rho_0,\mathbf{u}_0)\|_{L^1}+\|(\partial^2_3\varrho_0,\partial^2_3\mathbf{u}_0)\|_{L^2}.
\end{align*}
Now it remains to estimate $\|\partial^2_3\mathbf{u}\|_{L ^{2}}$. Using \eqref{verti-u-conl-lem}, we get
\begin{align}\label{ubiaoshi233}
 \displaystyle\makebox[-8pt]{~}\|\partial^2_3\mathbf{u}\|_{L^2}
&\lesssim(1+t)^{-\frac{7}{4}}\Big(\|\varrho_0\|_{L^1}+\|\partial^2_3\varrho_0\|_{L^2}\Big)+(1+t)^{-\frac{1}{2}}\Big(\|\partial^2_3\mathbf{u}_0\|_{L^2_{x_3}L^1_{x_1x_2}}+\|\partial^2_3\mathbf{u}_0\|_{L^2}\Big)\nonumber\\[2mm]
&\displaystyle \quad+\int^t_0(1+t-\tau)^{-\frac{7}{4}}\|S_1\|_{L^1}\mathrm{d}\tau+\int^t_0(1+t-\tau)^{-m}\|\partial^2_3S_1\|_{L^2}\mathrm{d}\tau\nonumber\\[2mm]
&\displaystyle \quad+\int^t_0(1+t-\tau)^{-\frac{1}{2}}\|\partial^2_3S_2\|_{L^2_{x_3}L^1_{x_1x_2}}\mathrm{d}\tau+\int^t_0(1+t-\tau)^{-m}\|\partial^2_3S_2\|_{L^2}\mathrm{d}\tau\nonumber\\[2mm]
&\displaystyle \lesssim(1+t)^{-\frac{7}{4}}\Big(\|\varrho_0\|_{L^1}+\|\partial^2_3\varrho_0\|_{L^2}\Big)+(1+t)^{-\frac{1}{2}}\Big(\|\partial^2_3\mathbf{u}_0\|_{L^2_{x_3}L^1_{x_1x_2}}+\|\partial^2_3\mathbf{u}_0\|_{L^2}\Big)\nonumber\\[2mm]
&\displaystyle \displaystyle \quad+H_{331}+H_{332}+H_{333}+H_{334},
\end{align}
where in virtue of similar arguments as in the derivation of \eqref{M121}, \eqref{M122} and \eqref{M234} respectively, $H_{331}$, $H_{332}$ and $H_{334}$ can be estimated as
\begin{align}\label{H331}
H_{331} \lesssim\mathcal{E}(t)(1+t)^{-\frac{3}{2}},\ \ \ H_{332}\lesssim\mathcal{E}(t)(1+t)^{-\frac{137}{120}},\ \mbox{ and }\ H_{334}\lesssim\mathcal{E}(t)(1+t)^{-\frac{16}{15}}.
\end{align}
For $H_{333}$, recalling the definition of $ S _{2} $ in \eqref{nonlinear}, we have
\begin{align}\label{h333-split}
\displaystyle H_{333}&\lesssim\int^t_0(1+t-\tau)^{-\frac{1}{2}}\Big(\|\partial^2_3(\mathbf{u}\cdot\nabla\mathbf{u})\|_{L^2_{x_3}L^1_{x_1x_2}}+\|\partial^2_3(g(\varrho)\Delta_h\mathbf{u})\|_{L^2_{x_3}L^1_{x_1x_2}}\Big.\nonumber\\[2mm]
&\displaystyle \quad \Big.+\|\partial^2_3(g(\varrho)\nabla\mathop{\mathrm{div}}\nolimits\mathbf{u})\|_{L^2_{x_3}L^1_{x_1x_2}}+\|\partial^2_3(f(\varrho)\nabla\varrho)\|_{L^2_{x_3}L^1_{x_1x_2}}\Big)\mathrm{d}\tau
 \nonumber \\[2mm]
&\displaystyle=:H_{3331}+H_{3332}+H_{3333}+H_{3334}.
\end{align}
We next consider the term $H_{3331}$. It follows from Lemma \ref{lemma2.1} that
\begin{align*}
&\displaystyle\|\partial^2_3(\mathbf{u}\cdot\nabla \mathbf{u})\|_{L^2_{x_3}L^1_{x_1x_2}}=\|\partial^2_{3}u_j\partial_j\mathbf{u}+2\partial_3u_j\partial_j\partial_3\mathbf{u}+u_j\partial_j\partial^2_3\mathbf{u}\|_{L^2_{x_3}L^1_{x_1x_2}}\nonumber\\[2mm]
&~\displaystyle \lesssim\|\partial_3\mathbf{u}\|^{\frac{1}{2}}_{L^2}\|\partial^2_3\mathbf{u}\|^{\frac{1}{2}}_{L^2}\|\partial^2_3u_3\|_{L^2}+\|\nabla_h\mathbf{u}\|^{\frac{1}{2}}_{L^2}\|\nabla_h\partial_3\mathbf{u}\|^{\frac{1}{2}}_{L^2}\|\partial^2_3\mathbf{u}_h\|_{L^2}\nonumber\\[2mm]
&~\displaystyle \quad+\|\partial_3u_3\|^{\frac{1}{2}}_{L^2}\|\partial^2_3u_3\|^{\frac{1}{2}}_{L^2}\|\partial^2_3\mathbf{u}\|_{L^2}+\|\partial_3\mathbf{u}_h\|^{\frac{1}{2}}_{L^2}\|\partial^2_3\mathbf{u}_h\|^{\frac{1}{2}}_{L^2}\|\nabla_h\partial_3\mathbf{u}\|_{L^2}\nonumber\\[2mm]
&~\displaystyle \quad+\|u_3\|^{\frac{1}{2}}_{L^2}\|\partial_3u_3\|^{\frac{1}{2}}_{L^2}\|\partial^3_3\mathbf{u}\|_{L^2}+\|\mathbf{u}_h\|^{\frac{1}{2}}_{L^2}\|\partial_3\mathbf{u}_h\|^{\frac{1}{2}}_{L^2}\|\nabla_h\partial^2_3\mathbf{u}\|_{L^2}\nonumber\\[2mm]
&~\displaystyle \lesssim\|\partial_3\mathbf{u}\|^{\frac{1}{2}}_{L^2}\|\partial^2_3\mathbf{u}\|^{\frac{1}{2}}_{L^2}(\|\nabla _{h}\partial _{3}\mathbf{u}\|_{L^2}+\|\partial _{3}\mathop{\mathrm{div}}\nolimits\mathbf{u}\|_{L ^{2}})
 \nonumber \\[2mm]
 &~\displaystyle \quad+\|\nabla_h\mathbf{u}\|^{\frac{1}{2}}_{L^2}(\|\nabla_h\partial_3\mathbf{u}\|^{\frac{1}{2}}_{L^2}+\|\partial_3\mathop{\mathrm{div}}\nolimits\mathbf{u}\|^{\frac{1}{2}}_{L^2})\|\partial^2_3\mathbf{u}\|_{L^2}\nonumber\\[2mm]
&~\displaystyle \quad+\|\mathop{\mathrm{div}}\nolimits\mathbf{u}\|^{\frac{1}{2}}_{L^2}(\|\partial_3\mathop{\mathrm{div}}\nolimits\mathbf{u}\|^{\frac{1}{2}}_{L^2}+\|\nabla_h\partial_3\mathbf{u}\| _{L ^{2}}^{\frac{1}{2}})\|\partial^2_3\mathbf{u}\|_{L ^{2}}+\|u_3\|^{\frac{1}{2}}_{L^2}\|\partial_3u_3\|^{\frac{1}{2}}_{L^2}\|\partial^3_3\mathbf{u}\|_{L^2}\nonumber\\[2mm]
&~\displaystyle \quad+\|\mathbf{u}_h\|^{\frac{1}{2}}_{L^2}\|\partial_3\mathbf{u}_h\|^{\frac{1}{2}}_{L^2}\|\nabla_h\partial^2_3\mathbf{u}\|_{L^2}).
\end{align*}
This implies that
\begin{gather}\label{h3331-sp}
\displaystyle H _{3331}=  \int^t_0(1+t-\tau)^{-\frac{1}{2}}\|\partial^2_3(\mathbf{u}\cdot\nabla\mathbf{u})\|_{L^2_{x_3}L^1_{x_1x_2}}\mathrm{d}\tau  \lesssim \tilde{H}_{1}+\tilde{H}_{2},
\end{gather}
with
\begin{align}\label{H1-TILD}
\displaystyle \tilde{H}_{1}&= \int^t_0(1+t-\tau)^{-\frac{1}{2}}\Big(\|\partial_3\mathbf{u}\|^{\frac{1}{2}}_{L^2}\|\partial^2_3\mathbf{u}\|^{\frac{1}{2}}_{L^2}(\|\nabla _{h}\partial _{3}\mathbf{u}\|_{L^2}+\|\partial _{3}\mathop{\mathrm{div}}\nolimits\mathbf{u}\|_{L ^{2}})
 \nonumber \\[2mm]
 & \displaystyle \qquad\Big.
  +\|\nabla_h\mathbf{u}\|^{\frac{1}{2}}_{L^2}(\|\nabla_h\partial_3\mathbf{u}\|^{\frac{1}{2}}_{L^2}+\|\partial _{3}\mathop{\mathrm{div}}\nolimits \mathbf{u}\|_{L ^{2}}^{\frac{1}{2}})\|\partial^2_3\mathbf{u}\|_{L^2}+\|u_3\|^{\frac{1}{2}}_{L^2}\|\partial_3u_3\|^{\frac{1}{2}}_{L^2}\|\partial^3_3\mathbf{u}\|_{L^2}
   \nonumber \\[2mm]
   &\displaystyle \qquad \Big.+\|\mathop{\mathrm{div}}\nolimits\mathbf{u}\|^{\frac{1}{2}}_{L^2}(\|\partial_3\mathop{\mathrm{div}}\nolimits\mathbf{u}\|^{\frac{1}{2}}_{L^2}+\|\nabla_h\partial_3\mathbf{u}\|^{\frac{1}{2}}_{L^2})\|\partial^2_3\mathbf{u}\|_{L^2}\Big)\mathrm{d}\tau \nonumber\\[2mm]
  &\displaystyle \lesssim \Big(\mathcal{E}_2(t)+\mathcal{E}_{2}^{\frac{1}{2}}(t)\mathcal{E}^{\frac{1}{2}}_1(t)\Big)\int^t_0(1+t-\tau)^{-\frac{1}{2}}(1+\tau)^{-\frac{3}{2}}(\log(1+\tau))^{\frac{1}{2}}\mathrm{d}\tau \nonumber\\[2mm]
 &\displaystyle \quad
+\left(\mathcal{E}_2(t)+\mathcal{E}_{2}^{\frac{3}{4}}\mathcal{E}^{\frac{1}{4}}_1(t)\right)\int^t_0(1+t-\tau)^{-\frac{1}{2}}(1+\tau)^{-\frac{3}{2}}\log(1+\tau)\mathrm{d}\tau
 \nonumber\\[2mm]
  & \displaystyle \quad
+\Big(\mathcal{E}^{\frac{1}{4}}_2(t)\mathcal{E}^{\frac{1}{4}}_1(t)+\mathcal{E}_{2}^{\frac{1}{2}}(t)\Big)\mathcal{E}^{\frac{1}{2}}_0(t)\int^t_0(1+t-\tau)^{-\frac{1}{2}}(1+\tau)^{-1}\log(1+\tau)\mathrm{d}\tau \nonumber\\[2mm]
&\displaystyle \quad
+\Big(\mathcal{E}^{\frac{1}{2}}_1(t)\mathcal{E}^{\frac{1}{2}}_2(t)+\mathcal{E}^{\frac{1}{4}}_1(t)\mathcal{E}^{\frac{3}{4}}_2(t)\Big)\int^t_0(1+t-\tau)^{-\frac{1}{2}}(1+\tau)^{-\frac{3}{2}}\log(1+\tau)\mathrm{d}\tau
 \nonumber \\[2mm]
&\displaystyle \lesssim\mathcal{E}(t)\Big((1+t)^{-\frac{1}{2}}+(1+t)^{-\frac{1}{2}}\log(1+\tau)\Big) \nonumber\\[2mm]
&\displaystyle \lesssim\mathcal{E}(t)(1+t)^{-\frac{1}{2}}\log(1+t)
\end{align}
and
\begin{align*}
\displaystyle \tilde{H}_{2}&= \int^t_0(1+t-\tau)^{-\frac{1}{2}}\|\mathbf{u}_h\|^{\frac{1}{2}}_{L^2}\|\partial_3\mathbf{u}_h\|^{\frac{1}{2}}_{L^2}\|\nabla_h\partial_3\mathbf{u}\|^{\frac{1}{2}}_{L^2}\|\nabla_h\partial^3_3\mathbf{u}\|^{\frac{1}{2}}_{L^2}\mathrm{d}\tau
 \nonumber \\
 &\displaystyle \lesssim\mathcal{E}^{\frac{3}{4}}_2(t)\mathcal{E}^{\frac{1}{4}}_0(t)\int^t_0(1+t-\tau)^{-\frac{1}{2}}(1+\tau)^{-1}\mathrm{d}\tau
  \nonumber \\
 &\displaystyle \lesssim\mathcal{E}^{\frac{1}{2}}_2(t)\mathcal{E}^{\frac{1}{2}}_0(t)(1+t)^{-\frac{1}{2}}\log(1+t)\nonumber\\
 &\displaystyle \lesssim\mathcal{E}(t)(1+t)^{-\frac{1}{2}}\log(1+t).
\end{align*}
Here we have used Lemma \ref{decaylemma} and \eqref{dissip-1}. Therefore we have from \eqref{h3331-sp} that
\begin{align}\label{H3331}
\displaystyle
H_{3331} \lesssim\mathcal{E}(t)(1+t)^{-\frac{1}{2}}\log(1+t).
\end{align}
For the term $H_{3332}$ on the right hand side of \eqref{h333-split}, using \eqref{two}, we have
\begin{align*}
&\displaystyle\|\partial^2_3(g(\varrho)\Delta_h\mathbf{u})\|_{L^2_{x_3}L^1_{x_1x_2}}
 \nonumber \\[2mm]
 &~\displaystyle\lesssim \|\partial^2_3\varrho\Delta_h\mathbf{u}\|_{L^2_{x_3}L^1_{x_1x_2}}+2\|\partial_3\varrho\partial_3\Delta_h\mathbf{u}\|_{L^2_{x_3}L^1_{x_1x_2}}+\|\varrho\partial^2_3\Delta_h\mathbf{u}\|_{L^2_{x_3}L^1_{x_1x_2}}\nonumber\\[2mm]
&~\displaystyle \lesssim\|\partial^2_3\varrho\|^{\frac{1}{2}}_{L^2}\|\partial^3_3\varrho\|^{\frac{1}{2}}_{L^2}\|\Delta_h\mathbf{u}\|_{L^2}+\|\partial_3\varrho\|^{\frac{1}{2}}_{L^2}\|\partial^2_3\varrho\|^{\frac{1}{2}}_{L^2}\|\partial_3\Delta_h\mathbf{u}\|_{L^2}+\|\varrho\|^{\frac{1}{2}}_{L^2}\|\partial_3\varrho\|^{\frac{1}{2}}_{L^2}\|\partial^2_3\Delta_h\mathbf{u}\|_{L^2}.
\end{align*}
Hence, we have
\begin{align}\label{H3332}
\displaystyle\makebox[-4pt]{~} H_{3332}&=\int^t_0(1+t-\tau)^{-\frac{1}{2}}\|\partial^2_3(g(\varrho)\Delta_h\mathbf{u})\|_{L^2_{x_3}L^1_{x_1x_2}}\mathrm{d}\tau\nonumber\\
&\displaystyle \lesssim\sup_{0\leq\tau\leq t}\Big[(1+\tau)^{\frac{3}{4}}\|\partial^2_3\varrho\|^{\frac{3}{4}}_{L^2}\|\partial^4_3\varrho\|^{\frac{1}{4}}_{L^2}(1+\tau)^{\frac{16}{15}}\|\Delta_h\mathbf{u}\|_{L^2}\Big]\int^t_0(1+t-\tau)^{-\frac{1}{2}}(1+\tau)^{-\frac{109}{60}}\mathrm{d}\tau\nonumber\\[2mm]
&\displaystyle \quad+\sup_{0\leq\tau\leq t}\Big[(1+\tau)^{\frac{1}{2}}\|\partial_3\varrho\|^{\frac{1}{2}}_{L^2}(1+\tau)^{\frac{1}{2}}\|\partial^2_3\varrho\|^{\frac{1}{2}}_{L^2}(1+\tau)^{\frac{8}{15}}\|\Delta_h\mathbf{u}\|^{\frac{1}{2}}_{L^2}\|\partial^2_3\Delta_h\mathbf{u}\|^{\frac{1}{2}}_{L^2}\Big]
\nonumber \\[2mm]
 & \displaystyle \quad \qquad \cdot\int^t_0(1+t-\tau)^{-\frac{1}{2}}(1+\tau)^{-\frac{23}{15}}\mathrm{d}\tau\nonumber\\
&\displaystyle \quad+\sup_{0\leq\tau\leq t}\Big[(1+\tau)^{\frac{3}{8}}(\log(1+\tau))^{-\frac{1}{2}}\|\varrho\|^{\frac{1}{2}}_{L^2}(1+\tau)^{\frac{1}{2}}\|\partial_3\varrho\|^{\frac{1}{2}}_{L^2}\Big]
 \nonumber \\[2mm]
&\displaystyle \quad \qquad \cdot \int^t_0(1+t-\tau)^{-\frac{1}{2}}(1+\tau)^{-\frac{113}{120}}(\log(1+\tau))^{\frac{1}{2}}(1+\tau)^{\frac{1}{15}}\|\partial^2_3\Delta_h\mathbf{u}\|_{L^2}\mathrm{d}\tau\nonumber\\
&\displaystyle \lesssim(\mathcal{E}_{2}^{\frac{7}{8}}(t)\mathcal{E}_{0}^{\frac{1}{8}}(t)+\mathcal{E}_{2}^{\frac{3}{4}}(t)\mathcal{E}_{0}^{\frac{1}{4}}(t))(1+t)^{-\frac{1}{2}}
+\mathcal{E}_{2}^{\frac{1}{2}}(t)\Big(\int^t_0(1+t-\tau)^{-1}(1+\tau)^{-\frac{113}{60}}\log(1+\tau)\mathrm{d}\tau\Big)^{\frac{1}{2}}\nonumber\\
&\quad\quad\cdot\Big(\int^t_0(1+\tau)^{\frac{2}{15}}\|\partial^2_3\Delta_h\mathbf{u}\|^2_{L^2}\mathrm{d}\tau\Big)^{\frac{1}{2}}   \notag\\[2mm]
&\displaystyle \lesssim(\mathcal{E}_{2}^{\frac{7}{8}}(t)\mathcal{E}_{0}^{\frac{1}{8}}(t)+\mathcal{E}_{2}^{\frac{3}{4}}(t)\mathcal{E}_{0}^{\frac{1}{4}}(t)+\mathcal{E}^{\frac{1}{2}}_2(t)\mathcal{E}^{\frac{1}{2}}_1(t))(1+t)^{-\frac{1}{2}}\nonumber\\
&\displaystyle \lesssim\mathcal{E}(t)(1+t)^{-\frac{1}{2}}.
\end{align}
Similarly, we have
\begin{align}\label{H3333}
H_{3333}&=\int^t_0(1+t-\tau)^{-\frac{1}{2}}\|\partial^2_3(g(\varrho)\nabla \mathop{\mathrm{div}}\nolimits\mathbf{u})\|_{L^2_{x_3}L^1_{x_1x_2}}\mathrm{d}\tau \lesssim\mathcal{E}(t)(1+t)^{-\frac{1}{2}}.
\end{align}
Moreover, using \eqref{two} again, we have
\begin{align*}
&\displaystyle\|\partial^2_3(f(\varrho)\nabla\varrho)\|_{L^2_{x_3}L^1_{x_1x_2}}
 \nonumber \\[2mm]
 &~\displaystyle\lesssim \|\partial^2_3f(\varrho)\nabla\varrho\|_{L^2_{x_3}L^1_{x_1x_2}}+2\|\partial_3f(\varrho)\nabla\partial_3\varrho\|_{L^2_{x_3}L^1_{x_1x_2}}+\|\varrho\nabla\partial^2_3\varrho\|_{L^2_{x_3}L^1_{x_1x_2}}\nonumber\\[2mm]
&~\displaystyle \lesssim\|\partial^2_3\varrho\|^{\frac{1}{2}}_{L^2}\|\partial^3_3\varrho\|^{\frac{1}{2}}_{L^2}\|\nabla\varrho\|_{L^2}+\|\partial_3\varrho\|^{\frac{1}{2}}_{L^2}\|\partial^2_3\varrho\|^{\frac{1}{2}}_{L^2}\|\nabla\partial_3\varrho\|_{L^2}+\|\varrho\|^{\frac{1}{2}}_{L^2}\|\partial_3\varrho\|^{\frac{1}{2}}_{L^2}\|\nabla\partial^2_3\varrho\|_{L^2}.
\end{align*}
This along with Lemma \ref{decaylemma} further yields that
\begin{align}\label{H3334}
H_{3334}&=\int^t_0(1+t-\tau)^{-\frac{1}{2}}\|\partial^2_3(f(\varrho)\nabla\varrho)\|_{L^2_{x_3}L^1_{x_1x_2}}\mathrm{d}\tau\nonumber\\[2mm]
&\displaystyle \lesssim\sup_{0\leq \tau\leq t}\Big[(1+\tau)^{\frac{3}{4}}\|\partial^2_3\varrho\|^{\frac{3}{4}}_{L^2}\|\partial^4_3\varrho\|^{\frac{1}{4}}_{L^2}(1+\tau)\|\nabla\varrho\|_{L^2}\Big]
 \nonumber \\
 &\displaystyle \quad \qquad \cdot
\int^t_0(1+t-\tau)^{-\frac{1}{2}}(1+\tau)^{-\frac{7}{4}}\mathrm{d}\tau\nonumber\\[2mm]
&\displaystyle \quad+\sup_{0\leq \tau\leq t}\Big[(1+\tau)^{\frac{1}{2}}\|\partial_3\varrho\|^{\frac{1}{2}}_{L^2}(1+\tau)^{\frac{1}{2}}\|\partial^2_3\varrho\|^{\frac{1}{2}}_{L^2}(1+\tau)\|\nabla\partial_3\varrho\|_{L^2}\Big]\nonumber\\[2mm]
&\displaystyle \quad \qquad\cdot\int^t_0(1+t-\tau)^{-\frac{1}{2}}(1+\tau)^{-2}\mathrm{d}\tau\nonumber\\[2mm]
&\displaystyle \quad+\sup_{0\leq \tau\leq t}\Big[(1+\tau)^{\frac{3}{8}}(\log(1+\tau))^{-\frac{1}{2}}\|\varrho\|^{\frac{1}{2}}_{L^2}(1+\tau)^{\frac{1}{2}}\|\partial_3\varrho\|^{\frac{1}{2}}_{L^2}(1+\tau)^{\frac{1}{2}}\|\partial^2_3\varrho\|^{\frac{1}{2}}_{L^2}\|\nabla^2\partial^2_3\varrho\|^{\frac{1}{2}}_{L^2}\Big]
\nonumber \\
 &\displaystyle \quad \qquad \cdot\int^t_0(1+t-\tau)^{-\frac{1}{2}}(1+\tau)^{-\frac{11}{8}}(\log(1+\tau))^{\frac{1}{2}}\mathrm{d}\tau\nonumber\\[2mm]
&\displaystyle \lesssim\mathcal{E}_{2}^{\frac{7}{8}}(t)\mathcal{E}_{0}^{\frac{1}{8}}(t)(1+t)^{-\frac{1}{2}}
+\mathcal{E}_{2}(t) (1+t)^{-\frac{1}{2}}
+ \mathcal{E}^{\frac{3}{4}}_{2}(t)\mathcal{E}^{\frac{1}{4}}_0(t)(1+t)^{-\frac{1}{2}}  \notag\\[2mm]
&\displaystyle \lesssim\mathcal{E}(t)(1+t)^{-\frac{1}{2}}.
\end{align}
Summing up the estimates \eqref{H3331}--\eqref{H3334}, we obtain that
\begin{align*}
H_{333}=\int^t_0(1+t-\tau)^{-\frac{1}{2}}\|\partial^2_3S_2\|_{L^2_{x_3}L^1_{x_1x_2}}\mathrm{d}\tau \lesssim\mathcal{E}(t)(1+t)^{-\frac{1}{2}}\log(1+\tau).
\end{align*}
This combined with \eqref{ubiaoshi233} and \eqref{H331} gives
\begin{align*}
\displaystyle(1+t)^{\frac{1}{2}}(\log(1+t))^{-1}\|\partial^2_3\mathbf{u}\|_{L^2}\lesssim\mathcal{E}(t)+\|\varrho_0\|_{L^1}+\|\partial^2_3\mathbf{u}_0\|_{L^2_{x_3}L^1_{x_1x_2}}+\|(\partial^2_3\mathbf{u}_0,\partial^2_3\varrho_0)\|_{L^2},
\end{align*}
and thus ends the proof of Lemma \ref{lemma4.8}.
\end{proof}
Gathering the estimates of $\mathcal{E}_0$, $\mathcal{E}_1$ and $\mathcal{E}_2$ in Proposition \ref{prop4.1} and Lemmas \ref{lemma5.5}--\ref{lemma4.8}, we ultimately get an estimate of the form \eqref{goal-esti} which can be precisely stated as
\begin{align}\label{E}
\mathcal{E}(t) \lesssim F(\varrho_0,\mathbf{u}_0)+ \mathcal{E}^{\frac{3}{2}}(t)+\mathcal{E}^2(t),
\end{align}
with
\begin{align*}
F(\varrho_0,\mathbf{u}_0)=\|(\varrho_0,\mathbf{u}_0)\|^2_{H^4}+\|(\varrho_0,\mathbf{u}_0)\|^2_{L^1}+\|(\mathbf{u}_0,\partial_3\mathbf{u}_0,\partial^2_3\mathbf{u}_0)\|^2_{L^2_{x_3}L^1_{x_1x_2}}.
\end{align*}

\vspace{4mm}

\subsection{Proof of Theorem \ref{thm2}}
With the global existence result in Section \ref{FIR-PROOF} as well as the estimate \eqref{E} at hand, to finish the proof of Theorem \ref{thm2}, it suffices to varify the \emph{a priori} assumptions in \eqref{appri-assum-Nonlin}. Indeed, once the \emph{a priori }assumptions are varified, one can immediately utilize the bootstrap arguments to obtain the estimates in Theorem \ref{thm2}. To proceed, let us recall \eqref{appri-assum-Nonlin} again and set $ \Lambda $ to be small enough such that \eqref{E} can be rewritten as
\begin{align*}
\displaystyle \mathcal{E}(t) &\lesssim F(\varrho_0,\mathbf{u}_0)+ \mathcal{E}^{\frac{3}{2}}(t)+\mathcal{E}^2(t)
 \nonumber \\
  & \displaystyle \leq C _{1}F(\varrho_0,\mathbf{u}_0)+ \frac{1}{2}\mathcal{E}(t)
\end{align*}
for some constant $ C _{1}>0 $. Then it follows that
\begin{gather*}
\displaystyle \mathcal{E}(t) \leq 2 C _{1} F(\varrho_0,\mathbf{u}_0).
\end{gather*}
Therefore if we take $ \Lambda =4C _{1}F(\varrho_0,\mathbf{u}_0)$ and set $ F(\varrho_0,\mathbf{u}_0) $ to be small enough, then it holds that
\begin{gather*}
 \displaystyle  \mathcal{E}(t) \leq 2 C _{1} F(\varrho_0,\mathbf{u}_0)<\Lambda.
 \end{gather*}
 This varifies the \emph{a priori} assumptions in \eqref{appri-assum-Nonlin}, and thus ends the proof of Theorem \ref{thm2}. \hfill $ \square $

\vspace*{2cm}

\section*{Acknowledgement}
The research of Z.F. Feng was supported by the National Natural Science Foundation of China (Grant No. 12101095), the Research Project of Chongqing Education Commission (Grant No. CXQT21014), the Talent Program of Chongqing Normal University (Grant No. BWQB2023008) and the grant of Chongqing Young Experts' Workshop. The research of G.Y. Hong was supported by the National Natural Science Foundation of China (Grant No. 12522111), the Guangdong Provincial Pearl River Talents Program (Grant No. 2023QN10X436), the Guangdong Basic and Applied Basic Research Foundation (Grant No. 2024A1515012306), and the Fundamental Research Funds for the
Central Universities (Grant No. 2025ZYGXZR040). The research of Y.H. Wang was partially supported by the National Natural Science Foundation of
China (Grant No. 12401274) and the Natural Science Foundation of Hunan Province (Grant No. 2024JJ6302). J. Wu was partially supported by the National Science Foundation of USA (Grant Nos. DMS 2104682 and DMS 2309748).


\end{document}